\newtheorem{thm}{Theorem}[section]
\newtheorem{prop}[thm]{Proposition}
\newtheorem{lem}[thm]{Lemma}
  \theoremstyle{definition}
\newtheorem{df}[thm]{Definition}   \theoremstyle{definition}
\newtheorem{rem}[thm]{Remark}                \theoremstyle{plain}
 \theoremstyle{definition}
\def\CC{\Bbb{C}}
\def\RR{\Bbb{R}}  
\def\CCI{\hat{\CC}}        \def\NN{\Bbb{N}} 
\def\B1{{\rm\kern.32em\vrule    width.12em       height1.4ex
depth-.05ex\kern-.28em 1}}
\def\G{\Gamma}
\def\g{\gamma}
\def\GN{\Gamma ^{\NN }}
\def\Rat{\text{Rat}}
\def\emRat{\text{{\em Rat}}}
\def\Cpt{\text{Cpt}}
\def\Hol{\text{H\"{o}l}}
\def\emHol{\text{{\em H\"{o}l}}}
\def\Min{\text{Min}}
\begin{document}
\vspace{-5mm} 
\title{The space of $2$-generator postcritically bounded polynomial semigroups and 
random complex dynamics
\footnote{Date: January 6, 2016. Published in Adv. Math.  290 (2016) 809--859.  
2010 MSC.  
37F10, 30D05. Keywords: 
polynomial semigroups, random complex dynamics, 
randomness-induced phenomena,     
Julia sets, 
devil's coliseum, complex Takagi function.  
The author's webpage: http:www.math.sci.osaka-u.ac.jp/$\sim $sumi/. 
 }}  
\vspace{-7mm} 
\author{Hiroki Sumi\
\\   Department of Mathematics, Graduate School of Science,  
Osaka University\\   
1-1, Machikaneyama,\ Toyonaka,\  Osaka,\ 560-0043,\ 
Japan\\ E-mail:    sumi@math.sci.osaka-u.ac.jp}
\date{}
\maketitle
\vspace{-10mm} 
\begin{abstract}
We investigate the dynamics of $2$-generator semigroups of polynomials with bounded planar postcritical set 
and associated random dynamics on 
the Riemann sphere. Also, we investigate the  space ${\cal B}$ of such semigroups. 
We show that for a parameter $h$ in the intersection of ${\cal B}$, 
the hyperbolicity locus ${\cal H}$ and the closure of the disconnectedness locus 
(the space  of parameters for which the Julia set is disconnected),   
the corresponding semigroup satisfies either the open set condition 
(and  Bowen's formula) or that the Julia sets of the two generators coincide. 
Also, we show that for such a parameter $h$, if the Julia sets of the two generators do not coincide, 
then there exists a neighborhood $U$ of $h$ 
such that for each parameter in $U$, 
the Hausdorff dimension of the Julia set of the corresponding 
semigroup is strictly less than 2. 
Moreover, we show that the intersection of the connectedness locus 
and ${\cal B}\cap {\cal H}$ 
has dense interior. 
By using the results on the semigroups corresponding to these parameters,   
we investigate the associated functions which give the probability of tending to $\infty$ 
(complex analogues of the devil's staircase or Lebesgue's singular functions) 
and complex analogues of the Takagi function. 
 
\end{abstract} 
\vspace{-6mm} 
\section{Introduction}
\label{Introduction} 
\vspace{-2mm} 
Some partial results of this paper have been announced in \cite{Ssugexp} without any proofs.   
We investigate the dynamics 
of polynomial semigroups (i.e., semigroups of non-constant polynomial maps 
where the semigroup operation is functional composition) on the Riemann sphere $\CCI $ and 
random dynamics of polynomials.  
We focus on the  {\bf randomness-induced phenomena} 
 (i.e., phenomena which can hold in random dynamical systems but cannot 
hold in deterministic iteration dynamical systems) 
and the {\bf gradation between chaos and order}  
in random complex dynamical systems  
and we study complex analogues of the devil's staircase, Lebesgue's singular functions
and the Takagi function which are \Hol der continuous on $\CCI $ and vary only on 
{\bf connected} thin fractal Julia sets of the associated polynomial semigroups.  

The first study of random complex dynamics was given by J. E. Fornaess and  N. Sibony (\cite{FS}). 
For research on random dynamics of quadratic polynomials, 
see \cite{Br1, BBR,  GQL}.  
For recent research and motivations on random complex dynamics,  
see the author's works \cite{SdpbpI}--\cite{Scp}, \cite{JS1}.  
  In order to investigate random complex dynamical systems, it is very natural to study the dynamics of 
  associated polynomial semigroups. 
In fact, it is a very powerful tool to investigate random complex dynamics, 
since random complex dynamics and the dynamics of polynomial semigroups are related to each other very deeply.   
The first study of dynamics of polynomial semigroups was 
conducted by
A. Hinkkanen and G. J. Martin (\cite{HM}),
who were interested in the role of the
dynamics of polynomial semigroups while studying
various one-complex-dimensional
moduli spaces for discrete groups,
and
by F. Ren's group (\cite{GR}), 
 who studied 
such semigroups from the perspective of random dynamical systems.
Since the Julia set $J(G)$ of a finitely generated polynomial semigroup 
$G$ generated by $\{ h_{1},\ldots, h_{m}\} $ has 
``backward self-similarity,'' i.e.,  
$J(G)=\cup _{j=1}^{m}h_{j}^{-1}(J(G))$ (see \cite[Lemma 1.1.4]{S1}),  
the study of the dynamics of polynomial semigroups can be regarded as the study of  
``backward iterated function systems,'' and also as a generalization of the study of 
self-similar sets in fractal geometry.  
For recent work on the dynamics of polynomial semigroups, 
see the author's papers \cite{S1}--\cite{Srcddc}, and 
\cite{St3, SS, SU1, SU2, SU4, JS2}. 

To introduce the main idea of this paper,  
we let $G$ be a polynomial semigroup and denote by $F(G)$ the Fatou set of $G$, which is defined to be  
the maximal open subset of $\CCI $ where $G$ is equicontinuous with respect to the spherical distance on $\CCI $.    
We call $J(G):=\CCI \setminus F(G)$ the Julia set of $G.$  
For a polynomial map $g:\CCI \rightarrow \CCI $, we denote by $\mbox{CV}(g)$ the set of critical values of $g:\CCI \rightarrow \CCI .$ 
We set $\mbox{CV}^{\ast }(g):= \mbox{CV}(g)\setminus \{ \infty \} .$ 
For a polynomial semigroup $G$, we set 
$P(G):= \overline{\cup _{g\in G}\mbox{CV}(g)}$ (the closure is taken in $\CCI $) and $P^{\ast }(G):= P(G)\setminus \{ \infty \} .$ 
The set $P(G)$ is called the postcritical set of $G$ and $P^{\ast }(G)$ is called the planar postcritical set of $G.$ 
Note that if $G$ is generated by a family $\Lambda $ of polynomials, i.e., 
$G=\{ g_{1}\circ \cdots \circ g_{n}\mid n\in \NN, \forall g_{j}\in \Lambda \}$, 
 then 
$P(G)=\overline{ \cup _{h\in G\cup \{ Id\} }h(\cup _{g\in \Lambda }\mbox{CV}(g))}.$ 
In particular, $h(P(G))\subset P(G)$ for each $h\in G.$  
For a polynomial semigroup $G$, we set 
$\hat{K}(G):=\{ z\in \CC \mid \cup _{g\in G}\{ g(z)\}  \mbox{ is bounded in }\CC \} .$

Let 
${\cal P}$ be the space of all polynomial maps $g:\CCI \rightarrow \CCI $ with $\deg (g)\geq 2$ endowed with 
the distance $\kappa $ which is defined by 
$\kappa (f,g):=\sup _{z\in \CCI }d(f(z),g(z))$, where $d$ denotes the spherical distance on $\CCI .$ 
Let ${\cal P}_{n}=\{ f\in {\cal P}\mid \deg (f)=n\}.$ We remark that 
$\deg :{\cal P}\rightarrow \NN $ is continuous and 
for each $n\geq 2$,  
 ${\cal P}_{n}$ is a connected component of ${\cal P}$, ${\cal P}_{n}$ is an open and closed subset of ${\cal P}$, 
 and ${\cal P}_{n}\cong (\CC \setminus \{ 0\}) \times \CC ^{n}.$

For each $h=(h_{1},\ldots ,h_{m})\in {\cal P} ^{m}$, 
we denote by $\langle h_{1},\ldots ,h_{m}\rangle $ the polynomial semigroup 
generated by $\{ h_{1},\ldots, h_{m}\} $, i.e., 
$\langle h_{1},\ldots ,h_{m}\rangle =\{ h_{i_{1}}\circ \cdots \circ h_{i_{n}}\in {\cal P}  
\mid n\in \NN, \forall i_{j}\in \{ 1,\ldots ,m\} \} .$ 
Moreover, 
we set $F(h_{1},\ldots h_{m}):=F(\langle h_{1},\ldots ,h_{m}\rangle )$, 
$J(h_{1},\ldots ,h_{m}):=J(\langle h_{1},\ldots ,h_{m}\rangle )$, 
$P(h_{1},\ldots ,h_{m}):= P(\langle h_{1},\ldots ,h_{m}\rangle )$, 
$P^{\ast }(h_{1},\ldots ,h_{m}):= P^{\ast }(\langle h_{1},\ldots ,h_{m}\rangle )$,
and $\hat{K}(h_{1},\ldots ,h_{m}):=\hat{K}(\langle h_{1}\ldots ,h_{m}\rangle ).$ 

We say that a polynomial semigroup $G$ is {\bf postcritically bounded} if 
$P^{\ast }(G)$ is bounded in $\CC .$ We say that a polynomial semigroup $G$ is {\bf hyperbolic} if 
$P(G)\subset F(G).$ 
We are interested in the parameter space of $2$-generator postcritically bounded polynomial semigroups. 
\vspace{-1mm} 
\begin{df}We use the following notations.   
\vspace{-2mm} 
\begin{itemize}
\item 
We set 
${\cal B}:= \{ (h_{1},h_{2})\in 
{\cal P}^{2}\mid 
P^{\ast }(h_{1},h_{2}) \mbox{ is bounded in } 
\CC \}$.  
\vspace{-2mm} 
\item We set 
${\cal C}:=  
\{ (h_{1},h_{2})\in 
{\cal P}^{2}\mid 
J(h_{1},h_{2})\mbox{ is connected}\} $.  
\vspace{-2mm} 
\item We set 
${\cal D}:=  
\{ (h_{1},h_{2})\in 
{\cal P}^{2}\mid 
J(h_{1},h_{2})\mbox{ is disconnected}\} $.  
\vspace{-2mm} 
\item We set ${\cal H}:=
\{ (h_{1},h_{2})\in 
{\cal P}^{2}\mid 
\langle h_{1},h_{2}\rangle \mbox{ is hyperbolic} \} $.  
\vspace{-2mm} 
\item We set 
${\cal I} :=\{ 
 (h_{1},h_{2})\in {\cal P}^{2}\mid 
 J(h_{1})\cap J(h_{2})\neq \emptyset \} $.  
\vspace{-2mm} 
\item We set 
${\cal Q} := 
\{ (h_{1},h_{2})\in {\cal P}^{2}\mid 
J(h_{1})=J(h_{2}), \mbox{ and } J(h_{1}) \mbox{ and }
J(h_{2}) \mbox{ are quasicircles}\} $.   

\end{itemize}
\end{df}
It is well-known that for an element $f\in {\cal P}$, 
$J(f)$ is connected if and only if $P^{\ast }(f)$ is bounded in $\CC .$ However, 
we have ${\cal B}\cap {\cal D}\neq \emptyset $ (e.g. $(z^{3}, 2z^{3})\in {\cal B}\cap {\cal D}$). 
Moreover, we have the following. 
\vspace{-0.5mm} 
\begin{lem}[Lemmas 5.4, 5.1 in \cite{SdpbpIII}]
\label{l:hbdopen}
The sets 
${\cal H},{\cal B}\cap {\cal H}, {\cal D}\cap {\cal B}\cap 
{\cal H}$ 
are non-empty and open in the product space 
${\cal P}^{2}={\cal P}\times {\cal P}$.   
\end{lem}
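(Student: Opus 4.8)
\medskip
\noindent\textbf{Proof proposal.}
Non-emptiness is witnessed by explicit examples. The semigroup $\langle z^{2},z^{2}+c\rangle $ with $c\neq 0$ and $|c|$ small lies in ${\cal B}\cap {\cal H}$: the disc $U=\{|z|<3/4\}$ satisfies $h_{j}(\overline{U})\subset U$ for both generators and contains both finite critical values, so every element of $\langle z^{2},z^{2}+c\rangle $ maps $U$ into a fixed compact subset of $U$; hence $P^{\ast }(z^{2},z^{2}+c)\subset U\subset F(z^{2},z^{2}+c)$ and, together with $\infty \in F(z^{2},z^{2}+c)$, one gets $P(z^{2},z^{2}+c)\subset F(z^{2},z^{2}+c)$. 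Likewise $\langle z^{3},2z^{3}\rangle \in {\cal D}\cap {\cal B}\cap {\cal H}$: every element has the form $az^{3^{n}}$, so $P^{\ast }(z^{3},2z^{3})=\{0\}\subset F(z^{3},2z^{3})$, $\infty \in F(z^{3},2z^{3})$, and $J(z^{3},2z^{3})$ is disconnected as noted before the lemma. For openness I would prove the three statements in the order ${\cal H}$, then ${\cal B}\cap {\cal H}$, then ${\cal D}\cap {\cal B}\cap {\cal H}$, each using the previous one. Everything rests on two structural facts about finitely generated rational semigroups, which I would quote from the author's earlier papers: (i) hyperbolicity is equivalent to expansion near the Julia set, i.e.\ the inverse branches of the generators are uniform contractions in a conformal metric on a neighbourhood of $J(G)$ disjoint from $P(G)$; and (ii) for a hyperbolic postcritically bounded polynomial semigroup, $P^{\ast }(G)$ admits arbitrarily small bounded forward-invariant neighbourhoods whose closures lie in $\CC \cap F(G)$.

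\medskip
\textit{Openness of ${\cal H}$.} Fix $(h_{1},\ldots ,h_{m})\in {\cal H}$, write $G=\langle h_{1},\ldots ,h_{m}\rangle $, so $P(G)$ and $J(G)$ are disjoint compacta. By (i) and a routine contraction argument one produces open sets $N_{0},N_{1}$ with $J(G)\subset N_{0}$, $\overline{N_{0}}\subset N_{1}$, $\overline{N_{1}}\cap P(G)=\emptyset $, $\infty \notin \overline{N_{1}}$ and $h_{j}^{-1}(\overline{N_{1}})\subset N_{0}$ for all $j$ (a thin enough neighbourhood of $J(G)$, pulled back by any generator, lands inside a slightly smaller one). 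Put $Y:=\CCI \setminus N_{0}$, a compact set; then $h_{j}(Y)\subset \CCI \setminus \overline{N_{1}}$ and $\mbox{CV}(h_{j})\subset \CCI \setminus \overline{N_{1}}$ for each $j$. Now take $(g_{1},\ldots ,g_{m})$ close to $(h_{1},\ldots ,h_{m})$; since $\deg $ is locally constant on ${\cal P}$, $g_{j}\to h_{j}$ uniformly in $\kappa $ together with its critical values (roots of $g_{j}'$ depend continuously on $g_{j}$). Compactness of $Y$ and of $h_{j}(Y)$ then give $g_{j}(Y)\subset \CCI \setminus N_{1}$ and $\mbox{CV}(g_{j})\subset \CCI \setminus N_{1}$ for all $j$ once $(g_{1},\ldots ,g_{m})$ is close enough. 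Hence $w(Y)\subset Y$ for every $w\in \langle g_{1},\ldots ,g_{m}\rangle $, and $w(\bigcup _{j}\mbox{CV}(g_{j}))\subset \CCI \setminus N_{1}$ for every such $w$ and for $w=Id$; as $\CCI \setminus N_{1}$ is closed this gives $P(\langle g_{1},\ldots ,g_{m}\rangle )\subset \CCI \setminus N_{1}$. On the other hand $Y$ is forward invariant under $\langle g_{1},\ldots ,g_{m}\rangle $ and omits the non-empty open set $N_{0}$, so on any small disc $V$ with $\overline{V}\subset \mbox{int}(Y)$ the iterates omit three fixed points of $N_{0}$; by Montel's theorem the family is normal, so $\mbox{int}(Y)\subset F(\langle g_{1},\ldots ,g_{m}\rangle )$ and $J(\langle g_{1},\ldots ,g_{m}\rangle )\subset \overline{N_{0}}\subset N_{1}$. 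Since $P$ lies in $\CCI \setminus N_{1}$ and $J$ in $N_{1}$, they are disjoint, so $\langle g_{1},\ldots ,g_{m}\rangle $ is hyperbolic. Thus ${\cal H}$ is open; moreover, choosing $N_{0}$ small shows $J(\langle g_{1},\ldots ,g_{m}\rangle )$ stays in any prescribed neighbourhood of $J(G)$, and combined with the persistence of repelling periodic points this gives Hausdorff continuity of the Julia set on ${\cal H}$.

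\medskip
\textit{Openness of ${\cal B}\cap {\cal H}$.} Fix $(h_{1},h_{2})\in {\cal B}\cap {\cal H}$ and choose, by (ii), a bounded open $U\subset \CC $ with $\overline{U}\subset \CC \cap F(h_{1},h_{2})$, $P^{\ast }(h_{1},h_{2})\subset U$ and $h_{j}(\overline{U})\subset U$ for $j=1,2$. This is where hyperbolicity is essential: without it $P^{\ast }(G)$ may meet $J(G)$ and no such $U$ exists, whereas with it $P^{\ast }(G)$ is the closure of the forward orbits of the finitely many critical values and these accumulate only on the attracting part of $F(G)$, which has arbitrarily small forward-invariant neighbourhoods, bounded because $P^{\ast }(G)$ is bounded. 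For $(g_{1},g_{2})$ close to $(h_{1},h_{2})$ we still have $g_{j}(\overline{U})\subset U$ and $\mbox{CV}^{\ast }(g_{j})\subset U$ ($j=1,2$), by compactness and continuity of critical values; induction on word length gives $w(\overline{U})\subset U$ for every $w\in \langle g_{1},g_{2}\rangle $, hence $P^{\ast }(\langle g_{1},g_{2}\rangle )\subset \overline{U}$ is bounded and $(g_{1},g_{2})\in {\cal B}$. Also $\{ w|_{U}\mid w\in \langle g_{1},g_{2}\rangle \} $ is uniformly bounded, hence normal, so $U\subset F(\langle g_{1},g_{2}\rangle )$; with $\infty \in F(\langle g_{1},g_{2}\rangle )$ (immediate for members of ${\cal B}$ via a trapping neighbourhood of $\infty $) this gives $P(\langle g_{1},g_{2}\rangle )\subset F(\langle g_{1},g_{2}\rangle )$, so $(g_{1},g_{2})\in {\cal H}$ as well, so ${\cal B}\cap {\cal H}$ is open.

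\medskip
\textit{Openness of ${\cal D}\cap {\cal B}\cap {\cal H}$ and the main obstacle.} Fix $(h_{1},h_{2})\in {\cal D}\cap {\cal B}\cap {\cal H}$; then $J(h_{1},h_{2})=A\sqcup B$ with $A,B$ non-empty compact and $\delta :=d(A,B)>0$. By the Hausdorff continuity of $J$ on ${\cal H}$, for $(g_{1},g_{2})$ near $(h_{1},h_{2})$ the set $J(\langle g_{1},g_{2}\rangle )$ lies within Hausdorff distance $\delta /3$ of $J(h_{1},h_{2})$, hence meets both the $(\delta /3)$-neighbourhood of $A$ and that of $B$, which are disjoint, so $J(\langle g_{1},g_{2}\rangle )$ is disconnected and $(g_{1},g_{2})\in {\cal D}$. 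Intersecting with the open set ${\cal B}\cap {\cal H}$ gives a neighbourhood of $(h_{1},h_{2})$ inside ${\cal D}\cap {\cal B}\cap {\cal H}$. The genuine difficulty throughout is concentrated in the two structural inputs: the equivalence of hyperbolicity with expansion for finitely generated rational semigroups (which produces the trapping neighbourhoods $N_{0}\subset N_{1}$ of $J(G)$ and the Hausdorff continuity of $J$ on ${\cal H}$), and the construction in the hyperbolic postcritically bounded case of a bounded forward-invariant neighbourhood of $P^{\ast }(G)$ with closure in $\CC \cap F(G)$; all the remaining steps are soft normal-family and compactness arguments.
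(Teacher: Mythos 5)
The paper does not give a proof of this lemma; it simply cites Lemmas 5.4 and 5.1 of \cite{SdpbpIII}, so there is no internal argument to compare against. Your sketch is, up to two points of detail, a correct reconstruction of the kind of argument that lemma must rest on. The non-emptiness examples work as claimed (for $(z^{2},z^{2}+c)$ one needs $|c|<3/16$ so that $h_{2}(\overline{U})\subset U$; for $(z^{3},2z^{3})$ every element is $az^{3^{n}}$, $P^{\ast }=\{0\}$, and $0,\infty $ are superattracting so the semigroup is hyperbolic). For openness you correctly isolate the three steps — a trap $N_{0}\subset N_{1}$ separating $J(G)$ from $P(G)$ and stable under $\kappa $-small perturbation of the generators and of their critical values; a bounded forward-invariant neighbourhood of $P^{\ast }(G)$; and Hausdorff continuity of $J$ on ${\cal H}$ combined with persistence of repelling cycles.

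Two places deserve a sentence more care. First, the inclusion $h_{j}^{-1}(\overline{N_{1}})\subset N_{0}$ with $\overline{N_{0}}\subset N_{1}$ is a genuine use of your fact (i), not mere continuity: one argues, e.g., that in the hyperbolic metric of $\Omega =\CCI \setminus P(G)$ the map $h_{j}:h_{j}^{-1}(\Omega )\to \Omega $ is a covering and the inclusion $h_{j}^{-1}(\Omega )\hookrightarrow \Omega $ is a (strict, once $\sharp P(G)\geq 3$) contraction, so lifting geodesics shows that the sets $\{d_{\Omega }(\cdot ,J(G))\leq r\}$ are pulled strictly inward; the degenerate case $\sharp P(G)=2$ (all generators of the form $cz^{d}$) must be treated separately with the cylinder metric, where each generator is an expanding self-covering by factor $\deg h_{j}\geq 2$. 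Second, in the ${\cal B}\cap {\cal H}$ step you derive $P^{\ast }(\langle g_{1},g_{2}\rangle )\subset \overline{U}$ and $U\subset F(\langle g_{1},g_{2}\rangle )$ and then conclude $P\subset F$; as written this needs $P^{\ast }\subset U$, not merely $\subset \overline{U}$. This is in fact true: $g_{j}(\overline{U})$ is a compact subset of $U$, so the full forward orbit of the critical values is relatively compact in $U$, and together with $\mbox{CV}^{\ast }(g_{j})\subset U$ this puts the closure $P^{\ast }(\langle g_{1},g_{2}\rangle )$ inside $U$ (alternatively, enlarge $U$ to $U'$ with $g_{j}(\overline{U'})\subset U$ and conclude $\overline{U}\subset F$). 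Finally, your structural fact (ii) is not really an extra input: once fact (i) yields a compact absorbing set $K\supset J(G)$ with $h_{j}^{-1}(K)\subset \mbox{int}(K)$ and $K\cap P(G)=\emptyset $, the open set $\CCI \setminus K$ is forward invariant, and the union of its bounded components that meet $P^{\ast }(G)$ is exactly the desired forward-invariant neighbourhood (bounded because $P^{\ast }(G)\cap F_{\infty }(G)=\emptyset $, which follows from boundedness of $P^{\ast }(G)$). Neither point is a gap in the idea, just a bit of bookkeeping to write out.
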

\vspace{-0.9mm} 
To present the first main result, we need some notations. 
For each $z\in \CCI $, we denote by $T\CCI _{z}$ the complex tangent space of
$\CCI $ at $z.$ 
For a holomorphic map $\varphi :V\rightarrow \CCI $ defined on 
a domain $V$, we denote by $D\varphi _{z}:T\CCI _{z}\rightarrow T\CCI _{g(z)}$ the  
differential map of $\varphi $ at $z.$  
We denote by $\| D\varphi _{z}\| _{s}$ the norm of the derivative 
of $\varphi $ at $z$ with respect to the spherical metric on $\CCI .$ 
 \vspace{-0.5mm} 
\begin{df}
Let $h=(h_{1},\ldots ,h_{m})\in {\cal P}^{m}$ be an element. 
Let $\Sigma _{m}:= \{ 1,\ldots ,m\} ^{\NN }$ endowed with 
product topology. This is a compact metric space. 
Let $\sigma :\Sigma _{m}\rightarrow \Sigma _{m}$ be the shift map,  
i.e., $\sigma (w_{1},w_{2},\ldots )=(w_{2},w_{3},\ldots ).$ 
We define the map $\tilde{h}:\Sigma _{m}\times \CCI \rightarrow \Sigma _{m}\times \CCI $ 
by $\tilde{h}(w, y)=(\sigma (w),h_{w_{1}}(y))$, 
where $w=(w_{1},w_{2},\ldots )\in \Sigma _{m}$ and $y\in \CCI .$ 
The map $\tilde{h}:\Sigma _{m}\times \CCI \rightarrow \Sigma _{m}\times \CCI $ 
is called the {\bf skew product map associated with $h=(h_{1},\ldots ,h_{m}).$} 
For each $w\in \Sigma _{m}$, 
we denote by $F_{w}(\tilde{h})$ the maximal open subset of 
$\CCI $ where the family $\{ h_{w_{n}}\circ \cdots \circ h_{w_{1}}\} _{n\in \NN }$ of polynomial maps is normal. 
We set $J_{w}(\tilde{h}):=\CCI \setminus F_{w}(\tilde{h}).$ 
We set $J(\tilde{h}):= \overline{\bigcup _{w\in \Sigma _{m}}\{ w\} \times J_{w}(\tilde{h})}$, 
where the closure is taken in the product space $\Sigma _{m}\times \CCI .$ 
We set $F(\tilde{h}):=(\Sigma _{m}\times \CCI )\setminus J(\tilde{h})$. 
Let $\pi : \Sigma _{m}\times \CCI \rightarrow \Sigma _{m}$ and 
$\pi _{\CCI }: \Sigma _{m}\times \CCI \rightarrow \CCI $ be the canonical projections. 
%
For each $n\in \NN $ and each $(w, y)\in \Sigma _{m}\times \CCI $, 
we set $D\tilde{h}^{n}_{w,y}:=D(h_{w_{n}}\circ \cdots \circ h_{w_{1}})_{y}.$ 
For all $w =(w _{1}, w _{2},\ldots )\in \Sigma _{m}$ and $n\in \NN $, we denote by 
$w _{n}$ the $n$-th coordinate of $w.$  
\end{df} 

\vspace{-6mm} 
\begin{df}
Let $h=(h_{1},h_{2})\in {\cal P}^{2}. $ 
For each $z\in \CCI $ and each $t\geq 0$, 
we set 
\vspace{-2.905mm} 
$$Z_{(h_{1},h_{2})}(z,t):= \sum _{n=1}^{\infty }\sum _{(\omega _{1},\ldots ,\omega _{n})\in \{ 1,2\} ^{n}}
\sum _{y\in (h_{\omega _{n}}\circ \cdots \circ h_{\omega _{1}})^{-1}(z)}
\| D(h_{\omega _{n}}\circ \cdots \circ h_{\omega _{1}})_{y}\| _{s}^{-t},$$  
counting multiplicities. Here, we set $0^{-t}:= \infty . $
We set $Z_{(h_{1},h_{2})}(z):= \inf \{ t\geq 0\mid Z_{(h_{1},h_{2})}(z,t)<\infty \} $, where we set 
$\inf \emptyset := \infty .$ 
For each $h=(h_{1},h_{2})\in {\cal H}$, 
we denote by $\delta _{(h_{1},h_{2})}$ the unique zero of the pressure function 
$P(t)=P(\tilde{h}|_{J(\tilde{h})}, -t\log \varphi ), t\geq 0$, where 
$P(\cdot ,\cdot )$ denotes the topological pressure, and $\varphi (\omega ,y):= -\log \| D(h_{\omega _{1}})_{y}\| _{s}$ 
(for the existence and uniqueness of the zero, see \cite{S6}).   
\end{df}
\vspace{-1mm} 
Note that $(h_{1},h_{2})\mapsto \delta _{(h_{1},h_{2})}$ is real-analytic and plurisubharmonic 
on ${\cal H}$ (see \cite{SU1}). 
If an element $h=(h_{1},h_{2})\in {\cal H}$ satisfies the {\bf open set condition}, i.e., 
there exists a non-empty open subset $U$ of $\CCI $ such that 
$\cup _{j=1}^{2}h_{j}^{-1}(U)\subset U$ and $\cap _{j=1}^{2}h_{j}^{-1}(U)=\emptyset $, 
then the dynamics of $\langle h_{1}, h_{2}\rangle $ has  many interesting properties, e.g., 
$\dim _{H}(J(h_{1},h_{2}))=\delta _{(h_{1},h_{2})}$ (this is called Bowen's formula) where $\dim _{H}$ denotes the Hausdorff dimension 
with respect to the Euclidean distance (see \cite{S6}). 
Thus it is  very interesting  to consider when $(h_{1},h_{2})\in {\cal H}$ satisfies the open set condition.

We are interested in the semigroup and random dynamical system generated by an element $(h_{1},h_{2})$ of a small neighborhood of 
$(\partial {\cal D})\cap {\cal B}\cap {\cal H}.$ 
Note that by Lemma~\ref{l:hbdopen}, we have 
$(\partial {\cal D})\cap {\cal B}\cap {\cal H}=(\partial {\cal C})\cap {\cal B}\cap {\cal H}\subset {\cal C}\cap {\cal B}\cap {\cal H}.$  
Under the above notations, we have the following result.  
\vspace{-0.1mm} 
\begin{thm}
\label{spacemainth} 
All of the following statements \ref{spacemainthosc}--\ref{spacemainth4-2} hold.  
\begin{enumerate}
\vspace{-1mm} 
\item \label{spacemainthosc}
Let $(h_{1},h_{2})\in 
(\overline{{\cal D}}\cap {\cal B}\cap {\cal H})\setminus {\cal Q}$.  
Then $(h_{1},h_{2})$ satisfies the open set condition.
\vspace{-1.0mm} 
\item \label{spacemainth1}
Let $(h_{1},h_{2})\in {\cal D}\cap {\cal B}$  and let  
$G=\langle h_{1},h_{2}\rangle $. Then,   
$h_{1}^{-1}(J(G))\cap h_{2}^{-1}(J(G))=\emptyset $, 
$J(G)=\amalg _{w \in \{ 1,2\} ^{\NN }}J_{w }(\tilde{h})$ 
{\em (}$\amalg $ denotes the disjoint union{\em )}, 
each $J_{w }(\tilde{h})$ is connected, the map $w \mapsto J_{w}(\tilde{h})$ is continuous on $\{ 1,2\} ^{\NN }$ 
with respect to the Hausdorff metric, and $(h_{1},h_{2}) $ satisfies the open set condition. 
\item \label{spacemainth3}
Let $(h_{1},h_{2})\in 
\overline{{\cal D}}\cap {\cal B}\cap {\cal H}$ and let $G=\langle h_{1},h_{2}\rangle .$     
Then, $J(G)$ is porous and  
$$\dim _{H}(J(G))\leq \overline{\dim} _{B}(J(G))
< 2.$$ \vspace{-0.5mm} 
Here, a compact subset $X$ of $\CCI $ is said to be porous if 
there exists a constant $0<k<1$ such that for each $x\in X$ and for each 
$0<r<\mbox{{\em diam}}\,\CCI$, there exists a ball in $\{ y\in \CCI \mid d(y,x)<r\}\setminus X$ with radius at least $kr.$ 
Moreover, 
$\overline{\dim }_{B}$ denotes the upper Box dimension with respect to the Euclidean distance.  
\item \label{spacemainth4}
Let $(h_{1},h_{2})\in 
(\overline{{\cal D}}\cap {\cal B}\cap {\cal H})\setminus {\cal Q}$. 
Let $d_{j}:=\deg (h_{j})$ for each $j.$ 
Then,   
for each $z\in 
\CCI \setminus P(h_{1},h_{2})$, 
\vspace{-1mm} 
$$1<\frac{\log (d_{1}+d_{2})}{\sum _{j=1}^{2}\frac{d_{j}}{d_{1}+d_{2}}\log d_{j}}\leq \dim _{H}(J(h_{1},h_{2}))= 
\overline{\dim }_{B}(J(h_{1},h_{2}))=\delta _{(h_{1},h_{2})}=
Z_{(h_{1},h_{2})}(z).$$ 
Moreover, in addition to the assumption, 
if $\frac{\log (d_{1}+d_{2})}{\sum _{j=1}^{2}\frac{d_{j}}{d_{1}+d_{2}}\log d_{j}}= \dim _{H}(J(h_{1},h_{2}))$, 
then $(h_{1},h_{2})\in {\cal D}\cap {\cal B}\cap {\cal H}$ and 
there exist a transformation $\varphi (z)=\alpha z+\beta $ with $\alpha ,\beta \in \CC, \alpha \neq 0$,  
two complex numbers $a_{1}, a_{2}$ and an integer $d\geq 3$  
such that  
$d=d_{1}=d_{2}$ and such that for each $j=1,2, \varphi \circ h_{j}\circ \varphi ^{-1}(z)=a_{j}z^{d}.$  
\item \label{spacemainth4-2}
Let $(h_{1},h_{2})\in 
(\overline{{\cal D}}\cap {\cal B}\cap {\cal H})\setminus {\cal Q}$. 
Then there exist an    
$\epsilon >0$ and a neighborhood $V$ of  $(h_{1},h_{2})$ in 
${\cal B}\cap {\cal H}$ such that   
for each $(g_{1},g_{2})\in V$ and for each 
$z\in \CCI \setminus P(g_{1},g_{2})$, 
$$\dim _{H}(J( g_{1},g_{2}))\leq 
\overline{\dim }_{B}(J(g_{1},g_{2}))\leq 
\delta _{(g_{1},g_{2})}=Z_{(g_{1},g_{2})}(z)\leq 2-\epsilon <2.$$   
\end{enumerate}   
\end{thm}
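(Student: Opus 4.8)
The plan is to bootstrap from the pointwise statements in parts \ref{spacemainth3} and \ref{spacemainth4} to a uniform statement on a neighborhood, using continuity/semicontinuity of the relevant quantities together with the openness of ${\cal B}\cap {\cal H}$ (Lemma~\ref{l:hbdopen}) and the real-analyticity of $h\mapsto \delta_h$ on ${\cal H}$. Fix $(h_1,h_2)\in(\overline{{\cal D}}\cap{\cal B}\cap{\cal H})\setminus{\cal Q}$. By part \ref{spacemainth4} the common value $\dim_H(J(h_1,h_2))=\overline{\dim}_B(J(h_1,h_2))=\delta_{(h_1,h_2)}=Z_{(h_1,h_2)}(z)$ is strictly less than $2$; call it $s_0<2$ and pick $\epsilon_0>0$ with $s_0\le 2-2\epsilon_0$. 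The first step is to establish the equality $\delta_{(g_1,g_2)}=Z_{(g_1,g_2)}(z)$ for all $(g_1,g_2)$ in a small neighborhood $V_1\subset{\cal B}\cap{\cal H}$ and all $z\in\CCI\setminus P(g_1,g_2)$: this should be a purely hyperbolic-semigroup fact (it does not need $\overline{{\cal D}}$ or the complement of ${\cal Q}$), following from the thermodynamic-formalism identification of $Z_{(g_1,g_2)}(z)$ with the zero of the pressure function for any $z$ off the postcritical set, as already used for the single parameter $h$ in the cited works \cite{S6, SU1}. The key point is that this identification is robust: hyperbolicity guarantees a well-defined expanding skew product, $P(G)\subset F(G)$ persists on a neighborhood, and the relevant conformal-measure/series comparison works uniformly.

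The second step is the continuity of $\delta$. Since $\delta$ is real-analytic (in particular continuous) on ${\cal H}$, there is a neighborhood $V_2$ of $(h_1,h_2)$ with $\delta_{(g_1,g_2)}<\delta_{(h_1,h_2)}+\epsilon_0\le 2-\epsilon_0$ for all $(g_1,g_2)\in V_2$. Here we take $\epsilon:=\epsilon_0$ and $V:=V_1\cap V_2\cap({\cal B}\cap{\cal H})$, which is a neighborhood of $(h_1,h_2)$ in ${\cal B}\cap{\cal H}$ by Lemma~\ref{l:hbdopen}. The third step handles the two remaining inequalities $\dim_H(J(g_1,g_2))\le\overline{\dim}_B(J(g_1,g_2))\le\delta_{(g_1,g_2)}$. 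The left inequality is the general fact that Hausdorff dimension is dominated by upper box dimension. For the right inequality one would run the standard covering estimate: cover $J(G)$ by preimages $(h_{\omega_n}\circ\cdots\circ h_{\omega_1})^{-1}(B)$ of a fixed ball $B$ off the postcritical set, using bounded distortion (from hyperbolicity) to control the diameters by $\|D(h_{\omega_n}\circ\cdots\circ h_{\omega_1})_y\|_s^{-1}$, so that for $t>\delta_{(g_1,g_2)}$ the $t$-dimensional sum of diameters is controlled by $Z_{(g_1,g_2)}(z)<\infty$; this forces $\overline{\dim}_B(J(G))\le t$, hence $\le\delta_{(g_1,g_2)}$. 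This is essentially the upper-bound half of Bowen's formula and does not require the open set condition (the OSC is only needed for the matching lower bound in part \ref{spacemainth4}).

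I expect the main obstacle to be making Step 1 genuinely uniform rather than merely pointwise-per-parameter: one needs the pressure zero $\delta_{(g_1,g_2)}$ and the critical exponent $Z_{(g_1,g_2)}(z)$ to coincide with constants (in the distortion bounds, in the choice of ball $B$ and its distance to $P(g_1,g_2)$) that can be taken locally uniform in $(g_1,g_2)$. This requires that $P(g_1,g_2)$ varies upper-semicontinuously and stays uniformly inside $F(G)$ on a neighborhood — which follows from hyperbolicity being open and the standard stability of hyperbolic semigroups — so that a single ball $B$ and single distortion constant work for all parameters in $V$. Once that uniformity is in hand, the covering argument of Step 3 and the continuity input of Step 2 are routine, and the chain of inequalities in the displayed formula closes. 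The strict inequality $2-\epsilon<2$ is immediate from $\epsilon>0$.
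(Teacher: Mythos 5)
Your proposal takes essentially the same route as the paper: the paper invokes \cite[Theorem~1.1]{S6} for the chain $\dim_H(J)\le\overline{\dim}_B(J)\le\delta_{(g_1,g_2)}=Z_{(g_1,g_2)}(z)$ holding for every $(g_1,g_2)\in{\cal B}\cap{\cal H}$ individually, plus \cite{SU1} for the continuity of $(g_1,g_2)\mapsto\delta_{(g_1,g_2)}$, and combines this with Lemmas~\ref{l:pfmainth3}, \ref{l:pfmainth4} at the center point to get $\delta_{(h_1,h_2)}<2$ and hence a uniform bound $\delta\le 2-\epsilon$ on a neighborhood. The one caveat is that your closing concern about making distortion constants and the ball $B$ uniform across $(g_1,g_2)$ is not actually required: the inequalities from \cite{S6} are per-semigroup statements, so no cross-parameter uniformity in the covering constants is needed, and the only quantity one must control across $V$ is $\delta$ itself, which continuity already does.
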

We remark that $(z^{2},\frac{1}{2}z^{2})\in {\cal B}\cap {\cal H}$ and $J(z^{2},\frac{1}{2}z^{2})=\{ z\mid 1\leq |z|\leq 2\} $,  
whose interior is not empty, and the author showed that there exists an open neighborhood $U$ of 
$(z^{2},\frac{1}{2}z^{2})$ in ${\cal B}\cap {\cal H}$ such that for a.e. 
$(h_{1},h_{2})\in U$ with respect to the Lebesgue measure on $({\cal P}_{2})^{2}$,  
$2$-dimensional Lebesgue measure of $J(h_{1},h_{2})$ is positive 
(\cite[Theorem 1.6]{SU4}). 
We also remark that for each $(d_{1},d_{2})\in \NN \times \NN $ with $d_{1}\geq 2$, 
$d_{2}\geq 2$, $(d_{1},d_{2})\neq (2,2)$, we have 
$(z^{d_{1}},z^{d_{2}})\in $ 
$(\partial C)\cap {\cal B}\cap {\cal H}\cap {\cal Q}$, 
since $(z^{d_{1}},(1+\frac{1}{n})z^{d_{2}})\in {\cal D}\cap {\cal B}\cap {\cal H}$.  

We now present results on the topology of connectedness locus in the parameter space. 
\begin{thm} 
\label{t:spacetopology}
All of the following statements \ref{spacemainth2}--\ref{spaceexample} hold. 
\begin{enumerate}
\item \label{spacemainth2}
$\overline{\mbox{{\em int}}({\cal C})}\cap {\cal B}\cap {\cal H}
={\cal C}\cap {\cal B}\cap {\cal H}$.  
Here, we denote by $\mbox{{\em int}}({\cal C})$ the set of interior points of 
${\cal C}$ with respect to the topology in ${\cal P}^{2}.$ 
In particular, 
any element in $(\partial {\cal D})\cap {\cal B}\cap {\cal H}$ can be approximated by 
a sequence in $(\mbox{{\em int}}({\cal C}))\cap {\cal B}\cap {\cal H}.$

\item \label{spacemainth5}
$ {\cal D}\cap {\cal Q}=\emptyset $ and ${\cal D}\cap {\cal B}\cap {\cal I}=\emptyset .$    
For each connected component ${\cal V}$ of ${\cal P}^{2}$, 
   ${\cal Q}\cap {\cal V}$ is included in a proper holomorphic subvariety of  
${\cal V}$.   
\item \label{spacemainth7}
$((\partial {\cal C})\cap {\cal B}\cap {\cal H})\setminus {\cal Q}$ 
is an open and dense subset of $(\partial {\cal C})\cap {\cal B}\cap {\cal H}$ which is endowed with the relative topology 
from ${\cal P}^{2}.$   
\item \label{spaceexample}
Suppose that $h_{1}\in {\cal P}$, $\langle h_{1}\rangle $ is postcritically bounded, and $h_{1}$ is hyperbolic. 
Moreover, 
let $d\in \NN ,d\geq 2$ with   
$(\deg (h_{1}),d)\neq (2,2)$. Then, 
there exists an element $h_{2}\in {\cal P}$ 
such that 
 $$(h_{1},h_{2})\in 
((\partial {\cal C})\cap {\cal B}\cap {\cal H})\setminus {\cal I}\subset 
((\partial {\cal C})\cap {\cal B}\cap {\cal H})\setminus {\cal Q} \mbox{ and } 
\deg (h_{2})=d.$$   
\end{enumerate} 
\end{thm}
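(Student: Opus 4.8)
The plan is to start from the hyperbolic, postcritically bounded single map $h_1$ and manufacture a companion $h_2$ of prescribed degree $d$ so that the pair lands on the boundary of the connectedness locus while staying inside ${\cal B}\cap{\cal H}$ and keeping $J(h_1)\cap J(h_2)=\emptyset$. First I would fix a large round disk $\overline{D}=\overline{D(0,R)}$ containing $P^{\ast}(h_1)$ and $J(h_1)$ in its interior, with $h_1^{-1}(\overline{D})\subset D$ (possible since $\langle h_1\rangle$ is postcritically bounded and hyperbolic, hence $J(h_1)$ is a quasicircle-free... more precisely $\hat K(h_1)$ is a compact subset of $D$ and $h_1^{-1}(D)\Subset D$). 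The idea is then to take $h_2=h_{2,c}$ in a one-parameter family, e.g. $h_{2,c}(z)=c(z-p)^{d}+p$ or more flexibly a family whose critical orbit we can push from ``captured'' to ``escaping,'' chosen so that: (i) for $|c|$ small, $h_{2,c}^{-1}(\overline{D})\Subset D$ as well, so $\langle h_1,h_{2,c}\rangle\in{\cal B}$; (ii) the two generators have disjoint Julia sets for the relevant parameters, so we stay out of ${\cal I}$ (hence out of ${\cal Q}$ by the inclusion ${\cal Q}\subset{\cal I}$, which is immediate, or by Theorem \ref{t:spacetopology}\ref{spacemainth5}); and (iii) as $c$ varies we can move the pair across $\partial{\cal D}=\partial{\cal C}$.

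The key mechanism for (iii) is the disconnectedness criterion for polynomial semigroups: $J(h_1,h_{2,c})$ is disconnected precisely when one can separate the backward images $h_1^{-1}(J(G))$ and $h_{2,c}^{-1}(J(G))$ by a Jordan curve in the basin of infinity — concretely, when there is a doubly-connected component of $F(G)$ separating them, which happens iff some iterate of a suitable critical point escapes. So I would arrange the family so that for one extreme value $c=c_0$ the critical values of $h_{2,c_0}$ lie in $\hat K(G)$ with $J(h_1)$ and $J(h_{2,c_0})$ mutually inside/outside nested topological disks — forcing $J(G)$ connected, i.e. $(h_1,h_{2,c_0})\in{\cal C}\cap{\cal B}\cap{\cal H}$ — and for another value $c_1$ the semigroup is in ${\cal D}\cap{\cal B}\cap{\cal H}$ (using $(z^{d_1},(1+\tfrac1n)z^{d_2})\in{\cal D}\cap{\cal B}\cap{\cal H}$ as the model to imitate, with the hypothesis $(\deg h_1,d)\neq(2,2)$ exactly matching the constraint in the remark following Theorem \ref{spacemainth}). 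By continuity of $c\mapsto(h_1,h_{2,c})$ and openness of both ${\cal D}\cap{\cal B}\cap{\cal H}$ (Lemma \ref{l:hbdopen}) and ${\cal B}\cap{\cal H}$, the connected subset $\{(h_1,h_{2,c})\}$ of ${\cal B}\cap{\cal H}$ meets both ${\cal C}$ and ${\cal D}$, hence meets $\partial{\cal D}=\partial{\cal C}$; pick $h_2=h_{2,c_\ast}$ at such a boundary parameter. Along the whole family I keep $J(h_1)\cap J(h_{2,c})=\emptyset$ by keeping $J(h_{2,c})$ trapped in a small disk disjoint from $J(h_1)$ (shrinking the allowed $c$-range if necessary), which gives $(h_1,h_2)\notin{\cal I}$; the inclusion $\bigl((\partial{\cal C})\cap{\cal B}\cap{\cal H}\bigr)\setminus{\cal I}\subset\bigl((\partial{\cal C})\cap{\cal B}\cap{\cal H}\bigr)\setminus{\cal Q}$ is then clear since ${\cal Q}\subset{\cal I}$.

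The main obstacle I anticipate is realizing a genuine transition from ${\cal C}$ to ${\cal D}$ within a family of prescribed degree $d$ while simultaneously preserving hyperbolicity, postcritical boundedness, and Julia-set disjointness — these four requirements pull in different directions, and the naive family $c(z-p)^d+p$ may fail to stay hyperbolic or postcritically bounded across the needed range. The fix is to choose $h_{2,c}$ more carefully: place its single (non-infinite) critical point at a point $p$ that is well inside the immediate basin of a superattracting fixed point when $c=c_0$ (guaranteeing hyperbolicity and $\hat K$-capture) and then deform, or better, use a two-stage argument — first produce \emph{some} $(h_1,h'_2)\in{\cal D}\cap{\cal B}\cap{\cal H}$ with $\deg h'_2=d$ and $J(h_1)\cap J(h'_2)=\emptyset$ (modeling on $(h_1,(1+\tfrac1n)\,\lambda^{1-d}(\lambda z)^d)$-type conjugates adapted so $J$ of the second map is a small circle far from $J(h_1)$), and a similar $(h_1,h''_2)\in{\cal C}\cap{\cal B}\cap{\cal H}$, then connect $h'_2$ to $h''_2$ by a path in the (connected, by the structure ${\cal P}_n\cong(\C\setminus\{0\})\times\C^n$) space ${\cal P}_d$ chosen to stay in ${\cal B}\cap{\cal H}$ and off ${\cal I}$, and invoke the intermediate-value argument above. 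Verifying that such a path exists — i.e. that ${\cal B}\cap{\cal H}$ restricted to $\{h_1\}\times{\cal P}_d$ is path-connected enough to join the two sample points while avoiding ${\cal I}$ — is the technical heart, and I would handle it using the quantitative trapping estimates (the disk $\overline D$ with $h_1^{-1}(\overline D)\Subset D$ and an analogous small disk for the $h_2$-factor) that make membership in ${\cal B}\cap{\cal H}$ an open, robust condition preserved under small controlled perturbations.
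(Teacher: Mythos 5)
Your overall strategy for statement~\ref{spaceexample} --- construct a one-parameter family $g_t(z)=c_t(z-b)^d+b$ of degree-$d$ polynomials centered at a point $b\in\mbox{int}(K(h_1))$, start from a parameter in $\mathcal{D}\cap\mathcal{B}\cap\mathcal{H}$ where $J(g_t)$ is a huge circle, and take the supremum parameter $t_1$ at which the nested-disk configuration degenerates --- is the same as the paper's (which uses $g_t(z)=te^{i\theta}(z-b)^d+b$). What is missing is the one device that makes the argument actually close, and your proposed fix for it does not work.

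The crux is to show that the boundary parameter $t_1$ is reached \emph{strictly before} the circle $J(g_{t_1})$ touches $J(h_1)$, so that $(h_1,g_{t_1})\notin\mathcal{I}$. You propose to guarantee this by ``shrinking the allowed $c$-range if necessary,'' but that is circular: if you shrink the range to keep $J(h_1)\cap J(g_t)=\emptyset$, you may simply never reach $\partial\mathcal{D}$ inside the shrunk range, and you have no a priori bound locating $\partial\mathcal{D}$. The paper resolves this with a specific rotation angle $\theta$: picking $z_0\in J(h_1)$ at maximal distance $s$ from $b$, one chooses $z_1$ on the circle $\{|z-b|=s\}$ with $z_1\notin J(h_1)$ (this exists because $J(h_1)$ cannot be a full round circle unless $h_1$ is affinely a pure power, a case excluded by choosing $b$ generically), and then $\theta$ is chosen so that $g_{1/s^{d-1}}(z_0)=z_1$. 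Now if $t_1$ were the critical value $1/s^{d-1}$, then $z_0\in J(g_{t_1})\cap J(h_1)$ and $g_{t_1}(z_0)=z_1\in J(g_{t_1})\cap\mbox{int}(K(h_1))$, contradicting $K(h_1)\subset K(g_{t_1})$. This forced strict inequality $t_1<1/s^{d-1}$ is precisely what yields $J(h_1)\cap J(g_{t_1})=\emptyset$, and without some such quantitative device the intermediate-value argument alone does not give you a boundary point outside $\mathcal{I}$. Your ``two-stage'' fallback (connect a $\mathcal{D}$-point to a $\mathcal{C}$-point by a path in $\mathcal{B}\cap\mathcal{H}$ avoiding $\mathcal{I}$) has the same gap: producing such an $\mathcal{I}$-avoiding path is exactly the thing to be proved, not a fact one can simply invoke.

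A second point you leave unaddressed is hyperbolicity of $\langle h_1, g_{t_1}\rangle$ at the boundary parameter. Even granting $J(h_1)\cap J(g_{t_1})=\emptyset$, one still has to verify $P(h_1,g_{t_1})\subset F(h_1,g_{t_1})$, which the paper does by a nontrivial argument (Claims 3 and 4 in its proof) showing $g_{t_1}^{-1}(J(h_1))\cap J(h_1)=\emptyset$ and then using the single-map hyperbolicity of $h_1$ plus the fact that $g_t$ has a unique finite critical point at $b\in\mbox{int}(K(h_1))$ to pull back small disks univalently and conclude $P^\ast(h_1,g_t)\subset\mbox{int}(K(h_1))$ for all $t\in[t_0,t_1]$. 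Your sketch asserts hyperbolicity is preserved ``by openness,'' but $\mathcal{H}$-openness only helps on the open parameter interval, not at the limit $t_1$; an argument at $t_1$ itself is required.
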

We next present the main results on random complex dynamical systems associated with 
elements $(h_{1},h_{2})\in {\cal B}.$ 
Let $\tau $ be a Borel probability measure on ${\cal P}$ with compact support. 
We consider the independent and identically distributed (i.i.d.) random dynamics on $\CCI $ such that 
at every step we choose a map $h\in {\cal P}$ according to $\tau .$ 
Thus this determines a time-discrete Markov process with time-homogeneous transition probabilities 
on the state space 
$\CCI $ such that for each $x\in \CCI $ and 
each Borel measurable subset $A$ of $\CCI $, 
the transition probability 
$p(x,A)$ from $x$ to $A$ is defined as $p(x,A)=\tau (\{ g\in {\cal P} \mid g(x)\in A\} ).$ 
For a metric space $X$, let ${\frak M}_{1}(X)$ be the space of all 
Borel probability measures on $X$ endowed with the topology 
induced by weak convergence (thus $\mu _{n}\rightarrow \mu $ in ${\frak M}_{1}(X)$ if and only if 
$\int \varphi d\mu _{n}\rightarrow \int \varphi d\mu $ for each bounded continuous function $\varphi :X\rightarrow \RR $). 
Note that if $X$ is a compact metric space, then ${\frak M}_{1}(X)$ is compact and metrizable. 
For each $\tau \in {\frak M}_{1}(X)$, we denote by supp$\, \tau $ the topological support of $\tau .$  
Let ${\frak M}_{1,c}(X)$ be the space of all Borel probability measures $\tau $ on $X$ such that supp$\,\tau $ is 
compact.     
It is very interesting and important to consider the following function of probability of tending to $\infty .$ 

\begin{df} For each   
$h=(h_{1},h_{2})\in {\cal P}^{2}$, each    
$z\in \CCI ,$ and each $ p\in (0,1)$, we use the following notations.   
We denote by 
$T(h_{1},h_{2},p,z)$ the probability of tending to $\infty \in \CCI $ 
regarding the random dynamics on $\CCI $ such that at every step we choose 
$h_{1}$ with probability $p$ and we choose $h_{2}$ with probability $1-p.$  
More precisely,  
setting $\tau _{h_{1},h_{2},p}:= p\delta _{h_{1}}+(1-p)\delta _{h_{2}}\in {\frak M}_{1}(\{ h_{1},h_{2}\} )$  
($\delta _{h_{i}}$ denotes the Dirac measure concentrated at $h_{i}$), 
we set 
$$T(h_{1},h_{2},p,z):= \tilde{\tau }_{h_{1},h_{2},p}(\{ \gamma \in \{ h_{1},h_{2}\} ^{\NN }\mid 
\gamma _{n}\circ \cdots \circ \gamma _{1}(z)\rightarrow 
\infty \mbox{ as } n\rightarrow \infty\} ),$$ 
where $\tilde{\tau }_{h_{1},h_{2},p}:= 
\otimes _{n=1}^{\infty }\tau _{h_{1},h_{2},p}\in {\frak M}_{1}(\{ h_{1},h_{2}\} ^{\NN }).$    
(Note:  
$z\mapsto T(h_{1},h_{2},p,z)$ is locally constant on 
$F(h_{1},h_{2})$. See \cite[Lemma 3.24]{Splms10}.)  
\end{df}

We now present results on the functions of probability of tending to $\infty .$ 
\begin{thm}
\label{t:spaceth1h2} 
Statements \ref{spacemainth8} and \ref{spacemainth9} hold. 
\begin{enumerate} 
\item \label{spacemainth8}
For each $(h_{1},h_{2})\in 
({\cal D}\cap {\cal B})\cup 
((\partial  {\cal C})\cap {\cal B}\cap {\cal H})$ and for each 
$0<p<1$,  \\  
$J(h_{1},h_{2})=
\{ z_{0}\in \CCI \mid \mbox{for each neighborhood }U\mbox{ of }z_{0}, \ 
  z\mapsto T(h_{1},h_{2},p,z) \mbox{ is not constant on }U \} $.    
\item \label{spacemainth9}
Let $(h_{1},h_{2})\in 
(\partial {\cal C})\cap {\cal B}\cap {\cal H}$ and let 
$0<p<1$. Then, the function  
$z\mapsto T(h_{1},h_{2},p,z)$ is continuous on  $\CCI $ if and only if 
$J(h_{1})\cap J(h_{2})=\emptyset $.    

\end{enumerate} 
\end{thm}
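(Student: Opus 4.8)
The plan is to work from the functional equation satisfied by $T:=T(h_1,h_2,p,\cdot)$, namely $T(z)=pT(h_1(z))+(1-p)T(h_2(z))$ for every $z\in\CCI$ (condition on the first chosen map), together with the boundary data $T(\infty)=1$, $0\le T\le 1$, $T\equiv 0$ on $\hat K(G)$ — a set which is infinite, indeed has nonempty interior, for $(h_1,h_2)\in{\cal B}$ (cf. \cite{SdpbpI,S1}) — and $T\equiv 1$ on a neighbourhood of $\infty$. Throughout write $G=\langle h_1,h_2\rangle$ and $\tau=\tau_{h_1,h_2,p}$. For \ref{spacemainth8}, one inclusion, that $T$ is locally constant on $F(h_1,h_2)$, is \cite[Lemma 3.24]{Splms10}; so what remains is: if $z_0\in J(G)$ then $T$ is not locally constant near $z_0$.

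For that we argue by contradiction: suppose $T\equiv c$ on a nonempty connected open $U$ with $U\cap J(G)\neq\emptyset$. The cases $c\in\{0,1\}$ are immediate: if $c=1$, then since $T\le 1$ and $0<p<1$ the functional equation forces $T\equiv 1$ on $h_1(U)\cup h_2(U)$, hence by induction on $\bigcup_{g\in G}g(U)$, which by the blowing-up property of Julia sets of polynomial semigroups (\cite{HM,S1}) is all of $\CCI$ minus a finite set contained in $F(G)$; this contradicts $T\equiv 0$ on the infinite set $\hat K(G)$, and $c=0$ is symmetric. For $0<c<1$ we split on the hypotheses. If $(h_1,h_2)\in{\cal Q}$ then (as ${\cal D}\cap{\cal Q}=\emptyset$, Theorem~\ref{spacemainth5}) we are in $(\partial{\cal C})\cap{\cal B}\cap{\cal H}$, $J(G)=J(h_1)=J(h_2)$ is a Jordan curve bounding two domains $D_{\mathrm{in}}$ and $D_{\mathrm{out}}\ni\infty$ each invariant under $h_1$ and $h_2$, and $T\equiv 1$ on $D_{\mathrm{out}}$, $T\equiv 0$ on $D_{\mathrm{in}}\subseteq\hat K(G)$, so $T$ jumps across $J(G)$. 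If $(h_1,h_2)\notin{\cal Q}$, the mechanism is propagation of local constancy through the ``forced'' branch: when $(h_1,h_2)\in{\cal D}\cap{\cal B}$, every point of $J(G)$ lies in exactly one of $h_1^{-1}(J(G))$, $h_2^{-1}(J(G))$ (disjointness, Theorem~\ref{spacemainth1}), and when $(h_1,h_2)\in(\partial{\cal C})\cap{\cal B}\cap{\cal H}\setminus{\cal Q}$ this intersection has empty interior in $J(G)$ (using the open set condition, Theorem~\ref{spacemainthosc}, and porosity, Theorem~\ref{spacemainth3}), so — the locus ${\cal L}$ where $T$ is locally constant being open — we may take $z_0\in{\cal L}\cap J(G)$ lying in exactly one of them, say $z_0\in h_1^{-1}(J(G))\setminus h_2^{-1}(J(G))$. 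Then $h_2(U)\subseteq F(G)$ after shrinking $U$, so $z\mapsto T(h_2(z))$ is locally constant on $U$, whence the functional equation makes $z\mapsto T(h_1(z))$ locally constant on $U$, i.e. $T$ is locally constant on the open set $h_1(U)$, which meets $J(G)$. Iterating this propagation forward and along backward branches shows that ${\cal L}$, once it meets $J(G)$, contains a set dense in $J(G)$ (the backward orbit of a non-exceptional point), and a final step promotes this to $T$ being globally constant on $\CCI$ — impossible. That last promotion is the main obstacle of \ref{spacemainth8}: it is where the structure of $J(G)$ (the disjoint decomposition $J(G)=\amalg_{w}J_w(\tilde h)$ in the ${\cal D}\cap{\cal B}$ case, and the consequences of \ref{spacemainthosc} in the boundary case) must be used to rule out the complement of ${\cal L}$ inside $J(G)$.

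For \ref{spacemainth9}, the ``if'' direction will follow from mean stability: when $J(h_1)\cap J(h_2)=\emptyset$ we are outside ${\cal Q}$, and together with hyperbolicity this places $\tau_{h_1,h_2,p}$ in the mean-stable regime, so by our earlier spectral-gap results for the transition operator on spaces of H\"older-continuous functions (\cite{Splms10,Scp}) the bounded $\tau$-harmonic function $T$ is H\"older continuous on $\CCI$, hence continuous. For the ``only if'' direction we prove the contrapositive: given $\zeta\in J(h_1)\cap J(h_2)$ we show $T$ is discontinuous at $\zeta$. Since $\zeta\in J(G)$, part \ref{spacemainth8} gives that $T$ is not locally constant at $\zeta$, but we need an actual jump. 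Using that $\zeta$ lies in the Julia set of each hyperbolic, postcritically bounded $h_i$ — so $J(h_i)$ is connected, completely $h_i$-invariant, and $K(h_i)^{\circ}\neq\emptyset$ — we will pull a fixed small test configuration back along inverse branches of $h_1$ and of $h_2$ accumulating at $\zeta$, producing sequences $z_k\to\zeta$, $z_k'\to\zeta$ for which the limiting escape-probabilities differ: approaching through the ``$h_1$-side'' keeps a whole $h_1$-orbit trapped in $K(h_1)$, which is incompatible with the value forced by approaching through the ``$h_2$-side'' (in the ${\cal Q}$ case this is just the observation that $T=\mathbf{1}_{D_{\mathrm{out}}}$ is discontinuous on $J(G)$). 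Constructing these two approach sequences is the second main obstacle; it is precisely the mechanism by which failure of the separation $J(h_1)\cap J(h_2)=\emptyset$ — equivalently, failure of mean stability — obstructs continuity of $T$.
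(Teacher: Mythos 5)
Your outline mirrors the paper's in broad strokes — $T$ locally constant on $F(G)$ by \cite[Lemma 3.24]{Splms10}, the cases $c\in\{0,1\}$ and $(h_1,h_2)\in{\cal Q}$ handled directly, the ${\cal D}\cap{\cal B}$ case offloaded to the disjoint decomposition — but the two places you flag as ``the main obstacles'' are genuine gaps, and they are precisely where the paper introduces machinery you do not have. For statement~\ref{spacemainth8} with $0<c<1$ and $(h_1,h_2)\notin{\cal Q}$: showing that the \emph{open} locus ${\cal L}$ of local constancy contains a dense subset of $J(G)$ (e.g.\ a backward orbit of a non-exceptional point) does \emph{not} give $J(G)\subset{\cal L}$ — an open set can contain a dense subset of $J(G)$ while missing points of $J(G)$, e.g.\ ${\cal L}=\CCI\setminus\{z_0\}$ with $z_0\in J(G)$ — so your ``final promotion'' step does not go through. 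The paper's fix is Lemma~\ref{l:oscncp}: it works with the \emph{closed} complement $A$, the set where $T$ is not locally constant. The open set condition shows that every $h_i$-preimage of a point of $A\cap U$ lies again in $A\cap U$ (the other branch $h_j$ sends a small neighborhood of the preimage into $\CCI\setminus\overline{U}\subset F(G)$, so the functional equation pushes non-constancy back through the $h_i$ branch), that such a point is non-exceptional since OSC forces $\#G^{-1}(z_0)=\infty$, and hence $J(G)=\overline{G^{-1}(z_0)}\subset A$ because $A$ is closed. It then remains only to exhibit \emph{one} point of $A\cap U$, which Lemma~\ref{l:pfmainth8} does by choosing $z_0\in h_1^{-1}(J(h_2))\setminus(h_2^{-1}(J(h_1))\cup J(h_1))$ (nonempty by Lemma~\ref{l:dh1-1log} and Proposition~\ref{p:jh1jh2jk}), checking $z_0\in U$, and using Lemma~\ref{l:intT1} to see that $T$ cannot be locally constant at $h_1(z_0)\in J(G)\cap\overline{F_\infty(G)}$.

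For statement~\ref{spacemainth9}, your ``if'' route through mean stability and the spectral gap is valid but heavier than needed: the paper passes directly from $J(h_1)\cap J(h_2)=\emptyset$ to $J_{\ker}(G)=\emptyset$ via Proposition~\ref{p:jh1jh2jk}(1) and then invokes \cite[Theorem 3.22]{Splms10}. The ``only if'' direction is where your sketch is genuinely incomplete: you plan to ``pull a test configuration back along inverse branches'' to manufacture two approach sequences with different limiting escape probabilities, but never construct them, and no such construction is necessary. Proposition~\ref{p:jh1jh2jk}(3), under the normalization $K(h_1)\subset K(h_2)$, identifies $\hat K(G)=K(h_1)$ and $F_\infty(G)=\CCI\setminus K(h_2)$, so any $\zeta\in J(h_1)\cap J(h_2)$ lies simultaneously in $\hat K(G)$ (where $T\equiv 0$) and in $\overline{F_\infty(G)}$ (a boundary point of the open set where $T\equiv 1$). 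That single structural identity already produces the jump of $T$ at $\zeta$; the analytic construction you envisage is replaced by a set-theoretic one, which is the content you are missing.
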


We next give the definition of mean stable systems, minimal sets and transition operator in order to present 
the results on the associated random dynamical systems and 
further results on the functions of probability of tending to $\infty .$  
\begin{df}
\label{d:as}
Let  $\Gamma $ be a non-empty compact subset of ${\cal P}$.  
Let $G$ be the polynomial semigroup generated by $\Gamma $, i.e., 
$G=\{ h_{1}\circ \cdots \circ h_{n}\in {\cal P} \mid n\in \NN , \forall h_{j}\in \Gamma \} .$  
We say that $\Gamma$ is {\bf mean stable} if 
there exist non-empty open subsets $U,V$ of $F(G)$ and a number $n\in \NN $ 
such that all of the following  (1)--(3) hold: 
{(1)} $\overline{V}\subset U$ and $\overline{U}\subset F(G)$;  
{(2)} For each $\gamma =(\gamma _{1},\gamma _{2},\ldots )\in \Gamma ^{\NN }$, 
$\gamma _{n}\circ \cdots \circ \gamma _{1}(\overline{U})\subset V$;  
{(3)} For each point $z\in \CCI $, there exists an element 
$g\in G$ such that  
$g(z)\in U.$ 

Furthermore, for a $\tau \in {\frak M}_{1,c}({\cal P})$, 
we say that $\tau $ is mean stable if $\mbox{supp}\,\tau$ is mean stable. 
\end{df}
The author showed that if $\tau \in {\frak M}_{1,c}({\cal P})$ is mean stable, then 
the associated random dynamical system has many interesting properties, e.g. the chaos 
of the averaged system disappears at every point of $\CCI $ due to the cooperation of many kinds of maps in the system, 
even though the iteration of each map of the system has a chaotic part 
(\cite{Splms10,Scp}). 
Also, the author showed that the set of mean stable compact subsets $\Gamma $ of ${\cal P}$ is 
open and dense in the space of all non-empty compact subsets of ${\cal P}$ with respect to the Hausdorff metric
(see \cite{Scp}). Those results are called the {\bf cooperation principle}. 
Note that for every $f\in {\cal P}$, $\{ f\}$ is not mean stable and 
$\langle f\rangle $ is chaotic in the Julia set $J(f)\neq \emptyset $. 
Thus the cooperation principle is a randomness-induced phenomenon 
which cannot hold in the deterministic iteration dynamics of an $f\in {\cal P}.$ 
Note that randomness-induced phenomena have been a central interest 
in the study of random dynamics. Many physicists have observed various kinds 
of randomness-induced phenomena (sometimes physicists call them ``noise-induced phenomena'') 
by numerical experiments (e.g., \cite{MT}).   
In this paper, we consider how large the set ${\cal MS}:=\{ (h_{1},h_{2})\in {\cal P}^{2}
\mid \{ h_{1},h_{2}\} \mbox{ is mean stable}\} $ (resp. ${\cal MS}\cap {\cal B}$) 
is in ${\cal P}^{2}$ (resp. ${\cal B}$).  
\begin{df}
For a metric space $X$, we denote by $\Cpt(X)$ the space of all 
non-empty compact subsets of $X$ endowed with the Hausdorff metric.  
 For a polynomial semigroup $G$, we say that a non-empty compact subset $L$ of $\CCI $ is a minimal set for $(G,\CCI )$ 
if $L$ is minimal in 
$\{ C\in \Cpt(\CCI ) \mid  \forall g\in G, g(C)\subset C\} $ 
with respect to inclusion.  
Moreover, we set $\Min (G,\CCI ):= \{ L \in \Cpt(\CCI )\mid L \mbox{ is a minimal set for } (G,\CCI )\} .$ 

\end{df}
\begin{df}
For a compact metric space $X$, we denote by $C(X)$ the Banach space of 
all complex-valued continuous functions on $X$ endowed with supremum norm $\| \cdot \| .$ 
Let $(h_{1},h_{2})\in {\cal P}^{2}$ and let $p\in (0,1)$. 
We denote by $M_{h_{1},h_{2},p}:C(\CCI )\rightarrow C(\CCI )$ 
the operator defined by 
$M_{h_{1},h_{2},p}(\varphi )(z)=p\varphi (h_{1}(z))+(1-p)\varphi (h_{2}(z))$ for each 
$\varphi \in C(\CCI ), z\in \CCI .$ 
This $M_{h_{1},h_{2},p}$ is called the transition operator with respect to 
the random dynamical system associated with $\tau _{h_{1},h_{2},p}=p\delta _{h_{1}}+(1-p)\delta _{h_{2}}.$  
We denote by 
$M_{h_{1},h_{2},p}^{\ast }:{\frak M}_{1}(\CCI )\rightarrow {\frak M}_{1}(\CCI )$ the 
dual map of $M_{h_{1},h_{2},p}$, i.e., 
$\int \varphi (z) d(M_{h_{1},h_{2},p}^{\ast }(\mu ))(z) =\int M_{h_{1},h_{2},p}(\varphi )(z) d\mu (z)$ 
for each $\varphi \in C(\CCI )$ and $\mu \in {\frak M}_{1}(\CCI ).$ 
\end{df}
\begin{df}
For each $\alpha \in (0,1)$, 
we set $C^{\alpha }(\CCI ):=\{ \varphi \in C(\CCI )\mid  
\sup _{x,y\in \CCI ,x\neq y}\frac{|\varphi (x)-\varphi (y)|}{d(x,y)^{\alpha }}<\infty \} .$ 
Moreover, for each $\varphi \in C^{\alpha }(\CCI )$, 
we set $\| \varphi \| _{\alpha }:= \sup _{z\in \CCI }|\varphi (z)|
+\sup _{x,y\in \CCI ,x\neq y}\frac{|\varphi (x)-\varphi (y)|}{d(x,y)^{\alpha }}.$ 
Note that $C^{\alpha }(\CCI )$ is a Banach space with this norm $\| \cdot \| _{\alpha }.$ 
\end{df}
\begin{df}
\label{d:finftyg}
For a polynomial semigroup $G$ with $\infty \in F(G)$, we denote by $F_{\infty }(G)$ the 
connected component of $F(G)$ containing $\infty .$ Also, for an element $g\in {\cal P}$, 
we set $F_{\infty }(g):= F_{\infty }(\langle g\rangle ).$ 
\end{df}
\begin{rem}
It is easy to see that if $G$ is generated by a compact subset of ${\cal P}$, then 
$\infty \in F(G).$ 
\end{rem}

We now present the results on the random dynamical systems generated by elements 
in ${\cal B}$ and 
further results on the functions of probability of tending to $\infty .$  
\begin{thm}
\label{t:timportant}
Statements \ref{spacemainth10} and \ref{spacemainth11} hold. 
\begin{enumerate} 
\item \label{spacemainth10}
Let $(h_{1},h_{2})\in ({\cal D}\cap {\cal B}\cap {\cal H})\cup 
(((\partial {\cal C})\cap {\cal B}\cap {\cal H})
\setminus {\cal I})$. Let $p\in (0,1).$ 
Then, there exist an open neighborhood $V$ of 
$(h_{1},h_{2})$ in ${\cal B}\cap {\cal H}$,  an open neighborhood $W$ of $p$ in $(0,1)$ 
and a constant $\alpha \in (0,1)$  
such that   
all of the following {\em (i)}--{\em (viii)} hold. 
\begin{itemize}
\item[{\em (i)}]
For each $(g_{1},g_{2},q)\in V\times (0,1)$,  
$\{ g_{1},g_{2}\} $ is mean stable and $\tau _{g_{1},g_{2},q}$ is mean stable. 
\item[{\em (ii)}]
For each $(g_{1},g_{2},q)\in V\times W$, 
$z\mapsto T(g_{1},g_{2},q,z)$ is $\alpha $-H\"{o}lder continuous on $\CCI $.    
\item[{\em (iii)}] 
Let $(g_{1},g_{2})\in V$.  
Then there exists a unique minimal set $L_{g_{1},g_{2}}$ for $(\langle g_{1},g_{2}\rangle , \hat{K}(\langle g_{1},g_{2}\rangle ))$ and 
the set of minimal sets for $(\langle g_{1},g_{2}\rangle ,\CCI )$ is $\{ \{ \infty \}, L_{g_{1},g_{2}}\} .$  
Moreover, $\{ \infty \} \cup L_{g_{1},g_{2}}\subset F(g_{1},g_{2})$ and 
$L_{g_{1},g_{2}}\subset \mbox{int}(\hat{K}(g_{1},g_{2})).$ 
\item[{\em (iv)}] 
For each $(g_{1},g_{2},q,z)\in V\times W\times \CCI $ 
there exists a Borel subset ${\cal B}_{g_{1},g_{2},q,z}$ of $\{ g_{1},g_{2}\} ^{\NN }$ 
with $\tilde{\tau }_{g_{1},g_{2},p}({\cal B}_{g_{1},g_{2},q,z})=1$ such that for each 
$\gamma =(\gamma _{1},\gamma _{2},\ldots )\in {\cal B}_{g_{1},g_{2},q,z}$, we have 
$d(\gamma _{n}\cdots \gamma _{1}(z), \{ \infty \} \cup L_{g_{1},g_{2}})\rightarrow 0$ as $n\rightarrow \infty .$  
\item[{\em (v)}] 
For each $(g_{1},g_{2},q)\in V\times W$, 
there exists a unique $M_{g_{1},g_{2},q}^{\ast }$-invariant Borel probability measure $\nu =\nu _{g_{1},g_{2},q}$ on 
$\hat{K}(g_{1},g_{2})$ 
such that for each $\varphi \in C(\CCI )$, 
$$M_{g_{1},g_{2},q}^{n}(\varphi )(z)\rightarrow T(g_{1},g_{2},q,z)\cdot \varphi (\infty ) +(1-T(g_{1},g_{2},q,z))\cdot \int \varphi d\nu$$  
uniformly on $\CCI $ as $n\rightarrow \infty .$  
\item[{\em (vi)}] 
The map $(g_{1},g_{2},q)\in V\times W\mapsto T(g_{1},g_{2},q,\cdot )\in C(\CCI )$ is continuous on $V\times W.$ 
The map $(g_{1},g_{2},q)\in V\times W\mapsto \nu _{g_{1},g_{2},q}\in {\frak M}_{1}(\CCI )$ is continuous on $V\times W.$ 
\item[{\em (vii)}] 
For each $(g_{1},g_{2})\in V$, there exist an open neighborhood $W_{g_{1},g_{2}}$ of $p$ in $W$ and a constant $\beta \in (0,1)$ 
such that for each $q\in W_{g_{1},g_{2}}$, we have $T(g_{1},g_{2},q,\cdot )\in C^{\beta }(\CCI )$ and 
the map $q\mapsto T(g_{1},g_{2},q,\cdot )\in (C^{\beta }(\CCI ), \| \cdot \| _{\beta })$ is real-analytic on $W_{g_{1},g_{2}}.$  
\item[{\em (viii)}] 
Let $(g_{1},g_{2})\in V.$ Let $G=\langle g_{1},g_{2}\rangle.$ Then  
for each $z\in \CCI $, the function  
$q\mapsto T(g_{1},g_{2},q,z) $ is real-analytic on   
$(0,1)$. Moreover,   
for each $n\in \NN \cup \{ 0\} $, the function  
$(q,z)\mapsto (\partial ^{n}T/\partial q^{n})
(g_{1},g_{2},q,z)$ is continuous on $(0,1)\times \CCI $.   
Moreover, for each $q\in (0,1)$, 
the function $z\mapsto T(g_{1},g_{2},q,z)$ is characterized by the unique element $\varphi \in C(\CCI )$
such that 
$M_{g_{1},g_{2},q}(\varphi )=\varphi ,\varphi |_{\hat{K}(G)}\equiv 0,  $ 
$ \varphi | _{ F_{\infty }(G)}\equiv 1$. 
Furthermore, inductively, 
for any $n\in \NN \cup \{ 0\} $ and for any $q\in (0,1),$  
the function $z\mapsto (\partial ^{n+1}T/\partial q^{n+1})(g_{1},g_{2},q,z)$
is characterized by the unique element $\varphi \in C(\CCI )$ such that  
\begin{eqnarray*}\varphi (z)& \equiv &M_{g_{1},g_{2},q}(\varphi )(z)+
(n+1)\left(\frac{\partial ^{n}T}{\partial q^{n}}(g_{1},g_{2},q,g_{1}(z))-
\frac{\partial ^{n}T}{\partial q^{n}}(g_{1},g_{2},q,g_{2}(z))\right),\\   
& & \varphi |_{\hat{K}(G)\cup F_{\infty }(G)} \equiv 0.
\end{eqnarray*}   
Moreover, for any $n\in \NN \cup \{ 0\} $, the function $z\mapsto (\partial ^{n}T/\partial q^{n})(g_{1},g_{2},q,z)$ is H\"{o}lder continuous on $\CCI $ and locally constant on $F(G).$  
Moreover, for each $q\in (0,1), $ there exists a $\zeta \in (0,1)$ such that 
setting $\psi _{n,g_{1},g_{2},q}(z)=
(\partial ^{n}T/\partial q^{n})(g_{1},g_{2},q,z)$ for each $z\in \CCI $ and $n\in \NN \cup \{ 0\}$, we have that 
$\psi _{n+1,g_{1},g_{2},q}=\sum _{j=0}^{\infty }M_{g_{1},g_{2},q}^{j}
((n+1)(\psi _{n,g_{1},g_{2},q}\circ g_{1}-\psi _{n,g_{1},g_{2},q}\circ 
g_{2}))$ in $(C^{\zeta }(\CCI ),\| \cdot \| _{\zeta })$ for each $n\in \NN \cup \{ 0\} .$   
\end{itemize}

\item \label{spacemainth11}
Let $(h_{1},h_{2})\in ({\cal D}\cap {\cal B})$ and let 
$G=\langle h_{1},h_{2}\rangle $. Then,   
for each $p\in (0,1)$, the function $T(h_{1},h_{2},p,\cdot )$ is H\"{o}lder continuous on $\CCI $ and 
for each $z\in \CCI $, the function  
$p\mapsto T(h_{1},h_{2},p,z) $ is real-analytic on   
$(0,1)$. Moreover,   
for each $n\in \NN \cup \{ 0\} $, the function  
$(p,z)\mapsto (\partial ^{n}T/\partial p^{n})
(h_{1},h_{2},p,z)$ is continuous on $(0,1)\times \CCI $.   
For each $p\in (0,1)$, the function $z\mapsto T(h_{1},h_{2},p,z)$ is characterized by the unique element $\varphi \in C(\CCI )$
such that 
$M_{h_{1},h_{2},p}(\varphi )=\varphi ,\varphi |_{\hat{K}(G)}\equiv 0,  
\varphi |_{F_{\infty }(G)}\equiv 1$. 
Furthermore, inductively, 
for any $n\in \NN \cup \{ 0\} $, 
the function $z\mapsto (\partial ^{n+1}T/\partial p^{n+1})(h_{1},h_{2},p,z)$
is characterized by the unique element $\varphi \in C(\CCI )$ such that  
\\ 
$\varphi (z)\equiv M_{h_{1},h_{2},p}(\varphi )(z)+
(n+1)\left((\partial ^{n}T/\partial p^{n})(h_{1},h_{2},p,h_{1}(z))-
(\partial ^{n}T/\partial p^{n})(h_{1},h_{2},p,h_{2}(z))\right),\\   
\varphi |_{\hat{K}(G)\cup F_{\infty }(G)}\equiv 0$.  

Moreover, setting $\psi _{n,h_{1},h_{2},p}(z)=(\partial ^{n}T/\partial p^{n})(h_{1},h_{2},p,z)$ for each 
$p\in (0,1), z\in \CCI, n\in \NN \cup \{ 0\} $, we have that 
$\psi _{n+1,h_{1},h_{2},p}=\sum _{j=0}^{\infty }M_{h_{1},h_{2},p}^{j}
((n+1)(\psi _{n,h_{1},h_{2},p}\circ h_{1}-\psi _{n,h_{1},h_{2},p}\circ h_{2}))$ 
in $(C(\CCI ), \| \cdot \| _{\infty })$ for each $n\in \NN \cup \{ 0\} .$   
\end{enumerate} 
\end{thm}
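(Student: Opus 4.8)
The plan is to deduce statement~\ref{spacemainth10} from the mean stability of $\{h_{1},h_{2}\}$ together with the theory of mean stable random holomorphic dynamical systems of \cite{Splms10,Scp}, using the quantitative estimate $\delta _{(g_{1},g_{2})}\leq 2-\epsilon $ of Theorem~\ref{spacemainth}\,(\ref{spacemainth4-2}) to control regularity uniformly in parameters, and to deduce statement~\ref{spacemainth11} from the backward-self-similar decomposition $J(G)=\amalg _{w}J_{w}(\tilde{h})$ of Theorem~\ref{spacemainth}\,(\ref{spacemainth1}), since there hyperbolicity (and hence mean stability) is not available. The first step is to check that, for $(h_{1},h_{2})$ as in~\ref{spacemainth10}, $J(h_{1})\cap J(h_{2})=\emptyset $: if $(h_{1},h_{2})\in {\cal D}\cap {\cal B}\cap {\cal H}$ this is Theorem~\ref{t:spacetopology}\,(\ref{spacemainth5}) ($ {\cal D}\cap {\cal B}\cap {\cal I}=\emptyset $), and otherwise it is the hypothesis $(h_{1},h_{2})\notin {\cal I}$. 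Since $\langle h_{1},h_{2}\rangle $ is hyperbolic and postcritically bounded, each $h_{j}$ is hyperbolic with connected Julia set, so for $z\in F(h_{j})$ the orbit $h_{j}^{n}(z)$ tends to $\infty \in F(G)$ or to an attracting cycle of $h_{j}$, which lies in $P(h_{j})\subset F(G)$; either way $h_{j}^{n}(z)\in F(G)$ for large $n$, and consequently the kernel Julia set $J_{\ker }(G):=\bigcap _{g\in G}g^{-1}(J(G))$ is contained in $J(h_{1})\cap J(h_{2})=\emptyset $. (Note also that the hypothesis set lies in $(\overline{{\cal D}}\cap {\cal B}\cap {\cal H})\setminus {\cal Q}$ by Theorem~\ref{t:spacetopology}\,(\ref{spacemainth5}), so Theorem~\ref{spacemainth}\,(\ref{spacemainth4-2}) is applicable.)

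With $J_{\ker }(G)=\emptyset $, $P^{\ast }(G)$ bounded and $G$ hyperbolic, the criterion of \cite{SdpbpIII,Scp} shows that $\{h_{1},h_{2}\}$ is mean stable, with $U$ a small neighbourhood of $\{\infty \}$ together with the bounded attractor of $G$ in $F(G)\cap \hat{K}(G)$: the contraction condition comes from hyperbolicity, and the condition that every point of $\CCI $ reaches $U$ under $G$ comes from $J_{\ker }(G)=\emptyset $. Since mean stability is open in $\Cpt({\cal P})$ (\cite{Scp}) and $\text{supp}\,\tau _{g_{1},g_{2},q}=\{g_{1},g_{2}\}$, we obtain an open $V\subset {\cal B}\cap {\cal H}$ on which (i) holds, and after intersecting with the neighbourhood from Theorem~\ref{spacemainth}\,(\ref{spacemainth4-2}) we may assume $\delta _{(g_{1},g_{2})}=Z_{(g_{1},g_{2})}(z)\leq 2-\epsilon $ on $V$. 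On $V$ the theory of \cite{Splms10,Scp} then yields (iii) (the minimal sets are exactly $\{\infty \}$ and $L_{g_{1},g_{2}}\subset \text{int}(\hat{K}(g_{1},g_{2}))$), (iv) (a.s.\ convergence of random orbits to $\{\infty \}\cup L_{g_{1},g_{2}}$), (v) (the unique $M_{g_{1},g_{2},q}^{\ast }$-invariant measure $\nu _{g_{1},g_{2},q}$ on $\hat{K}(g_{1},g_{2})$ and the limit $M_{g_{1},g_{2},q}^{n}(\varphi )\to T\varphi (\infty )+(1-T)\int \varphi \, d\nu _{g_{1},g_{2},q}$), and (vi) (continuous dependence on parameters, equivalently because $T(g_{1},g_{2},q,\cdot )$ is the unique fixed point of the uniformly contracting, continuously varying operator $M_{g_{1},g_{2},q}$ on the affine set $\{\varphi \in C(\CCI ):\varphi |_{\hat{K}(G)}\equiv 0,\ \varphi |_{F_{\infty }(G)}\equiv 1\}$). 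For (ii), the Hölder continuity theorem of \cite{Splms10} applies because the relevant dimension equals $\delta _{(g_{1},g_{2})}\leq 2-\epsilon $ throughout $V$, so one exponent $\alpha $ works on $V\times W$ after shrinking $W$ around $p$.

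The analyticity statements rest on the spectral gap of the transition operator: on ${\cal C}_{0}:=\{\varphi \in C^{\beta }(\CCI ):\varphi |_{\hat{K}(G)\cup F_{\infty }(G)}\equiv 0\}$, $M_{g_{1},g_{2},q}$ has spectral radius $<1$, uniformly for $q$ near $p$ and $\beta $ small (the contraction estimate underlying mean stability). As $q\mapsto M_{g_{1},g_{2},q}$ is affine, the fixed point $T(g_{1},g_{2},q,\cdot )$ is real-analytic into $(C^{\beta }(\CCI ),\|\cdot \|_{\beta })$ by the analytic implicit function theorem, giving (vii). Mean stability of $\{g_{1},g_{2}\}$ does not involve $q$, so the picture persists for every $q\in (0,1)$ with a $q$-dependent exponent $\zeta $, whence $q\mapsto T(g_{1},g_{2},q,z)$ is real-analytic on $(0,1)$; the characterization of $T$ follows from (v) applied to the difference of two candidate fixed points, and differentiating $\varphi =M_{g_{1},g_{2},q}(\varphi )$ in $q$ gives $(\text{Id}-M_{g_{1},g_{2},q})\psi _{n+1}=(n+1)(\psi _{n}\circ g_{1}-\psi _{n}\circ g_{2})$ on ${\cal C}_{0}$ (the right side vanishes on $\hat{K}(G)\cup F_{\infty }(G)$ as each $g_{j}$ preserves both sets), which inverts to the stated series via $\sum _{j\geq 0}M_{g_{1},g_{2},q}^{j}$, convergent in $C^{\zeta }(\CCI )$; continuity of $(q,z)\mapsto \partial _{q}^{n}T$, Hölder continuity, and local constancy on $F(G)$ follow from the series and the stability of these properties under $M_{g_{1},g_{2},q}$ and $C^{\zeta }$-limits, completing~\ref{spacemainth10}. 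For~\ref{spacemainth11}, where $\langle h_{1},h_{2}\rangle $ need not be hyperbolic, I would instead invoke Theorem~\ref{spacemainth}\,(\ref{spacemainth1}): the separation $h_{1}^{-1}(J(G))\cap h_{2}^{-1}(J(G))=\emptyset $ forces, by \cite{SdpbpIII}, both the Hölder continuity of $T(h_{1},h_{2},p,\cdot )$ on $\CCI $ and a sup-norm spectral gap of $M_{h_{1},h_{2},p}$ on $\{\varphi \in C(\CCI ):\varphi |_{\hat{K}(G)\cup F_{\infty }(G)}\equiv 0\}$ (the random orbit is attracted to $\hat{K}(G)\cup F_{\infty }(G)$ at a uniform geometric rate), and the remaining assertions follow exactly as above with $C^{\zeta }(\CCI )$ replaced throughout by $(C(\CCI ),\|\cdot \|_{\infty })$.

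I expect the main difficulty to be the \emph{uniform} spectral gap and its compatibility with the Hölder classes: for~\ref{spacemainth10} one must ensure the contraction rate and admissible exponents $\alpha ,\beta ,\zeta $ can be chosen uniformly over $V\times W$, which is exactly where the uniform bound $\delta _{(g_{1},g_{2})}\leq 2-\epsilon $ of Theorem~\ref{spacemainth}\,(\ref{spacemainth4-2}) is essential; for~\ref{spacemainth11} the difficulty is to obtain the sup-norm gap without hyperbolicity, which rests on the quantitative separation and the continuity of the fibres $J_{w}(\tilde{h})$ from \cite{SdpbpIII}. Once the gap is in hand, statements (vii), (viii) and the analyticity part of~\ref{spacemainth11} are routine Banach-space perturbation theory applied to the fixed-point equation for $T$.
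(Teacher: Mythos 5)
Your overall route — mean stability from empty kernel Julia set for~\ref{spacemainth10}, sup-norm spectral gap from the open set condition for~\ref{spacemainth11}, and then perturbation theory / series inversion for the derivative statements — matches the paper's plan in broad strokes, and your shortcut derivation of $J_{\ker}(G)\subset J(h_{1})\cap J(h_{2})$ via the basins of each hyperbolic $h_{j}$ is a pleasant alternative to the paper's use of Proposition~\ref{p:jh1jh2jk}. There are, however, two concrete problems.

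First, the role you assign to Theorem~\ref{spacemainth}\,(\ref{spacemainth4-2}) is a misdiagnosis. You say the uniform bound $\delta_{(g_{1},g_{2})}\leq 2-\epsilon$ is ``exactly where'' the uniform Hölder exponent $\alpha$ and the uniform spectral gap come from, and you flag this as the expected crux of the argument. In fact the paper's proof of Theorem~\ref{t:timportant} never uses $\delta$, $\dim_{H}$, nor Theorem~\ref{spacemainth}\,(\ref{spacemainth4-2}) at all: the uniform $\alpha$ on $V\times W$ is taken directly from \cite[Remark~5.11]{Scp}, and the spectral gap of $M_{g_{1},g_{2},q}$ on the appropriate Hölder subspace is a consequence of mean stability alone, which is an open condition on $(g_{1},g_{2})\in\text{Cpt}({\cal P})$. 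The Bowen parameter bound is a separate geometric statement about $J(G)$, not about the transition operator, and it plays no role in making the Hölder exponent uniform. If you tried to fill in details along the path you propose, you would look for a connection that is not there.

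Second, item~(iii) (uniqueness of the bounded minimal set) is not a generic output of the \cite{Splms10,Scp} theory of mean stable systems. The paper proves it by invoking Proposition~\ref{p:bdyosc} to get that $J(g_{2})$ is a quasicircle with an attracting fixed point $c'\in\operatorname{int}(K(g_{2}))$, then argues that any minimal set $L\subset F(g_{1},g_{2})\cap\operatorname{int}(K(g_{2}))$ must contain $c'$, which forces uniqueness. Without this structural input (quasicircle, attracting fixed point stable under the perturbation to $V$), you only know \emph{existence} of a bounded minimal set from Zorn's lemma, not \emph{uniqueness}. Similarly, your claim that for~\ref{spacemainth11} ``the remaining assertions follow exactly as above'' with $C^{\zeta}$ replaced by $C(\CCI)$ is too fast: in that non-hyperbolic setting the paper does not invoke the $\cite{Scp}$ perturbation theory at all, but instead proves the analyticity in $p$ by extending $M_{p}$ holomorphically to a complex polydisk in $p$ and using the combinatorial sup-norm estimate $\|M_{p}^{k}\psi\|_{\infty}\leq(\max\{p,1-p\})^{k}\|\psi\|_{\infty}$ from the open set condition (this is the content of Theorem~\ref{t:oscanal}). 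Your sup-norm-gap instinct is correct, but you should route through that explicit estimate rather than appeal to a spectral-gap perturbation theorem designed for mean stable systems on Hölder spaces.
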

In order to present results on the pointwise H\"{o}lder exponents of 
the functions of probability of tending to $\infty $, we need the following definitions. 

\begin{df}
\label{d:maxrelent}
Let $h=(h_{1},h_{2})\in {\cal P}^{2}$. 
Let $p\in (0,1).$ We set $p_{1}=p, p_{2}=1-p.$  
Let $\eta _{p} =\otimes _{n=1}^{\infty }(\sum _{j=1}^{2}p_{j}\delta _{j})\in {\frak M}_{1}(\Sigma _{2})$ 
be the Bernoulli measure on $\Sigma _{2}$ with respect to the weight $(p_{1},p_{2}).$ 
We denote by $\pi :\Sigma _{2}\times \CCI \rightarrow \Sigma _{2}$ the canonical projection onto $\Sigma _{2}.$  
Also, we denote by 
$\pi _{\CCI }: \Sigma _{2}\times \CCI \rightarrow \CCI $ the canonical projection onto $\CCI .$  
It is known that there exists a unique $\tilde{h}$-invariant ergodic Borel probability measure 
$\tilde{\lambda }_{h_{1},h_{2},p} $ on $\Sigma _{2}\times \CCI $ 
such that $\pi _{\ast }(\tilde{\lambda }_{h_{1},h_{2},p})=\eta _{p}$ and   
$h_{\tilde{\lambda }_{h_{1},h_{2},p}}(\tilde{h}|\sigma )=\max _{\rho \in {\frak E}_{1}(\Sigma _{m}\times \CCI ): 
\tilde{h}_{\ast }(\rho )=\rho, \pi _{\ast }(\rho )=\tilde{\tau} }  
h_{\rho }(\tilde{h}|\sigma )=\sum _{j=1}^{2}p_{j}\log (\deg (h_{j}))$, 
where $h_{\rho }(\tilde{h}|\sigma )$ denotes the relative metric entropy 
of $(\tilde{h},\rho )$ with respect to $(\sigma, \eta _{p})$, and 
${\frak E}_{1}(\cdot )$ denotes the space of ergodic measures (see \cite{S3}).  
This $\tilde{\lambda }_{h_{1},h_{2},p} $ is called the {\bf maximal relative entropy measure} for $\tilde{h}$ with respect to 
$(\sigma ,\eta _{p}).$ Also, we set 
$\lambda _{h_{1},h_{2},p}:= (\pi _{\CCI })_{\ast }(\tilde{\lambda }_{h_{1},h_{2},p}).$ 
This is a Borel probability measure on $J(h_{1},h_{2}).$    
\end{df}
\noindent \begin{df}\ 
\begin{itemize}  
\item 
Let 
$\tilde{h}:\Sigma _{2}\times 
\CCI \rightarrow \Sigma _{2}\times \CCI $ be the skew product 
associated with 
$h=(h_{1},h_{2})\in {\cal H}$. Let $p\in (0,1)$. 
We set $p_{1}=p, p_{2}=1-p.$ 
Let  
$\rho $ be an $\tilde{h}$-invariant Borel probability measure on 
$J(\tilde{h})$ (i.e., $\rho (A)=\rho (\tilde{h}^{-1}(A))$ for each Borel measurable subset $A$ of 
$J(\tilde{h})$).  
Moreover, we set 
$$u(h_{1},h_{2},p,\rho ):= 
\frac{-\int _{\Sigma _{2}\times \CCI }\log p_{w_{1}}\ d\rho (w,x)}
{\int _{\Sigma _{2}\times \CCI }\log \| D(h_{w_{1}})_{x}\| _{s} \ d\rho (w,x)}\in (0,\infty ).$$ 
\item 
Let $V$ be an open subset of 
$\CC $. For any function $\varphi :V \rightarrow \RR $ and any point 
$y\in V $, if $\varphi $ is bounded around $y$, we set \\ 
$\mbox{H\"{o}l}(\varphi ,y):= 
\sup \{ \beta \geq 0 \mid \limsup _{z\rightarrow y,z\neq y}
(|\varphi (z)-\varphi (y)|/|z-y|^{\beta })<\infty \} \in [0,\infty ]$, and this is called  
the {\bf pointwise H\"{o}lder exponent of $\varphi $ at $y$}.  
({\bf Note:} If $\mbox{H\"{o}l}(\varphi ,y)<1$, then 
$\varphi $ is not differentiable at $y$. If $\mbox{H\"{o}l}(\varphi ,y)>1$, then 
$\varphi $ is differentiable at $y$ and the derivative is equal to $0$.) 
\end{itemize}
\end{df}

We now present the results on the pointwise H\"{o}lder exponents 
of the functions of probability of tending to $\infty .$ 
\begin{thm}
\label{t:spacend}
Statements~\ref{nondiff} and \ref{nonorable} hold. 
\begin{enumerate} 
\item \label{nondiff}({\bf Non-differentiability})
Let $(h_{1},h_{2})\in \overline{{\cal D}}\cap {\cal B}\cap {\cal H}$, 
$G=\langle h_{1},h_{2}\rangle $, and 
$0<p<1$.  
Then,  
 supp $\lambda  _{h_{1},h_{2},p}=J(G)$, $\lambda _{h_{1},h_{2},p}$ is non-atomic, 
and for almost every point $z_{0}\in J(G)$ with respect to  
 $\lambda _{h_{1},h_{2},p}$, 
\begin{equation} 
\label{eq:holth1}
\mbox{{\em H\"{o}l}}(T(h_{1},h_{2},p,\cdot ),z_{0})\leq 
u(h_{1},h_{2},p,\tilde{\lambda }_{h_{1},h_{2},p})
=-\frac{p\log p+(1-p)\log (1-p)}
{p\log (\deg (h_{1}))+(1-p)\log (\deg (h_{2}))}
<1
\end{equation}  
and $T(h_{1},h_{2},p,\cdot )$ is not differentiable at $z_{0}$.   
In particular, there exists an uncountable dense subset $A$ of $J(G)$ such that 
at every point of $A$, the function 
$T(h_{1},h_{2},p,\cdot )$ is not differentiable. 
Moreover, if 
$(h_{1},h_{2})\in {\cal D}\cap {\cal B}\cap {\cal H}$,  
then $\mbox{{\em H\"{o}l}}(T(h_{1},h_{2},p,\cdot ),z_{0})= 
u(h_{1},h_{2},p,\tilde{\lambda }_{h_{1},h_{2},p})$ for almost every point $z_{0}\in J(G) $ 
with respect to $\lambda _{h_{1},h_{2},p}.$  
Moreover, if $(h_{1},h_{2})\in (\overline{{\cal D}}\cap {\cal B}\cap {\cal H})\setminus {\cal Q}$, 
then for almost every point $z_{0}\in J(G)$ with respect to $\lambda _{h_{1},h_{2},p}$, 
the function $T(h_{1},h_{2},p,\cdot )$ is continuous at $z_{0}.$ 
\item \label{nonorable}
Let $(h_{1},h_{2})\in (\overline{{\cal D}}\cap {\cal B}\cap {\cal H})\setminus {\cal Q}$, 
$G=\langle h_{1},h_{2}\rangle $, 
and $0<p<1$. Let $v =\dim _{H}(J(G))$ and let  
$H^{v }$ be the $v$-dimensional Hausdorff measure.  
Then we have the all of the following. 
\begin{itemize}
\item[{\em (i)}] 
$0<H^{v}(J(G))<\infty $ and there exists a unique Borel probability measure 
$\nu $ on $J(\tilde{h})$ such that for each $\varphi \in C(J(\tilde{h}))$, 
$\int _{J(\tilde{h})}\sum _{(\alpha ,y)\in \tilde{h}^{-1}(\gamma ,x)}
\frac{\varphi (\alpha ,y)}{\| D(\alpha _{1})_{y}\| _{s}^{v}}\ d\nu (\gamma ,x)=
\int _{J(\tilde{h})} \varphi (\gamma, x) d\nu (\gamma ,x).$ Moreover, 
there exists a unique element $\psi \in C(J(\tilde{h}))$ with $\psi (\gamma ,x)>0\ (\forall (\gamma ,x))$ such that for each $(\gamma, x)\in J(\tilde{h})$, 
we have 
$\sum _{(\alpha ,y)\in \tilde{h}^{-1}(\gamma ,x)}
\frac{\psi (\alpha ,y)}{\| D(\alpha _{1})_{y}\| _{s}^{v}}=
\psi (\gamma, x)$. Also, $\eta := \psi \cdot \nu $ is an $\tilde{h}$-invariant ergodic Borel probability measure 
on $J(\tilde{h}).$ Further, $(\pi _{\CCI })_{\ast }(\nu )=\frac{H^{v}|_{J(G)}}{H^{v}(J(G))}.$   
%
\item[{\em (ii)}] 
For almost every  $z_{0}\in J(G)$ with respect to 
$H^{v}$,  the function $T(h_{1},h_{2},p,\cdot ):\CCI \rightarrow [0,1] $ is continuous at $z_{0}$ and 
$\mbox{{\em H\"{o}l}}(T(h_{1},h_{2},p,\cdot ),z_{0})\leq u(h_{1},h_{2},p,\eta )$. 
If $(h_{1},h_{2})\in {\cal D}\cap {\cal B}\cap {\cal H}$, 
then for almost every  $z_{0}\in J(G)$ with respect to 
$H^{v}$,  
$\mbox{{\em H\"{o}l}}(T(h_{1},h_{2},p,\cdot ),z_{0})=u(h_{1},h_{2},p,\eta )$.
\end{itemize} 
\end{enumerate} 
\end{thm}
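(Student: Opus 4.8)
The plan is to deduce everything from the self-similarity identity satisfied by $T:=T(h_{1},h_{2},p,\cdot )$. Conditioning on the first $n$ maps, $M_{h_{1},h_{2},p}T=T$ iterates to
\[ T(z)=\sum _{\omega =(\omega _{1},\ldots ,\omega _{n})\in \{ 1,2\} ^{n}}p_{\omega }\,T(f_{\omega }(z)),\qquad f_{\omega }:=h_{\omega _{n}}\circ \cdots \circ h_{\omega _{1}},\ \ p_{\omega }:=\prod _{k=1}^{n}p_{\omega _{k}}, \]
for every $z\in \CCI $ and $n\in \NN $, where $p_{1}=p,\ p_{2}=1-p$. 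I would first record the facts about $\lambda _{h_{1},h_{2},p}=(\pi _{\CCI })_{\ast }\tilde{\lambda }_{h_{1},h_{2},p}$ needed for Statement~\ref{nondiff}. Since $\tilde{\lambda }_{h_{1},h_{2},p}$ is the maximal relative entropy measure, the theory of \cite{S3} gives $\mathrm{supp}\,\tilde{\lambda }_{h_{1},h_{2},p}\subset J(\tilde{h})$ with $\pi _{\CCI }(\mathrm{supp}\,\tilde{\lambda }_{h_{1},h_{2},p})=J(G)$, and positivity of the relative entropy $\sum _{j}p_{j}\log d_{j}>0$ forces $\tilde{\lambda }_{h_{1},h_{2},p}$, hence $\lambda _{h_{1},h_{2},p}$, to be non-atomic; so $\mathrm{supp}\,\lambda _{h_{1},h_{2},p}=J(G)$. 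The crucial quantitative input is the identity $\chi :=\int \log \| D(h_{w_{1}})_{x}\| _{s}\,d\tilde{\lambda }_{h_{1},h_{2},p}(w,x)=p\log d_{1}+(1-p)\log d_{2}$: $\tilde{h}$-invariance of $\tilde{\lambda }_{h_{1},h_{2},p}$ replaces $\| \cdot \| _{s}$ by the Euclidean derivative in the integral, and then the Manning--Przytycki-type formula for the Lyapunov exponent of the maximal (relative) entropy measure (\cite{S3}) expresses $\chi $ as $\sum _{j}p_{j}\log d_{j}$ plus a sum of Green-function values at the finite critical points $c$ of $h_{1},h_{2}$, which all vanish because $h_{j}(c)\in P^{\ast }(G)\subset \hat{K}(G)$ and $P^{\ast }(G)$ is bounded and $G$-forward invariant. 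Hence $u(h_{1},h_{2},p,\tilde{\lambda }_{h_{1},h_{2},p})=\bigl(-(p\log p+(1-p)\log (1-p))\bigr)/\chi $, which is $<1$ since $d_{1},d_{2}\geq 2$ and $\max (d_{1},d_{2})\geq 3$ on $\overline{{\cal D}}\cap {\cal B}$ (cf.\ \cite{SdpbpI}), so the degenerate case $p=\tfrac12,\ d_{1}=d_{2}=2$ does not occur. Finally the case $(h_{1},h_{2})\in {\cal Q}$ is trivial: then $J(h_{1})=J(h_{2})$ is completely invariant under both $h_{1}$ and $h_{2}$, so $\hat{K}(G)=\hat{K}(h_{1})\supset J(G)$ and $F_{\infty }(G)=\CCI \setminus \hat{K}(G)$ with $G\equiv 1$ on it in the Green's-function coordinate, whence $T=\mathbf{1}_{F_{\infty }(G)}$ and $\Hol(T,z_{0})=0$ at every $z_{0}\in J(G)$, while non-atomicity and $\mathrm{supp}\,\lambda _{h_{1},h_{2},p}=J(G)$ give the uncountable dense non-differentiability set.

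So I would then assume $(h_{1},h_{2})\in (\overline{{\cal D}}\cap {\cal B}\cap {\cal H})\setminus {\cal Q}$, where the open set condition holds (Theorem~\ref{spacemainth}(\ref{spacemainthosc})); fix an open $U\supset J(G)$ witnessing it, so the sets $f_{\omega }^{-1}(U)$ $(|\omega |=n)$ are pairwise disjoint. I would argue at $\tilde{\lambda }_{h_{1},h_{2},p}$-a.e.\ $(\tau ,z_{0})\in J(\tilde{h})$, writing $[\tau ]_{n}=(\tau _{1},\ldots ,\tau _{n})$. By hyperbolicity and $\mathrm{dist}(J(G),P(G))>0$, for each $n$ the branch $g_{n}$ of $f_{[\tau ]_{n}}^{-1}$ sending $f_{[\tau ]_{n}}(z_{0})$ to $z_{0}$ extends univalently with bounded distortion to a ball $B(f_{[\tau ]_{n}}(z_{0}),\delta )\subset U$ for a fixed $\delta >0$, so $\mathrm{diam}\,g_{n}(B(f_{[\tau ]_{n}}(z_{0}),\delta ))\asymp \| D(f_{[\tau ]_{n}})_{z_{0}}\| _{s}^{-1}=e^{-n\chi +o(n)}$ and $p_{[\tau ]_{n}}=e^{\,n(p\log p+(1-p)\log (1-p))+o(n)}$, both by the Birkhoff ergodic theorem. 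By Poincar\'{e} recurrence for $(\tilde{h},\tilde{\lambda }_{h_{1},h_{2},p})$, $f_{[\tau ]_{n}}(z_{0})\in B(z_{0},\delta /2)$ for infinitely many $n$; for such $n$, since $z_{0}\in J(G)$ the function $T$ is not constant on $B(z_{0},\delta /2)$ (Theorem~\ref{t:spaceth1h2}(\ref{spacemainth8})), so I can pick $\zeta _{1}\in B(z_{0},\delta /2)\subset B(f_{[\tau ]_{n}}(z_{0}),\delta )$ with $|T(\zeta _{1})-T(f_{[\tau ]_{n}}(z_{0}))|\geq c_{0}=c_{0}(z_{0})>0$ independent of $n$. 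Put $z_{n}:=g_{n}(\zeta _{1})$; then $z_{n},z_{0}\in f_{[\tau ]_{n}}^{-1}(U)$, so for $\omega \neq [\tau ]_{n}$ we have $f_{\omega }(z_{n}),f_{\omega }(z_{0})\in \CCI \setminus U\subset F(G)$, and using the open set condition together with the eventual uniform contraction of the hyperbolic dynamics on $F(G)$ one checks $f_{\omega }$ carries the tiny set $g_{n}(B(f_{[\tau ]_{n}}(z_{0}),\delta ))$ into a single Fatou component, on which $T$ is constant; hence the self-similarity identity collapses to $T(z_{n})-T(z_{0})=p_{[\tau ]_{n}}\bigl(T(\zeta _{1})-T(f_{[\tau ]_{n}}(z_{0}))\bigr)$. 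Thus $|T(z_{n})-T(z_{0})|\geq c_{0}e^{\,n(p\log p+(1-p)\log (1-p))+o(n)}$ and $|z_{n}-z_{0}|\leq e^{-n\chi +o(n)}$, and letting $n\to \infty $ along the recurrence times yields $\Hol(T,z_{0})\leq u(h_{1},h_{2},p,\tilde{\lambda }_{h_{1},h_{2},p})<1$, so $T$ is not differentiable at $z_{0}$; the uncountable dense subset of $J(G)$ then follows from $\mathrm{supp}\,\lambda _{h_{1},h_{2},p}=J(G)$ and non-atomicity. Continuity of $T$ at $\lambda _{h_{1},h_{2},p}$-a.e.\ $z_{0}$ I would get by showing the closed discontinuity set, contained in $\bigcup _{g\in G}g^{-1}(J(h_{1})\cap J(h_{2}))$, is $\lambda _{h_{1},h_{2},p}$-null: the open set condition makes the ``contact set'' $J(h_{1})\cap J(h_{2})$ conformally negligible and $\lambda _{h_{1},h_{2},p}$ is quasi-invariant under the backward branches.

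For the equality in the ${\cal D}$-case I would use Theorem~\ref{spacemainth}(\ref{spacemainth1}): $h_{1}^{-1}(J(G))\cap h_{2}^{-1}(J(G))=\emptyset $ and $J(G)=\amalg _{w}J_{w}(\tilde{h})$ with $w\mapsto J_{w}(\tilde{h})$ continuous, so $J(\tilde{h})=\bigcup _{w}\{ w\} \times J_{w}(\tilde{h})$ is the graph of a continuous ``code map'' $z\mapsto w(z)$ on $J(G)$, whence $\lambda _{h_{1},h_{2},p}$-a.e.\ $z_{0}$ has $\tilde{\lambda }_{h_{1},h_{2},p}$-typical pair $(w(z_{0}),z_{0})$; fix such a $z_{0}$ with code $\tau $. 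For $z$ near $z_{0}$ let $n(z)$ be the longest common prefix length of $w(z)$ and $\tau $ (resp., if $z\in F(G)$, the largest $n$ with $z\in f_{[\tau ]_{n}}^{-1}(U)$); then $z,z_{0}\in f_{[\tau ]_{n(z)}}^{-1}(U)$ with the other $2^{n(z)}-1$ branches landing in a common Fatou component, so $T(z)-T(z_{0})=p_{[\tau ]_{n(z)}}(T(f_{[\tau ]_{n(z)}}z)-T(f_{[\tau ]_{n(z)}}z_{0}))$, giving $|T(z)-T(z_{0})|\leq p_{[\tau ]_{n(z)}}=e^{\,n(z)(p\log p+(1-p)\log (1-p))+o(n(z))}$, whereas $z,z_{0}$ lie in disjoint level-$(n(z)+1)$ pieces, so $|z-z_{0}|\gtrsim e^{-n(z)\chi +o(n(z))}$ by the disjointness and bounded distortion. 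Combining these, $|T(z)-T(z_{0})|\leq C|z-z_{0}|^{\beta }$ near $z_{0}$ for every $\beta <u(h_{1},h_{2},p,\tilde{\lambda }_{h_{1},h_{2},p})$, so $\Hol(T,z_{0})\geq u(h_{1},h_{2},p,\tilde{\lambda }_{h_{1},h_{2},p})$, and with the previous paragraph we get equality. For Statement~\ref{nonorable}, part~(i) is the Ruelle--Perron--Frobenius package for the hyperbolic open-set-condition system (\cite{S6},\cite{SU1}): $v=\dim _{H}(J(G))=\delta _{(h_{1},h_{2})}$ by Bowen's formula, the transfer operator $L_{v}\varphi (\gamma ,x)=\sum _{(\alpha ,y)\in \tilde{h}^{-1}(\gamma ,x)}\| D(\alpha _{1})_{y}\| _{s}^{-v}\varphi (\alpha ,y)$ has topological pressure $0$, hence a unique conformal measure $\nu $ and unique positive eigenfunction $\psi $ of eigenvalue $1$, $\eta :=\psi \nu $ is the invariant Gibbs state, $0<H^{v}(J(G))<\infty $, and $(\pi _{\CCI })_{\ast }\nu =H^{v}|_{J(G)}/H^{v}(J(G))$ by the standard identification of conformal and Hausdorff measures under the open set condition. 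Part~(ii) I would then prove exactly as Statement~\ref{nondiff} with the $\tilde{h}$-invariant ergodic measure $\eta \sim \nu $ in place of $\tilde{\lambda }_{h_{1},h_{2},p}$: its projection is equivalent to $H^{v}$, so the Birkhoff/recurrence/self-similarity argument gives $\Hol(T,z_{0})\leq u(h_{1},h_{2},p,\eta )$ for $H^{v}$-a.e.\ $z_{0}$, with equality and continuity in the ${\cal D}$-case by the same cylinder estimate.

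The step I expect to be the main obstacle is the precise matching, at a typical point, of the ``combinatorial'' scale $p_{[\tau ]_{n}}\asymp e^{-n(-(p\log p+(1-p)\log (1-p)))}$ against the ``geometric'' scale $\mathrm{diam}\,f_{[\tau ]_{n}}^{-1}(U)\asymp e^{-n\chi }$, together with the control of the remainder terms in the self-similarity identity: one needs the open set condition plus the uniform eventual contraction of the hyperbolic dynamics on $F(G)$ to guarantee that the branches $f_{\omega }$ with $\omega \neq [\tau ]_{n}$ do not reseparate the tiny cylinder across $J(G)$, and the disjointness $h_{1}^{-1}(J(G))\cap h_{2}^{-1}(J(G))=\emptyset $ together with porosity of $J(G)$ (Theorem~\ref{spacemainth}(\ref{spacemainth3})) to prevent the cylinders from being too thin for the lower bound on $|z-z_{0}|$. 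A second delicate point is the Lyapunov identity $\chi =p\log d_{1}+(1-p)\log d_{2}$ (and hence the strict inequality $u<1$), along with the $\lambda _{h_{1},h_{2},p}$- and $H^{v}$-nullity of the discontinuity set of $T$ and of the cylinder boundaries in the non-${\cal Q}$ regime.
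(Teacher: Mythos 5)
Your overall strategy is the paper's: establish the Lyapunov identity $\chi=\sum p_j\log\deg(h_j)$ for $\tilde{\lambda}_{h_1,h_2,p}$ (the paper cites \cite[Lemma 5.52]{Splms10}; your Brolin--Manning--Przytycki/Green's-function argument is a reasonable alternative), dispose of the $\cal Q$-case by the jump of $T$ across $\partial K(h_1)$, and in the non-$\cal Q$ case run a Birkhoff/recurrence/Koebe argument through the open-set-condition cylinders. This is exactly the content of the paper's Theorem~\ref{t:oscholgen} (packaged as a general lemma, then applied via Lemma~\ref{l:dbhqhol}), and your Statement~\ref{nonorable} sketch matches the paper's use of the Ruelle--Perron--Frobenius machinery from \cite{S6}. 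For the ${\cal D}$-case equality the paper simply invokes \cite[Theorems 3.82, 3.84]{Splms10}, while you re-derive it from the disjointness $h_1^{-1}(J(G))\cap h_2^{-1}(J(G))=\emptyset$; the structure is right, though the lower bound $|z-z_0|\gtrsim e^{-n(z)\chi+o(n(z))}$ needs a uniform separation estimate for disjoint cylinders that you only allude to.

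There is, however, a genuine gap in the way you handle $\lambda_{h_1,h_2,p}$-a.e.\ continuity, and it contaminates the Hölder estimate. You propose to show that the discontinuity set of $T$ is contained in $\bigcup_{g\in G}g^{-1}(J(h_1)\cap J(h_2))$ and is $\lambda$-null by ``quasi-invariance.'' Neither claim is justified: from $M_{h_1,h_2,p}T=T$ one only gets that a discontinuity point $z$ has $h_1(z)$ or $h_2(z)$ again a discontinuity point, which produces an orbit staying in the discontinuity set but not necessarily ever landing in $J(h_1)\cap J(h_2)$; and $\lambda_{h_1,h_2,p}$ is not quasi-invariant under individual inverse branches in the sense you would need. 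The paper avoids this entirely: Theorem~\ref{t:oscholgen} shows that $\mu$-a.e.\ point has a unique symbolic coding whose whole forward orbit remains in $U\setminus P(G)$ (using ergodicity of $\tilde{\mu}$ and the open set condition), hence lies in $F_{pt}^{0}(\tau)$ by \cite[Lemma 4.3]{Splms10}, and continuity then follows from Lemma~\ref{l:attminset}. This matters because your Hölder lower bound ``pick $\zeta_1$ with $|T(\zeta_1)-T(f_{[\tau]_n}(z_0))|\geq c_0$ independent of $n$'' silently uses continuity of $T$ at $z_0$ (so that $T(f_{[\tau]_n}(z_0))\to T(z_0)$ along the recurrence subsequence and $\zeta_1$ can be a fixed point with $T(\zeta_1)\neq T(z_0)$); without first securing continuity at the recurrence point, $c_0$ could depend on $n$. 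So the continuity argument has to come first, as in the paper, and it needs the unique-coding/$F_{pt}^{0}$ route rather than the nullity-of-$\bigcup g^{-1}(J(h_1)\cap J(h_2))$ route you sketch.
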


\begin{thm}
\label{t:j1orj2}
Let $h=(h_{1},h_{2})\in (\overline{{\cal D}}\cap {\cal B}\cap {\cal H})\setminus {\cal I}.$ 
Let $G=\langle h_{1},h_{2}\rangle .$ 
Let $p\in (0,1).$ Let $p_{1}=p, p_{2}=1-p.$ 
Then 
there exists an open neighborhood $V$ of $(h_{1},h_{2})$ in ${\cal B}\cap {\cal H}$ 
and a number $i\in \{ 1,2\} $ such that for each $(g_{1},g_{2})\in V$, 
denoting by $\mu _{j}$ the maximal entropy measure for $g_{j}:\CCI \rightarrow \CCI $ for each $j=1,2,$  
all of the following hold. 
\begin{itemize}
\item[{\em (i)}]
 For each $j=1,2$, for $\mu _{j}$-a.e. $z_{0}\in J(g_{j})$,  
$\emHol(T(g_{1},g_{2},p,\cdot ),z_{0})\leq -\frac{\log p_{j}}{\log \deg (g_{j})}.$ 
\item[{\em (ii)}]
For $\mu _{i}$-a.e. $z_{0}\in J(g_{i})$, 
$\emHol(T(g_{1},g_{2},p,\cdot ),z_{0})\leq -\frac{\log p_{i}}{\log \deg (g_{i})}<1.$  
\item[{\em (iii)}] 
For each $\alpha \in (-\frac{\log p_{i}}{\log \deg (g_{i})}, 1)$ 
and for each $\varphi \in C^{\alpha}(\CCI )$ such that $\varphi (\infty )=1$ and $\varphi |_{\hat{K}(G)}\equiv 0$,  
we have $\| M_{g_{1},g_{2},p}^{n}(\varphi )\| _{\alpha }\rightarrow \infty $ as $n\rightarrow \infty .$ 
\end{itemize} 
\end{thm}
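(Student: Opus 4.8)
The plan is first to fit the hypothesis into Theorem~\ref{t:timportant}. Since ${\cal D}\cap{\cal B}\cap{\cal I}=\emptyset$, $(\partial{\cal D})\cap{\cal B}\cap{\cal H}=(\partial{\cal C})\cap{\cal B}\cap{\cal H}$ (both quoted above), and ${\cal Q}\subset{\cal I}$ trivially, the set $(\overline{{\cal D}}\cap{\cal B}\cap{\cal H})\setminus{\cal I}$ equals $({\cal D}\cap{\cal B}\cap{\cal H})\cup\bigl(((\partial{\cal C})\cap{\cal B}\cap{\cal H})\setminus{\cal I}\bigr)$, i.e.\ exactly the hypothesis of Theorem~\ref{t:timportant}(\ref{spacemainth10}). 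I would apply that theorem to get an open neighborhood $V_{0}$ of $(h_{1},h_{2})$ in ${\cal B}\cap{\cal H}$, an open neighborhood $W$ of $p$ in $(0,1)$ and a constant $\alpha_{0}\in(0,1)$ with properties (i)--(viii), and then set $V:=V_{0}\cap\{(g_{1},g_{2})\in{\cal P}^{2}\mid J(g_{1})\cap J(g_{2})=\emptyset\}$, shrunk further so that $\deg g_{j}\equiv d_{j}$ on $V$; this $V$ is open (the Julia set of a hyperbolic rational map varies continuously) and contains $(h_{1},h_{2})$ because $(h_{1},h_{2})\notin{\cal I}$. On $V$ the map $z\mapsto T(g_{1},g_{2},p,z)$ is (H\"older, hence) continuous on $\CCI$ by Theorem~\ref{t:timportant}(\ref{spacemainth10})(ii). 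To pin down $i$: since $(h_{1},h_{2})\in(\overline{{\cal D}}\cap{\cal B}\cap{\cal H})\setminus{\cal Q}$, Theorem~\ref{spacemainth}(\ref{spacemainth3}) and (\ref{spacemainth4}) give
$$\frac{\log(d_{1}+d_{2})}{\sum_{j}\frac{d_{j}}{d_{1}+d_{2}}\log d_{j}}\ \le\ \dim_{H}(J(h_{1},h_{2}))\ <\ 2,$$
and since the left-hand side equals $2$ when $d_{1}=d_{2}=2$, we must have $(d_{1},d_{2})\neq(2,2)$, hence $\tfrac1{d_{1}}+\tfrac1{d_{2}}<1$, hence $p_{j}>\tfrac1{d_{j}}$ for at least one $j$. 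I would take $i$ to be such a $j$; then $-\log p_{i}/\log d_{i}<1$, and this $i$ is the same for every element of $V$.

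\textbf{Step 1 (parts (i) and (ii)).} Fix $(g_{1},g_{2})\in V$ and $j\in\{1,2\}$. Since $\langle g_{j}\rangle\subset\langle g_{1},g_{2}\rangle$, the map $g_{j}$ is hyperbolic and postcritically bounded, so its maximal entropy measure $\mu_{j}$ satisfies $\mathrm{supp}\,\mu_{j}=J(g_{j})\subset J(g_{1},g_{2})$, is non-atomic, and has no critical point of $g_{j}$ in its support. Put $\overline{j}=(j,j,\dots)\in\Sigma_{2}$; since $J_{\overline{j}}(\tilde h)=J(g_{j})$, the measure $\rho_{j}:=\delta_{\overline{j}}\otimes\mu_{j}$ is an $\tilde h$-invariant ergodic Borel probability measure on $J(\tilde h)$ with $(\pi_{\CCI})_{\ast}\rho_{j}=\mu_{j}$. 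The plan is to run the argument behind the upper bound in Theorem~\ref{t:spacend}(\ref{nondiff}) with $\rho_{j}$ in place of $\tilde\lambda_{h_{1},h_{2},p}$ (this uses only that $(g_{1},g_{2})$ is a hyperbolic postcritically bounded pair with $J(g_{1})\cap J(g_{2})=\emptyset$ and that $T(g_{1},g_{2},p,\cdot)$ is a H\"older-continuous fixed point of $M_{g_{1},g_{2},p}$): working over the fiber $\{\overline j\}\times J(g_{j})$, at a $\mu_{j}$-generic point $z_{0}$ one takes inverse branches of $g_{j}^{n}$ and reads off from the single word $\overline{j}|_{n}$ in the identity $M_{g_{1},g_{2},p}^{n}(T)=T$ that $T(g_{1},g_{2},p,\cdot)$ oscillates near $z_{0}$ by at least a constant times $p_{j}^{n}$ on a ball of radius $\asymp\|D(g_{j}^{n})_{z_{0}}\|_{s}^{-1}$; letting $n\to\infty$ and using Birkhoff ($\tfrac1n\log\|D(g_{j}^{n})_{z_{0}}\|_{s}\to\int\log\|D(g_{j})_{x}\|_{s}\,d\mu_{j}$) gives $\Hol(T(g_{1},g_{2},p,\cdot),z_{0})\le u(g_{1},g_{2},p,\rho_{j})$ for $\mu_{j}$-a.e.\ $z_{0}$. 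It then remains to evaluate the exponent: $u(g_{1},g_{2},p,\rho_{j})=(-\log p_{j})/\int_{J(g_{j})}\log\|D(g_{j})_{x}\|_{s}\,d\mu_{j}(x)$, the denominator equals $\int\log|g_{j}'(x)|\,d\mu_{j}$ because $(g_{j})_{\ast}\mu_{j}=\mu_{j}$, and this last integral equals $\log d_{j}$ because postcritical boundedness puts every finite critical point of $g_{j}$ in the filled-in Julia set, so the correction terms in the standard formula for the Lyapunov exponent of the maximal entropy measure of a polynomial vanish. Hence $u(g_{1},g_{2},p,\rho_{j})=-\log p_{j}/\log d_{j}$, which is (i); part (ii) is the case $j=i$ together with $-\log p_{i}/\log d_{i}<1$ from Step~0, and continuity of $T(g_{1},g_{2},p,\cdot)$ at the relevant points is the global continuity noted in Step~0.

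\textbf{Step 2 (part (iii)).} Fix $(g_{1},g_{2})\in V$, $G=\langle g_{1},g_{2}\rangle$, $\alpha\in(-\log p_{i}/\log d_{i},\,1)$, and $\varphi\in C^{\alpha}(\CCI)$ with $\varphi(\infty)=1$ and $\varphi|_{\hat{K}(G)}\equiv0$. By (ii) there is $z_{0}\in J(g_{i})$ with $\Hol(T(g_{1},g_{2},p,\cdot),z_{0})\le-\log p_{i}/\log d_{i}<\alpha$, which forces $\limsup_{z\to z_{0}}|T(g_{1},g_{2},p,z)-T(g_{1},g_{2},p,z_{0})|/|z-z_{0}|^{\alpha}=\infty$, so $T(g_{1},g_{2},p,\cdot)\notin C^{\alpha}(\CCI)$. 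On the other hand $p\in W$, so Theorem~\ref{t:timportant}(\ref{spacemainth10})(v) gives $M_{g_{1},g_{2},p}^{n}(\varphi)\to T(g_{1},g_{2},p,\cdot)\,\varphi(\infty)+(1-T(g_{1},g_{2},p,\cdot))\int\varphi\,d\nu_{g_{1},g_{2},p}$ uniformly on $\CCI$; since $\nu_{g_{1},g_{2},p}$ is carried by $\hat{K}(G)$, $\varphi|_{\hat{K}(G)}\equiv0$ and $\varphi(\infty)=1$, the uniform limit is $T(g_{1},g_{2},p,\cdot)$. Each $M_{g_{1},g_{2},p}^{n}(\varphi)$ lies in $C^{\alpha}(\CCI)$ (one composes $\varphi$ with the Lipschitz maps $g_{1},g_{2}$). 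So if $\|M_{g_{1},g_{2},p}^{n}(\varphi)\|_{\alpha}$ did not tend to $\infty$, some subsequence would be bounded in $C^{\alpha}(\CCI)$, hence equicontinuous, hence by Arzel\`a--Ascoli would have a uniformly convergent further subsequence whose limit is $T(g_{1},g_{2},p,\cdot)$ and has finite $\|\cdot\|_{\alpha}$ by lower semicontinuity of $\|\cdot\|_{\alpha}$ under uniform convergence --- contradicting $T(g_{1},g_{2},p,\cdot)\notin C^{\alpha}(\CCI)$. Hence $\|M_{g_{1},g_{2},p}^{n}(\varphi)\|_{\alpha}\to\infty$.

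\textbf{Main obstacle.} The hard part is the pointwise H\"older bound in (i) for the fiber measure $\rho_{j}$. Unlike $\tilde\lambda_{h_{1},h_{2},p}$ in Theorem~\ref{t:spacend}(\ref{nondiff}), whose projection has full support $J(G)$, $\rho_{j}$ is concentrated on the single fiber $\{\overline j\}\times J(g_{j})$; one must therefore show that $T(g_{1},g_{2},p,\cdot)$ genuinely oscillates by a definite amount on fixed-size balls centered on $J(g_{j})$, and that the oscillation produced by the constant-word term of $M^{n}(T)=T$ survives pullback along $\overline j$ and is not annihilated by the other $2^{n}-1$ words --- uniformly along the sequence of scales $\|D(g_{j}^{n})_{z_{0}}\|_{s}^{-1}$. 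This is precisely where I expect the hypothesis $J(g_{1})\cap J(g_{2})=\emptyset$ to be used (note that the open set condition for the pair $(g_{1},g_{2})$ need not hold on $V$, only the disjointness of the two Julia sets), together with the uniform backward contraction and bounded distortion of the single hyperbolic map $g_{j}$ near $J(g_{j})$; controlling this non-cancellation is the technical core of the argument.
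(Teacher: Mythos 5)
Your overall skeleton matches the paper's: reduce to the case $K(h_1)\subset K(h_2)$, choose $i$ using $(\deg h_1,\deg h_2)\neq(2,2)$, lift $\mu_j$ to the fiber measure over the constant word $\overline{j}$, run an oscillation argument to get the pointwise H\"older bound with exponent $u(g_1,g_2,p,\rho_j)=-\log p_j/\log\deg(g_j)$, and derive (iii) from (ii) via lower semicontinuity of the $C^\alpha$-norm. The Lyapunov computation and the Arzel\`a--Ascoli step are sound. But there is a genuine gap, and you have correctly diagnosed its location but not closed it.

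First, the set on which you propose to work is slightly wrong. You set $V:=V_0\cap\{(g_1,g_2)\mid J(g_1)\cap J(g_2)=\emptyset\}$, but mere disjointness of the two Julia sets is not what the fiber argument consumes. What is actually needed at each $z_0\in J(g_j)$ is that, after pulling back by an inverse branch of $g_j^n$, the images under every other word $(\rho_1,\dots,\rho_n)\neq(j,\dots,j)$ land in $F(G)$, so that the corresponding terms in $M^n(T)=T$ are locally constant and only the constant-word term oscillates. This is exactly the no-overlap condition $J(g_1)\cup J(g_2)\subset\CCI\setminus\bigl(g_1^{-1}(J(G))\cap g_2^{-1}(J(G))\bigr)$, which is strictly stronger than $J(g_1)\cap J(g_2)=\emptyset$ and does not follow from it for an arbitrary hyperbolic postcritically bounded pair. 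The paper proves this stronger statement at the base point $(h_1,h_2)$ (Lemma~\ref{l:jhjscs}, using the nested chain $K(h_1)\subset h_1^{-1}(K(h_2))\subset h_2^{-1}(K(h_1))\subset K(h_2)$ from Proposition~\ref{p:bdyosc}, which requires $\overline{{\cal D}}$) and then propagates it to a neighborhood $V$ by continuity of $(g_1,g_2)\mapsto J(g_1,g_2)$ in the Hausdorff metric --- not by imposing a condition that is openly stated on $V$. You cannot reapply Lemma~\ref{l:jhjscs} at nearby $(g_1,g_2)$ since they need not lie in $\overline{{\cal D}}$; continuity is essential here.

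Second, your ``main obstacle'' paragraph is where the actual proof would have to live, and you leave it as an expectation rather than a proof. Two ingredients are missing. (a) Uniqueness of the symbolic address for $\mu_j$-a.e.\ point: from the no-overlap condition above one shows every $z_0\in J(g_j)$ has a \emph{unique} $\alpha\in\Sigma_2$ with $z_0\in\bigcap_n h_{\alpha|_n}^{-1}(J(G))$, namely $\alpha=\overline{j}$; the paper packages this, together with the oscillation estimate, as Lemma~\ref{l:suppmdisc}, whose hypothesis (a) is precisely the no-overlap condition and whose proof adapts Theorem~\ref{t:oscholgen}. (b) A nonconstancy statement: the oscillation argument needs a point $a\in J(g_j)$ at which $T(g_1,g_2,p,\cdot)$ is not locally constant (hypothesis (b) of Lemma~\ref{l:suppmdisc}). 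For $j=2$ this comes from Lemma~\ref{l:intT1} (interior of $T^{-1}(\{1\})$ lies in $F(G)$, so no point of $\partial F_\infty(G)=J(g_2)$ has $T$ locally $\equiv1$), and for $j=1$ from the dual Lemma~\ref{l:intt-10} (interior of $T^{-1}(\{0\})$ lies in $F(G)$, applied along $\partial\hat{K}(G)=J(g_1)$). Your proposal neither states nor verifies this; without it the oscillation could in principle be zero on the fiber, and the pointwise H\"older bound would be vacuous. So you have the right map of the territory, but the core of (i) and (ii) --- Lemma~\ref{l:suppmdisc} together with Lemmas~\ref{l:jhjscs}, \ref{l:intT1}, \ref{l:intt-10} --- is still unwritten.
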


\begin{rem} 
Let $(h_{1},h_{2})\in {\cal D}\cap {\cal B}\cap {\cal H}$ be an element. 
By statements \ref{nondiff}, \ref{nonorable} in Theorem~\ref{t:spacend},  
it follows that if 
$p$ is close enough to $0$ or $1$, then 
(1) for almost every  $z_{0}\in J(h_{1},h_{2})$ with respect to 
$\lambda _{h_{1},h_{2},p}$, 
$z\mapsto T(h_{1},h_{2},p,z)$ is not differentiable at $z_{0}$,  
but (2) for almost every $z_{0}\in J(h_{1},h_{2})$ 
with respect to 
$H^{v}$, 
$z\mapsto T(h_{1},h_{2},p,z)$ is differentiable at $z_{0}$ and the derivative is equal to $0$.  
\end{rem}

\begin{rem}
\label{r:gradation} 
Theorems~\ref{t:timportant}, \ref{t:spacend}, \ref{t:j1orj2}  and \cite[Theorem 3.30]{Scp} 
imply that if $(h_{1},h_{2})\in (\overline{{\cal D}}\cap {\cal B}\cap {\cal H})\setminus {\cal I}$
 then there exists a neighborhood ${\cal A}_{0}$ of $(h_{1},h_{2})$ in ${\cal P}^{2}$ 
 such that for each $(g_{1},g_{2})\in {\cal A}_{0}$ and each $p\in (0,1)$, 
 the associated random dynamical system does not have chaos in the $C^{0}$ sense, 
but still has a kind of chaos in the $C^{\alpha }$ sense for some $0<\alpha <1$ 
(see also the similar results Theorems~\ref{t:spgenex}, \ref{t:oscanal}, \ref{t:oschg2} in 
which we do not assume hyperbolicity). 
More precisely, for such an element $(g_{1},g_{2},p)$, 
there exists a number $\alpha _{0}\in (0,1)$ such that 
for each $\alpha \in (0,\alpha _{0})$, 
the system behaves well on the Banach space $C^{\alpha }(\CCI )$ endowed with $\alpha$-H\"{o}lder 
norm $\| \cdot \| _{\alpha }$ (see \cite[Theorem 1.10]{Scp}),
 but for each $\alpha \in (\alpha _{0}, 1)$, 
the system behaves chaotically  
on the Banach space $C^{\alpha }(\CCI )$ (and on the Banach space $C^{1}(\CCI )$ as well).  
In this way, regarding the random dynamical systems, 
we have a kind of {\bf gradation between chaos and order}. Note that in \cite{Splms10,Scp},
this phenomenon was found for the systems with disconnected Julia sets, but in this paper, 
we show that this phenomenon can hold for plenty of systems with connected Julia sets. 
  For the related results in which we do not assume hyperbolicity, 
see Theorems~\ref{t:spgenex}, \ref{t:oscanal}, \ref{t:oschg2} and Remark~\ref{r:oschg3}. 
In \cite{JS1}, we perform a multifractal analysis of the pointwise \Hol der exponents of 
$T(h_{1},h_{2},p,\cdot )$ (and more general limit state functions of random complex dynamical systems). 

\end{rem}

\begin{rem}
From the point of view of \cite[Introduction]{Splms10} and \cite[Remark 1.14]{Scp}, we can say that 
the function $z\mapsto T(h_{1},h_{2},p,z)$ is a complex analogue 
of the devil's staircase or Lebesgue's singular functions and it is called a 
``{\bf devil's coliseum}''  
(see Figures~\ref{fig:0.41dc5grey} and \ref{fig:0.41dc5udgrey}, 
for the definition of the devil's staircase and Lebesgue's singular functions, see \cite{YHK}), 
and the function $z\mapsto (\partial T/\partial p)(h_{1},h_{2},p,z)$ 
(see Figure~\ref{fig:0.41ct2grey}) is a complex analogue of the Takagi function 
(or the Blancmange function) 
(for the definition of the Takagi function, see \cite{YHK}).  
These notions have been introduced in \cite{Splms10, Scp}, though in those papers 
we deal with the case having disconnected Julia sets. In this paper, we also deal with the case with 
{\bf connected} Julia sets which are thin fractal sets.   
\end{rem}
\begin{figure}[htbp]
\caption{The Julia set of $G=\langle h_{1}, h_{2}\rangle $, where   
$(h_{1},h_{2})\in ((\partial {\cal C})\cap {\cal B}\cap {\cal H})\setminus {\cal I}$ 
and $\deg (h_{1})=\deg (h_{2})=4.$ 
The Julia set $J(G)$ is {\bf connected}, $(h_{1},h_{2})$ satisfies the open set condition and 
$\frac{3}{2}<\dim _{H}(J(G))<2.$ }
\ \ \ \ \ \ \ \ \ \ \ \ \ \ \ \ \ \ \ \ \ \ \ \ \ \ \ \ \ \ \ 
\ \ \ \ \ \ \ \ \ \ \ \ \ \ \  \  \ \  \ \ \  \ \ 
\includegraphics[width=2.6cm,width=2.6cm]{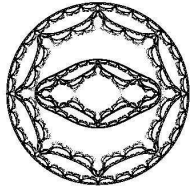}
\label{fig:0.41}
\end{figure} 

\begin{figure}[htbp]
\caption{The graph of $z\mapsto T(h_{1},h_{2},1/2,z)$, where   
$(h_{1},h_{2})$ is as in Figure~\ref{fig:0.41}.  
A devil's coliseum (a complex analogue of the devil's staircase or Lebesgue's singular functions).
The function is H\"{o}lder continuous on $\CCI $ and the set of varying points is equal to the 
{\bf connected} Julia set in Figure~\ref{fig:0.41}.}
\ \ \ \ \ \ \ \ \ \ \ \ \ \ \ \ \ \ \ \ \ \ \ \ \ \ \ \ \ \ \ 
\ \ \ \ \ \ \ \ \ \ \ \ 
\includegraphics[width=5.5cm,width=5.5cm]{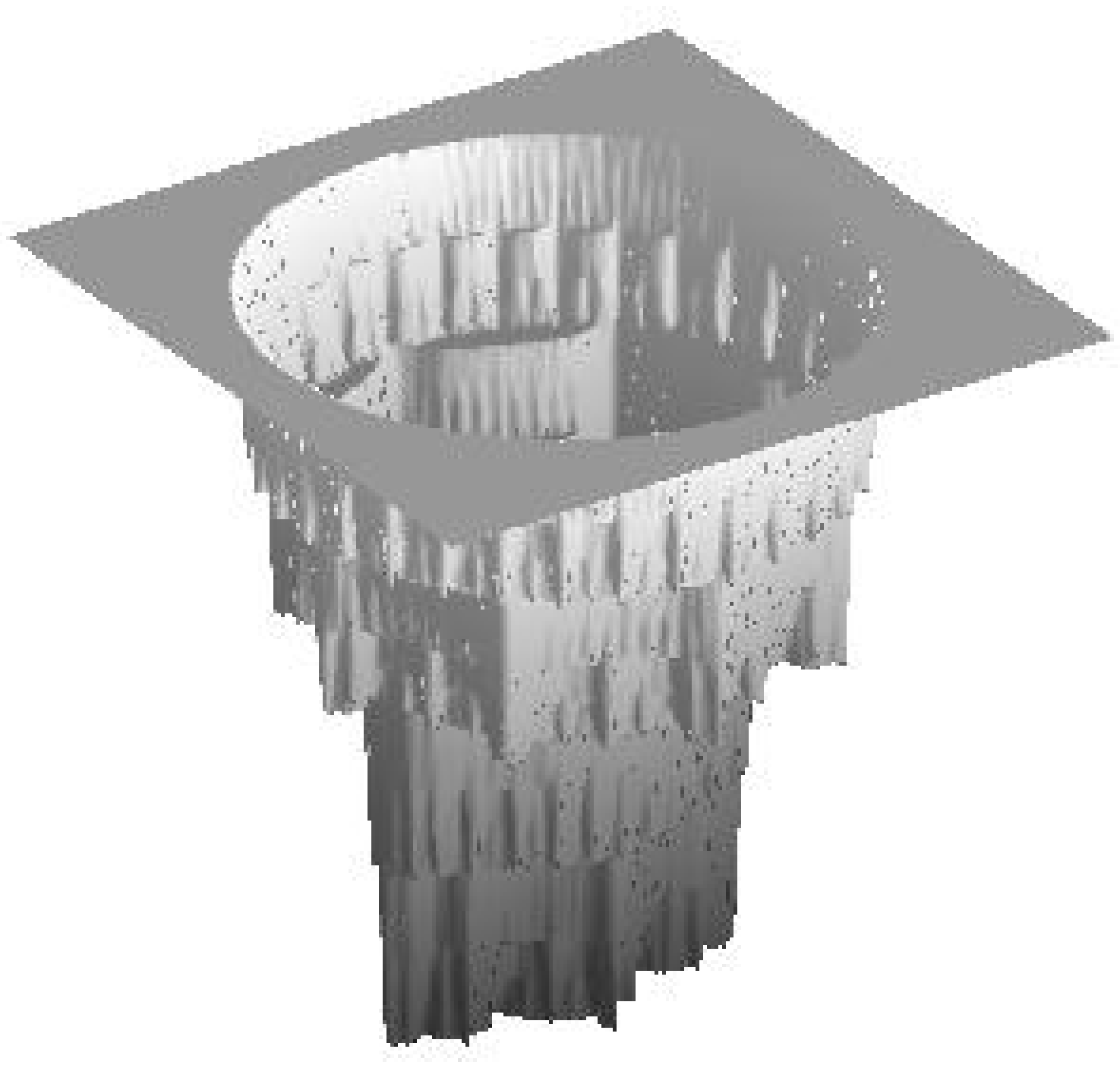}
\label{fig:0.41dc5grey}
\caption{Figure~\ref{fig:0.41dc5grey} upside down.}
\ \ \ \ \ \ \ \ \ \ \ \ \ \ \ \ \ \ \ \ \ \ \ \ \ \ \ \ \ \ \ 
\ \ \ \ \ \ \ \ \  
\includegraphics[width=6.0cm,width=6.0cm]{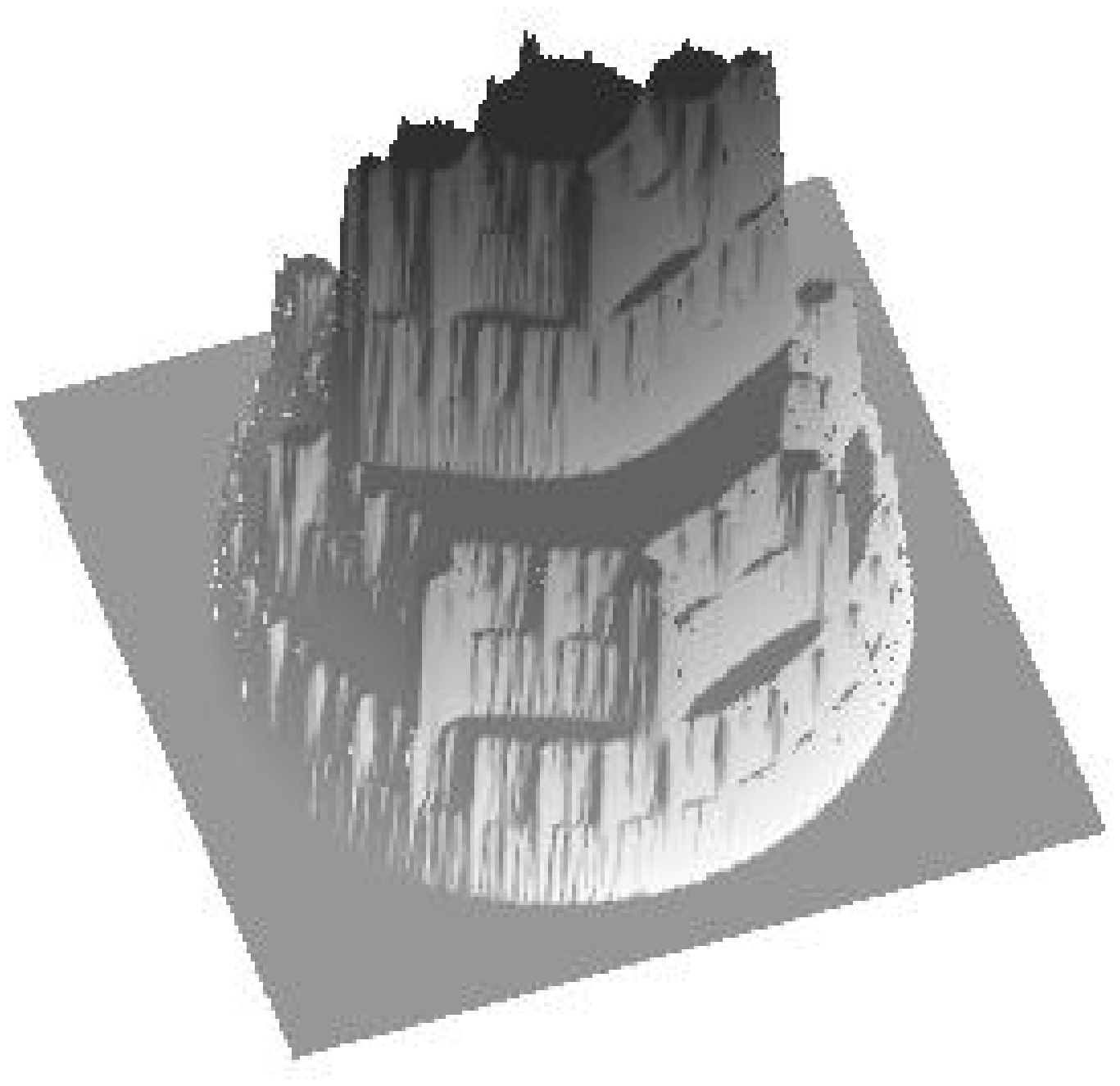}
\label{fig:0.41dc5udgrey}
\caption{The graph of $z\mapsto (\partial T/\partial p)(h_{1},h_{2},1/2,z)$,  
where   
$(h_{1},h_{2})$ is as in Figure~\ref{fig:0.41}. A complex analogue of 
the Takagi function (the Blancmange function).  
This function is H\"{o}lder continuous on $\CCI $ and the set of varying points is included in 
the Julia set in Figure~\ref{fig:0.41}.} 
\ \ \ \ \ \ \ \ \ \ \ \ \ \ \ \ \ \ \ \ \ \ \ \ \ \ \ \ \ \ \ 
\ \ \ \ \ \ \ \ \ \ \    
\includegraphics[width=5.5cm,width=5.5cm]{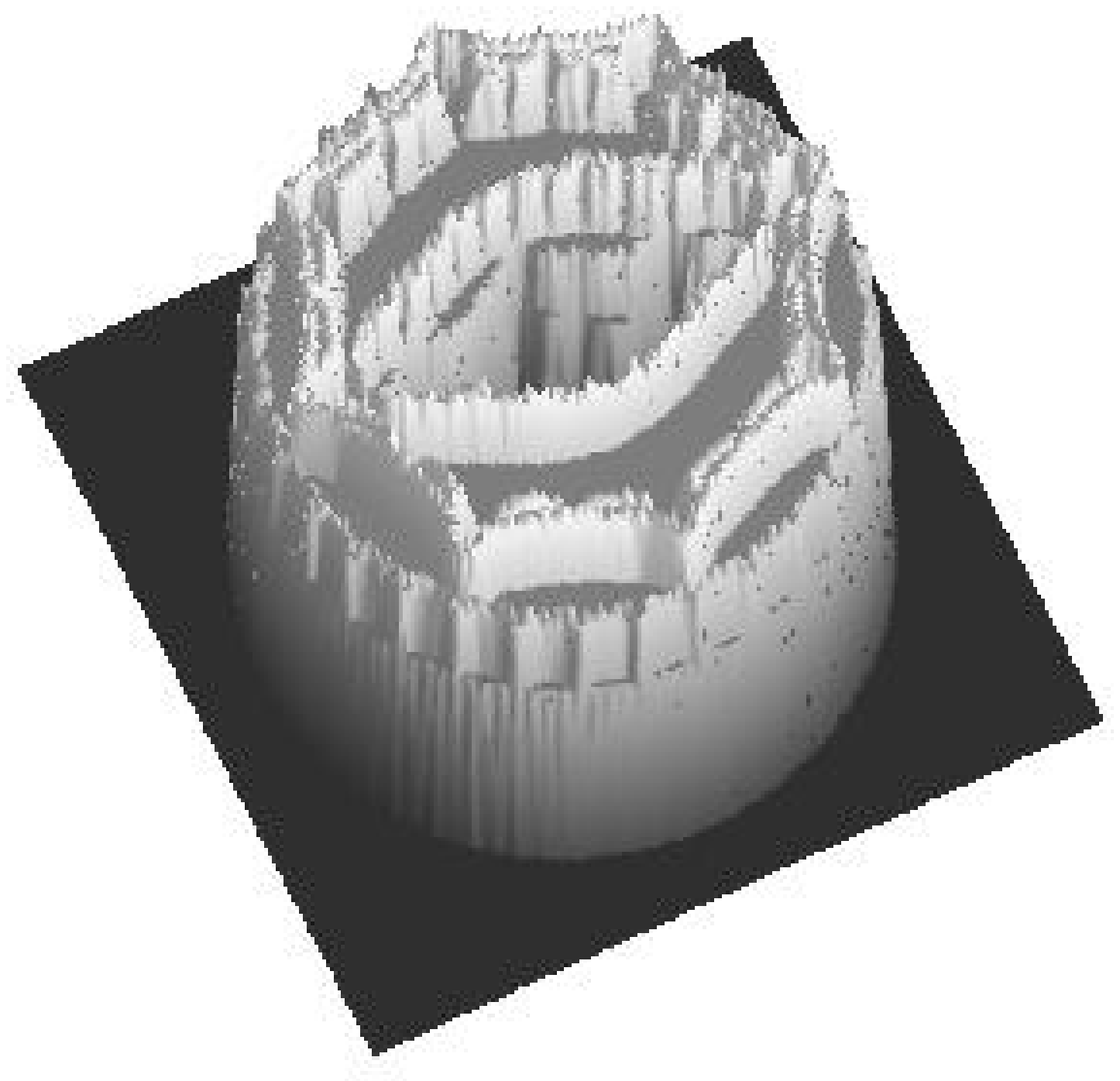}
\label{fig:0.41ct2grey}
\end{figure}
\begin{rem}
\label{r:summary}
This paper is the first one in which the parameter space of polynomial semigroups is investigated. 
We focus on the space of parameters for which the semigroup is postcritically bounded. 
In particular, we study the disconnectedness locus and the connectedness locus in the above space. 
We combine all ideas on postcritically bounded polynomial semigroups (\cite{SdpbpI, SS}), 
interaction cohomology (\cite{S15}), skew products and potential theory (\cite{S3, S4, Splms10}), 
(semi-)hyperbolic semigroups and thermodynamic formalisms 
(\cite{S6, SUb,SU2}),  ergodic theory, perturbation theory for linear operators, and random complex dynamics (\cite{Splms10, Scp}).    
In the proofs of the results, we use the idea of the nerve of backward images of the Julia set under the elements of a polynomial 
semigroup and associated cohomology (interaction cohomology) from \cite{S15} and we combine this with 
potential theory. Also, we use the results on the dynamics of postcritically bounded 
polynomial semigroups from \cite{SdpbpI} and the results on hyperbolic semigroups from 
\cite{S6}.  
From these, we obtain that any element 
$h=(h_{1},h_{2})\in (\overline{{\cal D}}\cap {\cal B}\cap {\cal H})\setminus {\cal Q}$ satisfies the open set condition 
 and Bowen's formula (Theorem~\ref{spacemainth}-\ref{spacemainthosc}, \ref{spacemainth4}). This is crucial to obtain other results in 
this paper. 
Combining this with some geometric observations by using Green's functions and a result on the porosity of the Julia sets from \cite{S7}, we obtain $\dim _{H}(J(h_{1},h_{2}))<2$ (Theorem~\ref{spacemainth}-\ref{spacemainth3}) 
and Theorem~\ref{spacemainth}-\ref{spacemainth4}.  
Moreover, we obtain that there exists a neighborhood ${\cal A}_{0}$ of 
$(\overline{\cal D}\cap {\cal B}\cap {\cal H})\setminus {\cal I}$ such that 
for each $(h_{1},h_{2})\in {\cal A}_{0}$, the set $\{ h_{1},h_{2}\} $ is 
mean stable (Theorem~\ref{t:timportant}-\ref{spacemainth10}-(i)). 

Also, it is important and interesting to study the topology of the connectedness locus and its boundary. 
Combining $\overline{\mbox{int}({\cal C})}\cap {\cal B}\cap {\cal H}={\cal C}\cap {\cal B}\cap {\cal H}$ 
(Theorem~\ref{t:spacetopology}-\ref{spacemainth2}), that 
$((\partial {\cal C})\cap {\cal B}\cap {\cal H})\setminus {\cal Q}$ is dense in 
$(\partial {\cal C})\cap {\cal B}\cap {\cal H}$ (Theorem~\ref{t:spacetopology}-\ref{spacemainth7}) and 
Theorem~\ref{spacemainth}-\ref{spacemainth4}, we see that  
in any neighborhood of any point of $(\partial {\cal C})\cap {\cal B}\cap {\cal H}$, 
there exists an open subset ${\cal A}_{1}$ of $\mbox{int}({\cal C})\cap {\cal B}\cap {\cal H}$ 
such that for each $g=(g_{1},g_{2})\in {\cal A}_{1}$, 
we have that $\dim _{H}(J(g_{1},g_{2}))<2$. 
Combining the results on semigroups and some results on random complex dynamics from 
\cite{Splms10, Scp}, and developing many new ideas,  
we show Theorems~\ref{t:timportant}, \ref{t:spacend}, \ref{t:j1orj2}. 
Note that Theorem~\ref{t:timportant} (vii)(viii) (complex analogues of the Takagi function) 
are obtained by using some results from \cite{Scp}, which were shown by using the perturbation 
theory for linear operators. 
Combining the above results on semigroups and Theorem~\ref{t:timportant}, we obtain that   
 in any neighborhood of any point of $((\partial {\cal C})\cap {\cal B}\cap {\cal H})\setminus {\cal I}$, 
there exists an open subset ${\cal A}_{2}$ of $\mbox{int}({\cal C})\cap {\cal B}\cap {\cal H}$ 
such that for each $g=(g_{1},g_{2})\in {\cal A}_{2}$, 
$\{ g_{1},g_{2}\} $ is mean stable and 
 the function $T(g_{1},g_{2},p,\cdot )$ of probability of tending to $\infty $ 
is H\"{o}lder continuous on $\CCI $ and varies only in a thin connected fractal set $J(g_{1},g_{2})$ 
whose Hausdorff dimension is strictly less than two. 
Moreover, if $(g_{1},g_{2})\in ((\partial {\cal C})\cap {\cal B}\cap {\cal H})\setminus {\cal I}$, then 
$T(g_{1},g_{2},p,\cdot )$ is H\"{o}lder continuous on $\CCI $ 
and varies precisely on the connected Julia set $J(g_{1},g_{2})$ 
which is a thin fractal set (Theorems~\ref{t:timportant}, \ref{t:spaceth1h2}, \ref{spacemainth}).  
Thus, we can say that even in the parameter region ${\cal C}\cap {\cal B}\cap {\cal H}$, 
there are plenty of examples of ``{\bf complex singular functions}'' (complex analogues of the devil's staircase 
or Lebesgue singular functions). Note that in \cite{Splms10, Scp}, 
we have obtained many results on the functions of probability of 
tending to $\infty $ when the associated Julia sets are ``disconnected''.  
This paper is the first one in which we show the existence of plenty of examples 
such that  
the function of probability of tending to $\infty $ is continuous and ``singular'' when the associated 
Julia set is connected. (In fact, if the overlap $h_{1}^{-1}(J(h_{1},h_{2}))\cap h_{2}^{-1}(J(h_{1},h_{2}))$
 is not empty, then the analysis of the random dynamical systems generated by $\{ h_{1},h_{2}\} $ and $T(h_{1},h_{2},p,\cdot )$ is  difficult.) 
Also, the details of such functions are given in Theorem~\ref{t:timportant}.    
\end{rem}
In section~\ref{Proofs}, we give the proofs of the main results.  
Also, in section~\ref{Proofs}, we show some further results 
in which we do not assume hyperbolicity (Theorems~\ref{t:shshdhl2}, \ref{t:spgenex},  
\ref{t:oscanal}, \ref{t:oscholgen}, \ref{t:oschg2}).  Moreover, 
we show a result on the Fatou components (Theorem~\ref{t:infcomp}) 
and some results on semi-hyperbolicity 
(Theorems~\ref{t:shshdhl2}, \ref{t:spgenex}). 

\ 

\noindent {\bf Acknowledgement.} The author thanks Rich Stankewitz for valuable comments. 
This research was partially supported by 
JSPS KAKENHI 24540211, 15K04899. 
\section{Proofs of the main results}
\label{Proofs} 
In this section, we give the proofs of the main results. Also, we show some further results 
in which we do not assume hyperbolicity. 

To recall some known facts on polynomial semigroups, 
let $G$ be a polynomial semigroup with $G\subset {\cal P}.$ 
Then, for each $g\in G$, $g(F(G))\subset F(G), g^{-1}(J(G))\subset J(G).$ 
Moreover, $J(G)$ is a perfect set and 
$J(G)$ is equal to the closure of the set of repelling cycles of elements of $G$. 
In particular, $J(G)=\overline{\cup _{g\in G}J(g)}$. 
We set $E(G):=\{ z\in \CCI \mid \sharp \bigcup _{g\in G}g^{-1}(\{ z\} )<\infty \} .$ 
Then $\sharp E(G)\leq 2$ and 
for each $z\in J(G)\setminus E(G)$, $J(G)=\overline{\bigcup _{g\in G}g^{-1}(\{ z\} ) }.$ 
Also, $J(G)$ is the smallest set in 
$\{ \emptyset \neq K\subset \CCI \mid K \mbox{ is compact}, \forall g\in G, g(K)\subset K\} $ 
with respect to the inclusion. 
If $G$ is generated by a compact family $\Lambda $ of ${\cal P}$, then 
$J(G)=\bigcup _{h\in \Lambda }h^{-1}(J(G))$ (this is called the backward self-similarity). 
For more details on these  
properties of polynomial semigroups, 
see \cite{HM, St3, GR, S3}.  
The article \cite{St3} by R. Stankewitz is a very nice introductory one for 
basic facts on polynomial semigroups.  
For the properties of the dynamics of postcritically bounded polynomial semigroups, 
see \cite{SdpbpI, SS}. 

 For fundamental tools and lemmas  of random complex dynamics, see \cite{Splms10,Scp}.  
%
%
\vspace{-2mm} 
\subsection{Proofs of Theorems~\ref{spacemainth} and \ref{t:spacetopology}}
\label{Pfspacemainth}
\vspace{-2mm} 
In this subsection, we prove Theorems~\ref{spacemainth} and \ref{t:spacetopology}. 
Also, we show some results in which we do not assume hyperbolicity 
(Lemma~\ref{l:oscinte}, Theorems~\ref{t:shshdhl2}, \ref{t:spgenex}).  
We need some definitions. 
\begin{df}[\cite{SdpbpI}] 
For any connected sets $K_{1}$ and 
$K_{2}$ in $\CC ,\ $  we write $K_{1}\leq _{s}K_{2}$ to indicate that 
$K_{1}=K_{2}$, or $K_{1}$ is included in 
a bounded component of $\CC \setminus K_{2}.$ Furthermore, 
$K_{1}<_{s}K_{2}$ indicates $K_{1}\leq _{s}K_{2}$ 
and $K_{1}\neq K_{2}.$ 
Moreover, $K_{2}\geq _{s}K_{1}$ indicates $K_{1}\leq _{s}K_{2}$,  
and $K_{2}>_{s}K_{1}$ indicates $K_{1}<_{s}K_{2}.$  
Note that 
$ \leq _{s}$ is a partial order in 
the space of all non-empty compact connected 
sets in $\CC .$ This $\leq _{s}$ is called 
the {\bf surrounding order.}

\end{df}
\begin{df}
For a topological space $X$, we denote by Con$(X)$ the set of all connected 
components of $X.$ 
\end{df}
\begin{df}
We denote by ${\cal G}$ the set of all postcritically bounded 
polynomial semigroups $G$ 
with $G\subset {\cal P}.$ 
We denote by ${\cal G}_{dis}$ the set of all 
$G\in {\cal G}$ with disconnected Julia set. 

\end{df} 
\begin{rem}
\label{r:icjto}
Let $G\in {\cal G}_{dis}.$ 
In \cite{SdpbpI}, it was shown that 
$J(G)\subset \CC $, $(\mbox{Con}(J(G)), \leq _{s})$ is totally ordered, 
there exists a unique maximal element $J_{\max }=J_{\max}(G)\in (\mbox{Con}(J(G)),\leq _{s})$, there exists 
a unique minimal element $J_{\min }=J_{\min }(G)\in (\mbox{Con}(J(G)),\leq _{s})$, 
and each element of $\mbox{Con}(F(G))$ 
is either simply connected or doubly connected.  
Moreover, in \cite{SdpbpI}, it was shown that
${\cal A}\neq \emptyset $, where  
${\cal A}$ denotes the set of all doubly connected components of $F(G)$  
(more precisely, for each $J,J'\in \mbox{Con}(J(G))$ with 
$J<_{s}J'$, there exists an $A\in {\cal A}$ with $J<_{s}A<_{s}J'$), 
$\bigcup _{A\in {\cal A}}A\subset \CC $, and $({\cal A},\leq _{s})$ is totally 
ordered. Note that each $A\in {\cal A}$ is bounded and multiply connected, while for a single $f\in {\cal P}$, 
we have no bounded multiply connected component of $F(f).$   
\end{rem}
\begin{df}
Let $\gamma =(\gamma _{1},\gamma _{2},\ldots )\in {\cal P}^{\NN}$ be a sequence of 
polynomials. For each $m,n\in \NN $ with $n\leq m$, we set $\g _{m,n}:=\g _{m}\circ \cdots \circ \g _{n}.$ 
We denote by $F_{\gamma }$ the set of points $z\in \CCI $ for which 
there exists a neighborhood $U$ of $z$ such that 
$\{ \gamma _{n,1}\} _{n\in \NN }$ is 
normal. The set $F_{\gamma }$ is called the Fatou set of the sequence 
$\gamma $ of polynomials. 
Moreover, we set $J_{\gamma }:= \CCI \setminus F_{\gamma }.$ 
The set $J_{\gamma }$ is called the Julia set of the sequence $\gamma $ 
of polynomials. 
\end{df}
\begin{lem}
\label{l:bdjconti}
Let $(h_{1},h_{2})\in {\cal B}\cap {\cal D}.$ 
Let $\G =\{ h_{1},h_{2}\} $ and $G=\langle h_{1},h_{2}\rangle .$  
Then, $h_{1}^{-1}(J(G))\cap h_{2}^{-1}(J(G))=\emptyset $, 
$J(G)=\amalg _{\g \in \GN }J_{\g }$ (disjoint union), 
and the map $\g \mapsto J_{\g }$ is continuous on $\GN $ with respect to the 
Hausdorff metric.  Moreover, for each $\g \in \GN $, $J_{\g }$ is connected. 
\end{lem}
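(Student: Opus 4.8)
\textbf{Proof proposal for Lemma~\ref{l:bdjconti}.}

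The plan is to exploit the known structure theory of postcritically bounded polynomial semigroups with disconnected Julia set (Remark~\ref{r:icjto}), which tells us that $(\mbox{Con}(J(G)),\leq _{s})$ is totally ordered with maximal element $J_{\max}$ and minimal element $J_{\min}$, that doubly connected Fatou components exist and separate distinct Julia components, and that $J(G)\subset\CC$. First I would establish the \emph{disjointness} of the backward images: since $G\in{\cal G}_{dis}$, there is a doubly connected component $A$ of $F(G)$ with $J_{\min}<_s A<_s J_{\max}$; the bounded component $B$ of $\CCI\setminus \overline{A}$ contains $J_{\min}$, and each $h_j$ maps $J(G)$ into itself while being a proper map, so $h_j^{-1}(J(G))$ is a compact subset of $J(G)$. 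The key point is that $h_j^{-1}(J(G))$ splits as $J(G)\cap h_j^{-1}(\text{small part})$ and one checks, using $h_j(J(G))\subset J(G)$, $h_j(P(G))\subset P(G)$, and the fact that $h_j$ permutes the ``levels'' of the ordered set of Julia components in a monotone fashion, that $h_1^{-1}(J(G))$ lies in (or below) one side of $A$ while $h_2^{-1}(J(G))$ lies on the other; this is essentially Theorem~\ref{spacemainth}-\ref{spacemainth1} restricted to the case $(h_1,h_2)\in{\cal D}\cap{\cal B}$, so I would cite or reprove the argument from \cite{SdpbpI}. From $h_1^{-1}(J(G))\cap h_2^{-1}(J(G))=\emptyset$ together with the backward self-similarity $J(G)=h_1^{-1}(J(G))\amalg h_2^{-1}(J(G))$ (disjoint, by what we just showed), one gets by induction that for each word $(\omega_1,\dots,\omega_n)$ the sets $(\g_{n,1})^{-1}(J(G))=h_{\omega_1}^{-1}\cdots h_{\omega_n}^{-1}(J(G))$ over all words of length $n$ are pairwise disjoint and their union is $J(G)$.

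Next I would identify $J_\g$ with the nested intersection $\bigcap_{n}(\g_{n,1})^{-1}(J(G))$. For a general sequence $\g=(\g_1,\g_2,\dots)\in\GN$ one has $J_\g\subset\bigcap_n\g_{n,1}^{-1}(J(G))$ because points in the Fatou set of the composition sequence escape to the attracting region near $\infty$ or stay in a compact part of $F(G)$; conversely, since $\{h_1,h_2\}$ is a compact (finite) generating set and $J(G)$ attracts backward orbits off the exceptional set, every point of $\bigcap_n\g_{n,1}^{-1}(J(G))$ is non-normal for $\{\g_{n,1}\}_n$, so it lies in $J_\g$. Then the disjointness from the previous paragraph forces $J(G)=\amalg_{\g\in\GN}J_\g$: each $z\in J(G)$ lies in exactly one cylinder $\g_{n,1}^{-1}(J(G))$ at every level $n$, and these choices are consistent and determine a unique $\g$ with $z\in J_\g$.

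For \emph{continuity} of $\g\mapsto J_\g$ in the Hausdorff metric, I would use that the pieces shrink geometrically: because $\langle h_1,h_2\rangle$ (being in ${\cal D}\cap{\cal B}$) has the property that the diameters of the cylinder sets $\g_{n,1}^{-1}(J(G))$ tend to $0$ uniformly in the word — this follows from the separation provided by the doubly connected annuli $A\in{\cal A}$, which give a definite ``modulus gap'' at each level, so the diameters decay at a uniform rate — two sequences $\g,\g'$ agreeing on the first $n$ coordinates have $J_\g,J_{\g'}$ both contained in the same piece $\g_{n,1}^{-1}(J(G))$ of diameter $o(1)$, hence $d_H(J_\g,J_{\g'})\to 0$ as $n\to\infty$. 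Finally, \emph{connectedness} of each $J_\g$: each $J_\g$ is a decreasing intersection $\bigcap_n \g_{n,1}^{-1}(J(G))$; I claim each $\g_{n,1}^{-1}(J(G))$ is connected — this is where I would invoke the structure of Remark~\ref{r:icjto} most heavily, namely that $h_j^{-1}(J(G))$ is a connected subset of $\CC$ (a preimage of the totally ordered chain of Julia components under a polynomial that respects the surrounding order is again an ``interval'' in the order, hence connected), and then inductively $\g_{n,1}^{-1}(J(G))=h_{\g_1}^{-1}(\cdots)$ is connected; a nested intersection of compact connected sets is connected, so $J_\g$ is connected.

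\textbf{Main obstacle.} The hard part is the very first step: proving $h_1^{-1}(J(G))\cap h_2^{-1}(J(G))=\emptyset$ and that each single-letter preimage $h_j^{-1}(J(G))$ is connected. This is exactly where the heavy structure theory of postcritically bounded semigroups with disconnected Julia set from \cite{SdpbpI} (total order on $\mbox{Con}(J(G))$, existence of separating doubly connected Fatou components, $J(G)\subset\CC$) has to be brought in and applied carefully to the two generators; everything else — the symbolic coding, the disjoint decomposition, the Hausdorff continuity via uniform shrinking, and propagating connectedness through nested intersections — is then relatively routine bookkeeping. I would expect to quote \cite{SdpbpI} for the separation statement rather than reprove it.
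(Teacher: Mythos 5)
Your proposal identifies the right overall structure (disjointness of the one-step preimages, coding of $J(G)$ by $\GN$, continuity, connectedness) but several of the key steps, which you present as ``routine bookkeeping,'' are in fact where the real work is, and two of your arguments are actually incorrect.

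First, the disjointness $h_{1}^{-1}(J(G))\cap h_{2}^{-1}(J(G))=\emptyset$. You gesture at ``doubly connected Fatou components separate the Julia components'' and $h_{j}$ ``permuting the levels monotonically,'' and say you would cite \cite{SdpbpI}, but that reference does not contain this statement. What the paper actually does is first show $\mbox{int}(K(h_{1}))\subset F(G)$ using \cite[Proposition 2.24, Theorem 2.20-5]{SdpbpI}, and then invoke the interaction cohomology results \cite[Theorems 1.5, 1.7]{S15}; this is what the author means in Remark~\ref{r:summary} by ``the nerve of backward images and associated cohomology.'' Your argument never reaches a proof here.

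Second, the identification $J_{\gamma}=\bigcap_{n}\gamma_{n,1}^{-1}(J(G))$. The inclusion $J_{\gamma}\subset\bigcap_{n}\gamma_{n,1}^{-1}(J(G))$ is easy, but the reverse inclusion is genuinely delicate: a point $y_{0}$ with $\gamma_{n,1}(y_{0})\in J(G)$ for all $n$ need not a priori be in $J_{\gamma}$. Your one-line claim that ``every point of $\bigcap_{n}\gamma_{n,1}^{-1}(J(G))$ is non-normal for $\{\gamma_{n,1}\}_{n}$'' is not justified and is exactly what the paper's two-case argument (finitely versus infinitely many occurrences of $h_{2}$ in $\gamma$) is devoted to, using \cite[Lemma 5.6]{SdpbpII}, \cite[Lemma 3.13]{SdpbpIII}, and \cite[Theorem 2.20]{SdpbpI}.

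Third, the Hausdorff continuity argument is wrong as stated. You claim ``the diameters of the cylinder sets $\gamma_{n,1}^{-1}(J(G))$ tend to $0$ uniformly in the word.'' Take $\gamma=(1,1,1,\ldots)$: then $\gamma_{n,1}^{-1}(J(G))=h_{1}^{-n}(J(G))$ contains $J(h_{1})=J_{\gamma}$ for all $n$, so its diameter is bounded below by $\mbox{diam}(J(h_{1}))>0$. The cylinder pieces converge in Hausdorff distance to $J_{\gamma}$, but they do not shrink to points, so your bound on $d_{H}(J_{\gamma},J_{\gamma'})$ does not follow. The paper instead deduces continuity by combining the identification $J_{\gamma}=J(G)_{\gamma}$ with \cite[Lemma 3.5]{SdpbpIII} and \cite[Proposition 2.2(3)]{S7}.

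Finally, your connectedness argument asserts that $h_{j}^{-1}(J(G))$ is connected because a ``preimage of a totally ordered chain under a polynomial respecting the surrounding order is again an interval in the order.'' This is not a valid inference: a single polynomial of degree $d\geq 2$ can pull a connected ``loop'' component of $J(G)$ back to $d$ disjoint pieces, and an interval in the surrounding order need not be a connected subset of $\CC$. The paper obtains connectedness of each $J_{\gamma}$ directly from \cite[Lemma 3.8]{SdpbpIII}, without any claim about connectedness of $h_{j}^{-1}(J(G))$.
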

\begin{proof}
By \cite[Proposition 2.24]{SdpbpI}, we may assume that $J(h_{1})\subset J_{\min }(G)$ and $J(h_{2})\subset J_{\max }(G).$ 
Then by \cite[Theorem 2.20-5]{SdpbpI}, 
$\emptyset \neq \mbox{int}(\hat{K}(G))\subset \mbox{int}(K(h_{1})).$ 
Here, for any subset $A$ of $\CCI $, int$(A)$ denotes the set of interior points 
of  $A$ with respect to the topology in $\CCI .$ 
By \cite[Theorem 2.20-5]{SdpbpI} again, 
$h_{2}(J(h_{1}))\subset h_{2}(J_{\min }(G))\subset \mbox{int}(\hat{K}(G))\subset \mbox{int}(K(h_{1})).$ 
Therefore
$h_{2}(\mbox{int}(K(h_{1})))\subset \mbox{int}(K(h_{1})).$ 
Thus $\mbox{int}(K(h_{1}))\subset F(G).$ 

By \cite[Theorems 1.7, 1.5]{S15}, 
$h_{1}^{-1}(J(G))\cap h_{2}^{-1}(J(G))=\emptyset .$ 
Since $J(G)=h_{1}^{-1}(J(G))\cup h_{2}^{-1}(J(G))$ (\cite[Lemma 1.1.4]{S1}), 
it follows that $J(G)=\amalg _{\gamma \in \GN} J(G)_{\gamma }$, 
where $J(G)_{\gamma }:= \cap _{n=1}^{\infty }(\gamma _{1}^{-1}\cdots \gamma _{n}^{-1}(J(G))).$ 
Since $J_{\gamma }= 
\gamma _{1}^{-1}\cdots \gamma _{n}^{-1}
(J_{(\gamma _{n+1},\gamma _{n+2},\ldots )})\subset 
\gamma _{1}^{-1}\cdots \gamma _{n}^{-1}(J(G))$ for each 
$\gamma $ and each $n\in \NN $, it follows that 
for each $\g \in \GN $, $J_{\g }\subset J(G)_{\g }.$ 
 
Let $\g \in \GN .$ In order to prove $J_{\g }=J(G)_{\g }$, 
suppose that there exists a point $y_{0}\in J(G)_{\g }\setminus J_{\g }.$ 
We now consider the following two cases. 
Case 1: $\sharp \{ n\in \NN \mid \g _{n}\neq h_{1}\} =\infty .$ 
Case 2: $\sharp \{ n\in \NN \mid \g _{n}\neq h_{1}\} <\infty .$ 

 Suppose that we have Case 1. 
Then there exist an open neighborhood $U$ of $y_{0}$ in $\CCI $, 
a strictly increasing sequence $\{ n_{j}\} _{j=1}^{\infty }$ of positive integers, 
and a map $\varphi : U\rightarrow \CCI $, such that  
$\g _{n_{j}+1}=h_{2}$ for each $j\in \NN $, and such that 
$\g _{n_{j},1}\rightarrow \varphi $ uniformly on $U$ as $j\rightarrow \infty .$ 
Since $\gamma _{n_{j},1}(y_{0})\in J(G)$ for each $j$, and since int$(\hat{K}(G))\subset F(G)$,  
\cite[Lemma 5.6]{SdpbpII} 
implies that  
$\varphi $ is constant. 
By \cite[Lemma 3.13]{SdpbpIII}, it follows that 
$d(\g _{n_{j},1}(y_{0}), P^{\ast }(G))\rightarrow 0$ as $j\rightarrow \infty .$ 
Moreover, since $\g _{n_{j}+1}=h_{2}$, 
we obtain $\g _{n_{j},1}(y_{0})\in h_{2}^{-1}(J(G))$ for each $j.$ 
Furthermore, by \cite[Theorem 2.20-2,5]{SdpbpI}, $h_{2}^{-1}(J(G))\subset \CCI \setminus P^{\ast }(G).$ 
This is a contradiction. Hence, we cannot have Case 1. 

 Suppose we have Case 2. 
Let $r\in \NN $ be a number such that 
for each $s\in \NN $ with $s\geq r$, $\g _{s}=h_{1}.$ 
Then $h_{1}^{n}(\g _{r,1}(y_{0}))\in J(G)$ for each $n\geq 0.$ 
Since $y_{0}\not\in J_{\g }$, 
we have $\g _{r,1}(y_{0})\not\in J(h_{1}).$ 
Moreover, since $\g_{r,1}(y_{0})\in J(G)$ and int$(\hat{K}(h_{1}))\subset F(G)$, 
it follows that $\g _{r,1}(y_{0})$ belongs to $F_{\infty }(h_{1})$. 
It implies that  $h_{1}^{n}(\g _{r,1}(y_{0}))\rightarrow \infty $ as $n\rightarrow \infty $. 
However, this contradicts that $h_{1}^{n}(\g _{r,1}(y_{0}))\in J(G)$ for each $n\geq 0.$ 
Therefore, we cannot have Case 2. 

Thus, for each $\g \in \G $, $J_{\g }=J(G)_{\g }.$ 
Combining this with \cite[Lemma 3.5]{SdpbpIII} and 
\cite[Proposition 2.2(3)]{S7}, 
we obtain that the map $\g \mapsto J_{\g }$ is continuous on $\GN $ with respect to the Hausdorff metric.  

 Finally, by \cite[Lemma 3.8]{SdpbpIII}, $J_{\g }$ is connected for each $\g \in \GN .$ 
Thus we have proved our lemma. 
\end{proof}

\begin{prop}
\label{p:dosc}
Let $(h_{1},h_{2})\in {\cal B}\cap {\cal D}.$ Then 
either {\em (1)} $K(h_{1})\subset \mbox{{\em int}}(K(h_{2}))$ 
or {\em (2)} 
$K(h_{2})\subset \mbox{{\em int}}(K(h_{1}))$ holds. 
If {\em (1)} holds, then setting $U:=(\mbox{{\em int}}(K(h_{2})))\setminus K(h_{1})$,
we have that $(h_{1},h_{2})$ satisfies the open set condition with $U$, 
that $K(h_{1})\subset h_{1}^{-1}(K(h_{2}))\subset h_{2}^{-1}(K(h_{1}))\subset K(h_{2})$,  
that $h_{2}$ is hyperbolic,  
and  that $J(h_{2})$ is a quasicircle. 
\end{prop}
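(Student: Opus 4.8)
The plan is to analyze the structure of a postcritically bounded polynomial semigroup with disconnected Julia set, using the theory of the surrounding order developed in \cite{SdpbpI}. By Remark~\ref{r:icjto}, for $G = \langle h_1, h_2\rangle$ we have a unique minimal element $J_{\min}(G)$ and a unique maximal element $J_{\max}(G)$ in $(\mathrm{Con}(J(G)), \leq_s)$. Since $J(h_1), J(h_2) \subset J(G)$ and each $J(h_j)$ is connected (as $J(h_j) = \partial K(h_j)$), each lies inside a single connected component of $J(G)$. By \cite[Proposition 2.24]{SdpbpI} (as already invoked in the proof of Lemma~\ref{l:bdjconti}), after possibly swapping indices we may assume $J(h_1) \subset J_{\min}(G)$ and $J(h_2) \subset J_{\max}(G)$. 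If $J(h_1) = J(h_2)$ then both generators would have Julia sets in the same component, forcing $J_{\min}(G) = J_{\max}(G)$ and hence $J(G)$ connected, a contradiction; so the two filled-in Julia sets are genuinely nested, and I expect $K(h_1) \subset \mathrm{int}(K(h_2))$, which is case (1) (case (2) being the symmetric possibility without the index normalization). The nesting of the \emph{filled-in} sets, rather than just the Julia sets, should follow from \cite[Theorem 2.20-5]{SdpbpI}, which (as used in Lemma~\ref{l:bdjconti}) gives $\mathrm{int}(\hat K(G)) \subset \mathrm{int}(K(h_1))$ and $h_2(J_{\min}(G)) \subset \mathrm{int}(\hat K(G))$, so that $J(h_1) \subset J_{\min}(G)$ together with connectedness of $J(h_1)$ localizes $K(h_1)$ inside the bounded complementary component of $J(h_2)$.

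Assuming case (1), set $U := \mathrm{int}(K(h_2)) \setminus K(h_1)$. To verify the open set condition I would check $h_1^{-1}(U) \cup h_2^{-1}(U) \subset U$ and $h_1^{-1}(U) \cap h_2^{-1}(U) = \emptyset$. For the first inclusion: $h_j^{-1}(K(h_2)) \subset K(h_j\text{-preimage})$ reasoning must be replaced by the postcritically bounded structure. The key chain of inclusions to establish is $K(h_1) \subset h_1^{-1}(K(h_2)) \subset h_2^{-1}(K(h_1)) \subset K(h_2)$. The outer two inclusions: $K(h_1) \subset h_1^{-1}(K(h_2))$ because $h_1(K(h_1)) = K(h_1) \subset K(h_2)$; and $h_2^{-1}(K(h_1)) \subset h_2^{-1}(K(h_2)) = K(h_2)$ since $K(h_1) \subset K(h_2)$. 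The crucial middle inclusion $h_1^{-1}(K(h_2)) \subset h_2^{-1}(K(h_1))$ is where the semigroup structure enters: one shows $h_2(h_1^{-1}(K(h_2))) \subset K(h_1)$. Here I would use that $h_1^{-1}(K(h_2))$ is a filled-in set bounded by $h_1^{-1}(J(h_2)) \subset h_1^{-1}(J(G)) \subset J(G)$, and by \cite[Theorems 1.7, 1.5]{S15} (disjointness of $h_1^{-1}(J(G))$ and $h_2^{-1}(J(G))$, as in Lemma~\ref{l:bdjconti}) together with the surrounding order on $\mathrm{Con}(J(G))$, the component of $J(G)$ containing $h_1^{-1}(J(h_2))$ surrounds $J(h_1)$; applying $h_2$ and using $h_2(J_{\min}(G)) \subset \mathrm{int}(\hat K(G))$ pushes the whole region into $K(h_1)$. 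Once the chain of inclusions holds, taking complements inside $\mathrm{int}(K(h_2))$ and using that $h_j^{-1}$ maps $\mathrm{int}(K(h_2))$ into itself gives $h_1^{-1}(U) \cup h_2^{-1}(U) \subset U$, and the disjointness $h_1^{-1}(U) \cap h_2^{-1}(U) = \emptyset$ follows from $h_1^{-1}(K(h_2)) \subset h_2^{-1}(K(h_1))$, i.e.\ $h_1^{-1}(\mathrm{int}(K(h_2)))$ and $h_2^{-1}(\mathrm{int}(K(h_1)))$ are separated by $h_2^{-1}(J(h_1))$ which lies outside $h_1^{-1}(K(h_2))$.

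Next, hyperbolicity of $h_2$: since $P^*(\langle h_1,h_2\rangle)$ is bounded and $J(h_1) \subset J_{\min}(G)$, one has $\mathrm{int}(\hat K(G)) \neq \emptyset$ with $P^*(h_2) \subset P^*(G)$; by \cite[Theorem 2.20-2,5]{SdpbpI} (used in Lemma~\ref{l:bdjconti}), $P^*(G) \cap h_2^{-1}(J(G)) = \emptyset$ and in particular $P^*(h_2) \cap J(h_2) = \emptyset$. Combined with $P^*(h_2)$ being bounded (so $\infty \in F(h_2)$ trivially and the finite critical orbits stay away from $J(h_2)$), this gives $P(h_2) \subset F(h_2)$, i.e.\ $h_2$ is hyperbolic. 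Finally, a hyperbolic polynomial $h_2$ whose Julia set is connected and whose unique bounded Fatou component (the interior of $K(h_2)$) is a Jordan domain with $K(h_1) \subset \mathrm{int}(K(h_2))$ — actually the standard statement is: if $h_2$ is hyperbolic and $J(h_2)$ is connected, and moreover the immediate basin of the attracting cycle in $\mathrm{int}(K(h_2))$ is simply connected and contains all of $K(h_1)$ — then $J(h_2)$ is a quasicircle. I would deduce simple connectivity of $\mathrm{int}(K(h_2))$ from $K(h_2)$ connected and, since $h_2$ is hyperbolic with $K(h_1) \subset \mathrm{int}(K(h_2))$ an attractor, the bounded Fatou component is a single immediate basin; a hyperbolic polynomial with connected Julia set and a single (super)attracting or attracting basin of period one covering the bounded complementary region has quasicircle Julia set by the classical theory (e.g.\ via the Böttcher/linearizing coordinate and quasiconformal surgery). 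The main obstacle I anticipate is the middle inclusion $h_1^{-1}(K(h_2)) \subset h_2^{-1}(K(h_1))$: it is the one genuinely new topological fact and requires carefully combining the total ordering of $\mathrm{Con}(J(G))$, the disjointness result from \cite{S15}, and the image control $h_2(J_{\min}(G)) \subset \mathrm{int}(\hat K(G))$ from \cite{SdpbpI}; getting the surrounding-order bookkeeping exactly right (which component surrounds which, and that $h_1^{-1}$ and $h_2^{-1}$ act as expected on these nested regions) is the delicate part.
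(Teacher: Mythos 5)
Your setup (total ordering of $\mathrm{Con}(J(G))$, identification $J(h_1)=J_{\min}(G)$, $J(h_2)=J_{\max}(G)$ after swapping indices, and the observation that $J(h_1)<_s J(h_2)$ gives $K(h_1)\subset\mathrm{int}(K(h_2))$) agrees with the paper's. You also correctly isolate the middle inclusion $h_1^{-1}(K(h_2))\subset h_2^{-1}(K(h_1))$ as the crux. But the mechanism you propose for it does not work, and this is a genuine gap.

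You want to show $h_2\bigl(h_1^{-1}(K(h_2))\bigr)\subset K(h_1)$ by ``applying $h_2$ and using $h_2(J_{\min}(G))\subset\mathrm{int}(\hat K(G))$.'' That cited fact only controls $h_2$ on $J_{\min}(G)=J(h_1)$; the set you need to push is $h_1^{-1}(K(h_2))$, which is strictly larger (it is bounded by $J_\beta=h_1^{-1}(J(h_2))>_s J(h_1)$). Knowing where $h_2$ sends the ``inner'' boundary $J(h_1)$ says nothing about where it sends the ``outer'' boundary $J_\beta$ or the annular region between them, and indeed the statement ``$h_2$ pushes the region into $K(h_1)$'' is precisely the middle inclusion restated, so the argument as proposed is circular. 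In the surrounding-order language, what you actually need is $J_\beta<_s J_\alpha$ where $J_\alpha=h_2^{-1}(J(h_1))$; the facts you have marshalled (total ordering, $J(h_1)$ minimal, $J(h_2)$ maximal, the disjointness from \cite{S15}) give $J(h_1)<_s J_\beta$ and $J(h_1)<_s J_\alpha<_s J(h_2)$ but do not decide the relation between $J_\beta$ and $J_\alpha$.

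The paper resolves this (its Claim~2) by contradiction, and the disconnectedness of $J(G)$ is used again in an essential, dynamical way: if $J_\alpha<_s J_\beta$ then $A:=K(h_2)\setminus\mathrm{int}(K(h_1))$ satisfies $A=h_1^{-1}(A)\cup h_2^{-1}(A)$, hence $J(G)\subset A$; since $J(G)$ is disconnected and $A$ is connected, $F(G)\cap A\neq\emptyset$, and one then tracks the forward orbit of a Fatou point trapped in $A$, using \cite[Lemma 5.6]{SdpbpII} and \cite[Lemma 3.13]{SdpbpIII} to force a limit into $P^\ast(G)$ while simultaneously forcing it into $F_\infty(h_1)$, a contradiction. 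This normal-family argument is not recoverable from the purely combinatorial surrounding-order bookkeeping you propose. Finally, for hyperbolicity of $h_2$ and the quasicircle conclusion the paper simply cites \cite[Theorem 2.20-4]{SdpbpI}; your reconstruction of hyperbolicity from $P^\ast(h_2)\cap J(h_2)=\emptyset$ is sound, but the quasicircle step (``a hyperbolic polynomial with \ldots a single attracting basin of period one \ldots has quasicircle Julia set'') is left vague and would need you to first establish that $\mathrm{int}(K(h_2))$ is a single simply connected Fatou component on which there is an attracting fixed point — a statement you assert but do not derive.
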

\begin{proof}

Let $\G :=\{ h_{1},h_{2}\} .$ 
Let $G=\langle h_{1},h_{2}\rangle .$ 
By Lemma~\ref{l:bdjconti}, we have $J(G)=\amalg _{\g \in \GN }J_{\g }.$ 
Combining this with \cite[Theorem 2.7]{SdpbpI}, we obtain that 
 either $J(h_{1})<_{s}J(h_{2})$ or $J(h_{2})<_{s}J(h_{1})$ holds.  
 We now assume that $J(h_{1})<_{s}J(h_{2})$ (which is equivalent to  $K(h_{1})\subset \mbox{int}(K(h_{2}))$).  
Then, by \cite[Proposition 2.24]{SdpbpI}, 
 $J(h_{1})\subset J_{\min }(G)$ and $J(h_{2})\subset J_{\max }(G).$ 
By Lemma~\ref{l:bdjconti} again, 
it follows that 
 $J(h_{1})=J_{\min }(G)$ and $J(h_{2})=J_{\max}(G).$ Let $A=K(h_{2})\setminus \mbox{int}(K(h_{1})).$ 
 We now prove the following claim. \\ 
Claim 1. $h_{1}^{-1}(A)\cup h_{2}^{-1}(A)\subset A.$ 

 To prove this claim, let $\alpha =(h_{2},h_{1},h_{1},\ldots )\in \GN .$ 
Then $J_{\alpha }=h_{2}^{-1}(J(h_{1})).$ 
Since $J(h_{1})=J_{\min }(G)$, 
we obtain that 
$J(h_{1})<_{s}J_{\alpha }=h_{2}^{-1}(J(h_{1})).$ 
Therefore $h_{2}^{-1}(A)\subset A.$ Similarly, 
letting $\beta =(h_{1},h_{2},h_{2},\ldots )\in \GN $, 
we have $J_{\beta }=h_{1}^{-1}(J(h_{2}))<_{s}J(h_{2})$ and 
$h_{1}^{-1}(A)\subset A.$ Thus we have proved Claim 1. 

 We have that $h_{1}^{-1}(A) $ and $h_{2}^{-1}(A)$ are connected compact sets. 
We prove the following claim.

\noindent Claim 2. $J_{\beta }=h_{1}^{-1}(J(h_{2}))<_{s}J_{\alpha } =h_{2}^{-1}(J(h_{1})).$ In particular, 
$h_{1}^{-1}(A)<_{s}h_{2}^{-1}(A).$ 

 To prove this claim, suppose that $J_{\beta }<_{s}J_{\alpha }$ does not hold. 
 Then by \cite[Theorem 2.7]{SdpbpI}, 
 $J_{\alpha }<_{s}J_{\beta }.$ 
 This implies that $A=h_{1}^{-1}(A)\cup h_{2}^{-1}(A).$ 
 By \cite[Corollary 3.2]{HM}, we have $J(G)\subset A.$ 
 Since $J(G)$ is disconnected (assumption) and since $A$ is connected, 
 $F(G)\cap A\neq \emptyset .$ 
 Let $y\in F(G)\cap A.$ Since $A=h_{1}^{-1}(A)\cup h_{2}^{-1}(A)$, 
there exists an element $\gamma \in \GN $ such that 
for each $n\in \NN $, $\g _{n,1}(y)\in A.$ 
Since $y\in A\cap F(G)$ and $\bigcup _{g\in G}g(F(G))\subset F(G)$, 
  $\gamma _{n,1}(y)\in F_{\infty }(h_{1})\cap A$ for each $n\in \NN .$ 
Therefore there exists a strictly increasing sequence $\{ n_{j}\} _{j=1}^{\infty }$ in $\NN $ 
such that 
for each $j$, $\g _{n_{j}+1}=h_{2}.$ 
Since $y\in F_{\g }$, we may assume that there exists an open neighborhood $U$ of $y$ in $\CCI $ 
and a holomorphic map $\varphi :U\rightarrow \CCI $ such that 
$\g _{n_{j},1}\rightarrow \varphi $ uniformly on $U$ as $j\rightarrow \infty .$ 
Since $\g _{n_{j},1}(y)\in F_{\infty }(h_{1})\cap A\subset (\CCI \setminus \hat{K}(G))\cap A$ for each $j$, 
\cite[Lemma 5.6]{SdpbpII} implies 
that 
there exists a constant $c\in \CC $ such that $\varphi =c$ on $U.$ 
By \cite[Lemma 3.13]{SdpbpIII}, 
it follows that $c\in P^{\ast }(G).$ 
Moreover, $\gamma _{n_{j}+1,1}\rightarrow h_{2}(c)\in P^{\ast }(G)$ as $j\rightarrow \infty $ 
uniformly on $U$.  
Since $P^{\ast }(G)\subset K(h_{1})$ and 
since $\g _{n_{j}+1,1}(y)\in F_{\infty }(h_{1})$ for each $j$, it follows that 
$d(\g _{n_{j}+1,1}(y),J(h_{1}))\rightarrow 0$ as $j\rightarrow \infty .$ 
Combining this with that $\g _{n_{j}+1}=h_{2}$ for each $j$, we obtain that 
$d(\g _{n_{j},1}(y), h_{2}^{-1}(J(h_{1})))\rightarrow \infty .$ 
Since $J(h_{1})<_{s}h_{2}^{-1}(J(h_{1}))$, 
it follows that $c\in F_{\infty }(h_{1}).$ However, this is a contradiction, 
Hence, $J_{\beta }<_{s}J_{\alpha }.$ Thus we have proved Claim 2. 

From Claims 1 and 2, we obtain that 
$(h_{1},h_{2})$ satisfies the open set condition with $U$ and 
$K(h_{1})\subset h_{1}^{-1}(K(h_{2}))\subset h_{2}^{-1}(K(h_{1}))\subset K(h_{2})$. 

By \cite[Theorem 2.20-4]{SdpbpI}, $h_{2}$ is hyperbolic and $J(h_{2})$ is a quasicircle. 

Thus we have proved our proposition. 
\end{proof}
\begin{lem}
\label{l:pfmainth1}
Statement~\ref{spacemainth1} in Theorem~\ref{spacemainth} holds.  
\end{lem}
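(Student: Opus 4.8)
The plan is to obtain Statement~\ref{spacemainth1} directly from the two results already established in this subsection, Lemma~\ref{l:bdjconti} and Proposition~\ref{p:dosc}, after matching up the two notations for the fiberwise Julia sets. First I would set $\Gamma = \{h_{1},h_{2}\}$ and $G = \langle h_{1},h_{2}\rangle$, and introduce the map $\iota :\{1,2\}^{\NN }\to \GN $ sending $w=(w_{1},w_{2},\ldots )$ to the sequence of polynomials $\iota (w):=(h_{w_{1}},h_{w_{2}},\ldots )$. Comparing the definition of $F_{w}(\tilde{h})$ (the maximal open set where $\{ h_{w_{n}}\circ \cdots \circ h_{w_{1}}\} _{n}$ is normal) with that of $F_{\gamma }$ for a sequence $\gamma $ of polynomials (the maximal open set where $\{ \gamma _{n,1}\} _{n}$ is normal), one sees at once that $F_{\iota (w)}=F_{w}(\tilde{h})$, hence $J_{\iota (w)}=J_{w}(\tilde{h})$, for every $w\in \{1,2\}^{\NN }$.

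Next I would record that $\iota $ is a continuous bijection: it is clearly continuous, and it is injective because $h_{1}\neq h_{2}$. The latter holds since $(h_{1},h_{2})\in {\cal B}$ forces $P^{\ast }(h_{j})\subset P^{\ast }(G)$ to be bounded in $\CC $ for $j=1,2$, so $J(h_{1})$ and $J(h_{2})$ are connected by the classical criterion for single polynomials; were $h_{1}=h_{2}$ we would get $J(G)=J(h_{1})$ connected, contradicting $(h_{1},h_{2})\in {\cal D}$. Since $\{1,2\}^{\NN }$ is compact and $\GN $ Hausdorff, $\iota $ is in fact a homeomorphism.

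Then I would invoke Lemma~\ref{l:bdjconti} (applicable since $(h_{1},h_{2})\in {\cal B}\cap {\cal D}$), which gives $h_{1}^{-1}(J(G))\cap h_{2}^{-1}(J(G))=\emptyset $, the disjoint decomposition $J(G)=\amalg _{\gamma \in \GN }J_{\gamma }$, the continuity of $\gamma \mapsto J_{\gamma }$ on $\GN $ in the Hausdorff metric, and the connectedness of each $J_{\gamma }$. Transporting these along the homeomorphism $\iota $ and using $J_{\iota (w)}=J_{w}(\tilde{h})$ yields exactly $J(G)=\amalg _{w\in \{1,2\}^{\NN }}J_{w}(\tilde{h})$, the connectedness of each $J_{w}(\tilde{h})$, and the continuity of $w\mapsto J_{w}(\tilde{h})$ on $\{1,2\}^{\NN }$. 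Finally, Proposition~\ref{p:dosc} (again applicable because $(h_{1},h_{2})\in {\cal B}\cap {\cal D}$) asserts that $(h_{1},h_{2})$ satisfies the open set condition, which is the remaining assertion of Statement~\ref{spacemainth1}.

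Since the whole argument reduces to quoting Lemma~\ref{l:bdjconti} and Proposition~\ref{p:dosc}, there is essentially no real obstacle; the only point needing a line of care is the bookkeeping identification $J_{\iota (w)}=J_{w}(\tilde{h})$ together with the fact that $\iota $ matches the index set $\{1,2\}^{\NN }$ with $\GN $, so that the disjoint union over $\GN $ in Lemma~\ref{l:bdjconti} becomes the disjoint union over $\{1,2\}^{\NN }$ claimed in Theorem~\ref{spacemainth}-\ref{spacemainth1}.
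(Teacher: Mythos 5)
Your proposal is correct and takes exactly the same route as the paper, which simply cites Lemma~\ref{l:bdjconti} and Proposition~\ref{p:dosc}; the extra bookkeeping you supply (the homeomorphism $\iota:\{1,2\}^{\NN}\to\GN$ and the identification $J_{\iota(w)}=J_{w}(\tilde{h})$) is a sound and helpful unpacking of what the paper leaves implicit.
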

\begin{proof}
Statement~\ref{spacemainth1} in Theorem~\ref{spacemainth} follows from Lemma~\ref{l:bdjconti} and Proposition~\ref{p:dosc}. 
\end{proof}
The following proposition is the key to proving many results in this paper. 
\begin{prop}
\label{p:bdyosc}
Let $(h_{1},h_{2})\in \overline{{\cal D}}\cap ({\cal B}\cap {\cal H}).$ 
Then, either {\em (1)} $K(h_{1})\subset K(h_{2})$ or {\em (2)} 
$K(h_{2})\subset K(h_{1})$ holds. 
If {\em (1)} holds then, setting $U_{h_{1},h_{2}}:=(\mbox{{\em int}}(K(h_{2})))\setminus K(h_{1})$, 
we have all of the following. 
\begin{itemize}
\item[{\em (a)}] 
$K(h_{1})\subset h_{1}^{-1}(K(h_{2}))\subset h_{2}^{-1}(K(h_{1}))\subset K(h_{2}).$ 

\item[{\em (b)}] 
$h_{1}^{-1}(U_{h_{1},h_{2}})\amalg h_{2}^{-1}(U_{h_{1},h_{2}})\subset U_{h_{1},h_{2}}.$ 

\item[{\em (c)}] 
$J(h_{2})$ is a quasicircle. 
\end{itemize}

\end{prop}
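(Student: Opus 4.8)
The plan is to deduce Proposition~\ref{p:bdyosc} from Proposition~\ref{p:dosc} by a limiting argument. Since $(h_1,h_2)\in\overline{\cal D}\cap{\cal B}\cap{\cal H}$ and ${\cal B}\cap{\cal H}$ is open (Lemma~\ref{l:hbdopen}), we may choose a sequence $(h_1^{(k)},h_2^{(k)})\in{\cal D}\cap{\cal B}\cap{\cal H}$ converging to $(h_1,h_2)$ in ${\cal P}^2$. For each $k$, Proposition~\ref{p:dosc} applies: after passing to a subsequence we may assume that for all $k$ the same alternative holds, say $K(h_1^{(k)})\subset\mbox{int}(K(h_2^{(k)}))$, together with the chain of inclusions $K(h_1^{(k)})\subset (h_1^{(k)})^{-1}(K(h_2^{(k)}))\subset (h_2^{(k)})^{-1}(K(h_1^{(k)}))\subset K(h_2^{(k)})$, hyperbolicity of $h_2^{(k)}$, and that $J(h_2^{(k)})$ is a quasicircle. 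First I would record the standard continuity facts needed: for a fixed-degree convergent sequence $g_k\to g$ in ${\cal P}$ with $g$ hyperbolic, one has $J(g_k)\to J(g)$ and $K(g_k)\to K(g)$ in the Hausdorff metric (using stability of hyperbolicity and of the filled Julia set), and $\hat K$-type sets vary upper/lower semicontinuously in the appropriate sense. These are available from \cite{S6} and the references on hyperbolic semigroups.

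Next I would pass to the limit in each ingredient. Hausdorff convergence $K(h_j^{(k)})\to K(h_j)$ turns $K(h_1^{(k)})\subset K(h_2^{(k)})$ into $K(h_1)\subset K(h_2)$, giving alternative (1) (the strict-interior statement degenerates in the limit to plain inclusion, which is exactly what the proposition claims). For part (a), the inclusions $K(h_1^{(k)})\subset (h_1^{(k)})^{-1}(K(h_2^{(k)}))\subset (h_2^{(k)})^{-1}(K(h_1^{(k)}))\subset K(h_2^{(k)})$ pass to the limit because taking preimages under a convergent sequence of polynomials is continuous with respect to Hausdorff convergence of compact sets (here one uses that the limiting maps $h_1,h_2$ are non-constant and that $K(h_j)$ has no isolated points). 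For part (c), hyperbolicity of $h_2^{(k)}$ is not obviously inherited, but here we may invoke $(h_1,h_2)\in{\cal H}$: since $P(h_2)\subset P(\langle h_1,h_2\rangle)\subset F(\langle h_1,h_2\rangle)$ and $h_2(F(G))\subset F(G)$, the single map $h_2$ is hyperbolic, and then the fact that its Julia set is a quasicircle follows from \cite[Theorem 2.20-4]{SdpbpI} once we know $K(h_1)\subset K(h_2)$ with the associated invariance, exactly as in the proof of Proposition~\ref{p:dosc}. Finally part (b): from (a) one gets $h_1^{-1}(U_{h_1,h_2})\subset U_{h_1,h_2}$ and $h_2^{-1}(U_{h_1,h_2})\subset U_{h_1,h_2}$ directly (the set $U_{h_1,h_2}=\mbox{int}(K(h_2))\setminus K(h_1)$ maps into itself under each $h_j^{-1}$ by the inclusion chain), and disjointness $h_1^{-1}(U_{h_1,h_2})\cap h_2^{-1}(U_{h_1,h_2})=\emptyset$ comes from $h_1^{-1}(K(h_2))\subset h_2^{-1}(K(h_1))$, which forces $h_1^{-1}(U_{h_1,h_2})$ into $h_2^{-1}(K(h_1))$ and $h_2^{-1}(U_{h_1,h_2})$ into the complement of $h_2^{-1}(K(h_1))$.

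I would then handle one subtlety separately: it is conceivable that along the approximating sequence sometimes alternative (1) and sometimes alternative (2) occurs and that $K(h_1)=K(h_2)$ in the limit, so that ``(1) holds'' is only partly informative. In that degenerate case $(h_1,h_2)\in{\cal Q}$-adjacent behavior can appear, but the conclusion of the proposition is symmetric in the two alternatives and the inclusion $K(h_1)\subset K(h_2)$ (or its reverse) still holds, so the statement as phrased — ``either (1) or (2)'' together with the consequences ``if (1) holds'' — is unaffected; one simply observes that the limiting inclusion chain in (a) degenerates to equalities, which is still a valid instance of (a).

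The main obstacle I anticipate is the continuity of $K(\cdot)$ and of preimages under Hausdorff convergence at the limit point: $K(h_j^{(k)})\to K(h_j)$ requires $h_j$ to be hyperbolic (or at least that $J(h_j)$ be stable), and upper semicontinuity of $K$ is automatic while lower semicontinuity is not in general. The hypothesis $(h_1,h_2)\in{\cal H}$ is precisely what rescues this, since it forces each $h_j$ to be a hyperbolic polynomial with $J(h_j)$ moving continuously; making this rigorous — in particular verifying that the inclusions survive the limit without collapsing in a way that breaks the open set condition geometry — is where the real work lies, and it is essentially a careful bookkeeping of Hausdorff limits combined with the structure theory of ${\cal G}_{dis}$ from \cite{SdpbpI} recalled in Remark~\ref{r:icjto}.
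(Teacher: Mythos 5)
Your overall strategy --- approximate $(h_1,h_2)$ by a sequence $(h_1^{(k)},h_2^{(k)})\in{\cal D}\cap{\cal B}\cap{\cal H}$, apply Proposition~\ref{p:dosc} along the sequence, and pass to the limit --- is exactly the paper's. Where you diverge is in \emph{how} the limit is taken for (1) and (a): you invoke Hausdorff continuity of $K(\cdot)$ at hyperbolic polynomials together with continuity of the operation $(h,A)\mapsto h^{-1}(A)$, and deduce the inclusions directly. The paper instead argues by contradiction: it supposes $h_1^{-1}(K(h_2))\not\subset h_2^{-1}(K(h_1))$, finds a boundary point $w\in h_1^{-1}(J(h_2))\setminus h_2^{-1}(K(h_1))$ whose image $h_1^l h_2(w)$ lies in a fixed neighbourhood $V$ of $\infty$ contained in $F(h_{1,n},h_{2,n})\cap F(h_1,h_2)$ for all $n$, and then uses only the (one-sided) convergence $J(h_{2,n})\to J(h_2)$ to produce a point violating the chain~(\ref{eq:kh1n}) for large $n$. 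Your route, if written out carefully, buys you less bookkeeping and covers parts (1), (a), (b) in one sweep via two general continuity facts; the paper's route is more elementary in that it only uses continuity of $J$ (which is standard) rather than of $K$ and of preimages, and it avoids having to justify lower semicontinuity of filled Julia sets. Both are legitimate.

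However, your argument for \emph{(c)} has a real gap. You invoke \cite[Theorem 2.20-4]{SdpbpI} to conclude $J(h_2)$ is a quasicircle, but that theorem is a statement about $J_{\max}(G)$ for $G\in{\cal G}_{dis}$, i.e., it requires $J(\langle h_1,h_2\rangle)$ to be \emph{disconnected}. For $(h_1,h_2)\in\partial{\cal D}\cap{\cal B}\cap{\cal H}$ the Julia set $J(\langle h_1,h_2\rangle)$ is typically connected, so the theorem does not apply at the limit point; it only applies to the approximating $(h_{1,n},h_{2,n})$. The correct step is the one in the paper: by Proposition~\ref{p:dosc}, each $J(h_{2,n})$ is a quasicircle; $h_2$ is hyperbolic (which you correctly deduce from $(h_1,h_2)\in{\cal H}$); and $h_{2,n}\to h_2$; therefore $J(h_2)$ is a quasicircle by structural stability of hyperbolic polynomials (the quasiconformal conjugacy between $h_{2,n}$ and $h_2$ on a neighbourhood of the Julia set carries the quasicircle $J(h_{2,n})$ to $J(h_2)$). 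Without that stability argument, (c) does not follow from (a) and (b) alone, because the strict inclusion $K(h_1)\subset\mbox{int}(K(h_2))$ that made the $J_{\max}$-machinery available in Proposition~\ref{p:dosc} degenerates to mere containment at the boundary.
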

\begin{proof}
Let $(h_{1},h_{2})\in \overline{{\cal D}}\cap ({\cal B}\cap {\cal H}).$ 
Then there exists a sequence $\{ (h_{1,n}, h_{2,n})\} _{n\in \NN }$ 
in ${\cal D}\cap {\cal H}\cap {\cal B} $ such that 
$(h_{1,n},h_{2,n})\rightarrow (h_{1},h_{2})$ as $n\rightarrow \infty .$ 
By Proposition~\ref{p:dosc}, we may assume that for each $n\in \NN $, 
$K(h_{1,n})\subset \mbox{int}(K(h_{2,n}))$, 
\begin{equation}
\label{eq:kh1n}
K(h_{1,n})\subset h_{1,n}^{-1}(K(h_{2,n}))\subset h_{2,n}^{-1}(K(h_{1,n}))\subset K(h_{2,n}),
\end{equation}   
 and 
$h_{1,n}^{-1}(U_{n})\amalg h_{2,n}^{-1}(U_{n})\subset U_{n}$, 
where $U_{n}:= (\mbox{int}(K(h_{2,n})))\setminus K(h_{1,n})$. 
 Suppose that $h_{1}^{-1}(K(h_{2}))\cap (\CCI \setminus h_{2}^{-1}(K(h_{1})))\neq \emptyset .$ 
Then, 
\begin{equation}
\label{eq:bdyh1k}
\partial (h_{1}^{-1}(K(h_{2})))\setminus h_{2}^{-1}(K(h_{1}))\neq \emptyset .
\end{equation} 
For, let $z\in h_{1}^{-1}(K(h_{2}))\cap (\CCI \setminus h_{2}^{-1}(K(h_{1})))\neq \emptyset $ be a point. 
If $z\in \mbox{int}(h_{1}^{-1}(K(h_{2})))$, 
then since $\CCI \setminus h_{2}^{-1}(K(h_{1}))$ is connected (this is because 
$(h_{1},h_{2})\in {\cal B}$),  
there exists a curve $\gamma :[0,1] \rightarrow \CCI \setminus h_{2}^{-1}(K(h_{1}))$ 
such that $\gamma (0)=z\in \mbox{int}(h_{1}^{-1}(K(h_{2})))$ and 
$\gamma (1)=\infty \in \CCI \setminus h_{1}^{-1}(K(h_{2})).$ Therefore there exists a 
$t\in [0,1]$ with $\gamma (t)\in \partial (h_{1}^{-1}(K(h_{2}))).$ 
Thus (\ref{eq:bdyh1k}) holds. 
Let $V$ be an open disc neighborhood of $\infty $ such that for each 
$n\in \NN $, 
$V\subset F(h_{1,n},h_{2,n})\cap F(h_{1},h_{2}).$ 
By (\ref{eq:bdyh1k}), there exist a point $w\in h_{1}^{-1}(J(h_{2}))$ and a positive integer 
$l$ 
such that $h_{1}^{l}h_{2}(w)\in V.$ 
Since $J(h_{2,n})\rightarrow J(h_{2})$ as $n\rightarrow \infty $ with respect to the 
Hausdorff metric, for a sufficiently large $n\in \NN $, there exists a point 
$w_{0}\in h_{1,n}^{-1}(J(h_{2,n}))$ such that $h_{1,n}^{l}h_{2,n}(w_{0})\in V$. 
Therefore $w_{0}\in \CC \setminus h_{2,n}^{-1}(K(h_{1,n})).$ 
Hence, $h_{1,n}^{-1}(J(h_{2,n}))\cap (\CC \setminus h_{2,n}^{-1}(K(h_{1,n})))\neq \emptyset $. 
However, this contradicts (\ref{eq:kh1n}). Thus, we should have that 
$h_{1}^{-1}(K(h_{2}))\subset h_{2}^{-1}(K(h_{1})).$ 
Similarly, by using the fact $K(h_{1,n})\subset \mbox{int}(K(h_{2,n}))$ for each $n$, 
 we obtain $K(h_{1})\subset K(h_{2}).$ 
Therefore, 
$K(h_{1})\subset h_{1}^{-1}(K(h_{1}))\subset h_{1}^{-1}(K(h_{2}))\subset h_{2}^{-1}(K(h_{1}))
\subset h_{2}^{-1}(K(h_{2}))=K(h_{2}).$ Thus statement (a) holds.  

 We next prove statement (b). Let $U=(\mbox{int}(K(h_{2})))\setminus K(h_{1}).$ 
By statement (a), we have 
\begin{align}
\label{eq:h1-uint} 
h_{1}^{-1}(U)  = & h_{1}^{-1}((\mbox{int}(K(h_{2})))\setminus K(h_{1}))
              =  (\mbox{int}(h_{1}^{-1}(K(h_{2}))))\setminus h_{1}^{-1}(K(h_{1})) 
              =  \mbox{int}(h_{1}^{-1}(K(h_{2}))))\setminus K(h_{1})\\  
              \subset & (\mbox{int}(K(h_{2})))\setminus K(h_{1})=U.
\end{align} 
Similarly, by statement (a), we have 
\begin{align}
\label{eq:h2-uint}
h_{2}^{-1}(U) = & h_{2}^{-1}((\mbox{int}(K(h_{2})))\setminus K(h_{1}))
              = h_{2}^{-1}(\mbox{int}(K(h_{2})))\setminus h_{2}^{-1}(K(h_{1})) 
              =  (\mbox{int}(K(h_{2})))\setminus h_{2}^{-1}(K(h_{1}))\\ 
             \subset & (\mbox{int}(K(h_{2})))\setminus K(h_{1})=U.    
\end{align}
Moreover, since we have $h_{1}^{-1}(K(h_{2}))\subset h_{2}^{-1}(K(h_{1}))$ (statement (a)), 
 
\begin{equation}
\label{eq:inth1kh2}
(\mbox{int}(h_{1}^{-1}(K(h_{2}))))\cap ((\mbox{int}(K(h_{2})))\setminus h_{2}^{-1}(K(h_{1})))=\emptyset .
\end{equation}
Combining (\ref{eq:h1-uint}), (\ref{eq:h2-uint}), and (\ref{eq:inth1kh2}), 
we obtain $h_{1}^{-1}(U)\cap h_{2}^{-1}(U)=\emptyset .$ Therefore statement (b) holds. 
Since $J(h_{2,n})$ is a quasicircle for each $n$, since $h_{2}$ is hyperbolic, and since 
$h_{2,n}\rightarrow h_{2}$, we obtain that $J(h_{2})$ is a quasicircle. Hence statement (c) holds. 

 Thus we have proved Proposition~\ref{p:bdyosc}.
\end{proof}
\begin{lem}
\label{l:pfmainthosc}
Statement~\ref{spacemainthosc} in Theorem~\ref{spacemainth} holds.  
\end{lem}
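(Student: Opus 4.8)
The statement to prove is Lemma~\ref{l:pfmainthosc}, namely Theorem~\ref{spacemainth}-\ref{spacemainthosc}: if $(h_1,h_2)\in(\overline{\cal D}\cap{\cal B}\cap{\cal H})\setminus{\cal Q}$ then $(h_1,h_2)$ satisfies the open set condition. The machinery is essentially all in place via Proposition~\ref{p:bdyosc}; the plan is to upgrade its conclusion \textbf{(b)} from ``$U_{h_1,h_2}$ satisfies the separation requirement'' to ``$U_{h_1,h_2}$ is a genuine non-empty open witness for the open set condition,'' using the hypothesis that the element is not in ${\cal Q}$.

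\textbf{Plan.} First I would invoke Proposition~\ref{p:bdyosc}: after possibly swapping the indices $h_1,h_2$, we may assume $K(h_1)\subset K(h_2)$, and with $U:=U_{h_1,h_2}=(\operatorname{int}(K(h_2)))\setminus K(h_1)$ we have $h_1^{-1}(U)\amalg h_2^{-1}(U)\subset U$ (statement (b)), $K(h_1)\subset h_1^{-1}(K(h_2))\subset h_2^{-1}(K(h_1))\subset K(h_2)$ (statement (a)), and $J(h_2)$ is a quasicircle (statement (c)). The definition of open set condition in the Introduction requires a \emph{non-empty} open set $U$ with $\cup_j h_j^{-1}(U)\subset U$ and $\cap_j h_j^{-1}(U)=\emptyset$; the second two conditions are exactly statement (b), so the only remaining task is to show $U\neq\emptyset$. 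Thus the whole lemma reduces to: \emph{if $(h_1,h_2)\notin{\cal Q}$ then $(\operatorname{int}(K(h_2)))\setminus K(h_1)\neq\emptyset$.}

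\textbf{Key step / main obstacle.} Suppose for contradiction that $U=\emptyset$, i.e.\ $\operatorname{int}(K(h_2))\subset K(h_1)$. Combined with $K(h_1)\subset K(h_2)$ from (a), and the fact that $K(h_1),K(h_2)$ are the filled-in Julia sets of single polynomials (so $K(h_i)=\overline{\operatorname{int}(K(h_i))}$ when $J(h_i)$ is, say, connected — which here holds since $(h_1,h_2)\in{\cal B}$ forces each $P^*(h_i)$ bounded, hence each $J(h_i)$ connected), I would deduce $K(h_1)=K(h_2)$, and therefore (taking boundaries) $J(h_1)=J(h_2)$. Now $J(h_2)$ is a quasicircle by (c), so $J(h_1)=J(h_2)$ are both quasicircles, which puts $(h_1,h_2)\in{\cal Q}$ — contradicting the hypothesis. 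The main obstacle is handling the equality $\operatorname{int}(K(h_2))\subset K(h_1)\subset K(h_2)$ carefully: I must ensure $K(h_2)=\overline{\operatorname{int} K(h_2)}$ so that $\operatorname{int}(K(h_2))\subset K(h_1)$ and $K(h_1)$ closed give $K(h_2)\subset K(h_1)$, and similarly that the resulting set equality forces $J(h_1)=\partial K(h_1)=\partial K(h_2)=J(h_2)$. Both facts are standard for filled Julia sets of polynomials with connected Julia set (no interior-free components, $J(h)=\partial K(h)$), and the connectedness of each $J(h_i)$ follows from $(h_1,h_2)\in{\cal B}$ via the classical equivalence ``$J(f)$ connected $\iff$ $P^*(f)$ bounded'' recalled just before Lemma~\ref{l:hbdopen}. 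Once $U\neq\emptyset$ is established, statement (b) of Proposition~\ref{p:bdyosc} delivers the open set condition verbatim, completing the proof.

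\begin{proof}
By Proposition~\ref{p:bdyosc}, after possibly interchanging $h_1$ and $h_2$ we may assume $K(h_1)\subset K(h_2)$, and, writing $U:=U_{h_1,h_2}=(\operatorname{int}(K(h_2)))\setminus K(h_1)$, we have statements (a), (b), (c) of that proposition. In particular $\cup_{j=1}^{2}h_j^{-1}(U)\subset U$ and $\cap_{j=1}^{2}h_j^{-1}(U)=\emptyset$ by (b). Hence, to see that $(h_1,h_2)$ satisfies the open set condition, it suffices to show $U\neq\emptyset$.

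Since $(h_1,h_2)\in{\cal B}$, the planar postcritical set $P^{\ast}(h_j)$ is bounded in $\CC$ for each $j$, so $J(h_j)$ is connected and $K(h_j)=\overline{\operatorname{int}(K(h_j))}$ with $J(h_j)=\partial K(h_j)$, for $j=1,2$. Suppose, for a contradiction, that $U=\emptyset$, i.e.\ $\operatorname{int}(K(h_2))\subset K(h_1)$. Since $K(h_1)$ is closed, $K(h_2)=\overline{\operatorname{int}(K(h_2))}\subset K(h_1)$; combined with $K(h_1)\subset K(h_2)$ this gives $K(h_1)=K(h_2)$, and therefore $J(h_1)=\partial K(h_1)=\partial K(h_2)=J(h_2)$. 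By statement (c) of Proposition~\ref{p:bdyosc}, $J(h_2)$ is a quasicircle, hence so is $J(h_1)=J(h_2)$. Thus $(h_1,h_2)\in{\cal Q}$, contradicting the hypothesis $(h_1,h_2)\in(\overline{{\cal D}}\cap{\cal B}\cap{\cal H})\setminus{\cal Q}$.

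Therefore $U\neq\emptyset$, and together with statement (b) of Proposition~\ref{p:bdyosc} this shows that $(h_1,h_2)$ satisfies the open set condition with the open set $U=U_{h_1,h_2}$. This proves Statement~\ref{spacemainthosc} in Theorem~\ref{spacemainth}.
\end{proof}
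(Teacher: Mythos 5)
Your proof is correct and follows the same route as the paper: invoke Proposition~\ref{p:bdyosc} to get the nesting and the separation statement (b), then use $(h_1,h_2)\notin{\cal Q}$ together with the quasicircle property (c) to force $U=(\mbox{int}(K(h_2)))\setminus K(h_1)\neq\emptyset$; the paper phrases this directly as ``$(h_1,h_2)\notin{\cal Q}$ implies $K(h_1)\subsetneqq K(h_2)$, hence $U\neq\emptyset$'' while you run the contrapositive, but the content is identical.

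One small inaccuracy in the justification: you assert that $P^{\ast}(h_j)$ bounded (equivalently, $J(h_j)$ connected) implies $K(h_j)=\overline{\mbox{int}(K(h_j))}$ for $j=1,2$. That implication is false in general --- for $h(z)=z^2-2$ one has $J(h)=K(h)=[-2,2]$ connected but $\mbox{int}(K(h))=\emptyset$. Fortunately you only actually use the identity for $j=2$, and there it is correct because $J(h_2)$ is a quasicircle (Proposition~\ref{p:bdyosc}-(c)), so $K(h_2)$ is the closure of the Jordan domain it bounds. Citing (c) rather than connectedness would make the step airtight; as written the conclusion is right but the stated reason is not.
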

\begin{proof}
Let $(h_{1},h_{2})\in (\overline{{\cal D}}\cap {\cal B}\cap {\cal H})\setminus {\cal Q}.$ 
Then by Proposition~\ref{p:bdyosc}, either $K(h_{1})\subset K(h_{2})$ or $K(h_{2})\subset K(h_{1}).$ 
We may assume $K(h_{1})\subset K(h_{2}).$ 
By Proposition~\ref{p:bdyosc} again, we obtain that setting 
$U:= (\mbox{int}(K(h_{2})))\setminus K(h_{1})$, 
$h_{1}^{-1}(U)\amalg h_{2}^{-1}(U)\subset U.$ Moreover, 
$J(h_{2})$ is a quasicircle. 
Since $(h_{1},h_{2})\notin {\cal Q}$, 
it follows that $K(h_{1})\subsetneqq K(h_{2}).$ Therefore, $U\neq \emptyset $. 
Thus, $(h_{1},h_{2})$ satisfies the open set condition with $U.$ 
Hence statement~\ref{spacemainthosc} in Theorem~\ref{spacemainth} holds. 
\end{proof} 
\begin{lem}
\label{l:h1h2bdhb}
Let $(h_{1},h_{2})\in (\partial {\cal D})\cap {\cal B}\cap {\cal H}.$ 
Then $h_{1}^{-1}(J(h_{2}))\cap h_{2}^{-1}(J(h_{1}))\neq \emptyset .$ 
\end{lem}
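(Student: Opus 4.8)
The plan is to argue by contradiction: suppose $(h_1,h_2)\in(\partial{\cal D})\cap{\cal B}\cap{\cal H}$ but $h_1^{-1}(J(h_2))\cap h_2^{-1}(J(h_1))=\emptyset$. By Proposition~\ref{p:bdyosc} we may assume $K(h_1)\subset K(h_2)$, and then (a) of that proposition gives the chain $K(h_1)\subset h_1^{-1}(K(h_2))\subset h_2^{-1}(K(h_1))\subset K(h_2)$, while (b) gives $h_1^{-1}(U)\amalg h_2^{-1}(U)\subset U$ where $U=(\mathrm{int}(K(h_2)))\setminus K(h_1)$, so $(h_1,h_2)$ already satisfies the open set condition with $U$ provided $U\neq\emptyset$, i.e. provided $(h_1,h_2)\notin{\cal Q}$. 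The hypothesis that the two backward images of the two one-map Julia sets are disjoint should force the chain of inclusions to be "rigid" — I expect it forces $h_1^{-1}(K(h_2))=h_2^{-1}(K(h_1))$, or more precisely that the annular region $U$ decomposes as the disjoint union $h_1^{-1}(U)\amalg h_2^{-1}(U)$ with nothing left over, which is exactly the condition that makes $J(G)=h_1^{-1}(J(G))\cup h_2^{-1}(J(G))$ connected. Then $J(G)$ connected contradicts $(h_1,h_2)\in\partial{\cal D}$, since $\partial{\cal D}\cap{\cal B}\cap{\cal H}\subset{\cal C}\cap{\cal B}\cap{\cal H}$ but also, being on the boundary of ${\cal D}$, it must be approximable by disconnected-Julia-set parameters, and one should be able to derive a contradiction from the geometry.

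More concretely, first I would set $G=\langle h_1,h_2\rangle$ and recall $J(G)=h_1^{-1}(J(G))\cup h_2^{-1}(J(G))$. I would show that if $h_1^{-1}(J(h_2))\cap h_2^{-1}(J(h_1))=\emptyset$, then in fact $h_1^{-1}(J(G))\cap h_2^{-1}(J(G))=\emptyset$: the point is that $J(h_1)=J_{\min}$-type and $J(h_2)=J_{\max}$-type considerations (via \cite{SdpbpI}) together with the inclusion chain localize the possible overlap of $h_1^{-1}(J(G))$ and $h_2^{-1}(J(G))$ to a neighborhood of $h_1^{-1}(J(h_2))$ and $h_2^{-1}(J(h_1))$. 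Then by \cite[Theorems 1.7, 1.5]{S15} (the interaction cohomology criterion, as used in the proof of Lemma~\ref{l:bdjconti}), $h_1^{-1}(J(G))\cap h_2^{-1}(J(G))=\emptyset$ implies $J(G)$ is disconnected, i.e. $(h_1,h_2)\in{\cal D}$. But $\partial{\cal D}\cap{\cal D}=\emptyset$ since ${\cal D}$ is open (it is the preimage of an open condition; cf. Lemma~\ref{l:hbdopen} and the remarks around it), giving the contradiction.

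Alternatively — and this may be cleaner — I would use the stability of the open set condition together with the characterization of $\overline{\cal D}$. Since $(h_1,h_2)\in\partial{\cal D}$, there is a sequence $(h_{1,n},h_{2,n})\in{\cal D}\cap{\cal B}\cap{\cal H}$ converging to $(h_1,h_2)$, and by Proposition~\ref{p:dosc}, for each $n$ we have $h_{1,n}^{-1}(J(h_{2,n}))<_s J(h_{2,n})$ with the strict containment $K(h_{1,n})\subsetneq K(h_{2,n})$ of the annular region being nonempty and, crucially, $h_{1,n}^{-1}(J(G_n))\cap h_{2,n}^{-1}(J(G_n))=\emptyset$ forces (by the Claim~2 argument in that proof) $h_{1,n}^{-1}(J(h_{2,n}))<_s h_{2,n}^{-1}(J(h_{1,n}))$, a \emph{strict} surrounding inequality. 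Passing to the limit, the non-strict version $h_1^{-1}(J(h_2))\leq_s h_2^{-1}(J(h_1))$ survives, and the assumption $h_1^{-1}(J(h_2))\cap h_2^{-1}(J(h_1))=\emptyset$ would upgrade it to a strict one which persists in a neighborhood (the overlap being empty is an open condition on compact sets), contradicting that $(h_1,h_2)\in\partial{\cal D}$ is a limit of parameters; more precisely, emptiness of the overlap is itself an open condition, so $h_1^{-1}(J(h_2))\cap h_2^{-1}(J(h_1))=\emptyset$ on a whole neighborhood $V$ of $(h_1,h_2)$, hence $V\subset{\cal C}$ by the connectedness criterion, contradicting $(h_1,h_2)\in\overline{\cal D}$.

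The main obstacle I anticipate is the implication "$h_1^{-1}(J(h_2))\cap h_2^{-1}(J(h_1))=\emptyset\ \Longrightarrow\ h_1^{-1}(J(G))\cap h_2^{-1}(J(G))=\emptyset$ (equivalently $J(G)$ disconnected)" — that is, controlling \emph{all} of $J(G)$, not just the two explicit one-generator pieces, using the ordering structure on $\mathrm{Con}(J(G))$ from \cite{SdpbpI} and the nerve/interaction-cohomology machinery of \cite{S15}. One must rule out that the overlap of $h_1^{-1}(J(G))$ and $h_2^{-1}(J(G))$ occurs at some deeper component of $J(G)$ not equal to $J(h_1)$ or $J(h_2)$; the inclusion chain (a) of Proposition~\ref{p:bdyosc} and the fact that $J(h_1)$, $J(h_2)$ realize $J_{\min}$, $J_{\max}$ should confine the only candidate overlap locus, but writing this carefully (handling the boundary case where the semigroup may have connected Julia set with nonempty interior) is the delicate part.
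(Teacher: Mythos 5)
Your second paragraph captures the paper's overall strategy exactly: assume the overlap is empty, show $h_1^{-1}(J(G))\cap h_2^{-1}(J(G))=\emptyset$, conclude $J(G)$ disconnected, hence $(h_1,h_2)\in{\cal D}\cap{\cal B}\cap{\cal H}$, and use openness (of ${\cal D}\cap{\cal B}\cap{\cal H}$ — note that ${\cal D}$ by itself is not shown to be open in this paper, so you should invoke Lemma~\ref{l:hbdopen} rather than ``${\cal D}$ is open'') to contradict $(h_1,h_2)\in\partial{\cal D}$. But the key step you yourself flag as the anticipated obstacle — passing from $h_1^{-1}(J(h_2))\cap h_2^{-1}(J(h_1))=\emptyset$ to $h_1^{-1}(J(G))\cap h_2^{-1}(J(G))=\emptyset$ — is precisely where the content lies, and the tools you reach for there are not the right ones. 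There is no $J_{\min}/J_{\max}$ structure at a boundary point of ${\cal D}$: on $\partial{\cal D}\cap{\cal B}\cap{\cal H}$ the Julia set is connected, so $\mathrm{Con}(J(G))$ is a singleton and the surrounding-order decomposition of \cite{SdpbpI} does not apply. Likewise \cite[Theorems 1.5, 1.7]{S15} is used in Lemma~\ref{l:bdjconti} for the converse direction (disconnected $\Rightarrow$ disjoint backward images); the implication you need in that step (disjoint backward images $\Rightarrow$ disconnected) is trivial from $J(G)=h_1^{-1}(J(G))\cup h_2^{-1}(J(G))$ and needs no cohomology.

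The paper closes the gap entirely by elementary set-inclusion bookkeeping extracted from Proposition~\ref{p:bdyosc}: because $\partial(h_1^{-1}(K(h_2)))=h_1^{-1}(J(h_2))$, $\partial(h_2^{-1}(K(h_1)))=h_2^{-1}(J(h_1))$, and one already has $h_1^{-1}(K(h_2))\subset h_2^{-1}(K(h_1))$, the empty-overlap hypothesis upgrades this to $h_1^{-1}(K(h_2))\subset\mathrm{int}(h_2^{-1}(K(h_1)))$. The identities (\ref{eq:h1-uint}) and (\ref{eq:h2-uint}) established in the proof of Proposition~\ref{p:bdyosc} then give $h_1^{-1}(\overline{U})\subset h_1^{-1}(K(h_2))$ and $h_2^{-1}(\overline{U})\subset K(h_2)\setminus\mathrm{int}(h_2^{-1}(K(h_1)))$, so $h_1^{-1}(\overline{U})\cap h_2^{-1}(\overline{U})=\emptyset$, and since $J(G)\subset\overline{U}$ by \cite[Corollary 3.2]{HM} this yields the disjointness you want. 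So while your plan is pointed in the right direction, the crucial step requires only the computations already done inside Proposition~\ref{p:bdyosc}, not $J_{\min}/J_{\max}$ or interaction cohomology. Finally, the ``alternative'' in your third paragraph has a directional error: an open neighborhood $V$ on which $h_1^{-1}(J(h_2))\cap h_2^{-1}(J(h_1))=\emptyset$ would place $V$ inside ${\cal D}$ (disconnected), not ${\cal C}$; the conclusion $(h_1,h_2)\in\mathrm{int}({\cal D})$ still contradicts $(h_1,h_2)\in\partial{\cal D}$, but as written the argument is backwards.
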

\begin{proof}
Suppose $h_{1}^{-1}(J(h_{2}))\cap h_{2}^{-1}(J(h_{1}))= \emptyset .$
By Proposition~\ref{p:bdyosc}, we may assume that 
$K(h_{1})\subset K(h_{2}).$ 
Combining Proposition~\ref{p:bdyosc} and  
$h_{1}^{-1}(J(h_{2}))\cap h_{2}^{-1}(J(h_{1}))= \emptyset $, 
we obtain that 
\begin{equation}
\label{eq:h1kh2sint}
h_{1}^{-1}(K(h_{2}))\subset \mbox{int}(h_{2}^{-1}(K(h_{1}))). 
\end{equation}
Let $U=(\mbox{int}(K(h_{2})))\setminus K(h_{1}).$ 
By Proposition~\ref{p:bdyosc}-(b) and \cite[Corollary 3.2]{HM}, 
we have $J(G)\subset \overline{U}.$ 
By (\ref{eq:h1-uint}) and (\ref{eq:h2-uint}), 
we have that 
\begin{equation}
\label{eq:h1cush1k}
h_{1}^{-1}(\overline{U})\subset h_{1}^{-1}(K(h_{2})) \mbox{ and } 
h_{2}^{-1}(\overline{U})\subset 
K(h_{2})\setminus (\mbox{int}(h_{2}^{-1}(K(h_{1})))).
\end{equation}
Since we are assuming  $h_{1}^{-1}(J(h_{2}))\cap h_{2}^{-1}(J(h_{1}))= \emptyset $, 
from (\ref{eq:h1kh2sint}) and (\ref{eq:h1cush1k}) it follows that 
$h_{1}^{-1}(J(G))\cap h_{2}^{-1}(J(G))\subset h_{1}^{-1}(\overline{U})\cap h_{2}^{-1}(\overline{U})=\emptyset .$ 
Since $J(G)=h_{1}^{-1}(J(G))\cup h_{2}^{-1}(J(G))$ (\cite[Lemma 1.1.4]{S1}), 
we obtain that $J(G)$ is disconnected. 
It implies $(h_{1},h_{2})\in {\cal D}\cap {\cal B}\cap {\cal H}$. 
However, since ${\cal D}\cap {\cal B}\cap {\cal H}$ is an open subset of ${\cal P}^{2}$ 
(Lemma~\ref{l:hbdopen}), it contradicts $(h_{1},h_{2})\in (\partial {\cal D})\cap {\cal B}\cap {\cal H}.$ 
Therefore, we must have that 
$h_{1}^{-1}(J(h_{2}))\cap h_{2}^{-1}(J(h_{1}))\neq  \emptyset .$ 

Thus we have proved our lemma. 
\end{proof} 
\begin{lem}
\label{l:h1h2hbz0}
Let $(h_{1},h_{2})\in {\cal B}\cap {\cal H}.$ 
Suppose $h_{1}^{-1}(J(h_{2}))\cap h_{2}^{-1}(J(h_{1}))\neq \emptyset .$ 
Let $z_{0}\in h_{1}^{-1}(J(h_{2}))\cap h_{2}^{-1}(J(h_{1})).$ 
For each $b\in \CC $, let 
$S_{b}(z)=z+b$ and let $h_{3,b}=S_{b}\circ h_{2}\circ S_{b}^{-1}.$ 
Then, for each $\epsilon>0$, 
there exists a number $c\in \{ b\in \CC \mid |b|<\epsilon \} $ such that 
$h_{1}^{-1}(J(h_{3,c}))\setminus h_{3,c}^{-1}(K(h_{1}))\neq \emptyset $ 
and $h_{1}^{-1}(K(h_{3,c}))\neq h_{3,c}^{-1}(K(h_{1})).$ 
\end{lem}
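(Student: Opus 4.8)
The plan is to exploit the rigidity of the configuration $h_{1}^{-1}(K(h_{2})) \subset h_{2}^{-1}(K(h_{1}))$ coming from Proposition~\ref{p:bdyosc} and to perturb $h_{2}$ by a small translation in order to destroy the inclusion at the level of filled-in Julia sets. First I would set up the framework: write $G_{b} = \langle h_{1}, h_{3,b}\rangle$, note that $h_{3,0} = h_{2}$, and observe that $(h_{1}, h_{3,b}) \to (h_{1}, h_{2})$ in ${\cal P}^{2}$ as $b \to 0$, so for $|b|$ small we stay inside ${\cal B}\cap {\cal H}$ (this uses that ${\cal B}\cap {\cal H}$ is open, Lemma~\ref{l:hbdopen}), and $K(h_{3,b}) = S_{b}(K(h_{2}))$ depends continuously on $b$ in the Hausdorff metric (indeed $J(h_{3,b})$ is a continuously moving quasicircle by Proposition~\ref{p:bdyosc}(c) and hyperbolicity). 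Since $K(h_{1}) \subset K(h_{2})$ (the case we normalize to, by Proposition~\ref{p:bdyosc}), statement (a) of that proposition gives $h_{1}^{-1}(K(h_{2})) \subset h_{2}^{-1}(K(h_{1}))$, i.e.\ at $b=0$ the inclusion holds; the point is that this inclusion is unstable at $z_{0}$.

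The key geometric step is to use the hypothesis $z_{0} \in h_{1}^{-1}(J(h_{2})) \cap h_{2}^{-1}(J(h_{1}))$: the point $z_{0}$ lies simultaneously on the boundary $\partial(h_{1}^{-1}(K(h_{2})))$ and on the boundary $\partial(h_{2}^{-1}(K(h_{1})))$, so the two closed sets $h_{1}^{-1}(K(h_{2}))$ and $h_{2}^{-1}(K(h_{1}))$ are tangent (touch on their boundaries) at $z_{0}$ rather than one sitting strictly inside the interior of the other. Now a translation $S_{c}$ with $c$ of small modulus moves $K(h_{3,c}) = S_{c}(K(h_{2}))$, hence moves $h_{1}^{-1}(K(h_{3,c})) = h_{1}^{-1}(S_{c}(K(h_{2})))$, in a direction that depends (to first order) linearly and nondegenerately on $c$; choosing the direction of $c$ so that, near $z_{0}$, the set $h_{1}^{-1}(K(h_{3,c}))$ pokes out of $h_{3,c}^{-1}(K(h_{1}))$, we get a point $w$ near $z_{0}$ with $w \in h_{1}^{-1}(J(h_{3,c}))$ but $w \notin h_{3,c}^{-1}(K(h_{1}))$, which is exactly $h_{1}^{-1}(J(h_{3,c})) \setminus h_{3,c}^{-1}(K(h_{1})) \neq \emptyset$. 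The inequality $h_{1}^{-1}(K(h_{3,c})) \neq h_{3,c}^{-1}(K(h_{1}))$ is then immediate, since the first set contains a point outside the second. To make the ``pokes out'' argument rigorous I would argue by contradiction using the $V$-neighborhood-of-$\infty$ device already employed in the proof of Proposition~\ref{p:bdyosc}: if for all sufficiently small $c$ in some disc we had $h_{1}^{-1}(K(h_{3,c})) \subset h_{3,c}^{-1}(K(h_{1}))$, then passing to $c \to 0$ along a suitable direction and tracking the orbit of $z_{0}$ (which escapes to $F_{\infty}$ under a word $h_{1}^{\ell}h_{2}$ because $z_{0} \in h_{1}^{-1}(J(h_{2}))$ means $h_{1}(z_{0}) \in J(h_{2})$ so $h_{2}(h_{1}(z_{0}))$ lies off $K(h_{1})$ while — wait, more carefully — one uses $h_{2}^{-1}(J(h_{1}))\ni z_0$ to locate $z_0$ on $\partial(h_2^{-1}(K(h_1)))$ and perturbs transversally) would contradict the tangency at $z_{0}$.

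The main obstacle I anticipate is making precise the transversality/nondegeneracy claim: I must show that translating $K(h_{2})$ by a small $c$ actually changes the inclusion relation near $z_{0}$, i.e.\ that the boundary contact at $z_{0}$ is ``genuine'' and not some degenerate higher-order tangency that survives all small translations in every direction. The cleanest route is to avoid quantitative transversality entirely and run a pure topological/continuity argument: suppose the conclusion fails, so there is $\epsilon>0$ with $h_{1}^{-1}(K(h_{3,b})) \subset h_{3,b}^{-1}(K(h_{1}))$ \emph{and} $h_{1}^{-1}(K(h_{3,b})) = h_{3,b}^{-1}(K(h_{1}))$ for all $|b|<\epsilon$ (if the sets are always equal, the earlier Proposition-style escape argument at $z_0$ — exactly the contradiction derived in the proof of Lemma~\ref{l:h1h2bdhb} showing $J(G)$ would be disconnected while perturbations force connectedness, or vice versa — yields a contradiction); handling the intermediate possibility that for a sequence $b_{n}\to 0$ we have strict inclusion requires showing strict inclusion for one $b$ already suffices, which it does not by itself give the ``poking out'' — so in fact the real content is: strict inclusion $h_{1}^{-1}(K(h_{3,b})) \subsetneq h_{3,b}^{-1}(K(h_{1}))$ forces, by continuity of both sides and compactness, that the contact point must have moved off $z_{0}$, and then a further small translation in the reversed direction pushes $h_{1}^{-1}(K(h_{3,b}))$ across the boundary. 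I would organize this as: (i) reduce to showing $h_1^{-1}(K(h_{3,c}))\not\subset h_{3,c}^{-1}(K(h_1))$ for some small $c$; (ii) assume the contrary for all $|c|<\epsilon$; (iii) derive that $z_0$ remains a boundary-contact point robustly, contradicting that translation moves $K(h_{3,c})$ by exactly $c$ while $h_1$, $K(h_1)$ are fixed — a genuinely moving convex-like obstacle cannot stay tangent to a fixed one in all directions. This last step is where care is needed and is the heart of the argument.
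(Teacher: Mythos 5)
Your proposal does not constitute a proof: the step you yourself flag as ``where care is needed'' --- showing that the boundary contact at $z_{0}$ cannot survive small translations in every direction --- is precisely the content of the lemma, and your qualitative appeal to ``a genuinely moving convex-like obstacle cannot stay tangent to a fixed one in all directions'' is not an argument. It would fail, for instance, if the tangency were of infinite order; you acknowledge this possibility but never rule it out. Your topological/contradiction route also never produces the required point $w\in h_{1}^{-1}(J(h_{3,c}))\setminus h_{3,c}^{-1}(K(h_{1}))$. In addition, you invoke Proposition~\ref{p:bdyosc} and the normalization $K(h_{1})\subset K(h_{2})$, but the lemma is stated for arbitrary $(h_{1},h_{2})\in{\cal B}\cap{\cal H}$ with $h_{1}^{-1}(J(h_{2}))\cap h_{2}^{-1}(J(h_{1}))\neq\emptyset$; it does not assume $(h_{1},h_{2})\in\overline{\cal D}$, so the nesting is not available as a hypothesis.

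The paper avoids the transversality issue entirely by a direct complex-analytic construction. Set $w_{0}:=h_{1}(z_{0})\in J(h_{2})$, note $S_{b}(w_{0})\in J(h_{3,b})$, choose an unbounded simply connected domain $V\ni 0$ so that the path $b\mapsto S_{b}(w_{0})$ avoids $\mbox{CV}(h_{1})$, and let $\zeta$ be the inverse branch of $h_{1}$ on $V_{0}:=\{S_{b}(w_{0}):b\in V\}$ with $\zeta(w_{0})=z_{0}$. Then $\alpha(b):=(h_{3,b}\circ\zeta\circ S_{b})(w_{0})$ is holomorphic on $V$, and a short asymptotic computation gives $|\alpha(b)|=|a_{2}||b|^{n}(1+o(1))$ as $b\to\infty$ in $V$ (with $n=\deg(h_{2})$), so $\alpha$ is non-constant. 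Since $z_{0}\in h_{2}^{-1}(J(h_{1}))$ gives $\alpha(0)=h_{2}(z_{0})\in J(h_{1})=\partial K(h_{1})$, the open mapping theorem produces $c$ with $|c|<\epsilon$ and $\alpha(c)\in\CC\setminus K(h_{1})$; the point $\zeta(S_{c}(w_{0}))$ then lies in $h_{1}^{-1}(J(h_{3,c}))\setminus h_{3,c}^{-1}(K(h_{1}))$. The lesson is that the hypothesis $h_{2}(z_{0})\in\partial K(h_{1})$ together with a non-constant holomorphic dependence on the perturbation parameter is exactly the rigorous replacement for your heuristic ``the tangency must break''; you need to identify the right holomorphic function and show it is non-constant, which the asymptotics at infinity provide.
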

\begin{proof}
Let $w_{0}:=h_{1}(z_{0})\in J(h_{2}).$ Then, 
\begin{equation}
\label{eq:t0w0tb}
S_{0}(w_{0})=w_{0}, \ S_{b}(w_{0})\in J(h_{3,b}) \mbox{ for each }b\in \CC , \mbox{ and } 
b\mapsto S_{b}(w_{0}) \mbox{ is holomorphic on } \CC . 
\end{equation}
Let $V$ be an unbounded  simply connected subdomain of $\CC $ with $0\in V$  
such that the set $V_{0}:=\{ S_{b}(w_{0})\mid b\in V\} $ does not contain any critical value of $h_{1}.$ 
Let $\zeta :V_{0}\rightarrow \CC $ be a well-defined inverse branch of $h_{1}$ such that 
$\zeta (w_{0})=z_{0}.$ Let $\alpha :V\rightarrow \CC $ be the map defined by 
$\alpha (b)=(h_{3,b}\circ \zeta \circ S_{b})(w_{0}), b\in V.$    
Let $l:= \deg (h_{1}), n:= \deg (h_{2})$ and 
let $a_{1}, a_{2}\in \CC \setminus \{ 0\} $ be numbers such that 
$h_{1}(z)=a_{1}z^{l}+\cdots $ and $h_{2}(z)=a_{2}z^{n}+\cdots .$ 
Then, 
$$| \zeta (S_{b}(w_{0}))|=|\zeta (w_{0}+b)|\sim 
\left|\frac{w_{0}+b}{a_{1}}\right|^{\frac{1}{l}}=\left(\frac{1}{|a_{1}|}\right)^{\frac{1}{l}}\cdot |b|^{\frac{1}{l}}(1+o(1)), 
\mbox{ as } b\rightarrow \infty \mbox{ in }  V.$$
Hence $|\zeta (S_{b}(w_{0}))-b|=|b|(1+o(1)|$ as $ b\rightarrow \infty $ in  $V.$ 
Therefore $|h_{2}(\zeta (S_{b}(w_{0}))-b)|=|a_{2}||b|^{n}(1+o(1))$ as $b \rightarrow \infty $ in $V.$ 
Thus $$|\alpha (b)|=|b+h_{2}(\zeta (S_{b}(w_{0}))-b)|=|a_{2}||b|^{n}(1+o(1)) 
\mbox{ as } b\rightarrow \infty  \mbox{ in }V.$$ 
Hence $\alpha :V\rightarrow \CC $ is not constant on $V.$ Since $\alpha (0)=h_{2}(z_{0})\in J(h_{1})$,  
it follows that for each $\epsilon >0$ there exists a number $c\in \{ b\in \CC \mid |b|<\epsilon \} $ 
such that $\alpha (c)\in \CC \setminus K(h_{1}).$  Combining this with $S_{c}(w_{0})\in J(h_{3,c})$, 
we obtain that $(h_{3,c}^{-1}(\CC \setminus K(h_{1})))\cap (h_{1}^{-1}(J(h_{3,c})))\neq \emptyset .$ 
Therefore $(h_{1}^{-1}(J(h_{3,c})))\setminus (h_{3,c}^{-1}(K(h_{1})))\neq \emptyset .$ 

Thus we have proved our lemma. 
\end{proof}

\begin{lem}
\label{l:pfmainth2}
Statement~\ref{spacemainth2} in Theorem~\ref{t:spacetopology} holds. 
\end{lem}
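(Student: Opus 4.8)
The plan is to prove $\overline{\mathrm{int}({\cal C})}\cap {\cal B}\cap {\cal H}={\cal C}\cap {\cal B}\cap {\cal H}$ by two inclusions. The inclusion $\overline{\mathrm{int}({\cal C})}\cap {\cal B}\cap {\cal H}\subset {\cal C}\cap {\cal B}\cap {\cal H}$ should be the easy direction: since ${\cal D}\cap {\cal B}\cap {\cal H}$ is open (Lemma \ref{l:hbdopen}) and ${\cal B}\cap {\cal H}$ splits as the disjoint union of the relatively open set ${\cal D}\cap {\cal B}\cap {\cal H}$ and the relatively closed set ${\cal C}\cap {\cal B}\cap {\cal H}$, a point in $\overline{\mathrm{int}({\cal C})}$ cannot lie in the open set ${\cal D}$; more carefully, one notes $\mathrm{int}({\cal C})\cap({\cal B}\cap{\cal H})$ is contained in ${\cal C}\cap{\cal B}\cap{\cal H}$, which is relatively closed in ${\cal B}\cap{\cal H}$, so its closure intersected with ${\cal B}\cap{\cal H}$ stays inside ${\cal C}\cap{\cal B}\cap{\cal H}$.

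The substantive direction is ${\cal C}\cap {\cal B}\cap {\cal H}\subset \overline{\mathrm{int}({\cal C})}$, i.e. every $(h_1,h_2)\in {\cal C}\cap {\cal B}\cap {\cal H}$ can be approximated by elements of $\mathrm{int}({\cal C})\cap {\cal B}\cap {\cal H}$. I would split into two cases according to whether $(h_1,h_2)\in \mathrm{int}({\cal C})$ already (nothing to do), or $(h_1,h_2)\in(\partial{\cal C})\cap{\cal B}\cap{\cal H}=(\partial{\cal D})\cap{\cal B}\cap{\cal H}$. In the boundary case, I would further distinguish: if $(h_1,h_2)\notin{\cal Q}$, then by Lemma \ref{l:pfmainthosc} (Statement \ref{spacemainthosc}) it satisfies the open set condition, and by Proposition \ref{p:bdyosc} we may assume $K(h_1)\subsetneq K(h_2)$ with $U_{h_1,h_2}=\mathrm{int}(K(h_2))\setminus K(h_1)\neq\emptyset$ and $h_1^{-1}(U_{h_1,h_2})\amalg h_2^{-1}(U_{h_1,h_2})\subset U_{h_1,h_2}$. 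The idea is then a perturbation: by Lemma \ref{l:h1h2bdhb}, $h_1^{-1}(J(h_2))\cap h_2^{-1}(J(h_1))\neq\emptyset$, and Lemma \ref{l:h1h2hbz0} produces, for every $\epsilon>0$, a conjugate $h_{3,c}=S_c\circ h_2\circ S_c^{-1}$ with $|c|<\epsilon$ such that $h_1^{-1}(J(h_{3,c}))\setminus h_{3,c}^{-1}(K(h_1))\neq\emptyset$ and $h_1^{-1}(K(h_{3,c}))\neq h_{3,c}^{-1}(K(h_1))$. Using the stability of hyperbolicity and of ${\cal B}$, and the fact that $J(h_{3,c})$ deforms continuously, I expect that this inequality forces $h_1^{-1}(J(h_{3,c}))\cap h_{3,c}^{-1}(J(h_1))=\emptyset$ for a suitable further small perturbation — hence, via $J(G)=h_1^{-1}(J(G))\cup h_{3,c}^{-1}(J(G))$ together with Proposition \ref{p:bdyosc}-(b) and \cite[Corollary 3.2]{HM}, that $(h_1,h_{3,c})$ has disconnected Julia set, i.e. lies in ${\cal D}\cap{\cal B}\cap{\cal H}$; since that set is open, a neighborhood of $(h_1,h_{3,c})$ lies in ${\cal D}$, and passing just across the boundary $\partial{\cal D}$ gives points of $\mathrm{int}({\cal C})\cap{\cal B}\cap{\cal H}$ arbitrarily close to $(h_1,h_2)$. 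For the remaining sub-case $(h_1,h_2)\in(\partial{\cal D})\cap{\cal B}\cap{\cal H}\cap{\cal Q}$, I would invoke Statement \ref{spacemainth7} (density of $((\partial{\cal C})\cap{\cal B}\cap{\cal H})\setminus{\cal Q}$ in $(\partial{\cal C})\cap{\cal B}\cap{\cal H}$) to reduce to the non-${\cal Q}$ case, approximating first within the boundary and then applying the previous argument.

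The main obstacle I anticipate is the passage from "$h_1^{-1}(K(h_{3,c}))\neq h_{3,c}^{-1}(K(h_1))$ together with $h_1^{-1}(K(h_{3,c}))\subset h_{3,c}^{-1}(K(h_1))$" (the containment coming from Proposition \ref{p:bdyosc}-(a) applied in the limit) to an \emph{open} condition producing genuinely disconnected Julia sets: one needs that a strict containment of filled-in Julia sets with a nonempty boundary gap persists and actually separates $h_1^{-1}(J(G))$ from $h_{3,c}^{-1}(J(G))$ after a small push, which requires a careful topological argument with the annular Fatou component $U_{h_1,h_2}$ and an appeal to the openness of ${\cal D}\cap{\cal B}\cap{\cal H}$ (Lemma \ref{l:hbdopen}) — essentially showing the perturbed pair crosses from $\partial{\cal D}$ into ${\cal D}$ rather than staying on the boundary. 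I would handle this by combining the explicit non-constancy of the map $\alpha$ in Lemma \ref{l:h1h2hbz0} (which gives points strictly outside $K(h_1)$) with the continuity of $b\mapsto J(h_{3,b})$ and of filled Julia sets under hyperbolic perturbation.
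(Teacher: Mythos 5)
Your plan goes wrong at the decisive step for the non-${\cal Q}$ case, and you are using the perturbation lemmas to move in exactly the wrong direction. Lemma~\ref{l:h1h2hbz0} produces a nearby pair $(h_{1},h_{3,c})$ for which $h_{1}^{-1}(J(h_{3,c}))\setminus h_{3,c}^{-1}(K(h_{1}))\neq\emptyset$, i.e.\ the containment $h_{1}^{-1}(K(h_{3,c}))\subset h_{3,c}^{-1}(K(h_{1}))$ from Proposition~\ref{p:bdyosc}-(a) \emph{fails}. But by Proposition~\ref{p:dosc} this containment is a \emph{necessary} condition for being in ${\cal D}\cap{\cal B}$ (given $K(g_{2})\setminus K(g_{1})\neq\emptyset$), so the perturbed pair is forced \emph{into} ${\cal C}$, not into ${\cal D}$; your claim that it ``lies in ${\cal D}\cap{\cal B}\cap{\cal H}$'' and that you can then ``pass just across the boundary $\partial{\cal D}$ into $\mathrm{int}({\cal C})$'' has it backwards. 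Even setting aside which side you land on, the phrase ``passing just across the boundary gives points of $\mathrm{int}({\cal C})$'' is circular: you are trying to prove precisely that $\partial{\cal D}$ is accessible from $\mathrm{int}({\cal C})$, and nothing guarantees a priori that a path from a point of ${\cal D}$ that exits ${\cal D}$ ever leaves $\overline{{\cal D}}=\partial{\cal D}\cup{\cal D}$. The actual mechanism is the opposite one: the conditions $g_{1}^{-1}(J(g_{2}))\setminus g_{2}^{-1}(K(g_{1}))\neq\emptyset$ and $K(g_{2})\setminus K(g_{1})\neq\emptyset$ are open, and together they exclude membership in ${\cal D}$ via Proposition~\ref{p:dosc}, so a full neighborhood $B$ of $(h_{1},h_{3})$ sits inside ${\cal C}\cap{\cal B}\cap{\cal H}$, hence inside $\mathrm{int}({\cal C})$, and is as close to $(h_{1},h_{2})$ as you like.

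Your handling of the ${\cal Q}$ case is also circular: you appeal to Statement~\ref{spacemainth7} to push off ${\cal Q}$, but the proof of that statement (Lemma~\ref{l:pfmainth7}) already invokes the present Lemma~\ref{l:pfmainth2}. What the paper does instead in the $K(h_{1})=K(h_{2})$ case is a direct translation argument: choosing $b$ small and conjugating $h_{2}$ by $S_{b}(z)=z+b$, one can arrange that $J(h_{3,c})$ simultaneously meets $\mathrm{int}(K(h_{1}))$ and $\CC\setminus K(h_{1})$; since $J(g_{1})$ is connected, this forces $J(g_{1})\cap J(g_{2})\neq\emptyset$ throughout a neighborhood, and then \cite[Theorem 2.1]{SdpbpI} gives $J(g_{1},g_{2})$ connected there, yielding the required open subset of ${\cal C}$. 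Your two-case split by ${\cal Q}$ versus not-${\cal Q}$ is essentially the paper's split (in $\overline{{\cal D}}\cap{\cal B}\cap{\cal H}$, $K(h_1)=K(h_2)$ is equivalent to $(h_1,h_2)\in{\cal Q}$ by Proposition~\ref{p:bdyosc}-(c)), and your easy-direction argument is fine, but the substantive direction needs the ``rule-out-${\cal D}$-on-an-open-set'' mechanism rather than the ``land-in-${\cal D}$-and-cross-back'' one.
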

\begin{proof}
It suffices to prove that $(\partial {\cal C})\cap {\cal B}\cap {\cal H}\subset 
\overline{\mbox{int}({\cal C})}\cap {\cal B}\cap {\cal H}.$ 
In order to prove it, let 
$(h_{1},h_{2})\in (\partial {\cal C})\cap {\cal B}\cap {\cal H}.$ 
Then $(h_{1},h_{2})\in (\partial {\cal D})\cap {\cal B}\cap {\cal H}.$ 
By Proposition~\ref{p:bdyosc}, we may assume $K(h_{1})\subset K(h_{2}).$ 
Let $W$ be a small neighborhood of $h_{2}$ in ${\cal P}$ with $\{ h_{1}\} \times W\subset {\cal B}\cap {\cal H}.$  
By Lemmas~\ref{l:h1h2bdhb} and \ref{l:h1h2hbz0}, 
there exists an element $h_{3}\in W$ such that 
$(h_{1}^{-1}(J(h_{3})))\setminus (h_{3}^{-1}(K(h_{1})))\neq \emptyset .$ 
Then there exists an open neighborhood $B$ of $(h_{1},h_{3})$ in ${\cal B}\cap {\cal H}$ such that 
\begin{equation}
\label{eq:g1-1jg2}
(g_{1}^{-1}(J(g_{2})))\setminus (g_{2}^{-1}(K(g_{1})))\neq \emptyset \mbox{ for each }(g_{1},g_{2})\in B. 
\end{equation}
We consider the following two cases. Case 1. $K(h_{1})\subsetneqq K(h_{2}).$ 
Case 2. $K(h_{1})=K(h_{2}).$

We now suppose that we have case 1. Then, letting $W$ and $B$ so small, 
we obtain that 
\begin{equation}
\label{eq:kg2kg1}
K(g_{2})\setminus K(g_{1})\neq \emptyset \mbox{ for each }(g_{1},g_{2})\in B.
\end{equation}
Then, for each $(g_{1},g_{2})\in B$, $J(g_{1},g_{2})$ is connected. 
Indeed, suppose that there exists an element $(\beta _{1},\beta _{2})\in B$ such that 
$J(\beta _{1},\beta _{2})$ is disconnected. Then $(\beta _{1}, \beta _{2})\in {\cal D}\cap {\cal B}.$ 
By Proposition~\ref{p:dosc} and (\ref{eq:kg2kg1}), 
we obtain $K(\beta _{1})\subset \mbox{int}(K(\beta _{2})).$ 
By Proposition~\ref{p:dosc} again, we get that 
$\beta _{1}^{-1}(K(\beta _{2}))\subset \beta _{2}^{-1}(K(\beta _{1})).$ However, this contradicts   
(\ref{eq:g1-1jg2}). Therefore, for each $(g_{1},g_{2})\in B$, $J(g_{1},g_{2})$ is connected. 
Thus, $B\subset \mbox{int}({\cal C})\cap {\cal B}\cap {\cal H}.$ Since $W$ was an arbitrary small neighborhood of $h_{2}$, 
it follows that $(h_{1},h_{2})\in \overline{\mbox{int}({\cal C})}\cap {\cal B}\cap {\cal H}.$ 

 We now suppose that we have case 2. 
For each $b\in \CC $, let $S_{b}(z)=z+b$ and $h_{3,b}(z)=S_{b}\circ h_{2}\circ S_{b}^{-1}.$ 
Let $A:= K(h_{1})=K(h_{2}).$ 
Let  $x_{1},x_{2}\in A$ be two points such that 
$|x_{1}-x_{2}|=\mbox{diam}_{E}(A)$, where $\mbox{diam}_{E}(A_{0}):=
\sup _{\alpha, \beta\in A_{0}}|\alpha-\beta|$ 
for a subset $A_{0}$ of $\CC .$  
Then $x_{1},x_{2}\in J(h_{1})=J(h_{2}).$ Moreover, we have 
\begin{equation}
\label{eq:tbx1tbx2}
|S_{b}(x_{1})-S_{b}(x_{2})|=\mbox{diam}_{E}(K(h_{3,b}))=\mbox{diam}_{E}(K(h_{1})).
\end{equation}
Let $\epsilon >0$ be any small number. 
Since $\mbox{int}(K(h_{1}))=\mbox{int}(K(h_{2}))\neq \emptyset $ (see Proposition~\ref{p:bdyosc}-(c)), 
there exists an element $c\in \{ b\in \CC \mid |b|<\epsilon \} $ such that 
\begin{equation}
\label{eq:tbx1int}
S_{c}(x_{1})\in \mbox{int}(K(h_{1})).
\end{equation}
By (\ref{eq:tbx1tbx2}) and (\ref{eq:tbx1int}), 
we obtain 
\begin{equation}
\label{eq:tbx2ck}
S_{c}(x_{2})\in \CC \setminus K(h_{1}).
\end{equation}
By (\ref{eq:tbx1int}) and (\ref{eq:tbx2ck}), 
it follows that 
$$ J(h_{3,c})\cap \mbox{int}(K(h_{1}))\neq \emptyset \mbox{ and } 
J(h_{3,c})\cap (\CC \setminus K(h_{1}))\neq \emptyset . 
 $$
 Therefore there exists a neighborhood $B$ of $(h_{1},h_{3,c})$ in ${\cal B}\cap {\cal H}$ 
 such that for each $(g_{1},g_{2})\in B$, 
 $$ J(g_{2})\cap \mbox{int}(K(g_{1}))\neq \emptyset \mbox{ and } 
J(g_{2})\cap (\CC \setminus K(g_{1}))\neq \emptyset . 
 $$
 Since $J(g_{1})$ is connected, we obtain that for each 
$(g_{1},g_{2})\in B$, $J(g_{1})\cap J(g_{2})\neq \emptyset .$ Thus 
$J(g_{1})\cup J(g_{2})$ is included in a connected component of $J(g_{1},g_{2}).$ 
By \cite[Theorem 2.1]{SdpbpI}, 
it follows that for each $(g_{1},g_{2})\in B$, $J(g_{1},g_{2})$ is connected. 
Hence $B\subset \mbox{int}({\cal C}).$ 
From these arguments, we obtain that $(h_{1},h_{2})\in \overline{\mbox{int}({\cal C})}
\cap {\cal B}\cap {\cal H}.$ 

 Therefore $(\partial {\cal C})\cap {\cal B}\cap {\cal H}\subset 
\overline{\mbox{int}({\cal C})}\cap {\cal B}\cap {\cal H}.$ 
Thus statement \ref{spacemainth2} in Theorem~\ref{t:spacetopology} holds. 
\end{proof}
\begin{lem}
\label{l:d1d2not22}
There exists a neighborhood $U$ of $\overline{{\cal B}\cap {\cal D}}$ in ${\cal P}^{2}$ 
such that for each $(g_{1},g_{2})\in U$, $(\deg (g_{1}),\deg (g_{2}))\neq (2,2).$ 
Moreover, for each connected component $A$ of ${\cal P}^{2}$ with 
$A\cap \overline{{\cal B}\cap {\cal D}}\neq \emptyset $, 
$(\deg (g_{1}),\deg (g_{2}))\neq (2,2))$ for each $(g_{1},g_{2})\in A.$ 
\end{lem}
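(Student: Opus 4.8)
I would reduce the whole lemma to the single fact
$$({\cal P}_{2}\times {\cal P}_{2})\cap {\cal B}\cap {\cal D}=\emptyset ,$$
i.e. to the statement that a $2$-generator postcritically bounded polynomial semigroup both of whose generators are quadratic has connected Julia set. Granting this, everything else is point-set topology: since $\deg :{\cal P}\to \NN $ is continuous and each ${\cal P}_{n}$ is a connected component of ${\cal P}$, the connected components of ${\cal P}^{2}$ are exactly the products ${\cal P}_{n_{1}}\times {\cal P}_{n_{2}}$, each of which is open and closed. Hence $W:= {\cal P}^{2}\setminus ({\cal P}_{2}\times {\cal P}_{2})$ is open and closed, and the displayed fact gives ${\cal B}\cap {\cal D}\subset W$, so $\overline{{\cal B}\cap {\cal D}}\subset W$. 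Taking $U:=W$ proves the first assertion, since no point of $W$ has degree pair $(2,2)$. For the second assertion, if a connected component $A={\cal P}_{n_{1}}\times {\cal P}_{n_{2}}$ of ${\cal P}^{2}$ meets $\overline{{\cal B}\cap {\cal D}}\subset W$, then $A\subset W$ (a connected set meeting a clopen set lies in it), so $(n_{1},n_{2})\neq (2,2)$ and every $(g_{1},g_{2})\in A$ has $(\deg (g_{1}),\deg (g_{2}))=(n_{1},n_{2})\neq (2,2)$.

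\textbf{Proof of the fact.} I would argue by contradiction: suppose $(h_{1},h_{2})\in {\cal B}\cap {\cal D}$ with $\deg (h_{1})=\deg (h_{2})=2$, and put $G=\langle h_{1},h_{2}\rangle $. By Proposition~\ref{p:dosc} I may assume $K(h_{1})\subset \mbox{int}(K(h_{2}))$, that $J(h_{2})$ is a quasicircle, that $(h_{1},h_{2})$ satisfies the open set condition, and that $K(h_{1})\subset h_{1}^{-1}(K(h_{2}))\subset h_{2}^{-1}(K(h_{1}))\subset K(h_{2})$; moreover $P^{\ast }(G)\subset \hat{K}(G)\subset K(h_{1})$, so the finite critical value of each $h_{j}$ lies in $K(h_{1})\subset K(h_{2})$. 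Now I compare moduli of annuli. Since $J(h_{2})$ is a quasicircle, $\mbox{int}(K(h_{2}))$ is a Jordan domain; since $h_{1}^{-1}(K(h_{2}))$ and $h_{2}^{-1}(K(h_{1}))$ are preimages under degree-$2$ maps of full connected compacta containing the relevant critical value, they are full and connected, and $h_{1}^{-1}(\mbox{int}(K(h_{2})))=\mbox{int}(h_{1}^{-1}(K(h_{2})))$ is again a Jordan domain. Hence $U_{0}:= \mbox{int}(K(h_{2}))\setminus K(h_{1})$, $A_{1}:= h_{1}^{-1}(U_{0})=\mbox{int}(h_{1}^{-1}(K(h_{2})))\setminus K(h_{1})$ and $A_{2}:= h_{2}^{-1}(U_{0})=\mbox{int}(K(h_{2}))\setminus h_{2}^{-1}(K(h_{1}))$ are genuine (non-degenerate, bounded) topological annuli; $A_{1}$ and $A_{2}$ are disjoint (open set condition) essential sub-annuli of $U_{0}$; and $h_{j}:A_{j}\to U_{0}$ is a proper unbranched (no critical value of $h_{j}$ lies in $U_{0}$) holomorphic covering of degree $2$ with connected total space, so $\mbox{mod}(A_{j})=\frac{1}{2}\mbox{mod}(U_{0})$ for $j=1,2$.

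\textbf{Conclusion and main obstacle.} The no-overlap relation $h_{1}^{-1}(J(G))\cap h_{2}^{-1}(J(G))=\emptyset $ from Lemma~\ref{l:bdjconti} gives $h_{1}^{-1}(J(h_{2}))\cap h_{2}^{-1}(J(h_{1}))=\emptyset $; together with $h_{1}^{-1}(K(h_{2}))\subset h_{2}^{-1}(K(h_{1}))$ this forces the compact connected set $h_{1}^{-1}(K(h_{2}))$ to lie inside $\mbox{int}(h_{2}^{-1}(K(h_{1})))$, and strictly so (being compact, it cannot equal the open set $\mbox{int}(h_{2}^{-1}(K(h_{1})))$). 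So a Jordan curve in the gap $\mbox{int}(h_{2}^{-1}(K(h_{1})))\setminus h_{1}^{-1}(K(h_{2}))$ surrounding $h_{1}^{-1}(K(h_{2}))$ bounds a genuine annulus $A_{0}$ with $\mbox{mod}(A_{0})>0$ that is disjoint from $A_{1}$, $A_{2}$ and essential in $U_{0}$. The Gr\"{o}tzsch superadditivity inequality for disjoint essential sub-annuli then gives
$$\mbox{mod}(U_{0})\ \geq \ \mbox{mod}(A_{1})+\mbox{mod}(A_{0})+\mbox{mod}(A_{2})\ =\ \mbox{mod}(U_{0})+\mbox{mod}(A_{0})\ >\ \mbox{mod}(U_{0}),$$
a contradiction. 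I expect the main obstacle to be the bookkeeping in the last two paragraphs: verifying that each object in sight is an honest topological annulus of finite positive modulus (this is where quasicircularity of $J(h_{2})$ from Proposition~\ref{p:dosc}, fullness of the filled Julia sets, and the position of the critical values inside $K(h_{1})$ all enter), and pinning down the covering modulus as $\frac{1}{2}\mbox{mod}(U_{0})$ rather than $2\,\mbox{mod}(U_{0})$ --- it is exactly the identity $2\cdot \frac{1}{2}=1$ that makes the degree-$(2,2)$ case rigid, whereas for $\deg (h_{j})=d\geq 3$ one only gets $2/d<1$ and no contradiction, consistently with $(z^{3},2z^{3})\in {\cal B}\cap {\cal D}$. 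One also has to make sure the "gap" annulus $A_{0}$ is genuinely present, which is precisely the place where disconnectedness of $J(G)$ (via the no-overlap property) gets used.
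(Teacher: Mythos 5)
Your topological reduction --- exploiting that each ${\cal P}_{n}$ is clopen in ${\cal P}$, so that ${\cal P}^{2}\setminus ({\cal P}_{2}\times {\cal P}_{2})$ is clopen and therefore swallows $\overline{{\cal B}\cap {\cal D}}$ together with every connected component meeting it --- is exactly the one the paper invokes by remarking that $\deg$ is continuous. Where you diverge is the key input: the paper simply cites \cite[Theorem 2.15]{SdpbpI} for the fact that no pair of quadratics lies in ${\cal B}\cap{\cal D}$, whereas you re-derive it from scratch by a Gr\"otzsch superadditivity argument. Your derivation is correct and self-contained given what precedes Lemma~\ref{l:d1d2not22}: Proposition~\ref{p:dosc} and Lemma~\ref{l:bdjconti} supply the nesting $K(h_{1})\subset h_{1}^{-1}(K(h_{2}))\subset h_{2}^{-1}(K(h_{1}))\subset K(h_{2})$, the open set condition with $U_{0}=\mbox{int}(K(h_{2}))\setminus K(h_{1})$, and the no-overlap $h_{1}^{-1}(J(G))\cap h_{2}^{-1}(J(G))=\emptyset$; the latter upgrades the middle inclusion to $h_{1}^{-1}(K(h_{2}))\subsetneq\mbox{int}(h_{2}^{-1}(K(h_{1})))$, producing a genuine gap annulus of positive modulus; each $h_{j}:h_{j}^{-1}(U_{0})\to U_{0}$ is an unbranched connected degree-$2$ cover (the finite critical values of $h_{1},h_{2}$ lie in $K(h_{1})$), so has modulus $\tfrac12\mbox{mod}(U_{0})$; and three disjoint essential sub-annuli of combined modulus strictly exceeding $\mbox{mod}(U_{0})$ is absurd. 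The trade-off is clear: the paper's citation is shorter, while your argument is transparent about \emph{why} $(2,2)$ is the singular case ($\tfrac12+\tfrac12=1$, versus $\tfrac1{d_{1}}+\tfrac1{d_{2}}<1$ otherwise, consistent with $(z^{3},2z^{3})\in{\cal B}\cap{\cal D}$) and about exactly where disconnectedness of $J(G)$ enters (positivity of the gap modulus). The bookkeeping you flag is real but routine: Riemann--Hurwitz with the critical value inside $K(h_{1})\subset\mbox{int}(K(h_{2}))$ gives connectedness of the preimage regions, fullness follows from connectedness of $h_{j}^{-1}(\CCI\setminus K(h_{j'}))$, and a Green's-function level curve of $\CCI\setminus h_{1}^{-1}(K(h_{2}))$ with small value supplies the required Jordan curve in the gap.
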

\begin{proof}
By \cite[Theorem 2.15]{SdpbpI}, for each $(g_{1},g_{2})\in {\cal B}\cap {\cal D}$ 
we have $(\deg (g_{1}),\deg (g_{2}))\neq (2,2).$ 
Since the function $\deg : {\cal P}\rightarrow \NN  \subset \RR $ 
is continuous, the statement of our lemma holds.  
\end{proof} 

\begin{df}
Let $D$ be a domain in $\CCI $ with $\infty \in D.$ We denote by 
$\varphi (D,z)$ Green's function on $D$ with pole at $\infty .$ 
\end{df}

\begin{lem}
\label{l:dh1-1log}
Let $(h_{1},h_{2})\in (\overline{{\cal D}}\cap {\cal B}\cap {\cal H})\setminus {\cal Q}.$ 
For each $i=1,2,$ let $d_{i}:=\deg (h_{i})$ and we denote by $c_{i}$ the leading coefficient of 
$h_{i}.$ Then 
$\frac{1}{d_{1}-1}\log |c_{1}|\neq \frac{1}{d_{2}-1}\log |c_{2}|.$ Moreover, 
$h_{1}^{-1}(K(h_{2}))\neq h_{2}^{-1}(K(h_{1})).$ 
\end{lem}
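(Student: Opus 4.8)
The plan is to translate both assertions into statements about the logarithmic capacities of $K(h_1),K(h_2)$ and their backward images --- equivalently, about the Robin constants $\gamma_i:=\frac{1}{d_i-1}\log|c_i|$ --- and then to play strict monotonicity of capacity against the degree inequality $(d_1-1)(d_2-1)\neq 1$. First I would normalize: by Proposition~\ref{p:bdyosc} we may assume $K(h_1)\subset K(h_2)$, so statement (a) there gives $K(h_1)\subset h_1^{-1}(K(h_2))\subset h_2^{-1}(K(h_1))\subset K(h_2)$ and statement (c) gives that $J(h_2)$ is a quasicircle. Since $(h_1,h_2)\in{\cal B}$, each $h_i$ is postcritically bounded, so $K(h_i)$ is a full continuum and $F_{\infty}(h_i)=\CCI\setminus K(h_i)$ is a simply connected domain. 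Moreover $K(h_1)\neq K(h_2)$: otherwise $J(h_1)=\partial K(h_1)=\partial K(h_2)=J(h_2)$ would be a quasicircle and $(h_1,h_2)$ would lie in ${\cal Q}$; hence $K(h_1)\subsetneqq K(h_2)$.

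For the first assertion, put $G_i:=\varphi(F_{\infty}(h_i),\cdot)$, so that $G_i(z)=\log|z|+\gamma_i+o(1)$ as $z\to\infty$; reading off the $o(1)$-asymptotics in the functional equation $G_i(h_i(z))=d_i\,G_i(z)$ gives $\log|c_i|+\gamma_i=d_i\gamma_i$, i.e. $\gamma_i=\frac{1}{d_i-1}\log|c_i|$, so the claim is exactly $\gamma_1\neq\gamma_2$. Domain monotonicity of the Green's function (valid since $F_{\infty}(h_2)\subset F_{\infty}(h_1)$) gives $G_1\geq G_2$ on $F_{\infty}(h_2)$, hence $\gamma_1\geq\gamma_2$. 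If $\gamma_1=\gamma_2$, then $G_1-G_2$ is harmonic and nonnegative on $F_{\infty}(h_2)$, extends harmonically across $\infty$ with value $\gamma_1-\gamma_2=0$ there, so by the minimum principle $G_1\equiv G_2$ on $F_{\infty}(h_2)$. But $K(h_1)$ is a full continuum properly contained in $K(h_2)$, so $\partial K(h_2)\not\subset K(h_1)$ (otherwise a path in $F_{\infty}(h_1)$ from a point of $\mbox{int}(K(h_2))\setminus K(h_1)$ to $\infty$ would have to cross $\partial K(h_2)\subset K(h_1)$); choose $\zeta\in\partial K(h_2)\cap F_{\infty}(h_1)$. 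Since $F_{\infty}(h_2)$ is simply connected, $\zeta$ is a regular boundary point of $F_{\infty}(h_2)$, so $G_2(z)\to 0$ as $z\to\zeta$ within $F_{\infty}(h_2)$, whereas $G_1$ is harmonic and strictly positive on the open set $F_{\infty}(h_1)\ni\zeta$; this contradicts $G_1\equiv G_2$ near $\zeta$. Hence $\gamma_1>\gamma_2$, proving the first assertion.

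For the second assertion, suppose $h_1^{-1}(K(h_2))=h_2^{-1}(K(h_1))$. For a polynomial $f$ of degree $d$ with leading coefficient $c$ and a compact set $E$, the Green's function of the unbounded component of $\CCI\setminus f^{-1}(E)$ is $\frac1d$ times the (extended-by-zero) Green's function of $\CCI\setminus E$ precomposed with $f$; reading off the behavior at $\infty$ shows that the Robin constant of $h_1^{-1}(K(h_2))$ is $\frac{1}{d_1}(\log|c_1|+\gamma_2)$ and that of $h_2^{-1}(K(h_1))$ is $\frac{1}{d_2}(\log|c_2|+\gamma_1)$. Equating these two numbers and substituting $\log|c_i|=(d_i-1)\gamma_i$, the relation collapses to $\bigl((d_1-1)(d_2-1)-1\bigr)(\gamma_1-\gamma_2)=0$. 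Now $(h_1,h_2)\in\overline{{\cal D}}$ lies in the open set ${\cal B}\cap{\cal H}$ (Lemma~\ref{l:hbdopen}), so it is a limit of elements of ${\cal D}$ that eventually lie in ${\cal B}\cap{\cal H}$; thus $(h_1,h_2)\in\overline{{\cal D}\cap{\cal B}}$, and Lemma~\ref{l:d1d2not22} gives $(d_1,d_2)\neq(2,2)$, so $(d_1-1)(d_2-1)\geq 2$ and the first factor is nonzero. Therefore $\gamma_1=\gamma_2$, contradicting the first assertion, and hence $h_1^{-1}(K(h_2))\neq h_2^{-1}(K(h_1))$.

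The step I expect to be the main obstacle is the strict monotonicity in the second paragraph --- concretely, excluding $G_1\equiv G_2$ on $F_{\infty}(h_2)$. This rests on the topological fact $\partial K(h_2)\not\subset K(h_1)$ together with boundary regularity of the simply connected domains $F_{\infty}(h_1),F_{\infty}(h_2)$; once $\gamma_1>\gamma_2$ is established, everything else is bookkeeping with Robin constants and the identity $(d_1-1)(d_2-1)\neq 1$.
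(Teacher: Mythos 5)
Your proof is correct and follows essentially the same route as the paper: both arguments compare Green's functions and Robin constants of $\CCI\setminus K(h_i)$ and of $\CCI\setminus h_i^{-1}(K(h_j))$ using domain monotonicity, reduce the second assertion to the algebraic identity $(d_1d_2-d_1-d_2)(\gamma_1-\gamma_2)=0$, and invoke Lemma~\ref{l:d1d2not22} to kill the first factor. The only difference is cosmetic — you equate Robin constants directly under the assumption $h_1^{-1}(K(h_2))=h_2^{-1}(K(h_1))$, whereas the paper first establishes the Green's function inequality and then characterizes equality via the maximum principle; likewise your boundary-point argument for $\gamma_1>\gamma_2$ is a slightly more explicit rendering of the paper's observation that equality forces $J(h_1)=J(h_2)$, hence $(h_1,h_2)\in{\cal Q}$.
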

\begin{proof}
Let $(h_{1},h_{2})\in (\overline{{\cal D}}\cap {\cal B}\cap {\cal H})\setminus {\cal Q}.$ 
By Proposition~\ref{p:bdyosc}, we may assume that $K(h_{1})\subset K(h_{2}).$ 
Then by Proposition~\ref{p:bdyosc} again, we have 
$K(h_{1})\subset h_{1}^{-1}(K(h_{2}))\subset h_{2}^{-1}(K(h_{1}))\subset K(h_{2}).$ 
For each $i=1,2,$ let $A_{i}:=\CCI \setminus K(h_{i}).$ 
Since $K(h_{1})\subset K(h_{2})$, we have that 
\begin{equation}
\label{eq:grh2}
\varphi (A_{2},z)-\varphi (A_{1},z)\leq 0 \mbox{ on } \CC \setminus K(h_{2}).
\end{equation}
Therefore by letting $z\rightarrow \infty $ in (\ref{eq:grh2}), we obtain that 
\begin{equation}
\label{eq:dh2-1logc2}
\frac{1}{d_{2}-1}\log |c_{2}|-\frac{1}{d_{1}-1}\log |c_{1}|\leq 0. 
\end{equation}
Since the function $\varphi (A_{2},z)-\varphi (A_{1},z)$ is harmonic and 
bounded in $\CC \setminus K(h_{2})$, the maximum principle implies that 
the equality holds in (\ref{eq:dh2-1logc2}) if and only if 
$\varphi (A_{2},z)-\varphi (A_{1},z)\equiv 0 \mbox{ on } \CC \setminus K(h_{2})$, which is equivalent to  
$J(h_{1})=J(h_{2}).$ Since $K(h_{1})\subset K(h_{2})$, Proposition~\ref{p:bdyosc}-(c) implies that 
$J(h_{2})$ is a quasicircle. Therefore, $J(h_{1})=J(h_{2})$ is equivalent to $(h_{1},h_{2})\in {\cal Q}.$ 
Since we are assuming $(h_{1},h_{2})\notin {\cal Q}$, 
it follows that 
\begin{equation}
\label{eq:1d2-1}
\frac{1}{d_{2}-1}\log |c_{2}|-\frac{1}{d_{1}-1}\log |c_{1}|\neq 0.
\end{equation}
It is easy to see that $\varphi (\CCI \setminus h_{1}^{-1}(K(h_{2})), z)=\frac{1}{d_{1}}\varphi (A_{2},h_{1}(z)) $ 
for each $z\in \CCI \setminus h_{1}^{-1}(K(h_{2}))$ and 
$\varphi (\CCI \setminus h_{2}^{-1}(K(h_{1})),z)=\frac{1}{d_{2}}\varphi (A_{1},h_{2}(z))$ 
for each $z\in \CCI \setminus h_{2}^{-1}(K(h_{1})).$ 
Since $h_{1}^{-1}(K(h_{2}))\subset h_{2}^{-1}(K(h_{1}))$, we obtain that 
$\varphi (\CCI \setminus h_{2}^{-1}(K(h_{1})),z)-\varphi (\CCI \setminus h_{1}^{-1}(K(h_{2})), z)\leq 0$ for each 
$z\in \CC \setminus h_{2}^{-1}(K(h_{1})).$ 
Therefore 
\begin{equation}
\label{eq:dh1gr}
\frac{1}{d_{2}}\varphi (A_{1},h_{2}(z))- 
\frac{1}{d_{1}}\varphi (A_{2},h_{1}(z))\leq 0 
\mbox{ for each }z\in \CC \setminus h_{2}^{-1}(K(h_{1})).
\end{equation}
Since $\varphi (A_{i},z)=\log |z|+\frac{1}{d_{i}-1}\log |c_{i}|+O(\frac{1}{|z|})$ as $z \rightarrow \infty $ 
for each $i=1,2$, by letting $z \rightarrow \infty $ in (\ref{eq:dh1gr})   
we obtain that 
\begin{equation}
\label{eq:1d1log}
\frac{1}{d_{2}}(\log |c_{2}|+\frac{1}{d_{1}-1}\log |c_{1}|)
-\frac{1}{d_{1}}(\log |c_{1}|+\frac{1}{d_{2}-1}\log |c_{2}|)
\leq 0.
\end{equation}
The function $\frac{1}{d_{2}}\varphi (A_{1}, h_{2}(z))-\frac{1}{d_{1}}\varphi (A_{2},h_{1}(z))$ 
is harmonic and bounded in $\CC \setminus h_{2}^{-1}(K(h_{1}))$.  
Therefore, the maximum principle implies that 
 the equality holds in (\ref{eq:1d1log}) if and only if 
$$\frac{1}{d_{2}}\varphi (A_{1}, h_{2}(z))-\frac{1}{d_{1}}\varphi (A_{2},h_{1}(z))\equiv 0  
\mbox{ on }\CC \setminus h_{2}^{-1}(K(h_{1})),$$ 
which is equivalent to that $h_{1}^{-1}(K(h_{2}))=h_{2}^{-1}(K(h_{1})).$ 
Moreover,  (\ref{eq:1d1log}) is equivalent to 
\begin{equation}
\label{eq:d1d1-1}
(d_{2}(d_{1}-1)(d_{2}-1)-d_{1}(d_{2}-1))\log |c_{1}|\geq (d_{1}(d_{2}-1)(d_{1}-1)-d_{2}(d_{1}-1))\log |c_{2}|.
\end{equation}
It is easy to see that (\ref{eq:d1d1-1}) is equivalent to 
\begin{equation}
\label{eq:d1d2-d1-d2}
(d_{1}d_{2}-d_{1}-d_{2})((d_{2}-1)\log |c_{1}|-(d_{1}-1)\log |c_{2}|)\geq 0. 
\end{equation}
Since $(d_{1},d_{2})\neq (2,2)$ (Lemma~\ref{l:d1d2not22}), 
(\ref{eq:d1d2-d1-d2}) is equivalent to 
\begin{equation}
\label{eq:d2-1log}
(d_{2}-1)\log |c_{1}|\geq (d_{1}-1)\log |c_{2}|.
\end{equation}
Therefore, (\ref{eq:1d1log}) is equivalent to (\ref{eq:d2-1log}), and 
the equality holds in (\ref{eq:1d1log}) if and only if the equality holds in (\ref{eq:d2-1log}). 
By (\ref{eq:1d2-1}), it follows that $h_{1}^{-1}(K(h_{2}))\neq h_{2}^{-1}(K(h_{1})).$ 

Thus we have proved our lemma.  
\end{proof} 
\begin{lem}
\label{l:pfmainth3}
Statement~\ref{spacemainth3} in Theorem~\ref{spacemainth} holds. 
\end{lem}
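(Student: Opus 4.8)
The plan is to reduce the whole statement to the assertion that $J(G)$ is porous. Once that is known, $\dim _{H}(J(G))\leq \overline{\dim }_{B}(J(G))$ is automatic, and the classical fact that a porous compact subset of $\CCI $ has upper box dimension strictly smaller than $2$ gives $\overline{\dim }_{B}(J(G))<2$. The case $(h_{1},h_{2})\in {\cal Q}$ is easy: then $J(h_{1})=J(h_{2})$ is a quasicircle, and since $J(h_{1})$ is a nonempty compact set with $h_{j}(J(h_{1}))=J(h_{1})$ for $j=1,2$, the minimality of $J(G)$ among nonempty compact $G$-forward-invariant sets (recalled at the start of this section) forces $J(G)=J(h_{1})$; a quasicircle is porous, so we are done. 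Hence from now on assume $(h_{1},h_{2})\in (\overline{{\cal D}}\cap {\cal B}\cap {\cal H})\setminus {\cal Q}$.

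By Lemma~\ref{l:pfmainthosc} the pair $(h_{1},h_{2})$ satisfies the open set condition, and by Proposition~\ref{p:bdyosc} I may assume $K(h_{1})\subset K(h_{2})$, so that $K(h_{1})\subset h_{1}^{-1}(K(h_{2}))\subset h_{2}^{-1}(K(h_{1}))\subset K(h_{2})$, $J(h_{2})$ is a quasicircle, and, setting $U:=\mbox{int}(K(h_{2}))\setminus K(h_{1})\ (\neq \emptyset )$, one has $h_{1}^{-1}(U)\amalg h_{2}^{-1}(U)\subset U$ and (by Proposition~\ref{p:bdyosc}(b) together with \cite[Corollary 3.2]{HM}) $J(G)\subset \overline{U}=K(h_{2})\setminus \mbox{int}(K(h_{1}))$. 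By Lemma~\ref{l:dh1-1log}, in addition $h_{1}^{-1}(K(h_{2}))\subsetneq h_{2}^{-1}(K(h_{1}))$, so the set $W_{0}:=\mbox{int}(h_{2}^{-1}(K(h_{1})))\setminus h_{1}^{-1}(K(h_{2}))$ is nonempty and open. I would first record that $W_{0}\subset U$ (immediate from $h_{2}^{-1}(K(h_{1}))\subset K(h_{2})$ and $K(h_{1})\subset h_{1}^{-1}(K(h_{2}))$) and that $W_{0}\cap J(G)=\emptyset $: indeed $J(G)=h_{1}^{-1}(J(G))\cup h_{2}^{-1}(J(G))\subset h_{1}^{-1}(\overline{U})\cup h_{2}^{-1}(\overline{U})$, and a short computation using Proposition~\ref{p:bdyosc}(a) (as in the proof of Lemma~\ref{l:h1h2bdhb}) shows $h_{1}^{-1}(\overline{U})\cup h_{2}^{-1}(\overline{U})\subset h_{1}^{-1}(K(h_{2}))\cup (K(h_{2})\setminus \mbox{int}(h_{2}^{-1}(K(h_{1}))))$, which is disjoint from $W_{0}$. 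Thus $W_{0}$ is a genuine hole of $J(G)$ sitting inside $U$.

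The core of the proof is to spread this single hole over all scales near every point of $J(G)$, using hyperbolicity. Since $\langle h_{1},h_{2}\rangle $ is hyperbolic and satisfies the open set condition with $U$, the backward system $\{ h_{1}^{-1},h_{2}^{-1}\} $ on $\overline{U}$ is uniformly contracting and the inverse branches of $h_{\omega _{1}}\circ \cdots \circ h_{\omega _{n}}$ are univalent with uniformly bounded distortion on a fixed neighbourhood of $\overline{U}$ (possible because $\overline{U}\subset F(G)$ and $P(G)$ stays away from a neighbourhood of $\overline{U}$). Writing $X_{\omega }:=(h_{\omega _{1}}\circ \cdots \circ h_{\omega _{n}})^{-1}(\overline{U})$ for $\omega \in \{ 1,2\} ^{n}$, standard distortion estimates then give: $\mbox{diam}(X_{\omega })\to 0$ exponentially and uniformly in $\omega $; each $X_{\omega }$ contains a ball of radius comparable to $\mbox{diam}(X_{\omega })$; and $(h_{\omega _{1}}\circ \cdots \circ h_{\omega _{n}})^{-1}(W_{0})\subset \mbox{int}(X_{\omega })$ has diameter comparable to $\mbox{diam}(X_{\omega })$ and (by the reasoning of the previous paragraph applied to the pulled-back pieces) is disjoint from $J(G)$. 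Since every $z\in J(G)$ lies in $X_{\omega }$ for a nested sequence of words $\omega $ of every length, given $z\in J(G)$ and $0<r<\mbox{diam}\,\CCI $ one chooses $n$ with $\mbox{diam}(X_{\omega })$ comparable to $r$ for such an $\omega $ with $z\in X_{\omega }$; then $(h_{\omega _{1}}\circ \cdots \circ h_{\omega _{n}})^{-1}(W_{0})$ yields a ball of radius at least $kr$ contained in $\{ y\in \CCI \mid d(y,z)<r\} \setminus J(G)$, with $k\in (0,1)$ independent of $z$ and $r$. This is the porosity result for (semi-)hyperbolic polynomial semigroups from \cite{S7}; hence $J(G)$ is porous, and therefore $\dim _{H}(J(G))\leq \overline{\dim }_{B}(J(G))<2$.

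The step I expect to be the main obstacle is exactly this last uniformity: upgrading the qualitative strict inclusion $h_{1}^{-1}(K(h_{2}))\subsetneq h_{2}^{-1}(K(h_{1}))$ (equivalently, $W_{0}\neq \emptyset $) into a scale-invariant supply of holes of comparable relative size near every point of $J(G)$. This needs the bounded-distortion machinery for inverse branches of hyperbolic polynomial semigroups, together with care about the overlaps permitted by the open set condition — only $\mbox{int}(h_{1}^{-1}(\overline{U}))$ and $\mbox{int}(h_{2}^{-1}(\overline{U}))$ need be disjoint, so one must verify that the pulled-back copies of $W_{0}$ avoid the shared boundaries (all of which lie in $J(G)$) — and it is here that the porosity results and geometric estimates of \cite{S7} are invoked.
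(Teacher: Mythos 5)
Your proof follows essentially the same route as the paper: you use Proposition~\ref{p:bdyosc} and Lemma~\ref{l:dh1-1log} to produce a nonempty open set $W_{0}\subset U$ disjoint from $J(G)$, i.e.\ $J(G)\subsetneq\overline{U}$, and then invoke the porosity result from \cite{S7} (the paper cites \cite[Theorem 1.25]{S7} directly). The only differences are cosmetic: you make the ${\cal Q}$-case explicit (quasicircle, hence porous) where the paper leaves it implicit, and you sketch the bounded-distortion mechanism behind \cite{S7} rather than black-boxing it.
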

\begin{proof}
It suffices to prove that if $(h_{1},h_{2})\in (\overline{{\cal D}}\cap {\cal B}\cap {\cal H})\setminus {\cal Q}$, 
then $J(h_{1},h_{2})$ is porous and $\dim _{H}(J(h_{1},h_{2}))\leq \dim _{B}(J(h_{1},h_{2}))<2.$ 
In order to prove it, let $(h_{1},h_{2})\in (\overline{{\cal D}}\cap {\cal B}\cap {\cal H})\setminus {\cal Q}$. 
By Proposition~\ref{p:bdyosc}, we may assume that 
$K(h_{1})\subset K(h_{2}).$ By Proposition~\ref{p:bdyosc} again, 
we obtain that $K(h_{1})\subset h_{1}^{-1}(K(h_{2}))\subset h_{2}^{-1}(K(h_{1}))\subset K(h_{2})$ and   
setting $U:=(\mbox{int}(K(h_{2})))\setminus K(h_{1})$, 
$(h_{1},h_{2})$ satisfies the open set condition with $U.$ 
Moreover, 
$h_{1}^{-1}(\overline{U})\cup h_{2}^{-1}(\overline{U})\subset \overline{U}.$ 
We now show the following claim. \\ 
Claim. $h_{1}^{-1}(\overline{U})\cup h_{2}^{-1}(\overline{U})\neq \overline{U}.$ 

To prove this claim, suppose that $h_{1}^{-1}(\overline{U})\cup h_{2}^{-1}(\overline{U})= \overline{U}.$ 
 Since $\overline{U}\subset K(h_{2})\setminus \mbox{int}(K(h_{1}))$, 
 we obtain that 
\begin{equation}
\label{eq:h1-1overu}
\overline{U}=h_{1}^{-1}(\overline{U})\cup h_{2}^{-1}(\overline{U})\subset 
 \left(h_{1}^{-1}(K(h_{2}))\setminus \mbox{int}(K(h_{1}))\right) 
\cup \left(K(h_{2})\setminus \mbox{int}(h_{2}^{-1}(K(h_{1})))\right).
\end{equation}   
Moreover, by Lemma~\ref{l:dh1-1log}, 
$h_{1}^{-1}(K(h_{2}))\subsetneqq h_{2}^{-1}(K(h_{1})).$ 
Since $\overline{\mbox{int}(h_{2}^{-1}(K(h_{1})))}=h_{2}^{-1}(K(h_{1}))$, 
we obtain that 
$(\mbox{int}(h_{2}^{-1}(K(h_{1}))))\setminus h_{1}^{-1}(K(h_{2}))\neq \emptyset .$ 
Therefore 
$$\emptyset \neq (\mbox{int}(h_{2}^{-1}(K(h_{1}))))\setminus h_{1}^{-1}(K(h_{2}))
\subset (\mbox{int}(K(h_{2})))\setminus K(h_{1})=U.$$  
However, this contradicts (\ref{eq:h1-1overu}). Thus we have proved our claim. 

 Let $G=\langle h_{1},h_{2}\rangle .$ 
 By \cite[Corollary 3.2]{HM}, $J(G)\subset \overline{U}.$ 
 Moreover, by \cite[Lemma 1.1.4]{S1}), $J(G)=h_{1}^{-1}(J(G))\cup h_{2}^{-1}(J(G)).$  
Therefore, Claim above implies that 
$J(G) =h_{1}^{-1}(J(G))\cup h_{2}^{-1}(J(G))\subset h_{1}^{-1}(\overline{U})\subset h_{2}^{-1}(\overline{U})
\subsetneqq \overline{U}.$ Hence, $J(G)\neq \overline{U}.$ 
Combining this with \cite[Theorem 1.25]{S7}, we see that 
$J(G)$ is porous and $\dim _{H}(J(G))\leq \overline{\dim }_{B}(J(G))<2.$ 
Thus statement~\ref{spacemainth3} in Theorem~\ref{spacemainth} holds. 
\end{proof}
\begin{lem}
\label{l:pfmainth4}
Statement~\ref{spacemainth4} in Theorem~\ref{spacemainth} holds. 
\end{lem}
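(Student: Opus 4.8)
The plan is to combine the open set condition (Lemma~\ref{l:pfmainthosc}), Bowen's formula and the thermodynamic machinery for hyperbolic semigroups from \cite{S6}, and the porosity/dimension bound (Lemma~\ref{l:pfmainth3}), together with the rigidity analysis of Lemma~\ref{l:dh1-1log}. Let $(h_{1},h_{2})\in (\overline{{\cal D}}\cap {\cal B}\cap {\cal H})\setminus {\cal Q}$ and $d_{j}=\deg(h_{j})$. First I would invoke Lemma~\ref{l:pfmainthosc}: $(h_{1},h_{2})$ satisfies the open set condition, say with the open set $U=(\mbox{int}(K(h_{2})))\setminus K(h_{1})$ after reducing to the case $K(h_{1})\subset K(h_{2})$ via Proposition~\ref{p:bdyosc}. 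Since $(h_{1},h_{2})\in{\cal H}$ and the OSC holds, the results of \cite{S6} give Bowen's formula $\dim_{H}(J(h_{1},h_{2}))=\delta_{(h_{1},h_{2})}$, and the conformal-measure/thermodynamic-formalism arguments there also give $\overline{\dim}_{B}(J(h_{1},h_{2}))=\delta_{(h_{1},h_{2})}$ and, for each $z\in\CCI\setminus P(h_{1},h_{2})$, $Z_{(h_{1},h_{2})}(z)=\delta_{(h_{1},h_{2})}$ (the Poincaré-series exponent equals the zero of the pressure function when backward orbits avoid the postcritical set). This establishes the chain of equalities in the displayed inequality.

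Next I would prove the lower bound $\frac{\log(d_{1}+d_{2})}{\sum_{j}\frac{d_{j}}{d_{1}+d_{2}}\log d_{j}}\le \delta_{(h_{1},h_{2})}$. The idea is to compare the pressure function $P(t)=P(\tilde h|_{J(\tilde h)},-t\log\varphi)$ with a ``mean-degree'' model: since $J(h_{1},h_{2})=h_{1}^{-1}(J(h_{1},h_{2}))\amalg h_{2}^{-1}(J(h_{1},h_{2}))$ under the OSC, the number of $n$-th level backward branches is $(d_{1}+d_{2})^{n}$ counted with the natural weights, and a convexity (Jensen) estimate on the derivative cocycle — using that the geometric mean of the contraction rates is controlled by $\prod d_{j}^{d_{j}/(d_{1}+d_{2})}$ up to bounded distortion coming from the hyperbolicity and the Koebe/conformality of the inverse branches — forces $P(t)>0$ for $t<\frac{\log(d_{1}+d_{2})}{\sum_{j}(d_{j}/(d_{1}+d_{2}))\log d_{j}}$, hence the zero $\delta_{(h_{1},h_{2})}$ of $P$ is at least this value. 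The strict inequality $1<\frac{\log(d_{1}+d_{2})}{\sum_{j}(d_{j}/(d_{1}+d_{2}))\log d_{j}}$ is an elementary inequality: it is equivalent to $\sum_{j}\frac{d_{j}}{d_{1}+d_{2}}\log d_{j}<\log(d_{1}+d_{2})$, i.e.\ to strict concavity of $\log$ evaluated at the convex combination $\frac{d_{1}}{d_{1}+d_{2}}d_{1}+\frac{d_{2}}{d_{1}+d_{2}}d_{2}$ versus $d_{1}+d_{2}$; since $\frac{d_{1}}{d_{1}+d_{2}}d_{1}+\frac{d_{2}}{d_{1}+d_{2}}d_{2}\le \max(d_{1},d_{2})<d_{1}+d_{2}$, the inequality is strict (one checks the degenerate cases $d_{1}=d_{2}$ separately, where it reduces to $\log d_{1}<\log 2d_{1}$).

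Finally, for the rigidity clause: suppose $\frac{\log(d_{1}+d_{2})}{\sum_{j}(d_{j}/(d_{1}+d_{2}))\log d_{j}}=\dim_{H}(J(h_{1},h_{2}))=\delta_{(h_{1},h_{2})}$. Equality in the Jensen/distortion estimate above forces the derivative cocycle to be cohomologous to a constant times a degree-counting cocycle, i.e.\ the inverse branches of $h_{1}$ and $h_{2}$ must be ``rigid'' — this is where the maximum-principle argument of Lemma~\ref{l:dh1-1log} and the Green's-function identities feed in: equality propagates to $\varphi(A_{2},\cdot)-\varphi(A_{1},\cdot)$ being as close to constant as the bounded-distortion permits, which via the structure of $K(h_{1})\subset K(h_{2})$ pins down $K(h_{1})$ and $K(h_{2})$ to be concentric discs after an affine change $\varphi(z)=\alpha z+\beta$, whence $d_{1}=d_{2}=:d$ and $\varphi\circ h_{j}\circ\varphi^{-1}(z)=a_{j}z^{d}$; since $(h_{1},h_{2})\notin{\cal Q}$ and $(\deg h_{1},\deg h_{2})\ne(2,2)$ (Lemma~\ref{l:d1d2not22}) we get $d\ge 3$, and $|a_{1}|\ne|a_{2}|$ by Lemma~\ref{l:dh1-1log}, so the resulting semigroup is in ${\cal D}\cap{\cal B}\cap{\cal H}$ (its Julia set is an annulus-type disconnected set as in the example $(z^{d},2z^{d})$). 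The main obstacle I expect is the equality-case analysis: extracting full geometric rigidity (concentric round discs and pure power maps) from equality in the dimension bound requires carefully tracking when the bounded-distortion inequalities degenerate, and tying this to the harmonic-function equality statements in Lemma~\ref{l:dh1-1log} rather than just asserting it; the forward (inequality) direction is comparatively routine given \cite{S6}.
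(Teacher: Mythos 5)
Your overall skeleton is the right one: Lemma~\ref{l:pfmainthosc} supplies the open set condition, which together with hyperbolicity lets you invoke \cite{S6} for the chain of equalities $\dim_{H}=\overline{\dim}_{B}=\delta_{(h_{1},h_{2})}=Z_{(h_{1},h_{2})}(z)$, and the $>1$ inequality is indeed the elementary Jensen computation you describe (the paper does not even bother to write it out). The differences — and the gap — lie in how you treat the Bowen-parameter lower bound and the equality-case rigidity.

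The paper obtains both the lower bound $\frac{\log(d_{1}+d_{2})}{\sum_{j}\frac{d_{j}}{d_{1}+d_{2}}\log d_{j}}\le\dim_{H}(J(h_{1},h_{2}))$ and the full rigidity conclusion (existence of an affine $\varphi$ with $\varphi\circ h_{j}\circ\varphi^{-1}(z)=a_{j}z^{d}$, $d=d_{1}=d_{2}$) in a single stroke by citing \cite[Theorem 3.15]{SUb}, which is a ready-made Bowen-parameter theorem for expanding rational semigroups with open set condition, including its characterization of the equality case. You instead propose to re-derive both from scratch: a pressure/Jensen argument for the lower bound, and a ``bounded-distortion degeneracy $\Rightarrow$ cohomological rigidity $\Rightarrow$ concentric round discs'' argument for the equality case, leaning on the Green's function maximum-principle analysis of Lemma~\ref{l:dh1-1log}. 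This is where the proposal does not close. The lower bound sketch is plausible as a blueprint but is essentially a restatement of the content of \cite[Theorem 3.15]{SUb} rather than a proof; and the rigidity sketch both understates the work required (as you yourself concede) and misattributes a role to Lemma~\ref{l:dh1-1log}. That lemma establishes only that $\frac{1}{d_{1}-1}\log|c_{1}|\ne\frac{1}{d_{2}-1}\log|c_{2}|$ and that $h_{1}^{-1}(K(h_{2}))\ne h_{2}^{-1}(K(h_{1}))$; it is used elsewhere (for the porosity claim and for strictness of the inclusion $h_{1}^{-1}(\overline U)\cup h_{2}^{-1}(\overline U)\subsetneq\overline U$), not to classify the maps as pure power maps. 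Extracting ``$h_{j}$ is affinely conjugate to $a_{j}z^{d}$'' genuinely requires the equality-case machinery of \cite[Theorem 3.15]{SUb}, which you have not reproduced. Once you have that, your remaining steps (Lemma~\ref{l:d1d2not22} gives $d\ge3$; $(h_{1},h_{2})\notin{\cal Q}$ forces $|a_{1}|\ne|a_{2}|$ so $(h_{1},h_{2})\in{\cal D}\cap{\cal B}\cap{\cal H}$) agree with the paper.
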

\begin{proof}
Let $(h_{1},h_{2})\in (\overline{{\cal D}}\cap {\cal B}\cap {\cal H})\setminus {\cal Q}.$ 
By Lemma~\ref{l:pfmainthosc}, $(h_{1},h_{2})$ satisfies the open set condition. 
Thus, by \cite[Theorem 1.1, 1.2]{S6}, for 
each $z\in \CCI \setminus P(\langle h_{1},h_{2}\rangle )$, 
$\dim _{H}(J(h_{1},h_{2}))=\overline{\dim}_{B}(J(h_{1},h_{2}))=\delta _{(h_{1},h_{2})}=Z_{(h_{1},h_{2})}(z).$ 
Moreover, by \cite[Theorem 3.15]{SUb}, we have that 
$\frac{\log (d_{1}+d_{2})}{\sum _{j=1}^{2}\frac{d_{j}}{d_{1}+d_{2}}\log d_{j}}\leq \dim _{H}(J(h_{1},h_{2})).$ 

Suppose that  $\frac{\log (d_{1}+d_{2})}{\sum _{j=1}^{2}\frac{d_{j}}{d_{1}+d_{2}}\log d_{j}}= \dim _{H}(J(h_{1},h_{2}))$.  
Then \cite[Theorem 3.15]{SUb} implies that 
there exist a transformation $\varphi (z)=\alpha z+\beta $ with $\alpha ,\beta \in \CC, \alpha \neq 0$,  
two complex numbers $a_{1}, a_{2}$ and a positive integer $d$  
such that  
we have $d=d_{1}=d_{2}$ and for each $j=1,2, \varphi \circ h_{j}\circ \varphi ^{-1}(z)=a_{j}z^{d}.$  
By Lemma~\ref{l:d1d2not22}, we obtain that $d\geq 3.$ From these it is easy to see that $(h_{1},h_{2})\in {\cal D}
\cap {\cal B}\cap {\cal H}.$   
\end{proof}

\begin{lem}
Statement~\ref{spacemainth4-2} in Theorem~\ref{spacemainth} holds. 
\end{lem}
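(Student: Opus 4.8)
The statement to prove is Theorem~\ref{spacemainth}-\ref{spacemainth4-2}: for $(h_1,h_2)\in (\overline{{\cal D}}\cap{\cal B}\cap{\cal H})\setminus{\cal Q}$ there are $\epsilon>0$ and a neighborhood $V$ in ${\cal B}\cap{\cal H}$ such that for all $(g_1,g_2)\in V$ and $z\in\CCI\setminus P(g_1,g_2)$,
$$\dim_H(J(g_1,g_2))\le\overline{\dim}_B(J(g_1,g_2))\le\delta_{(g_1,g_2)}=Z_{(g_1,g_2)}(z)\le 2-\epsilon<2.$$

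\textbf{The plan.} First I would fix $(h_1,h_2)\in(\overline{{\cal D}}\cap{\cal B}\cap{\cal H})\setminus{\cal Q}$ and invoke Lemma~\ref{l:pfmainth3} (Statement~\ref{spacemainth3}) together with Statement~\ref{spacemainth4} (Lemma~\ref{l:pfmainth4}) to get $\delta_{(h_1,h_2)}=\dim_H(J(h_1,h_2))=\overline{\dim}_B(J(h_1,h_2))<2$. Pick $\epsilon_0>0$ with $\delta_{(h_1,h_2)}<2-2\epsilon_0$. The first key point is that ${\cal H}$ is open (Lemma~\ref{l:hbdopen}), so on a neighborhood in ${\cal H}$ the quantity $\delta_{(g_1,g_2)}$ is defined; moreover $(g_1,g_2)\mapsto\delta_{(g_1,g_2)}$ is real-analytic, hence continuous, on ${\cal H}$ (the remark after the definition, citing \cite{SU1}). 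Therefore there is an open neighborhood $V_1$ of $(h_1,h_2)$ in ${\cal B}\cap{\cal H}$ with $\delta_{(g_1,g_2)}<2-\epsilon$ for all $(g_1,g_2)\in V_1$, where $\epsilon:=\epsilon_0$.

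\textbf{Upgrading to the open set condition on a neighborhood.} The equalities $\overline{\dim}_B(J(g_1,g_2))\le\delta_{(g_1,g_2)}=Z_{(g_1,g_2)}(z)$ require Bowen's formula / the thermodynamic results of \cite{S6}, which in turn need the open set condition for $(g_1,g_2)$. The element $(h_1,h_2)$ satisfies OSC by Lemma~\ref{l:pfmainthosc}, with open set $U=(\mathrm{int}(K(h_2)))\setminus K(h_1)$ (up to swapping indices), and by Proposition~\ref{p:bdyosc} $h_1^{-1}(\overline U)\amalg h_2^{-1}(\overline U)\subset\overline U$, with the inclusion being into $h_1^{-1}(K(h_2))$ and $h_2^{-1}(K(h_1))$ respectively, which are disjoint-interior by Lemma~\ref{l:dh1-1log} ($h_1^{-1}(K(h_2))\subsetneqq h_2^{-1}(K(h_1))$). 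The hard part will be to show that all sufficiently nearby $(g_1,g_2)\in{\cal B}\cap{\cal H}$ also satisfy OSC: I expect this to follow from stability of the configuration, namely that the strict surrounding relations $K(g_1)\subset g_1^{-1}(K(g_2))\subset\mathrm{int}(g_2^{-1}(K(g_1)))$ or at least disjointness $g_1^{-1}(\mathrm{int}(K(g_2)))\cap(\mathrm{int}(K(g_2))\setminus g_2^{-1}(K(g_1)))=\emptyset$ persist under perturbation because $K(\cdot)$ and the filled-in sets vary continuously (in the Hausdorff metric) on ${\cal B}$, and because of the strict inclusion coming from Lemma~\ref{l:dh1-1log}. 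Concretely, on a neighborhood $V_2\subset V_1$ the sets $U_{g_1,g_2}:=(\mathrm{int}(K(g_2)))\setminus K(g_1)$ are nonempty open sets with $g_1^{-1}(U_{g_1,g_2})\amalg g_2^{-1}(U_{g_1,g_2})\subset U_{g_1,g_2}$, i.e.\ OSC holds; one must be careful to handle the case $K(h_1)=K(h_2)$ (the "case 2" situation of Lemma~\ref{l:pfmainth2}) where the neighborhood may need the perturbation trick $h_{3,b}=S_b\circ h_2\circ S_b^{-1}$ — but note that with hyperbolicity and $\overline{{\cal D}}$ membership, $K(h_1)=K(h_2)$ together with $J(h_2)$ quasicircle (Proposition~\ref{p:bdyosc}-(c)) forces $(h_1,h_2)\in{\cal Q}$, which is excluded, so in fact $K(h_1)\subsetneqq K(h_2)$ and $U_{h_1,h_2}\neq\emptyset$ with nonempty interior, making the perturbation argument straightforward.

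\textbf{Conclusion.} Set $V:=V_2$. For each $(g_1,g_2)\in V$, OSC holds, so by \cite[Theorems 1.1, 1.2]{S6}, for every $z\in\CCI\setminus P(g_1,g_2)$ we get $\overline{\dim}_B(J(g_1,g_2))\le\delta_{(g_1,g_2)}=Z_{(g_1,g_2)}(z)$ (and $\dim_H\le\overline{\dim}_B$ always). Combined with $\delta_{(g_1,g_2)}\le 2-\epsilon$ from the continuity of $\delta$, this is exactly the claimed chain of inequalities. The main obstacle is the persistence of OSC under perturbation into ${\cal B}\cap{\cal H}$ near the boundary point; everything else is continuity of $\delta_{(h_1,h_2)}$ and citation of \cite{S6}.
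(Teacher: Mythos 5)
Your plan is right through the continuity step (establish $\delta_{(h_1,h_2)}=\dim_H(J(h_1,h_2))<2$ via Lemmas~\ref{l:pfmainth3} and \ref{l:pfmainth4}, then use continuity of $(g_1,g_2)\mapsto\delta_{(g_1,g_2)}$ from \cite{SU1} to get $\delta_{(g_1,g_2)}\le 2-\epsilon$ on a neighborhood), and this matches the paper. But the ``upgrading to OSC on a neighborhood'' paragraph is both unnecessary and unsalvageable, and it is where the proposal goes wrong.

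The inequality chain $\dim_H(J(g_1,g_2))\le\overline{\dim}_B(J(g_1,g_2))\le\delta_{(g_1,g_2)}=Z_{(g_1,g_2)}(z)$ does \emph{not} need the open set condition. This is exactly what \cite[Theorem~1.1]{S6} gives for \emph{every} finitely generated hyperbolic polynomial semigroup; OSC is only invoked (together with \cite[Theorem~1.2]{S6}) for the \emph{reverse} inequality $\delta_{(g_1,g_2)}\le\dim_H(J(g_1,g_2))$, i.e.\ Bowen's formula, which statement~\ref{spacemainth4-2} does not claim for the nearby parameters $(g_1,g_2)$ --- it is claimed only at the base point, where it is already supplied by Lemma~\ref{l:pfmainth4}. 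So the correct proof is the short one: the neighborhood $W\subset{\cal B}\cap{\cal H}$ exists by Lemma~\ref{l:hbdopen}, \cite[Theorem~1.1]{S6} gives the chain on all of $W$, continuity of $\delta$ gives the bound by $2-\epsilon$, and that is the whole argument.

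Worse, the stability-of-OSC argument you ``expect'' to work actually fails. For $(h_1,h_2)\in(\partial{\cal D})\cap{\cal B}\cap{\cal H}$, Lemma~\ref{l:h1h2bdhb} shows $h_1^{-1}(J(h_2))\cap h_2^{-1}(J(h_1))\neq\emptyset$, so the inclusion $h_1^{-1}(K(h_2))\subset h_2^{-1}(K(h_1))$ is \emph{not} strict into the interior at the base point; and Lemma~\ref{l:h1h2hbz0} (used precisely in the proof of Lemma~\ref{l:pfmainth2}) shows that in any neighborhood there are translated parameters with $g_1^{-1}(J(g_2))\setminus g_2^{-1}(K(g_1))\neq\emptyset$, which destroys the inclusion and hence the disjointness $g_1^{-1}(U_{g_1,g_2})\cap g_2^{-1}(U_{g_1,g_2})=\emptyset$ for the natural $U_{g_1,g_2}=(\mathrm{int}(K(g_2)))\setminus K(g_1)$. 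So your claimed $V_2$ on which ``OSC holds'' does not exist in general, and the conclusion you wanted would be blocked even if OSC had been needed.
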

\begin{proof}
There exists a neighborhood $W$ of $(h_{1},h_{2})$ in ${\cal B}\cap {\cal H}.$ 
By \cite[Theorem 1.1]{S6}, for each 
$(g_{1},g_{2})\in W$ and for each $z\in \CCI \setminus P(\langle g_{1},g_{2}\rangle )$, we have  
$\dim _{H}(J(g_{1},g_{2}))\leq \overline{\dim }_{B}(J(g_{1},g_{2}))\leq \delta _{(g_{1},g_{2})}=Z_{(g_{1},g_{2})}(z).$ 
Furthermore, by \cite{SU1}, the map  $(g_{1},g_{2})\mapsto \delta _{(g_{1},g_{2})}$ is continuous in $W.$ 
Combining these arguments with Lemmas~\ref{l:pfmainth3}, \ref{l:pfmainth4}, we see that 
Statement~\ref{spacemainth4-2} in Theorem~\ref{spacemainth} holds. 
\end{proof}

\begin{lem}
\label{l:l1subvar}
Let $L_{1}:= \{ (h_{1},h_{2})\in {\cal P}^{2}\mid J(h_{1})=J(h_{2})\} .$ 
Then, for each connected component ${\cal V}$ of ${\cal P}^{2}$, $L_{1}\cap {\cal V}$ is included in a proper holomorphic subvariety of ${\cal V}.$ 

\end{lem}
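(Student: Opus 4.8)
The plan is to reduce to one connected component of ${\cal P}^{2}$. Since the ${\cal P}_{n}$ ($n\geq 2$) are the connected components of ${\cal P}$, every connected component ${\cal V}$ of ${\cal P}^{2}$ is of the form ${\cal P}_{n_{1}}\times {\cal P}_{n_{2}}$, and it suffices to produce a single proper holomorphic subvariety of ${\cal V}$ that contains $L_{1}\cap {\cal V}$. The one substantial ingredient I would use is the classical structure theory of polynomials sharing a Julia set: if $f,g$ are polynomials of degree $\geq 2$ with $J(f)=J(g)=:J$, then $f\circ g$ and $g\circ f$ differ by an affine automorphism of $\CC$, i.e. there is an affine $\sigma$ with $f\circ g=\sigma \circ (g\circ f)$ (one may even take $\sigma$ to be an affine symmetry of $J$). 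I would justify this by cases: when $J$ is a round circle or a line segment, $f$ and $g$ are affinely conjugate---by the same conjugacy---to monomials, respectively to $\pm$Chebyshev polynomials, and the assertion is a one-line computation; in all other cases it follows from the known description of the (finitely generated) monoid of polynomials with Julia set $J$, together with the fact that the affine symmetry group of such a $J$ is finite cyclic. The weaker real-analytic consequence of $J(h_{1})=J(h_{2})$, namely $\frac{\log |c_{1}|}{\deg h_{1}-1}=\frac{\log |c_{2}|}{\deg h_{2}-1}$ for the leading coefficients $c_{i}$ (as in Lemma~\ref{l:dh1-1log}), is of course not enough to cut out a holomorphic subvariety; this is why the relation between the two compositions is needed.

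Granting this, I would set
$$E:=\{(h_{1},h_{2})\in {\cal V}\mid h_{1}\circ h_{2}=\sigma \circ (h_{2}\circ h_{1})\ \text{for some affine automorphism }\sigma\text{ of }\CC\},$$
so that $L_{1}\cap {\cal V}\subset E$. To see that $E$ is a holomorphic subvariety, I would use the elementary fact that two polynomials $P,Q$ of the same degree $D$ satisfy $P=\sigma\circ Q$ for some affine $\sigma$ if and only if $P-\frac{c_{P}}{c_{Q}}Q$ is a constant---where $c_{P},c_{Q}$ denote the leading coefficients---i.e. if and only if the coefficients of $z^{D},z^{D-1},\dots ,z^{1}$ of $P-\frac{c_{P}}{c_{Q}}Q$ all vanish. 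Applying this with $P=h_{1}\circ h_{2}$ and $Q=h_{2}\circ h_{1}$, which have degree $D=n_{1}n_{2}$ and leading coefficient $c_{Q}=(\text{lead }h_{2})(\text{lead }h_{1})^{n_{2}}$, a function that is nowhere zero on ${\cal V}$ because the leading coefficient of an element of ${\cal P}_{n}$ never vanishes, one finds that these coefficients are holomorphic functions on ${\cal V}$ (after clearing the denominator $c_{Q}$ they are polynomials in the coefficients of $h_{1}$ and $h_{2}$), and $E$ is exactly their common zero set.

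Finally I would check that $E\neq {\cal V}$ using the explicit pair $h_{1}(z)=z^{n_{1}}$, $h_{2}(z)=z^{n_{2}}+1$: then $h_{1}\circ h_{2}(z)=(z^{n_{2}}+1)^{n_{1}}$ has nonzero coefficient $n_{1}$ in degree $n_{2}(n_{1}-1)$, whereas $h_{2}\circ h_{1}(z)=z^{n_{1}n_{2}}+1$ has no term of degree strictly between $0$ and $n_{1}n_{2}$; since $0<n_{2}(n_{1}-1)<n_{1}n_{2}$ these two polynomials cannot differ by an affine map, so $(h_{1},h_{2})\notin E$. Hence $E$ is a proper holomorphic subvariety of the connected manifold ${\cal V}$ and contains $L_{1}\cap {\cal V}$, which proves the lemma. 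I expect the only genuine difficulty to be the cited structural input---that a common Julia set forces $f\circ g$ and $g\circ f$ to be affinely related, including in the degenerate circle and interval cases; the reduction to a holomorphic subvariety and the properness are routine.
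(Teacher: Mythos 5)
Your proof rests on the same key ingredient as the paper's: Beardon's theorem characterizing $J(h_{1})=J(h_{2})$ via the functional equation $h_{1}\circ h_{2}=\eta\circ (h_{2}\circ h_{1})$ for an affine symmetry $\eta$ of the Julia set. The paper then cites Beardon's description of the symmetry group $\Sigma(h_{1})$ and concludes abruptly; your finish --- recasting the affine relation as the vanishing of the subleading coefficients of $h_{1}\circ h_{2}-\frac{c_{P}}{c_{Q}}\,h_{2}\circ h_{1}$, and checking properness with the explicit pair $(z^{n_{1}},z^{n_{2}}+1)$ --- is a cleaner and more explicit version of the same argument, but not a genuinely different route.
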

\begin{proof}
Let $(h_{1},h_{2})\in {\cal P} ^{2}.$ 
Let $n:=\deg (h_{1}), m:= \deg (h_{2}).$ 
We write $h_{1}(z)=a_{n}(h_{1})z^{n}+a_{n-1}(h_{1})z^{n-1}+\cdots +a_{0}(h_{1})$ 
and $h_{2}(z)=b_{m}(h_{2})z^{m}+b_{m-1}(h_{2})z^{m-1}+\cdots +b_{0}(h_{2}).$ 
Let $\zeta (h_{1}):= -\frac{a_{n-1}(h_{1})}{n\cdot a_{n}(h_{1})}.$
Let $$\Sigma (h_{1}):= \{ \alpha (z)= a(z-\zeta (h_{1}))+\zeta (h_{1})\mid a\in \CC ,|a|=1, \alpha (J(h_{1}))=J(h_{1})\} .$$  
By \cite[Theorems 1,5]{Be90}, $J(h_{1})=J(h_{2})$ if and only if 
there exists an element $\eta \in \Sigma (h_{1})$ such that 
$h_{1}\circ h_{2}=\eta \circ h_{2}\circ h_{1}.$ Moreover, 
either (1) $\sharp \Sigma (h_{1})\leq n$ or (2) $\sharp \Sigma (h_{1})=\infty .$ 
Moreover, (1) holds if and only if 
setting $\rho (z)=(\frac{1}{a_{n}(h_{1})})^{\frac{1}{n-1}}\cdot z$, 
we have $\rho ^{-1}\circ T_{\zeta (h_{1})}^{-1}\circ h_{1}\circ T_{\zeta (h_{1})}\circ \rho (z)=
z^{c}h_{1,0}(z^{d})$, where $c,d\geq 0$ are maximal for this form, 
and $h_{1,0}$ is a non-constant monic polynomial.  
Note that in this case, 
$\Sigma (h_{1})=\{ \alpha (z)=a(z-\zeta (h_{1}))+\zeta (h_{1})\mid \alpha (J(h_{1}))=J(h_{1}), \alpha ^{d}=1\} .$ 
 Furthermore, ``$J(h_{1})=J(h_{2})$ and (2)'' holds if and only if 
$h_{1}(z)=a_{n}(h_{1})(z-\zeta (h_{1}))^{n}+\zeta (h_{1})$ and 
$h_{2}(z)=b_{m}(h_{2})(z-\zeta (h_{1}))^{m}+\zeta (h_{1})$. 
Thus the statement of our lemma holds.  
\end{proof}
\begin{lem}
\label{l:pfmainth5}
Statement~\ref{spacemainth5} in Theorem~\ref{t:spacetopology} holds.
\end{lem}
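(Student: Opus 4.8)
The plan is to establish the three assertions of Statement~\ref{spacemainth5} separately; each will follow quickly from facts already available, so the argument will be short and the only care needed is in invoking the correct earlier results.

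First, for ${\cal D}\cap{\cal Q}=\emptyset$ I would take $(h_1,h_2)\in{\cal Q}$, set $K:=J(h_1)=J(h_2)$ and $G=\langle h_1,h_2\rangle$, and prove $J(G)=K$. The inclusion $K\subset J(G)$ is immediate since $J(G)=\overline{\bigcup_{g\in G}J(g)}\supset J(h_1)$. For the reverse inclusion, I would use that the Julia set of a single polynomial of degree $\geq 2$ is completely invariant, so $h_1(K)=h_1(J(h_1))\subset K$ and likewise $h_2(K)\subset K$; hence $g(K)\subset K$ for every $g\in G$, and since $J(G)$ is the smallest non-empty compact set with this forward-invariance property, $J(G)\subset K$. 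Thus $J(G)=K=J(h_1)$, which is a quasicircle and in particular connected, so $(h_1,h_2)\in{\cal C}$, i.e. $(h_1,h_2)\notin{\cal D}$.

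Second, for ${\cal D}\cap{\cal B}\cap{\cal I}=\emptyset$ I would take $(h_1,h_2)\in{\cal B}\cap{\cal I}$ and $G=\langle h_1,h_2\rangle$. Since $P^{\ast}(h_j)\subset P^{\ast}(G)$ is bounded in $\CC$ for $j=1,2$, the well-known criterion for a single polynomial gives that each $J(h_j)$ is connected. As $(h_1,h_2)\in{\cal I}$, we have $J(h_1)\cap J(h_2)\neq\emptyset$, so $J(h_1)\cup J(h_2)$ is a connected subset of $J(G)$ and is therefore contained in a single connected component of $J(G)$; by \cite[Theorem 2.1]{SdpbpI} this forces $J(G)$ to be connected (this is exactly the implication used in Case 2 of the proof of Lemma~\ref{l:pfmainth2}). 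Hence $(h_1,h_2)\notin{\cal D}$.

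Finally, the subvariety assertion is immediate from Lemma~\ref{l:l1subvar}: since ${\cal Q}\subset L_1$ by definition, for each connected component ${\cal V}$ of ${\cal P}^2$ we get ${\cal Q}\cap{\cal V}\subset L_1\cap{\cal V}$, which Lemma~\ref{l:l1subvar} places inside a proper holomorphic subvariety of ${\cal V}$. I do not expect a substantive obstacle anywhere; the one point I would double-check is that \cite[Theorem 2.1]{SdpbpI} applies here without assuming hyperbolicity, but since it concerns postcritically bounded polynomial semigroups and is applied in precisely this form in Lemma~\ref{l:pfmainth2}, the ${\cal B}$-hypothesis present in the statement is all that is required.
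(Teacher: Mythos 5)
Your argument is correct; the first and third assertions match the paper's route (the paper simply calls the first ``easy to see''), while for the second you take a genuinely different path. The paper obtains ${\cal D}\cap{\cal B}\cap{\cal I}=\emptyset$ directly from Lemma~\ref{l:bdjconti}: for $(h_1,h_2)\in{\cal D}\cap{\cal B}$ one has $h_1^{-1}(J(G))\cap h_2^{-1}(J(G))=\emptyset$, and since $J(h_j)\subset h_j^{-1}(J(G))$ this at once forces $J(h_1)\cap J(h_2)=\emptyset$. You argue the contrapositive via \cite[Theorem 2.1]{SdpbpI}, exactly as in Case 2 of the proof of Lemma~\ref{l:pfmainth2}. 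Both are valid: the paper's version re-uses the backward-image disjointness machinery already built in this subsection, while yours needs only the external connectivity criterion and is, in that sense, more self-contained.

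One caution in the first part. The fact you invoke --- that $J(G)$ is the smallest non-empty compact $K\subset\CCI$ with $g(K)\subset K$ for all $g\in G$ --- is not correct as literally stated (that phrasing does appear in the paper's preliminary paragraph, but $\{\infty\}$ is always a strictly smaller forward-invariant compact set for a polynomial semigroup, as is any attracting cycle). The correct minimality statement is for \emph{backward} invariance: if $K$ is compact with $\sharp K\geq 3$ and $g^{-1}(K)\subset K$ for all $g\in G$, then $\CCI\setminus K$ is forward invariant and hence, by Montel, lies in $F(G)$; this is the fact underlying \cite[Corollary 3.2]{HM}, which the paper uses repeatedly in this form. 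In your situation the repair is immediate: $K=J(h_1)=J(h_2)$ is \emph{completely} invariant under each generator, so $h_j^{-1}(K)=K$ as well, and $K$ is a perfect set, hence infinite; the backward-invariance version then gives $J(G)\subset K$. So replace ``forward'' by ``backward'' (and note $\sharp K\geq 3$) and the step is airtight.
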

\begin{proof}
By the definition of ${\cal Q}$, it is easy to see that ${\cal Q}\cap {\cal D}=\emptyset .$ 
Moreover, by Lemma~\ref{l:bdjconti}, for each $(h_{1},h_{2})\in {\cal D}\cap {\cal B}$, 
$h_{1}^{-1}(J(h_{1},h_{2}))\cap h_{2}^{-1}(J(h_{1},h_{2}))=\emptyset .$ 
Thus ${\cal D}\cap {\cal B}\cap {\cal I}=\emptyset .$  
Let $L_{1}:= \{ (h_{1},h_{2})\in {\cal P}^{2}\mid J(h_{1})=J(h_{2})\} .$ 
Let ${\cal V}$ be any connected component of ${\cal P}^{2}.$ 
Since ${\cal Q}\subset L_{1}$, Lemma~\ref{l:l1subvar} implies that 
${\cal Q}\cap {\cal V}$ is included in a proper subvariety 
of ${\cal V}.$ 
Thus statement~\ref{spacemainth5} in Theorem~\ref{t:spacetopology} holds.
\end{proof}
\begin{lem}
\label{l:pfmainth7}
Statement~\ref{spacemainth7} in Theorem~\ref{t:spacetopology} holds. 
\end{lem}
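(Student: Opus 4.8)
The goal is to show that $((\partial {\cal C})\cap {\cal B}\cap {\cal H})\setminus {\cal Q}$ is open and dense in $(\partial {\cal C})\cap {\cal B}\cap {\cal H}$ in the relative topology. The plan is to handle openness and density separately, both by leveraging the structural results already established in this subsection.

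\emph{Openness.} First I would show that $(\partial {\cal C})\cap {\cal B}\cap {\cal H} = (\partial {\cal D})\cap {\cal B}\cap {\cal H}$, which is noted in the introduction (via Lemma~\ref{l:hbdopen}), so it suffices to argue relative openness of the complement of ${\cal Q}$ in $(\partial {\cal D})\cap {\cal B}\cap {\cal H}$. Equivalently, I would show that ${\cal Q}\cap (\partial {\cal D})\cap {\cal B}\cap {\cal H}$ is relatively closed. The key point is Lemma~\ref{l:dh1-1log}: for $(h_{1},h_{2})\in(\overline{{\cal D}}\cap{\cal B}\cap{\cal H})\setminus{\cal Q}$ one has the strict inequality $\frac{1}{d_{1}-1}\log|c_{1}|\neq\frac{1}{d_{2}-1}\log|c_{2}|$, whereas on ${\cal Q}$ (with say $K(h_1)\subset K(h_2)$, using Proposition~\ref{p:bdyosc}) equality $J(h_1)=J(h_2)$ forces $\frac{1}{d_{1}-1}\log|c_{1}|=\frac{1}{d_{2}-1}\log|c_{2}|$. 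Since the leading coefficients $c_i$ and degrees $d_i$ depend continuously (indeed, $d_i$ is locally constant) on $(h_1,h_2)$ in ${\cal P}^2$, the condition $\frac{1}{d_{1}-1}\log|c_{1}|=\frac{1}{d_{2}-1}\log|c_{2}|$ is closed; intersecting with $(\partial{\cal D})\cap{\cal B}\cap{\cal H}$ and combining with Proposition~\ref{p:bdyosc}(c) (which gives that $J(h_2)$ — the outer one — is a quasicircle automatically on all of $\overline{{\cal D}}\cap{\cal B}\cap{\cal H}$) shows that on this set ${\cal Q}$ coincides with the closed condition $\frac{1}{d_{1}-1}\log|c_{1}|=\frac{1}{d_{2}-1}\log|c_{2}|$. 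Hence ${\cal Q}\cap(\partial{\cal D})\cap{\cal B}\cap{\cal H}$ is relatively closed, giving openness of the complement.

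\emph{Density.} Given $(h_1,h_2)\in ((\partial{\cal C})\cap{\cal B}\cap{\cal H})\cap{\cal Q}$, I would perturb $h_2$ by a translation conjugacy to leave ${\cal Q}$ while staying in $(\partial{\cal C})\cap{\cal B}\cap{\cal H}$. Concretely, by Proposition~\ref{p:bdyosc} assume $K(h_1)\subset K(h_2)$; since $(h_1,h_2)\in{\cal Q}$ we have $J(h_1)=J(h_2)$, so $K(h_1)=K(h_2)$. Then the argument from the proof of Lemma~\ref{l:pfmainth2}, Case~2 applies verbatim: for each small $\epsilon>0$ there is $|c|<\epsilon$ with $S_c(x_1)\in\operatorname{int}(K(h_1))$ and $S_c(x_2)\in\CC\setminus K(h_1)$ (where $x_1,x_2$ realize $\operatorname{diam}_E K(h_1)$), so $h_{3,c}=S_c\circ h_2\circ S_c^{-1}$ satisfies $J(h_{3,c})\cap\operatorname{int}(K(h_1))\neq\emptyset$ and $J(h_{3,c})\cap(\CC\setminus K(h_1))\neq\emptyset$; hence $J(h_1)\cap J(h_{3,c})\neq\emptyset$ and $J(h_1)\cup J(h_{3,c})$ lies in one component of $J(h_1,h_{3,c})$, which by \cite[Theorem 2.1]{SdpbpI} forces $J(h_1,h_{3,c})$ connected, i.e. $(h_1,h_{3,c})\in{\cal C}$. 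On the other hand $(h_1,h_{3,c})\notin{\cal Q}$: since $J(h_{3,c})$ meets both $\operatorname{int}(K(h_1))$ and the complement of $K(h_1)$, it cannot equal $J(h_1)$. One must also check $(h_1,h_{3,c})\in(\partial{\cal C})\cap{\cal B}\cap{\cal H}$ — this follows because $(h_1,h_{3,c})\in{\cal B}\cap{\cal H}$ for $c$ small (these sets are open, Lemma~\ref{l:hbdopen}), and it lies in $\overline{{\cal D}}$ since by Proposition~\ref{p:bdyosc} it's in $\overline{{\cal D}}\cap{\cal B}\cap{\cal H}$ already (being a small perturbation within ${\cal B}\cap{\cal H}$ of a point of $\partial{\cal D}$, or by a direct surrounding-order argument), while it is in ${\cal C}$ hence in $\overline{{\cal D}}\setminus{\cal D}=\partial{\cal D}$. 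Taking $\epsilon\to 0$ yields approximation of $(h_1,h_2)$ by elements of $((\partial{\cal C})\cap{\cal B}\cap{\cal H})\setminus{\cal Q}$.

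\emph{Main obstacle.} The delicate point is the density step: verifying that the translate $(h_1,h_{3,c})$ genuinely stays on $\partial{\cal C}$ (equivalently $\partial{\cal D}$) rather than falling into the interior of ${\cal C}$ away from $\overline{{\cal D}}$. I expect this to be resolvable by the surrounding-order machinery of \cite{SdpbpI} together with Proposition~\ref{p:bdyosc}: any element of ${\cal C}\cap{\cal B}\cap{\cal H}$ with $K(h_1)\subset K(h_2)$ and $J(h_2)$ a quasicircle that is a limit of ${\cal D}$-elements stays in $\overline{{\cal D}}$, and the explicit perturbations constructed are of exactly this type. If instead $(h_1,h_{3,c})$ turned out to lie in $\operatorname{int}({\cal C})$ one would need an additional secondary perturbation pushing it back toward ${\cal D}$, but the construction above, being arbitrarily close to the original boundary point, should already guarantee membership in $\overline{{\cal D}}$.
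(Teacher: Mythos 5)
Your openness argument is valid, and is a slightly more explicit variant of the paper's (the paper relies on the observation that on $\overline{{\cal D}}\cap{\cal B}\cap{\cal H}$ the quasicircle property is automatic by Proposition~\ref{p:bdyosc}(c) and $(h_1,h_2)\mapsto J(h_i)$ is continuous on ${\cal H}$, so ${\cal Q}$ is relatively closed; your reduction to the closed locus $\frac{1}{d_1-1}\log|c_1|=\frac{1}{d_2-1}\log|c_2|$ via Lemma~\ref{l:dh1-1log} reaches the same conclusion).

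The density step, however, has a genuine gap, and you have in fact already put your finger on it yourself. The translation-conjugacy perturbation $(h_1,h_{3,c})$ built in the proof of Lemma~\ref{l:pfmainth2}, Case~2 does \emph{not} stay on $\partial{\cal C}$: that proof shows explicitly that there is a whole open neighborhood $B$ of $(h_1,h_{3,c})$ contained in $\mbox{int}({\cal C})$, so $(h_1,h_{3,c})\in\mbox{int}({\cal C})$, hence $(h_1,h_{3,c})\notin\partial{\cal C}=\partial{\cal D}$. Your fallback claim --- that being arbitrarily close to the original boundary point ``should already guarantee membership in $\overline{{\cal D}}$'' --- is false: proximity to $\partial{\cal D}$ is no obstruction to lying in $\mbox{int}({\cal C})\setminus\overline{{\cal D}}$, and here that is exactly what happens. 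So your construction produces a nearby point of $(\mbox{int}({\cal C}))\setminus{\cal Q}$, not a nearby point of $(\partial{\cal C})\setminus{\cal Q}$, and the ``secondary perturbation'' you anticipate is precisely the missing content of the lemma.

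The paper closes this gap with a topological argument that your plan does not supply. Fix $(h_1,h_2)\in(\partial{\cal C})\cap{\cal B}\cap{\cal H}$ and a small open polydisc $W$ about it in ${\cal B}\cap{\cal H}$. By Lemma~\ref{l:pfmainth2} together with Lemma~\ref{l:pfmainth5} one can pick $(g_1,g_2)\in(\mbox{int}({\cal C})\cap W)\setminus{\cal Q}$ (this is essentially the point your perturbation lands on). Now the crucial input is Lemma~\ref{l:pfmainth5} again: ${\cal Q}$ is contained in a proper holomorphic subvariety of each component of ${\cal P}^2$, so $W\setminus{\cal Q}$ is connected. Since $W$ meets both ${\cal C}$ and ${\cal D}$ (as $(h_1,h_2)\in\partial{\cal D}$), one can join $(g_1,g_2)$ to a point of ${\cal D}\cap W$ by a path $\gamma$ lying entirely in $W\setminus{\cal Q}$. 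The path must cross $\partial{\cal D}$, and the crossing point lies in $((\partial{\cal D})\cap{\cal B}\cap{\cal H})\setminus{\cal Q}\cap W$, which is what density requires. Without the connectedness of $W\setminus{\cal Q}$ (a consequence of the subvariety statement), one cannot rule out that the $\partial{\cal D}$-crossing lands back in ${\cal Q}$; that is the idea you are missing.
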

\begin{proof}
It is easy to see that $((\partial {\cal C})\cap {\cal B}\cap {\cal H})\setminus {\cal Q}$ 
is an open subset of $(\partial {\cal C})\cap {\cal B}\cap {\cal H}$. 
In order to show that $((\partial {\cal C})\cap {\cal B}\cap {\cal H})\setminus {\cal Q}$ is dense in 
$(\partial {\cal C})\cap {\cal B}\cap {\cal H}$, 
let 
$(h_{1},h_{2})\in (\partial {\cal C})\cap {\cal B}\cap {\cal H}$. 
Let $W$ be any open polydisc neighborhood of $(h_{1},h_{2})$ in ${\cal B}\cap {\cal H}.$  
By Lemma~\ref{l:pfmainth2} and Lemma~\ref{l:pfmainth5}, 
there exists an element 
$(g_{1},g_{2})\in ((\mbox{int}({\cal C}))\cap W)\setminus {\cal Q}.$ 
By Lemma ~\ref{l:pfmainth5}, $W\setminus {\cal Q}$ is connected. 
Therefore there exists a curve $\gamma $ in $W\setminus {\cal Q}$ which joins $(g_{1},g_{2})$ and 
a point in ${\cal D}.$ Then $\gamma \cap \partial {\cal D}\neq \emptyset .$ 
Therefore $\gamma \cap (((\partial {\cal D})\cap {\cal B}\cap {\cal H})\setminus {\cal Q})\neq \emptyset .$ 
Thus $((\partial {\cal C})\cap {\cal B}\cap {\cal H})\setminus {\cal Q}$ is dense in 
$(\partial {\cal C})\cap {\cal B}\cap {\cal H}$. 
Hence statement~\ref{spacemainth7} in Theorem~\ref{t:spacetopology} holds. 
\end{proof}
\begin{lem}
\label{l:pfexample}
Statement~\ref{spaceexample} in Theorem~\ref{t:spacetopology} holds. 
\end{lem}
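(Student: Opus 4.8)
The plan has four ingredients.

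\textbf{(1) Reductions and setup.} The displayed inclusion is automatic: Julia sets of elements of ${\cal P}$ are non-empty, so $J(h_1)=J(h_2)$ implies $J(h_1)\cap J(h_2)\neq\emptyset$, i.e. ${\cal Q}\subset{\cal I}$. Also $(\partial{\cal C})\cap{\cal B}\cap{\cal H}=(\partial{\cal D})\cap{\cal B}\cap{\cal H}$ by Lemma~\ref{l:hbdopen} (cf. the remark before Theorem~\ref{spacemainth}), so it suffices to find $h_2\in{\cal P}_{d}$ with $(h_1,h_2)\in((\partial{\cal D})\cap{\cal B}\cap{\cal H})\setminus{\cal I}$. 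Since $\langle h_1\rangle$ is postcritically bounded, $J(h_1)$ is connected and $\hat K(h_1)=K(h_1)$; since $h_1$ is hyperbolic it has an attracting cycle in $\CC$, so $\mbox{int}(K(h_1))\neq\emptyset$, $P^{\ast}(h_1)\subset\mbox{int}(K(h_1))$, and the forward $h_1$-orbit of every compact subset of $\mbox{int}(K(h_1))$ is relatively compact in $\mbox{int}(K(h_1))$. After conjugating $h_1$ by an affine map (which affects neither the hypotheses nor the conclusion) we may assume $0\in\mbox{int}(K(h_1))$.

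\textbf{(2) A disconnected configuration of degree $d$.} Put $h_2^{[\mu]}(z):=\mu z^{d}\in{\cal P}_{d}$ and $G_{\mu}:=\langle h_1,h_2^{[\mu]}\rangle$ for $\mu\neq 0$. For $|\mu|$ small, $h_2^{[\mu]}(K(h_1))$ is a small compact subset of $\mbox{int}(K(h_1))$, so $\mbox{int}(K(h_1))$ is forward invariant under $G_{\mu}$; as every $g\in G_{\mu}$ maps $\mbox{int}(K(h_1))$ into the bounded set $K(h_1)$, Montel's theorem gives $\mbox{int}(K(h_1))\subset F(G_{\mu})$, while $P(G_{\mu})=\overline{\bigcup_{g\in G_{\mu}\cup\{Id\}}g(\{0\}\cup CV^{\ast}(h_1))}\cup\{\infty\}$ lies in a compact subset of $\mbox{int}(K(h_1))$ together with $\{\infty\}\subset F(G_{\mu})$ (using the relative-compactness statement of (1) and $h_2^{[\mu]}(K(h_1))\subset\mbox{int}(K(h_1))$); hence $(h_1,h_2^{[\mu]})\in{\cal B}\cap{\cal H}$. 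Moreover $K(h_2^{[\mu]})=\{\,|z|\le\rho\,\}$ with $\rho=|\mu|^{-1/(d-1)}$, so $K(h_1)\subsetneq\mbox{int}(K(h_2^{[\mu]}))$ and $J(h_1)\cap J(h_2^{[\mu]})=\emptyset$, i.e. $(h_1,h_2^{[\mu]})\notin{\cal I}$. Finally, with $d_1:=\deg(h_1)$, for $\rho$ large $h_1^{-1}(K(h_2^{[\mu]}))$ lies in a disc about $0$ of radius $O(\rho^{1/d_1})$, whereas $h_2^{[\mu],-1}(K(h_1))=\{\,z:z^{d}\in\mu^{-1}K(h_1)\,\}$ contains (as $0\in\mbox{int}(K(h_1))$) a disc about $0$ of radius $\asymp\rho^{(d-1)/d}$; since $(d_1-1)(d-1)\ge 1$ with equality only for $(d_1,d)=(2,2)$, our hypothesis gives $1/d_1<(d-1)/d$, so for $|\mu|$ small $h_1^{-1}(K(h_2^{[\mu]}))\subset\mbox{int}(h_2^{[\mu],-1}(K(h_1)))$. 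By \cite[Lemma 1.1.4]{S1} and the disconnectedness criterion of \cite{SdpbpI, S15} (the mechanism underlying Proposition~\ref{p:dosc}) this forces $h_1^{-1}(J(G_{\mu}))\cap h_2^{[\mu],-1}(J(G_{\mu}))=\emptyset$, so $J(G_{\mu})$ is disconnected; thus $(h_1,h_2^{[\mu]})\in{\cal D}\cap{\cal B}\cap{\cal H}$ for all sufficiently small $|\mu|$. Fix one such $\mu_0$ and set $h_2^{\sharp}:=h_2^{[\mu_0]}$.

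\textbf{(3) The boundary argument.} Let $S:=\{\,h_2\in{\cal P}_{d}:(h_1,h_2)\in{\cal B}\cap{\cal H},\ J(h_1)\cap J(h_2)=\emptyset\,\}$. Since ${\cal B}\cap{\cal H}$ is open and, on the locus where the generators are hyperbolic, $J(h_2)$ varies continuously with $h_2$, the set $S$ is open. On $S$ both $J(h_1)$ and $J(h_2)$ are connected (as $P^{\ast}(h_2)\subset P^{\ast}(h_1,h_2)$ is bounded) and disjoint, while $K(h_1)\cap K(h_2)\supset P^{\ast}(h_1,h_2)\neq\emptyset$; hence exactly one of $K(h_1)\subset\mbox{int}(K(h_2))$, $K(h_2)\subset\mbox{int}(K(h_1))$ holds, and which one is locally constant on $S$. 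Let $S_{0}$ be the connected component of $S$ containing $h_2^{\sharp}$; on $S_{0}$ we have $K(h_1)\subset\mbox{int}(K(h_2))$, so by Proposition~\ref{p:dosc}, if $(h_1,h_2)\in{\cal D}$ for some $h_2\in S_{0}$ then $h_1^{-1}(K(h_2))\subset h_2^{-1}(K(h_1))$. Therefore it suffices to produce some $h_2^{\flat}\in S_{0}$ with $h_1^{-1}(K(h_2^{\flat}))\not\subset h_2^{\flat,-1}(K(h_1))$: then $(h_1,h_2^{\flat})\notin{\cal D}$, so ${\cal D}\cap S_{0}$ is a nonempty proper (relatively) open subset of the connected set $S_{0}$, hence not relatively closed, so $(\partial{\cal D})\cap S_{0}\neq\emptyset$; any $h_2$ in this set satisfies $(h_1,h_2)\in(\partial{\cal D})\cap{\cal B}\cap{\cal H}$ and, as $h_2\in S_{0}\subset S$, also $(h_1,h_2)\notin{\cal I}$, which is the assertion.

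\textbf{(4) The main obstacle.} The remaining and hardest step is the construction of $h_2^{\flat}\in S_{0}$ that breaks the inclusion $h_1^{-1}(K(h_2))\subset h_2^{-1}(K(h_1))$, the difficulty being that postcritical boundedness pushes $h_2$ towards small leading coefficient — exactly the regime (large $K(h_2)$) in which that inclusion tends to hold. I would obtain $h_2^{\flat}$ by deforming $h_2^{\sharp}$ inside $S$: conjugating by a translation $S_{c}$ and moving the (finitely many) critical values of $h_2$ within $\mbox{int}(K(h_1))$ — the technique of Lemmas~\ref{l:h1h2hbz0} and \ref{l:pfmainth2} — while shrinking $K(h_2)$ towards $K(h_1)$, so that the non-round shapes of $h_1^{-1}(K(h_2))$ and of $h_2^{-1}(K(h_1))$ eventually disagree and $h_1^{-1}(J(h_2))$ meets the complement of $h_2^{-1}(K(h_1))$; throughout one keeps $(h_1,h_2)\in{\cal B}\cap{\cal H}$ by using an appropriate (possibly disconnected) common invariant compact set built from the attracting basins of $h_1$ as in the hyperbolicity argument of (2), keeps $J(h_1)\cap J(h_2)=\emptyset$ via the translation, and checks that the deformation stays in the component $S_{0}$. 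This verification is where Proposition~\ref{p:bdyosc}, the perturbation lemmas, and the interaction-cohomology disconnectedness criterion of \cite{S15, SdpbpI} must be combined.
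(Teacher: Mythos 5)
Your steps (1)--(3) are sound and the overall strategy — produce a degree-$d$ polynomial $h_2^{\sharp}$ with $(h_1,h_2^{\sharp})\in{\cal D}\cap{\cal B}\cap{\cal H}$ and $J(h_1)\cap J(h_2^{\sharp})=\emptyset$, then locate a boundary point of ${\cal D}$ by a continuity/connectedness argument in a suitable parameter family — is exactly the spirit of the paper's proof. But step (4), which you yourself flag as ``the main obstacle,'' is where all the content lives, and it is not filled in. Worse, the route you propose for it is circular: you invoke Lemmas~\ref{l:h1h2hbz0} and \ref{l:pfmainth2}, but Lemma~\ref{l:h1h2hbz0} has as a hypothesis $h_1^{-1}(J(h_2))\cap h_2^{-1}(J(h_1))\neq\emptyset$, which by Lemma~\ref{l:h1h2bdhb} is precisely the property enjoyed by points of $(\partial{\cal D})\cap{\cal B}\cap{\cal H}$ — i.e., the thing you are trying to construct. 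So the perturbation technique cannot get you out of ${\cal D}$ the way you suggest.

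What actually closes the gap in the paper's proof is a very specific one-parameter family together with a geometric observation that you do not make. The paper takes $b\in\mbox{int}(K(h_1))$ (with the mild non-degeneracy $b\neq c_2$ if $h_1(z)=c_1(z-c_2)^{d_1}+c_2$), sets $s:=\max_{z\in K(h_1)}|z-b|$ and $z_0\in J(h_1)$ attaining this maximum, and — this is the key — observes (Claim 1 there) that the circle $|z-b|=s$ is not contained in $J(h_1)$, so one can pick $z_1$ on the circle with $z_1\notin J(h_1)$ and a rotation $\theta$ with $e^{i\theta}s^{-(d-1)}(z_0-b)^d+b=z_1$. The family is then $g_t(z)=te^{i\theta}(z-b)^d+b$; the nested-compacts condition defining ${\cal D}$ holds for small $t$, and $t_1$ is taken to be the supremum of the $t$ for which it holds strictly. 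The role of $\theta$ is to force $t_1<s^{-(d-1)}$: if $t_1=s^{-(d-1)}$ then $J(g_{t_1})=\{|z-b|=s\}$, so $z_0\in J(g_{t_1})\cap J(h_1)$, but then $g_{t_1}(z_0)=z_1$ belongs to $K(h_1)$ (by forward invariance) and not to $J(h_1)$, hence $J(g_{t_1})$ meets $\mbox{int}(K(h_1))$ — impossible under the nesting. This yields $J(h_1)\cap J(g_{t_1})=\emptyset$ (so ${\cal I}$ is avoided) and, by the definition of $t_1$ and the cohomology criterion of \cite{S15}, $(h_1,g_{t_1})\in\partial{\cal D}$; hyperbolicity at $t_1$ then still has to be verified separately (Claim 4 of the paper), which your framework takes for granted because membership in ${\cal H}$ is built into your set $S$ but the limit of the family landing in ${\cal H}$ is non-trivial. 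In short: your abstract framework is fine, but you have not produced the deformation that escapes ${\cal D}$ while staying disjoint from ${\cal I}$; the paper's Claim~1 (the circle is not contained in $J(h_1)$) and the resulting choice of $\theta$ is the missing idea.
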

\begin{proof}
Let $h_{1}\in {\cal P}$ and suppose $\langle h_{1}\rangle \in {\cal G}$ and $h_{1}$ is hyperbolic. 
Then $\mbox{int}(K(h_{1}))\neq \emptyset .$ 
Let $d_{1}:=\deg (h_{1})$ and let $d\in \NN .$ Suppose $(d_{1},d)\neq (2,2).$ 
Let $b\in \mbox{int}(K(h_{1}))$ be a point. 
Here, if $h_{1}(z)$ is of the form $c_{1}(z-c_{2})^{d_{1}}+c_{2}$, then 
we need the additional condition that $b\neq c_{2}.$   
Let $z_{0}\in J(h_{1})$ be a point such that 
$|z_{0}-b|=\sup _{z_{1}\in K(h_{1})}|z_{1}-b|.$ 
Let $s:= |z_{0}-b|.$ 
We show the following claim. \\ 
{\bf Claim 1}. $\{ z\in \CC \mid |z-b|=s\} \setminus J(h_{1})\neq \emptyset .$ 

In order to prove Claim 1, let $C:= \{ z\in \CC \mid |z-b|=s\} .$ 
By the way of the choice of $b$, we have $C\neq J(h_{1}).$ 
(Indeed, if $J(h_{1})=C$, then  $h_{1}(z)=c_{1}(z-b)^{d_{1}}+b$ for some $c_{1}\in \CC$.) 
Suppose $C\subset J(h_{1}).$ Then $C\subsetneqq J(h_{1}).$ 
By the definition of $s$, we have $J(h_{1})\subset \{ z\in \CC \mid |z-b|\leq s\} .$ 
Therefore $J(h_{1})\setminus C\subset \{ z\in \CC \mid |z-b|<s\} .$ 
Let $w_{1}\in J(h_{1})\setminus C$ be a point. 
Let $W$ be any neighborhood of $w_{1}$ in $\CC .$ Then 
there exists a point $w_{2}\in W\cap (\CCI \setminus K(h_{1}))\cap \{ z\in \CC \mid |z-b|<s\} .$ 
Since $\CCI \setminus K(h_{1})$ is connected, there exist a curve 
$\gamma $ in $\CCI \setminus K(h_{1})$ which joins $w_{2}$ and $\infty .$ 
Then $\emptyset \neq \gamma \cap C\subset \CCI \setminus (K(h_{1}))$, which contradicts 
the assumption that $C\subset J(h_{1}).$ Thus, 
we must have that $C\not\subset J(h_{1}).$ 
Hence claim 1 holds. 

Let $z_{1}\in \{ z\in \CC \mid |z-b|=s\} \setminus J(h_{1}).$ 
Let $\theta \in \RR $ be a number such that 
$e^{i\theta }\frac{1}{s^{d-1}}(z_{0}-b)^{d}+b=z_{1}.$ 
Let $r>0$ be a number such that 
$D(b,r)\subset \mbox{int}(K(h_{1})).$ 
Let $c_{1}\in \CC \setminus \{ 0\} $ be the leading coefficient of $h_{1}.$ 
Then $h_{1}(z)=c_{1}z^{d_{1}}(1+O(\frac{1}{z}))$ as $z\rightarrow \infty .$ 
Let $R\in \RR $ be a number such that 
\begin{equation}
\label{eq:rexp1}
R>\exp\left(\frac{1}{dd_{1}-d-d_{1}}(-d_{1}\log r+d_{1}d\log 2-d\log |c_{1}|)\right).
\end{equation}
Taking $R$ large enough, we may assume that $R$ satisfies the following 
\begin{equation}
\label{eq:dbd1rc1}
D\left(b, \sqrt[d_{1}]{\frac{R}{|c_{1}|}}\cdot \frac{3}{4}\right)\subset h_{1}^{-1}(D(b,R))
\subset D\left(b, \sqrt[d_{1}]{\frac{R}{|c_{1}|}}\cdot \frac{3}{2}\right)\subset \subset D(b,R), 
\end{equation}
where $A\subset \subset B$ means that $\overline{A}$ is a compact subset of int$B.$  
For each $t>0$, let $g_{t}(z)=te^{i\theta }(z-b)^{d}+b.$ 
Then for each $t>0$, we have 
$g_{t}^{-1}(D(b,r))=D\left(b,\sqrt[d]{\frac{r}{t}}\right).$
Let $t_{0}=\frac{1}{R^{d-1}}.$ Taking $R$ so large, we may assume that 
\begin{equation}
\label{eq:t01sd-1}
t_{0}<\frac{1}{s^{d-1}}.
\end{equation}
Since $(d_{1},d)\neq (2,2)$, it is easy to see that
(\ref{eq:rexp1}) is equivalent to   
\begin{equation}
\label{eq:sqrtdfr}
\sqrt[d]{\frac{r}{t_{0}}}>2\sqrt[d_{1}]{\frac{R}{|c_{1}|}}.
\end{equation}
By the definition of $t_{0}$, we have 
\begin{equation}
\label{eq:jgt0}
J(g_{t_{0}})=\left\{ z\in \CC \mid |z-b|=\sqrt[d-1]{\frac{1}{t_{0}}}\right\} =\{ z\in \CC \mid |z-b|=R\} .
\end{equation}
Moreover, taking $R$ so large, we may assume 
\begin{equation}
\label{eq:db12}
D\left( b,\sqrt[d_{1}]{\frac{R}{|c_{1}|}}\cdot \frac{1}{2}\right)\supset K(h_{1}), \ 
D(0,R)\supset K(h_{1}). 
\end{equation}
By (\ref{eq:db12}), (\ref{eq:dbd1rc1}), (\ref{eq:sqrtdfr}), (\ref{eq:jgt0}), 
we obtain 
\begin{align}
\label{eq:kh1sub}
K(h_{1})\subset &  D\left( b,\sqrt[d_{1}]{\frac{R}{|c_{1}|}}\frac{1}{2}\right) \subset \subset 
          D\left( b,\sqrt[d_{1}]{\frac{R}{|c_{1}|}}\frac{3}{4}\right) \subset h_{1}^{-1}(K(g_{t_{0}}))
         \subset D\left( b,\sqrt[d_{1}]{\frac{R}{|c_{1}|}}\cdot \frac{3}{2}\right) \\ 
\label{eq:subsubdb}    \ \ \ \  \subset \subset & D\left( b, \sqrt[d]{\frac{r}{t_{0}}}\right) =g_{t_{0}}^{-1}(D(b,r))
                     \subset g_{t_{0}}^{-1}(K(h_{1}))\subset \subset K(g_{t_{0}}).
\end{align}
Let 
\begin{equation}
\label{eq:t1df}
t_{1}:=\sup \{ t\in [t_{0},\frac{1}{s^{d-1}}] \mid \forall u\in [t_{0},t), 
         K(h_{1})\subset \subset h_{1}^{-1}(K(g_{u}))\subset \subset g_{u}^{-1}(K(h_{1}))
           \subset \subset K(g_{u})\} . 
\end{equation}
Note that by (\ref{eq:kh1sub}), (\ref{eq:subsubdb}) and (\ref{eq:t01sd-1}), 
$t_{1}$ is well-defined. It is easy to see that 
\begin{equation}
\label{eq:kh1h1ik}
K(h_{1})\subset h_{1}^{-1}(K(g_{t_{1}}))\subset g_{t_{1}}^{-1}(K(h_{1}))\subset K(g_{t_{1}}).
\end{equation}
Therefore, for each $t\in [t_{0},t_{1}]$, we have 
$g_{t}(\mbox{CV}^{\ast }(h_{1})\cup \mbox{CV}^{\ast }(g_{t}))\subset g_{t}(K(h_{1}))\subset K(h_{1})$ 
and $h_{1}(\mbox{CV}^{\ast }(h_{1})\cup \mbox{CV}^{\ast }(g_{t}))\subset h_{1}(K(h_{1}))=K(h_{1}).$ 
In particular, 
\begin{equation}
\label{eq:h1gtinb}
(h_{1},g_{t})\in {\cal B} \mbox{ for each } t\in [t_{0},t_{1}].
\end{equation}   
Moreover, for each $t\in [t_{0},t_{1})$, 
$$\left( h_{1}^{-1}(K(g_{t})\setminus \mbox{int}(K(h_{1})))\right) \amalg 
\left( g_{t}^{-1}(K(g_{t})\setminus \mbox{int}(K(h_{1})))\right) \subset 
K(g_{t})\setminus \mbox{int}(K(h_{1})).$$ 
Therefore $J(h_{1},g_{t})\subset K(g_{t})\setminus \mbox{int}(K(h_{1}))$ and 
by \cite[Lemma 1.1.4]{S1} 
$$J(h_{1},g_{t})=h_{1}^{-1}(J(h_{1},g_{t}))\cup g_{t}^{-1}(J(h_{1},g_{t}))
=h_{1}^{-1}(J(h_{1},g_{t}))\amalg g_{t}^{-1}(J(h_{1},g_{t})).$$  
Thus 
\begin{equation}
\label{eq:h1gtd}
(h_{1},g_{t})\in {\cal D} \mbox{ for each }t\in [t_{0},t_{1}).
\end{equation}
In particular, 
\begin{equation}
\label{eq:h1gt1pd}
(h_{1},g_{t_{1}})\in \overline{{\cal D}}\cap {\cal B}. 
\end{equation}
We now show the following claim.\\ 
{\bf Claim 2}. $t_{1}< \frac{1}{s^{d-1}}.$ \\ 
To prove this claim, suppose to the contrary that $t_{1}=\frac{1}{s^{d-1}}.$ 
Then $J(g_{t_{1}})=\{ z\in \CC \mid |z-b|=s\} .$ Since $|z_{0}-b|=s$, 
it follows that $z_{0}\in J(g_{t_{1}})\cap J(h_{1}).$ By (\ref{eq:kh1h1ik}), 
we have $g_{t_{1}}(K(h_{1}))\subset K(h_{1}).$ 
Therefore, $g_{t_{1}}(z_{0})\in J(g_{t_{1}})\cap K(h_{1}).$ 
Moreover, by the way of the choice of $\theta $, we have $g_{t_{1}}(z_{0} )=z_{1}\not\in J(h_{1}).$ 
Hence, we obtain $g_{t_{1}}(z_{0})\in J(g_{t_{1}})\cap \mbox{int}(K(h_{1})).$ 
In particular, $J(g_{t_{1}})\cap \mbox{int}(K(h_{1}))\neq \emptyset .$ 
However, this contradicts (\ref{eq:kh1h1ik}). Thus, we have proved Claim 2. 

 By Claim 2 and that $K(h_{1})\subset \overline{D(b,s)}$, we obtain that 
 \begin{equation}
 \label{eq:jh1jgt1}
 J(h_{1})\cap J(g_{t_{1}})=\emptyset .
 \end{equation}
Combining (\ref{eq:jh1jgt1}) and (\ref{eq:kh1h1ik}), we also obtain that 
\begin{equation}
\label{eq:kh1inth1i}
K(h_{1})\subset \mbox{int}(h_{1}^{-1}(K(g_{t_{1}}))) \mbox { and } g_{t_{1}}^{-1}(K(h_{1}))\subset \mbox{int}(K(g_{t_{1}})). 
\end{equation} 
Moreover, by the definition of $t_{1}$ and (\ref{eq:kh1inth1i}), we obtain 
$h_{1}^{-1}(J(g_{t_{1}}))\cap g_{t_{1}}^{-1}(J(h_{1}))\neq \emptyset .$ 
In particular, $h_{1}^{-1}(J(h_{1},g_{t_{1}}))\cap g_{t_{1}}^{-1}(J(h_{1},g_{t_{1}}))\neq \emptyset .$ 
Combining this with (\ref{eq:h1gtinb}) and \cite[Theorems 1.5, 1.7]{S15}, 
we obtain that $(h_{1},g_{t_{1}})\in {\cal C}.$ 
Combining this with (\ref{eq:h1gt1pd}), we see that 
\begin{equation}
\label{eq:h1gt1dcb}
(h_{1},g_{t_{1}})\in (\partial {\cal D})\cap {\cal B}.
\end{equation}

We next prove the following claim. \\ 
{\bf Claim 3}. $g_{t_{1}}^{-1}(J(h_{1}))\cap J(h_{1})=\emptyset .$ 

 To prove this claim, suppose to the contrary that there exists a point $w\in g_{t_{1}}^{-1}(J(h_{1}))\cap J(h_{1}).$ 
 Then by (\ref{eq:kh1h1ik}), we have 
 $w\in h_{1}^{-1}(K(g_{t_{1}})).$ 
 If we would have that $w\in h_{1}^{-1}(\mbox{int}(K(g_{t_{1}})))$, 
 then (\ref{eq:kh1h1ik}) implies that $w\in g_{t_{1}}^{-1}(\mbox{int}(K(h_{1}))). $ 
 However, this contradicts  $w\in g_{t_{1}}^{-1}(J(h_{1}))$. Therefore, 
 we must have that $w\in h_{1}^{-1}(J(g_{t_{1}})).$ Hence $w\in h_{1}^{-1}(J(g_{t_{1}}))\cap J(h_{1}).$ 
Therefore $h_{1}(w)\in J(g_{t_{1}})\cap J(h_{1})$. However, this contradicts (\ref{eq:jh1jgt1}). 
Thus, we have proved Claim 3. 

 By (\ref{eq:kh1h1ik}) and Claim 3, we obtain that 
 \begin{equation}
 \label{eq:kh1intgt1}
 K(h_{1})\subset \mbox{int}(g_{t_{1}}^{-1}(K(h_{1})). \mbox{ In particular, } 
K(h_{1})\cap g_{t_{1}}^{-1}(J(h_{1}))=\emptyset . 
 \end{equation}
We next prove the following claim. \\ 
{\bf Claim 4}. For each $t\in [t_{0},t_{1}]$, $P^{\ast }(h_{1},g_{t})\subset \mbox{int}(K(h_{1})).$ 
  
 To prove this claim, let $t\in [t_{0},t_{1}].$ 
By (\ref{eq:h1gtinb}), we have that 
 $\mbox{CV}^{\ast }(h_{1})\cup \mbox{CV}^{\ast }(g_{t})\subset K(h_{1}).$ 
 Moreover, by (\ref{eq:kh1h1ik}), $g_{t}(K(h_{1}))\cup h_{1}(K(h_{1}))\subset K(h_{1}).$ 
 Therefore 
\begin{equation}
\label{eq:pasth1}
P^{\ast }(h_{1},g_{t})\subset K(h_{1}).
\end{equation}
Combining this with (\ref{eq:kh1intgt1}), we obtain that 
there exists a constant $\epsilon _{1}>0$ such that 
for each $z\in g_{t}^{-1}(J(h_{1})),$ for each $h\in \langle h_{1},g_{t}\rangle $ and 
for each connected component $V_{1}$ of $h^{-1}(D(z,\epsilon _{1}))$, 
\begin{equation}
\label{eq:gvdz1}
h:V_{1}\rightarrow D(z, \epsilon _{1}) \mbox{ is bijective.} 
\end{equation}
Since $\mbox{CV}^{\ast }(g_{t})=\{ b\} \subset \mbox{int}(K(h_{1}))$, 
there exists a number $\epsilon _{2}>0$  such that for each $z\in J(h_{1})$, 
for each connected component $V_{2}$ of $g_{t}^{-1}(D(z,\epsilon _{2}))$, we have that 
\begin{equation}
\label{eq:diameve1}
\mbox{diam}(V_{2})<\epsilon _{1} \mbox{ and } g_{t}:V_{2}\rightarrow D(z,\epsilon _{2}) 
\mbox{ is bijective}. 
\end{equation} 
Since $h_{1}$ is hyperbolic, there exists a number $\epsilon _{3}>0$ such that 
for each $z\in J(h_{1})$, for each $n\in \NN $ and for each connected component 
$V_{3}$ of $h_{1}^{-n}(D(z,\epsilon _{3}))$, we have that 
\begin{equation}
\label{eq:diameve2}
\mbox{diam}(V_{3})<\epsilon _{2} \mbox{ and } h_{1}^{n}: V_{3}\rightarrow D(z,\epsilon _{3}) \mbox{ is bijective}. 
\end{equation} 
By (\ref{eq:gvdz1}), (\ref{eq:diameve1}) and (\ref{eq:diameve2}), it follows that 
for each $z\in J(h_{1})$,  for each $g\in \langle h_{1},g_{t}\rangle $, and for each 
connected component $W$ of $g^{-1}(D(z,\epsilon _{3}))$, we have 
that $g:W\rightarrow D(z,\epsilon _{3})$ is bijective. 
Thus $J(h_{1})\cap P^{\ast }(h_{1},g_{t})=\emptyset .$ 
Combining this with (\ref{eq:pasth1}), we obtain that 
$P^{\ast }(h_{1},g_{t})\subset \mbox{int}(K(h_{1})).$ Thus we have proved Claim 4. 

By (\ref{eq:t1df}) and (\ref{eq:kh1h1ik}), for each $t\in [t_{0},t_{1}]$, 
$g_{t}(K(h_{1}))\subset K(h_{1})$ and $h_{1}(K(h_{1}))\subset K(h_{1}).$ 
Thus 
\begin{equation}
\label{eq:intkh1sf} 
\mbox{int}(K(h_{1}))\subset F(h_{1},g_{t}) \mbox{ for each } t\in [t_{0},t_{1}]. 
\end{equation}
Combining this with Claim 4, it follows that for each $t\in [t_{0},t_{1}]$, 
$P(h_{1},g_{t})\subset F(h_{1},g_{t}).$ Thus 
\begin{equation}
\label{eq:h1gth}
(h_{1},g_{t})\in {\cal H} \mbox{ for each } t\in [t_{0},t_{1}]. 
\end{equation}
By (\ref{eq:h1gt1dcb}), (\ref{eq:jh1jgt1}) and (\ref{eq:h1gth}), it follows that 
$(h_{1},g_{t_{1}})\in ((\partial {\cal D})\cap {\cal B}\cap {\cal H})\setminus {\cal I}
\subset ((\partial {\cal D})\cap {\cal B}\cap {\cal H})\setminus {\cal Q}. $
Moreover, $\deg (g_{t_{1}})=d.$ Therefore, statement \ref{spaceexample} 
in Theorem~\ref{t:spacetopology} holds. Thus we have proved Lemma~\ref{l:pfexample}. 
\end{proof}
We show some results which are related to statement~\ref{spaceexample} in Theorem~\ref{t:spacetopology}.
In order to do so, we need the following lemma. 
\begin{lem}
\label{l:oscinte}
Let $(h_{1},h_{2})\in {\cal B}$ with $(\deg (h_{1}),\deg (h_{2}))\neq (2,2).$ 
Suppose that $K(h_{1})\subset \mbox{{\em int}}(K(h_{2}))$ and suppose that 
$(h_{1},h_{2})$ satisfies the open set condition with 
$U:= (\mbox{{\em int}}(K(h_{2})))\setminus K(h_{1}).$ 
Then $h_{1}^{-1}(K(h_{2}))\subsetneqq h_{2}^{-1}(K(h_{1}))$ and 
$\mbox{{\em int}}(J(h_{1},h_{2}))=\emptyset .$ 

\end{lem}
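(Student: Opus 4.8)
The plan is to prove Lemma~\ref{l:oscinte} in two parts: first establishing the strict inclusion $h_{1}^{-1}(K(h_{2}))\subsetneqq h_{2}^{-1}(K(h_{1}))$, and then deducing $\mbox{int}(J(h_{1},h_{2}))=\emptyset$.

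\textbf{Step 1: the non-strict inclusion and strictness.} First I would observe that the open set condition with $U=(\mbox{int}(K(h_{2})))\setminus K(h_{1})$ gives $h_{1}^{-1}(U)\amalg h_{2}^{-1}(U)\subset U$, and — exactly as in the computations \eqref{eq:h1-uint}, \eqref{eq:h2-uint}, \eqref{eq:inth1kh2} in the proof of Proposition~\ref{p:bdyosc} — this forces $h_{1}^{-1}(K(h_{2}))\subset h_{2}^{-1}(K(h_{1}))$ (one checks $h_j^{-1}$ of the complement pieces are disjoint, which is equivalent to this inclusion). For strictness I would invoke the Green's function argument of Lemma~\ref{l:dh1-1log}: with $d_i=\deg(h_i)$, $c_i$ the leading coefficient, and $A_i=\CCI\setminus K(h_i)$, the inclusion $K(h_1)\subset K(h_2)$ yields \eqref{eq:dh2-1logc2}, and the inclusion $h_{1}^{-1}(K(h_{2}))\subset h_{2}^{-1}(K(h_{1}))$ yields \eqref{eq:1d1log}; the maximum principle makes equality in either equivalent to $J(h_1)=J(h_2)$ (resp. $h_{1}^{-1}(K(h_{2}))=h_{2}^{-1}(K(h_{1}))$). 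But $K(h_1)\subset\mbox{int}(K(h_2))$ forces $J(h_1)\neq J(h_2)$, so equality fails in \eqref{eq:dh2-1logc2}, i.e. $\frac{1}{d_1-1}\log|c_1|\neq\frac{1}{d_2-1}\log|c_2|$; combining with $(d_1,d_2)\neq(2,2)$ and the algebraic manipulations \eqref{eq:d1d1-1}–\eqref{eq:d2-1log}, strict inequality propagates to \eqref{eq:1d1log}, hence $h_{1}^{-1}(K(h_{2}))\subsetneqq h_{2}^{-1}(K(h_{1}))$. (The only subtlety: in Lemma~\ref{l:dh1-1log} one needed $J(h_2)$ to be a quasicircle to identify $J(h_1)=J(h_2)$ with $(h_1,h_2)\in{\cal Q}$, but here we do not need that identification — we only need $J(h_1)\neq J(h_2)$, which is immediate from $K(h_1)\subset\mbox{int}(K(h_2))$.)

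\textbf{Step 2: empty interior of the Julia set.} With $G=\langle h_1,h_2\rangle$, I would show $J(G)\neq\overline{U}$, which by \cite[Theorem 1.25]{S7} forces $J(G)$ to be porous and in particular of empty interior (porous sets are nowhere dense). This is the argument already run inside Lemma~\ref{l:pfmainth3}: by \cite[Corollary 3.2]{HM}, $J(G)\subset\overline{U}$, and by \cite[Lemma 1.1.4]{S1}, $J(G)=h_{1}^{-1}(J(G))\cup h_{2}^{-1}(J(G))$; since $h_j^{-1}(\overline U)\subset h_{1}^{-1}(K(h_{2}))\cup h_{2}^{-1}(K(h_{1}))$ (analogues of \eqref{eq:h1cush1k}), one gets $J(G)\subset h_{1}^{-1}(K(h_{2}))\cup\bigl(K(h_2)\setminus\mbox{int}(h_{2}^{-1}(K(h_{1})))\bigr)$; but $h_{1}^{-1}(K(h_{2}))\subsetneqq h_{2}^{-1}(K(h_{1}))$ from Step 1 together with $\overline{\mbox{int}(h_{2}^{-1}(K(h_{1})))}=h_{2}^{-1}(K(h_{1}))$ gives a nonempty open piece $(\mbox{int}(h_{2}^{-1}(K(h_{1}))))\setminus h_{1}^{-1}(K(h_{2}))\subset U$ that is disjoint from the set just shown to contain $J(G)$. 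Hence $U\not\subset J(G)$, so $J(G)\subsetneqq\overline U$, and $\mbox{int}(J(G))=\emptyset$.

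\textbf{Main obstacle.} The delicate point is Step 1's strictness: one must carefully track that the chain of equivalences in Lemma~\ref{l:dh1-1log} — equality in the two Green's-function inequalities $\Leftrightarrow$ $J(h_1)=J(h_2)$ and $\Leftrightarrow$ $h_{1}^{-1}(K(h_{2}))=h_{2}^{-1}(K(h_{1}))$ — goes through under the \emph{hypotheses of this lemma} rather than under $(h_1,h_2)\in(\overline{\cal D}\cap{\cal B}\cap{\cal H})\setminus{\cal Q}$. The key inputs used there that we still have available are: $(h_1,h_2)\in{\cal B}$ (so complements of filled Julia sets are connected, needed to run the maximum principle on $\CC\setminus K(h_2)$ and $\CC\setminus h_2^{-1}(K(h_1))$), the inclusions $K(h_1)\subset h_1^{-1}(K(h_2))\subset h_2^{-1}(K(h_1))\subset K(h_2)$ (now coming from the open set condition rather than Proposition~\ref{p:bdyosc}), and $(d_1,d_2)\neq(2,2)$ (assumed outright). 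No hyperbolicity and no quasicircle property is needed because we only use the ``only if'' direction: $J(h_1)\neq J(h_2)$ already forces strictness. Once Step 1 is secured, Step 2 is a verbatim repetition of the porosity argument in Lemma~\ref{l:pfmainth3}, so the whole proof is short.
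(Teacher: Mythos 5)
Your Step 1 is essentially the paper's argument, and you have filled in the detail of how the Green's-function computation of Lemma~\ref{l:dh1-1log} transfers here correctly: the chain $K(h_1)\subset h_1^{-1}(K(h_2))\subset h_2^{-1}(K(h_1))\subset K(h_2)$ comes from the open set condition and ${\cal B}$-membership alone, $K(h_1)\subset\mbox{int}(K(h_2))$ forces $J(h_1)\neq J(h_2)$ and hence strict inequality in \eqref{eq:dh2-1logc2}, and this propagates through \eqref{eq:d1d1-1}--\eqref{eq:d2-1log} (using $(d_1,d_2)\neq(2,2)$) to strict inequality in \eqref{eq:1d1log}, giving $h_1^{-1}(K(h_2))\subsetneqq h_2^{-1}(K(h_1))$. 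Your observation that the quasicircle property is not needed here because only the ``only if'' direction is used is exactly the right point.

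However, your Step 2 has a genuine gap in its final citation. The deduction $J(G)\subsetneqq\overline{U}$ is fine and mirrors the Claim in Lemma~\ref{l:pfmainth3}. But to conclude $\mbox{int}(J(G))=\emptyset$ you invoke \cite[Theorem 1.25]{S7}, which is a porosity result for \emph{semi-hyperbolic} rational semigroups. In Lemma~\ref{l:pfmainth3} and in Theorem~\ref{t:shshdhl2} this citation is legitimate precisely because those results assume $(h_1,h_2)\in{\cal H}$ or that $h_1$ is semi-hyperbolic (and then show $\langle h_1,h_2\rangle$ is semi-hyperbolic). Lemma~\ref{l:oscinte} assumes no hyperbolicity or semi-hyperbolicity at all, so the citation does not apply, and indeed the lemma's conclusion is correspondingly weaker (only empty interior, not porosity). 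The paper instead cites \cite[Proposition 4.3]{S4}, a result stated for general rational semigroups satisfying the open set condition with $J(G)\neq\overline{U}$, which yields $\mbox{int}(J(G))=\emptyset$ without any expansion hypothesis. Your proof is correct once that citation is replaced.
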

\begin{proof}
Since $(h_{1},h_{2})\in {\cal B}$ and $(h_{1},h_{2})$ satisfies the open set condition with $U$, 
we obtain that 
$K(h_{1})\subset h_{2}^{-1}(K(h_{1}))\subset h_{1}^{-1}(K(h_{2}))\subset K(h_{2}).$ 
Combining this, the assumption $(\deg (h_{1}),\deg (h_{2}))\neq (2,2)$, 
and the method in the proof of Lemma~\ref{l:dh1-1log}, we obtain that 
$h_{1}^{-1}(K(h_{2}))\subsetneqq h_{2}^{-1}(K(h_{1})).$ Hence $h_{1}^{-1}(\overline{U})\cup 
h_{2}^{-1}(\overline{U})\subsetneqq \overline{U}.$ 
Let $G=\langle h_{1},h_{2}\rangle .$ 
Since $J(G)\subset \overline{U}$
(see \cite[Corollary 3.2]{HM}) and $h_{1}^{-1}(J(G))\cup h_{2}^{-1}(J(G))=J(G)$
 (see \cite[Lemma 1.1.4]{S1}), it follows that $J(G)\neq \overline{U}.$ 
Combining this with \cite[Proposition 4.3]{S4}, we obtain that $\mbox{int}(J(G))=\emptyset .$ 
\end{proof}
\begin{df}
A polynomial semigroup $G$ is said to be semi-hyperbolic if 
there exists an $N\in \NN $ and 
a $\delta >0$ such that 
for each $z\in J(G)$, for each $g\in G $ and for each connected component 
$V$ of $g^{-1}(B(z,\delta ))$, 
we have $\deg (g:V\rightarrow B(z,\delta ))\leq N.$  
\end{df}
\begin{thm}
\label{t:shshdhl2}
Under the assumptions of Lemma~\ref{l:oscinte}, 
suppose that 
$h_{1}$ is 
semi-hyperbolic (i.e. $\langle h_{1}\rangle $ is semi-hyperbolic). 
Then $\langle h_{1},h_{2}\rangle $ is semi-hyperbolic, $J(h_{1},h_{2})$ is porous and  
$\dim _{H}(J(h_{1},h_{2}))=\overline{\dim }_{B}(J(h_{1},h_{2}))=\inf \{ Z_{(h_{1},h_{2})}(z)\mid z\in \CCI \} <2 .$  
\end{thm}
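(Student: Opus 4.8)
The plan is to deduce all three assertions from Lemma~\ref{l:oscinte} together with the semi-hyperbolicity of $h_{1}$, the main work being to show that $G:=\langle h_{1},h_{2}\rangle $ is itself semi-hyperbolic. First I record some consequences of the hypotheses. Since $(h_{1},h_{2})\in {\cal B}$, the set $P^{\ast }(G)$ is bounded and satisfies $h(P^{\ast }(G))\subset P^{\ast }(G)$ for every $h\in G$, so every point of $P^{\ast }(G)$ has bounded $G$-orbit, i.e.\ $P^{\ast }(G)\subset \hat{K}(G)$; and since $h_{1}^{n}\in G$ for all $n$, $\hat{K}(G)\subset K(h_{1})$. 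Thus
\[
P^{\ast }(G)\subset \hat{K}(G)\subset K(h_{1}).
\]
In particular $P^{\ast }(h_{1})\subset K(h_{1})$, so $J(h_{1})$ is connected and $J(h_{1})\subset J(G)$. Writing $U=(\mbox{int}(K(h_{2})))\setminus K(h_{1})$, the open set condition gives $h_{1}^{-1}(\overline{U})\cup h_{2}^{-1}(\overline{U})\subset \overline{U}$, whence $J(G)\subset \overline{U}$ by \cite[Corollary 3.2]{HM}; in particular $J(G)\cap \mbox{int}(K(h_{1}))=\emptyset $. Moreover $K(h_{1})\subsetneqq \mbox{int}(K(h_{2}))$ (else $K(h_{1})$ would be a bounded non-empty clopen subset of $\CC $), so $U\neq \emptyset $ and $\overline{U}$ has non-empty interior; combined with $\mbox{int}(J(G))=\emptyset $ from Lemma~\ref{l:oscinte} this gives $J(G)\subsetneqq \overline{U}$. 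Finally Lemma~\ref{l:oscinte} supplies $h_{1}^{-1}(K(h_{2}))\subsetneqq h_{2}^{-1}(K(h_{1}))$, the source of the expansion used below.

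Next I would prove that $G$ is semi-hyperbolic. Combining $P^{\ast }(G)\subset K(h_{1})$ with $J(G)\cap \mbox{int}(K(h_{1}))=\emptyset $ shows $P^{\ast }(G)\cap J(G)\subset \partial K(h_{1})=J(h_{1})$: the only critical values of elements of $G$ that can accumulate on $J(G)$ lie on $J(h_{1})$. I then estimate the branching of maps $g:W\to B(z,\delta )$, where $g\in G$, $z\in J(G)$, and $W$ is a connected component of $g^{-1}(B(z,\delta ))$, in two regimes. If $d(z,J(h_{1}))\geq \delta $, then since $z\notin \mbox{int}(K(h_{1}))$ and $\partial (\mbox{int}\,K(h_{1}))\subset J(h_{1})$ one has $d(z,K(h_{1}))\geq \delta $, hence $B(z,\delta )\cap P^{\ast }(G)=\emptyset $ and every such $g$ is univalent on $W$. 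If $d(z,J(h_{1}))<\delta $, I use the semi-hyperbolicity of $h_{1}$: in a word $h_{i_{n}}\circ \cdots \circ h_{i_{1}}$ the $h_{1}$-factors contribute branching bounded by a fixed $N_{1}$ over small balls near $J(h_{1})$, while an $h_{2}$-factor contributes branching at most $\deg (h_{2})$ and only when the corresponding pullback meets the finite set $\mbox{CV}^{\ast }(h_{2})\cap J(h_{1})$; controlling the diameters of the successive pullbacks by means of the gap $h_{1}^{-1}(K(h_{2}))\subsetneqq h_{2}^{-1}(K(h_{1}))$ (so that along any such word only boundedly many factors can see a critical value) one obtains a uniform bound $N$ on $\deg (g:W\to B(z,\delta ))$, i.e.\ $G$ is semi-hyperbolic. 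I expect the diameter control along mixed words to be the main technical obstacle; alternatively one might invoke a general criterion (cf.\ \cite{SU1,SU2}) deducing semi-hyperbolicity of $\langle h_{1},h_{2}\rangle $ from that of $h_{1}$ under the present nesting.

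Once $G$ is semi-hyperbolic, $(h_{1},h_{2})$ satisfies the open set condition, and $J(G)\subsetneqq \overline{U}$, the porosity of $J(G)$ and the bound $\dim _{H}(J(G))\leq \overline{\dim }_{B}(J(G))<2$ follow from the results on semi-hyperbolic polynomial semigroups satisfying the open set condition (cf.\ \cite{S7}), the strict inclusion $J(G)\neq \overline{U}$ being used exactly as in the proof of Lemma~\ref{l:pfmainth3}. Finally, the equalities $\dim _{H}(J(G))=\overline{\dim }_{B}(J(G))=\inf \{ Z_{(h_{1},h_{2})}(z)\mid z\in \CCI \} $ are the Bowen-type formula for semi-hyperbolic (as opposed to hyperbolic) semigroups with the open set condition (cf.\ \cite{S6}); the infimum over $z$ replaces the single value $Z_{(h_{1},h_{2})}(z)$ of Theorem~\ref{spacemainth}-\ref{spacemainth4} because $Z_{(h_{1},h_{2})}(\cdot )$ may jump at points of $P(G)$, whereas for $z\notin P(G)$ it equals the common dimension. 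This completes the proof.
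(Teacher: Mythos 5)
Your sketch of how to prove semi-hyperbolicity of $G=\langle h_{1},h_{2}\rangle $ (treat $h_{1}$-blocks via semi-hyperbolicity of $h_{1}$, control $h_{2}$-factors via the nesting $K(h_{1})\subset \mbox{int}(h_{1}^{-1}(K(h_{2})))\subset \mbox{int}(h_{2}^{-1}(K(h_{1})))$, so that pullbacks stay small and branching is uniformly bounded) matches what the paper means by ``the similar method to that in the proof of Lemma~\ref{l:pfexample}.'' Likewise your use of \cite[Theorem 1.25]{S7} together with $J(G)\subsetneqq \overline{U}$ (from Lemma~\ref{l:oscinte}) to get porosity and $\dim_{H}\leq \overline{\dim}_{B}<2$ is exactly the paper's route.

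The gap is in the Bowen-formula step. You assert the equalities $\dim_{H}(J(G))=\overline{\dim}_{B}(J(G))=\inf\{Z_{(h_{1},h_{2})}(z)\mid z\in\CCI\}$ ``follow from results on semi-hyperbolic semigroups with the open set condition (cf.\ \cite{S6}),'' but \cite{S6} covers the expanding (hyperbolic) case; the relevant reference here is \cite[Theorem 1.11]{SU2}. That theorem does not follow from the open set condition alone — it also requires a uniform density condition on the open set: there must be $\alpha\in(0,1)$ with $l_{2}(U\cap D(x,r))\geq\alpha\,l_{2}(D(x,r))$ for all $x\in\overline{U}$ and $r\in(0,1]$. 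Verifying this is a genuine part of the proof and your argument omits it entirely. The paper establishes it by combining two geometric facts about $\partial U=J(h_{1})\cup J(h_{2})$: first, $J(h_{2})$ is a quasicircle (shown via the nesting $h_{2}^{-1}(K(h_{1}))\subset\mbox{int}(K(h_{2}))$, which forces $\mbox{int}(K(h_{2}))$ to be connected and $h_{2}$ to be a quasicircle map); second, $F_{\infty}(h_{1})$ is a John domain, a consequence of the semi-hyperbolicity of $h_{1}$ via \cite{CJY}. Without carrying out these two steps and the resulting density estimate, the appeal to a Bowen-type formula is not justified, and the middle equality $\dim_{H}=\overline{\dim}_{B}=\inf Z$ remains unproven.
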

\begin{proof}
Since $K(h_{1})\subset \mbox{int}(K(h_{2}))$, we have 
$K(h_{1})\subset \mbox{int}(h_{1}^{-1}(K(h_{2}))).$ 
By Lemma~\ref{l:oscinte}, it follows that $K(h_{1})\subset \mbox{int}(h_{2}^{-1}(K(h_{1}))).$ 
By using the similar method to that in the proof of Lemma~\ref{l:pfexample}, 
it is easy to see that 
$\langle h_{1},h_{2}\rangle $ is semi-hyperbolic. 
Let $A$ be the connected component of $\mbox{int}(K(h_{2}))$ with $K(h_{1})\subset A.$ 
Since $K(h_{1})\subset \mbox{int}(K(h_{2}))$, we have 
$h_{2}^{-1}(K(h_{1}))\subset \mbox{int}(K(h_{2})).$ 
Since $(h_{1},h_{2})$ satisfies the open set condition with $U$, we have 
$h_{2}^{-1}(K(h_{1}))\supset K(h_{1}).$  Moreover, since $(h_{1},h_{2})\in {\cal B}$, 
we have that $h_{2}^{-1}(K(h_{1}))$ is connected. 
It follows that $h_{2}^{-1}(K(h_{1}))\subset A.$ Thus 
$h_{2}^{-1}(A)\subset A.$ It implies that $\mbox{int}(K(h_{2}))$ is 
connected. Since $h_{2}(K(h_{1}))\subset K(h_{1})\subset \mbox{int}(K(h_{2}))$, 
it follows that $J(h_{2})$ is a quasicircle. 
Since $J(h_{2})$ is a quasicircle and 
since $F_{\infty }(h_{1})$ is a John domain (this is because $h_{1}$ is semi-hyperbolic, 
see \cite{CJY}), it follows that there exists a constant $\alpha \in (0,1)$ 
such that for each $r\in (0,1]$ and for each $x\in \overline{U}$, 
we have $l_{2}(U\cap D(x,r))\geq \alpha l_{2}(D(x,r))$, where $l_{2}$ denotes the 
$2$-dimensional Lebesgue measure on $\CC .$  
Since $(h_{1},h_{2})$ satisfies the open set condition with $U:=(\mbox{int}(K(h_{2})))\setminus K(h_{1})$,  
 \cite[Theorem 1.11]{SU2} implies that 
$\dim _{H}(J(h_{1},h_{2}))=\inf \{ Z_{(h_{1},h_{2})}(z)\mid z\in \CCI \} .$

Moreover, since $\langle h_{1}.h_{2}\rangle $ is semi-hyperbolic, 
$(h_{1},h_{2})$ satisfies the open set condition with $U$, 
and $J(h_{1},h_{2})\neq \overline{U}$ (see Lemma~\ref{l:oscinte}), 
\cite[Theorem 1.25]{S7} implies that 
$J(h_{1},h_{2})$ is porous and 
$\dim _{H}(J(h_{1},h_{2}))\leq \overline{\dim }_{B}(J(h_{1},h_{2}))<2.$ 
Hence we have proved our theorem. 
\end{proof}

Semi-hyperbolic polynomial semigroups with open set condition have many interesting 
properties. For the results, see \cite{SU2}. 

By using the method in the proof of Lemma~\ref{l:pfexample}, 
we can show the following theorem. 
\begin{thm}
\label{t:spgenex}
Suppose that $h_{1}\in {\cal P}$, $\langle h_{1}\rangle $ is postcritically bounded 
and {\em int}$(K(h_{1}))\neq \emptyset .$ Moreover, let $d\in \NN $, $d\geq 2$ and suppose that 
$(\deg (h_{1}), d)\neq (2,2).$ Then there exists an element $h_{2}\in {\cal P}$ 
such that 
all of the following hold.
\begin{enumerate}
\item \label{spgenex1} $(h_{1},h_{2})\in 
((\partial {\cal D})\cap {\cal C}\cap {\cal B})\setminus {\cal I}$  
and $\deg (h_{2})=d.$ 
\item \label{spgenex2}
We have that $K(h_{1})\subset \mbox{int}(K(h_{2}))$, 
$K(h_{1})\subset \mbox{{\em int}}(h_{1}^{-1}(K(h_{2})))$ and 
$h_{2}^{-1}(K(h_{1}))\subset \mbox{{\em int}}(K(h_{2})).$ 
Moreover, $(h_{1},h_{2})$ satisfies the open set condition with 
$U:=(\mbox{{\em int}}(K(h_{2}))\setminus K(h_{1}).$ 
Furthermore, $\mbox{{\em int}}(J(h_{1},h_{2}))=\emptyset .$
\item \label{spgenex3}
If, in addition to the assumptions of our theorem, 
$h_{1}$ is semi-hyperbolic, 
then $\langle h_{1},h_{2}\rangle $ is semi-hyperbolic, 
$J(h_{1},h_{2})$ is porous and 
 and 
$\dim _{H}(J(h_{1},h_{2}))=\overline{\dim }_{B}(J(h_{1},h_{2}))=\inf \{ Z_{(h_{1},h_{2})}(z)\mid z\in \CCI \}<2$. 
\end{enumerate} 

\end{thm}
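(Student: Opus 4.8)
The plan is to mimic closely the construction in the proof of Lemma~\ref{l:pfexample}, but starting from the weaker hypothesis $\mbox{int}(K(h_{1}))\neq\emptyset$ in place of hyperbolicity of $h_{1}$, and to be more careful about which steps actually used hyperbolicity. First I would fix a point $b\in\mbox{int}(K(h_{1}))$ (avoiding the center of symmetry if $h_{1}$ has the special form $c_{1}(z-c_{2})^{d_{1}}+c_{2}$), set $s:=\sup_{z_{1}\in K(h_{1})}|z_{1}-b|$ and pick $z_{0}\in J(h_{1})$ realizing this supremum. The argument of Claim~1 in the proof of Lemma~\ref{l:pfexample} showing $\{z\mid |z-b|=s\}\setminus J(h_{1})\neq\emptyset$ uses only that $K(h_{1})$ is connected with connected complement (i.e. $J(h_{1})$ is connected) and that $J(h_{1})$ is not a circle, which holds here since $\langle h_{1}\rangle$ is postcritically bounded; so this goes through verbatim. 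Then I would pick $\theta$, $r>0$ with $D(b,r)\subset\mbox{int}(K(h_{1}))$, a large $R$ satisfying the inequalities (\ref{eq:rexp1})--(\ref{eq:db12}), set $t_{0}=R^{-(d-1)}$, $g_{t}(z)=te^{i\theta}(z-b)^{d}+b$, and define $t_{1}$ exactly as in (\ref{eq:t1df}). The chain of inclusions (\ref{eq:kh1sub})--(\ref{eq:subsubdb}), the well-definedness of $t_{1}$, statements (\ref{eq:kh1h1ik})--(\ref{eq:h1gt1pd}), Claim~2 ($t_{1}<s^{-(d-1)}$), (\ref{eq:jh1jgt1}), (\ref{eq:kh1inth1i}), the non-emptiness of $h_{1}^{-1}(J(g_{t_{1}}))\cap g_{t_{1}}^{-1}(J(h_{1}))$, hence by \cite[Theorems 1.5, 1.7]{S15} that $(h_{1},g_{t_{1}})\in{\cal C}$, giving (\ref{eq:h1gt1dcb}): none of these used hyperbolicity of $h_{1}$, only that $h_{1}$ is postcritically bounded and $\mbox{int}(K(h_{1}))\neq\emptyset$, so they all carry over. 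Claim~3 ($g_{t_{1}}^{-1}(J(h_{1}))\cap J(h_{1})=\emptyset$) and (\ref{eq:kh1intgt1}) are likewise purely topological consequences of (\ref{eq:kh1h1ik}) and (\ref{eq:jh1jgt1}), so they survive.

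The only place hyperbolicity of $h_{1}$ entered in Lemma~\ref{l:pfexample} is Claim~4 ($P^{\ast}(h_{1},g_{t})\subset\mbox{int}(K(h_{1}))$ for $t\in[t_{0},t_{1}]$), where the shrinking-diameter estimate (\ref{eq:diameve2}) for backward branches of $h_{1}^{n}$ near $J(h_{1})$ was invoked. So the replacement for this step is the main obstacle. However, under only the hypothesis $\mbox{int}(K(h_{1}))\neq\emptyset$, the equality (\ref{eq:pasth1}), namely $P^{\ast}(h_{1},g_{t})\subset K(h_{1})$, still holds for $t\in[t_{0},t_{1}]$ because $\mbox{CV}^{\ast}(h_{1})\cup\mbox{CV}^{\ast}(g_{t})\subset K(h_{1})$ and both $h_{1}$ and $g_{t}$ map $K(h_{1})$ into itself by (\ref{eq:kh1h1ik}) and (\ref{eq:t1df}); combined with (\ref{eq:kh1intgt1}) this gives $P^{\ast}(h_{1},g_{t})\subset\mbox{int}(g_{t_{1}}^{-1}(K(h_{1})))$. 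For the conclusion $(h_{1},g_{t_{1}})\in\overline{\cal D}\cap{\cal B}$ together with $J(h_{1})\cap J(g_{t_{1}})=\emptyset$ — which is all that statement~\ref{spgenex1} needs — I do not actually need $P^{\ast}\subset\mbox{int}(K(h_{1}))$: statement~\ref{spgenex1} claims membership in $(\partial{\cal D})\cap{\cal C}\cap{\cal B})\setminus{\cal I}$, which follows from (\ref{eq:h1gt1dcb}) (giving $(h_{1},g_{t_{1}})\in(\partial{\cal D})\cap{\cal B}$, hence in ${\cal C}\cap{\cal B}$ by Lemma~\ref{l:hbdopen}) and from (\ref{eq:jh1jgt1}) (giving $\notin{\cal I}$). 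So hyperbolicity is genuinely not needed for part~\ref{spgenex1}, only for part~\ref{spgenex3}, where it is re-assumed.

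For statement~\ref{spgenex2} I would argue as follows. From (\ref{eq:kh1inth1i}) and (\ref{eq:kh1intgt1}) we already get $K(h_{1})\subset\mbox{int}(h_{1}^{-1}(K(g_{t_{1}})))$ and $g_{t_{1}}^{-1}(K(h_{1}))\subset\mbox{int}(K(g_{t_{1}}))$; to obtain the ``$K(h_{1})\subset\mbox{int}(K(g_{t_{1}}))$'' part of statement~\ref{spgenex2} one unwinds that $h_{1}^{-1}(K(g_{t_{1}}))\subset K(g_{t_{1}})$ from (\ref{eq:kh1h1ik}), so $K(h_{1})\subset\mbox{int}(K(g_{t_{1}}))$. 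That $(h_{1},g_{t_{1}})$ satisfies the open set condition with $U:=(\mbox{int}(K(g_{t_{1}})))\setminus K(h_{1})$ follows from $h_{1}^{-1}(K(g_{t_{1}}))\subsetneqq g_{t_{1}}^{-1}(K(h_{1}))$ — which is exactly the strict-inclusion conclusion available from the method in the proof of Lemma~\ref{l:dh1-1log} applied with $(\deg(h_{1}),d)\neq(2,2)$ — by the same computation producing $h_{1}^{-1}(U)\amalg h_{2}^{-1}(U)\subset U$ as in Proposition~\ref{p:bdyosc}(b). Finally $\mbox{int}(J(h_{1},g_{t_{1}}))=\emptyset$ is immediate from Lemma~\ref{l:oscinte} once the hypotheses of that lemma are verified (they are: $(h_{1},g_{t_{1}})\in{\cal B}$, $K(h_{1})\subset\mbox{int}(K(g_{t_{1}}))$, open set condition with the stated $U$, degrees not $(2,2)$). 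This proves \ref{spgenex2}. For statement~\ref{spgenex3}, assuming in addition $h_{1}$ semi-hyperbolic, I would apply Theorem~\ref{t:shshdhl2} directly to the pair $(h_{1},g_{t_{1}})$, whose hypotheses are precisely the conclusions of \ref{spgenex2} plus semi-hyperbolicity of $h_{1}$; that theorem yields semi-hyperbolicity of $\langle h_{1},g_{t_{1}}\rangle$, porosity of $J(h_{1},g_{t_{1}})$, and $\dim_{H}=\overline{\dim}_{B}=\inf\{Z_{(h_{1},g_{t_{1}})}(z)\mid z\in\CCI\}<2$, which is exactly \ref{spgenex3}. The main thing to be careful about is that the weaker hypothesis forces us to route all derivative/diameter estimates through Lemma~\ref{l:oscinte} and Theorem~\ref{t:shshdhl2} rather than re-deriving Claim~4 of Lemma~\ref{l:pfexample}, and to check that ``$\langle h_{1}\rangle$ postcritically bounded with $\mbox{int}(K(h_{1}))\neq\emptyset$'' suffices for each topological step that previously cited hyperbolicity; I expect that verification — particularly confirming $J(h_{1})$ is connected and not a circle, and that $F_{\infty}(h_{1})$ plays no role until part~\ref{spgenex3} — to be the only delicate bookkeeping.
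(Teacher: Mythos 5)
Your proposal is correct and takes essentially the same route as the paper's own (very terse) proof, which simply invokes ``the method in the proof of Lemma~\ref{l:pfexample} and Lemma~\ref{l:oscinte}'' for parts~\ref{spgenex1}--\ref{spgenex2} and Theorem~\ref{t:shshdhl2} for part~\ref{spgenex3}; your contribution is precisely the line-by-line audit of Lemma~\ref{l:pfexample}'s construction, isolating Claim~4 (i.e.\ (\ref{eq:diameve2})) as the only place where hyperbolicity of $h_{1}$ is used and noting that that claim is only required for the $\cal{H}$ membership, which Theorem~\ref{t:spgenex}-\ref{spgenex1} does not assert. One small slip: you write that $(h_{1},g_{t_{1}})\in(\partial{\cal D})\cap{\cal B}$ yields $\in{\cal C}\cap{\cal B}$ ``by Lemma~\ref{l:hbdopen},'' but that lemma concerns openness of ${\cal H},{\cal B}\cap{\cal H},{\cal D}\cap{\cal B}\cap{\cal H}$ and is not applicable without hyperbolicity — this is harmless, though, because you already obtained $\in{\cal C}$ directly from $h_{1}^{-1}(J(h_{1},g_{t_{1}}))\cap g_{t_{1}}^{-1}(J(h_{1},g_{t_{1}}))\neq\emptyset$ via \cite[Theorems 1.5, 1.7]{S15}, which is exactly the route in the paper's proof of Lemma~\ref{l:pfexample} and needs no hyperbolicity.
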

\begin{proof}
By using the method in the proof of Lemma~\ref{l:pfexample} and Lemma~\ref{l:oscinte}, 
we can show that there exists an element $h_{2}\in {\cal P}$ which satisfies 
properties~\ref{spgenex1},\ref{spgenex2}. 
We now suppose $h_{1}$ is semi-hyperbolic. 
Then by Theorem~\ref{t:shshdhl2}, we obtain that 
$\langle h_{1},h_{2}\rangle $ is semi-hyperbolic, 
$J(h_{1},h_{2})$ is porous and 
$\dim _{H}(J(h_{1},h_{2}))=\overline{\dim }_{B}(J(h_{1},h_{2}))=\inf \{ Z_{(h_{1},h_{2})}(z)\mid z\in \CCI \}<2$. 
\end{proof}

\vspace{-3mm} 
\subsection{Proof of Theorem~\ref{t:spaceth1h2}}
\vspace{-2mm} 
In this subsection, we prove Theorem~\ref{t:spaceth1h2}. 
We need the following. 
\begin{df}
Let Rat be the space of all non-constant rational maps on $\CCI $, 
endowed with the distance $\kappa $ which is defined by 
$\kappa (f,g):=\sup _{z\in \CCI }d(f(z),g(z))$, where $d$ denotes the spherical distance on $\CCI .$ 
All the notations and definitions in section~\ref{Introduction} are generalized to the settings  
of rational semigroups (i.e., subsemigroups of Rat) 
and random dynamical systems of rational maps. 
\end{df}

\begin{df}
Let $h=(h_{1},\ldots, h_{m})\in (\Rat)^{m}.$ Let $U$ be a non-empty open subset of $\CCI .$ 
We say that $h=(h_{1},\ldots ,h_{m})$ satisfies the open set condition with $U$ 
if $\cup _{j=1}^{m}h_{j}^{-1}(U)\subset U$ and 
$h_{i}^{-1}(U)\cap h_{j}^{-1}(U)=\emptyset $ for each $(i,j)$ with $i\neq j.$ 
We say that $h=(h_{1},\ldots ,h_{m})$ satisfies the open set condition if there exists 
a non-empty open set $U$ such that $h=(h_{1},\ldots ,h_{m})$ satisfies the open set condition with $U.$ 
\end{df}
\begin{df}
For a rational semigroup $G$ and a subset $A$ of $\CCI $, we set 
$G(A):=\cup _{g\in G}g(A)$ and $G^{-1}(A)=\cup _{g\in G}g^{-1}(A).$ 
\end{df}
\begin{df}
We denote by $B(\CCI )$ the set of all bounded Borel measurable complex-valued functions 
on $\CCI .$ 
For each $\tau \in {\frak M}_{1}(\Rat)$, 
we denote by $M_{\tau }: B(\CCI )\rightarrow B(\CCI )$ 
the operator defined by 
$M_{\tau }(\varphi )(z)=\int _{\Rat}\varphi (g(z)) d\tau (g)$ 
for each $\varphi \in B(\CCI )$ and $z\in \CCI .$ 
Note that $M_{\tau }(C(\CCI ))\subset C(\CCI ).$ 
This $M_{\tau }$ is called the transition operator with respect to the random dynamical system
associated with $\tau .$ 
\end{df}

\begin{lem}
\label{l:oscncp}
Let $m\in \NN $ with $m\geq 2.$ 
Let $h_{1},\ldots, h_{m}\in \mbox{{\em Rat}} $ and 
let $G=\langle h_{1},\ldots , h_{m}\rangle .$ 
Let $(p_{1},\ldots ,p_{m})\in (0,1)^{m}$ with 
$\sum _{j=1}^{m}p_{j}=1.$ 
Let $\tau =\sum _{j=1}^{m}p_{j}\delta _{h_{j}}$. 
Let $\varphi \in B(\CCI )$ such that 
$M_{\tau }(\varphi )=a\varphi $ for some $a\in \CC $ with $|a|=1.$ 
Suppose that for each connected component $\Omega $ of $F(G)$, 
$\varphi |_{\Omega }$ is constant. 
 Let $$A=\{ z_{0}\in \CCI \mid  \mbox{ for each neighborhood  } V \mbox{ of } z_{0} \mbox{ in } \CCI, 
\varphi |_{V} \mbox{ is not constant on } V\} . $$ 
Suppose that $(h_{1},\ldots ,h_{m}) $ satisfies the open set condition with 
an open set $U.$ Then either $A=J(G)$ or $A\subset \partial U.$   
\end{lem}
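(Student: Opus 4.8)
The plan is to analyze the set $A$ of non-local-constancy of $\varphi$ and show that it behaves like an invariant set under the backward dynamics, hence either fills up $J(G)$ or is squeezed onto the boundary $\partial U$ of the open-set-condition domain. First I would record two basic properties of $A$. Since each $h_j$ is an open map and $\varphi|_\Omega$ is constant on each Fatou component $\Omega$ of $G$, the set $A$ is a closed subset of $J(G)$: indeed if $z_0\in F(G)$ then $\varphi$ is constant on the Fatou component containing $z_0$, so $z_0\notin A$. Next I would show $A$ is backward invariant in the sense $h_j^{-1}(A)\subset A$ for each $j$, and also $\bigcup_j h_j(A\cap h_j^{-1}(\CCI)) \supset A$ in the appropriate sense; the key functional-equation input is $M_\tau(\varphi)=a\varphi$, i.e.\ $\sum_j p_j \varphi(h_j(z)) = a\varphi(z)$ for all $z$.

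The heart of the argument is the following dichotomy. Suppose $A\not\subset \partial U$; I want to conclude $A=J(G)$. Pick a point $z_0\in A\setminus \partial U$. Since $A\subset J(G)$ and $J(G)$ is nowhere dense near such points (using the open set condition, $J(G)\subset \partial U \cup (\text{stuff})$—more precisely one uses that under OSC the only places where $\varphi$ can fail to be locally constant \emph{off} $\partial U$ are forced by the dynamics), I would argue that $z_0\in U$ (not in $\CCI\setminus\overline U$, since there $\varphi$ is eventually constant under iteration of the relevant branches, and not on $\partial U$ by choice). Then, because $U$ satisfies $\bigcup_j h_j^{-1}(U)\subset U$ with disjoint pieces, a neighborhood of $z_0$ in $A$ gets mapped forward by some word in the $h_j$'s; using the functional equation $\sum_j p_j\varphi\circ h_j = a\varphi$ and the disjointness of the $h_j^{-1}(U)$, non-constancy of $\varphi$ near $z_0$ propagates: if $\varphi$ were locally constant somewhere on $J(G)\setminus A$, one derives a contradiction with the functional equation by pulling back along branches that are injective on $U$ (OSC gives injectivity of the relevant inverse branches into the disjoint pieces). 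Concretely, I would show $A$ is both open and closed in $J(G)$: closedness is the first step above; for openness, if $z_0\in A$ has a neighborhood $V$ with $V\cap J(G)\cap A$ having empty interior in $J(G)$, then $\varphi$ would be locally constant on a relatively open dense piece of $J(G)\cap V$, and the functional equation together with continuity/density propagates this to make $\varphi$ locally constant at $z_0$, contradiction. Since $J(G)$ is connected when... no—$J(G)$ need not be connected, so instead I would use that $J(G) = \overline{\bigcup_{g\in G} g^{-1}(\{z\})}$ for $z\in J(G)\setminus E(G)$ and that $A$ is backward invariant and closed: a nonempty closed backward-invariant subset of $J(G)$ that contains a point outside $E(G)$ must be all of $J(G)$, because $\overline{\bigcup_{g\in G}g^{-1}(\{z_0\})} = J(G)$ for such $z_0$ and backward invariance forces all these preimages into $A$.

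So the skeleton is: (1) $A$ closed, $A\subset J(G)$; (2) $A$ backward invariant under each $h_j$; (3) if some point of $A$ lies off $\partial U$, then using OSC (disjointness and injectivity of inverse branches on $U$) plus the eigen-equation $M_\tau\varphi = a\varphi$, show that point can be taken outside $E(G)$, or handle $E(G)$ separately since $\sharp E(G)\le 2$ and these points lie in $F(G)$ when $G$ is generated by polynomials with $\infty\in F(G)$—wait, we are in the rational setting, so I would instead note $A\cap E(G)$ is negligible and if $A\setminus\partial U\ne\emptyset$ but $A\subset E(G)\cup\partial U$ then $A$ finite modulo $\partial U$, contradicting that $A$ is perfect (which follows from $A$ being closed, backward invariant, and $J(G)$ perfect together with the propagation argument); (4) conclude $A\supseteq \overline{\bigcup_{g\in G}g^{-1}(\{z_0\})} = J(G)$, hence $A=J(G)$.

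The main obstacle I anticipate is step (3): rigorously propagating non-local-constancy of $\varphi$ through the functional equation $\sum_j p_j\varphi(h_j(z)) = a\varphi(z)$ under the open set condition, i.e.\ showing that if $z_0\in A$ and $z_0\notin\partial U$ then the backward orbit of $z_0$ stays in $A$ in a way that eventually reaches a point whose full backward orbit is dense in $J(G)$. The subtlety is that a priori $\varphi$ could oscillate at $z_0$ only because of contributions coming through $\partial U$ rather than through the injective branches on $U$; one must use that on $\CCI\setminus\overline U$ iteration of the system drives everything to $U$ (condition analogous to Definition~\ref{d:as}(3), or here a consequence of OSC plus $J(G)\subset\overline U$), so that the oscillation genuinely comes from the contracting injective branches, which is what lets the backward-invariance argument close. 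I would spend most of the write-up making this propagation precise, likely by contradiction: assume $\varphi$ is locally constant on a nonempty relatively open subset $O$ of $J(G)$, take $g\in G$ with $g^{-1}(O)$ of small diameter near $z_0$ (possible by OSC/contraction), and use the eigen-equation to force $\varphi$ constant near $z_0$.
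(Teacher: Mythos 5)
Your skeleton lines up with the paper's proof in outline (backward invariance of $A\cap U$, avoiding $E(G)$, density of backward orbits), but the two places you flag as the hard parts are exactly where your sketch is incorrect or missing the clean mechanism.

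First, the backward-invariance statement you write down, $h_j^{-1}(A)\subset A$, is not what is available, and the argument does not give it. What the OSC hands you is the following: fix $z_0\in A\cap U$ and $w_0\in h_i^{-1}(z_0)$; choose a small disk $W_0\ni w_0$ with $h_i(W_0)\subset U$. By the OSC, $W_0\subset h_i^{-1}(U)$ is disjoint from the open sets $h_j^{-1}(U)$ ($j\neq i$), so the open connected set $h_j(W_0)$ misses $U$ and hence misses $\overline U$; since $J(G)\subset\overline U$, we get $h_j(W_0)\subset F(G)$, so $\varphi$ is constant on $h_j(W_0)$ for every $j\neq i$. Plugging this into $\sum_j p_j\varphi(h_j(w))=a\varphi(w)$ shows the variation of $\varphi$ on $W_0$ equals $p_i^{-1}$ times the variation of $\varphi$ on $h_i(W_0)\ni z_0$, which is nonzero since $z_0\in A$. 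Hence $w_0\in A$, and $w_0\in h_i^{-1}(U)\subset U$. This is $h_i^{-1}(A\cap U)\subset A\cap U$, and $z_0\in U$ is essential: if $z_0\in\partial U$ the neighborhood $W_0$ cannot be forced into a single $h_i^{-1}(U)$, the other branches need not land in $F(G)$, and the isolation of a single term in the eigen-equation fails. You do not need any contraction estimate, any claim that "iteration drives $\CCI\setminus\overline U$ into $U$," nor a preliminary reduction to $A$ being open-and-closed in $J(G)$; the single inclusion $\CCI\setminus\overline U\subset F(G)$ does all the work. Your sketch never isolates this point, and it is the step you yourself identify as the obstacle.

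Second, your treatment of $E(G)$ is a detour that does not quite close. You propose showing $A$ is perfect and arguing finiteness-modulo-$\partial U$; but perfectness of $A$ is not something you have established, and the argument is unnecessary. The OSC gives $E(G)$ avoidance for free: the sets $\bigl(h_{w_1}\cdots h_{w_n}\bigr)^{-1}(U)$ over $(w_1,\ldots,w_n)\in\{1,\ldots,m\}^n$ are pairwise disjoint, each is nonempty, and each contains a preimage of any $z_0\in U$ (rational maps are surjective), so $\sharp\, G^{-1}(z_0)\geq m^n$ for all $n$, whence $z_0\notin E(G)$. Once you have $\overline{G^{-1}(z_0)}\subset A$ from the backward invariance and $z_0\notin E(G)$, the standard fact $J(G)=\overline{G^{-1}(z_0)}$ finishes the proof. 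So the route is the paper's route, but your propagation step is stated in a form ($h_j^{-1}(A)\subset A$) that is false in general and whose corrected form you have not derived, and your exceptional-set argument replaces a two-line counting observation with an incomplete perfectness claim.
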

\begin{proof}
From the assumption of our lemma, we have 
$A\subset J(G).$ Moreover, by the open set condition, \cite[Lemma 2.3(f)]{S3} implies that  
$J(G)\subset \overline{U}.$ 
Suppose that there exists a point $z_{0}\in A\cap U.$ 
We now prove the following claim. 

\noindent Claim 1. If $w_{0}\in h_{i}^{-1}(z_{0})$ for some $i$, then 
$w_{0}\in A\cap U.$ 

To prove claim 1, let $h_{i}(w_{0})=z_{0}.$ 
Then there exists an open disk neighborhood $W_{0}$ of $w_{0}$ such that 
$h_{i}(W_{0})\subset U.$ By the open set condition, 
we have that for each $j$ with $j\neq i$, 
$h_{j}(W_{0})\subset (\CCI \setminus \overline{U})\subset F(G).$ 
Combining this with  $A\subset J(G)$, we obtain that 
for each $j$ with $j\neq i$ and for each $w_{1},w_{2}\in W_{0}$, 
$\varphi (h_{j}(w_{1}))=\varphi (h_{j}(w_{2})).$ 
Hence, by $M_{\tau }(\varphi )=a\varphi $, we get that 
for each $w_{1},w_{2}\in W_{0}$, 
$a(\varphi (w_{1})-\varphi (w_{2}))=
M_{\tau }(\varphi )(w_{1})-M_{\tau }(\varphi )(w_{2})
=\sum _{j=1}^{m}p_{j}(\varphi (h_{j}(w_{1}))-\varphi (h_{j}(w_{2})))
=p_{i}(\varphi (h_{j}(w_{1}))-\varphi (h_{j}(w_{2}))).$ 
Since $z_{0}\in A$, it follows that $w_{0}\in A.$ 
Moreover, by the open set condition, $w_{0}\in U.$ 
Therefore, claim 1 holds. 

By claim 1, we obtain that $\overline{G^{-1}(z_{0})}\subset A.$ 

We now prove the following. 

\noindent Claim 2. $z_{0}\not\in E(G).$ 

To prove claim 2, we first observe that  
$\{ (h_{w_{1}}\cdots h_{w_{n}})^{-1}(U)\} _{(w_{1},\ldots ,w_{n})\in \{ 1,\ldots ,m\} ^{n}}$ 
are mutually disjoint because of the open set condition. 
Therefore 
$\sharp \cup _{(w_{1},\ldots ,w_{n})\in \{ 1,\ldots ,m\} ^{n}}(h_{w_{1}}\cdots h_{w_{n}})^{-1}(z_{0})\geq m^{n}.$ 
Thus $\sharp G^{-1}(z_{0})=\infty $ and $z_{0}\not\in E(G).$ Hence we have proved claim 2. 

By the fact $\overline{G^{-1}(z_{0})}\subset A$, claim 2 and 
\cite[Lemma 2.3(e)]{S3}, it follows that $J(G)\subset \overline{G^{-1}(z_{0})}\subset A.$ 
Thus we have proved  Lemma~\ref{l:oscncp}. 
\end{proof} 
\begin{df}
Let $\tau \in {\frak M}_{1}({\cal P})$. 
We set $\tilde{\tau }:=\otimes _{n=1}^{\infty }\tau \in {\frak M}_{1}({\cal P}^{\NN }).$ 
Moreover, for each $z\in \CCI $, we set 
$T_{\infty ,\tau }(z)=\tilde{\tau }(\{ \gamma =(\gamma _{1},\gamma _{2},\ldots )\in {\cal P}^{\NN }
\mid \gamma _{n}\cdots \gamma _{1}(z)\rightarrow \infty \mbox{ as }n\rightarrow \infty \} ).$ 
Furthermore, we denote by $G_{\tau }$ the polynomial semigroup generated by 
supp$\,\tau.$ Namely, 
$G_{\tau }=\{ h_{n}\circ \cdots \circ h_{1}\mid n\in \NN, \forall h_{j}\in \mbox{supp}\,\tau \} .$  
Moreover, we set $\Gamma _{\tau }:=\mbox{supp}\,\tau$ and 
$X_{\tau }:=(\mbox{supp}\,\tau )^{\NN }.$ 
\end{df}

\begin{lem}
\label{l:intT1}
Let $\tau \in {\frak M}_{1}({\cal P}).$ 
Suppose $\mbox{{\em int}}(\hat{K}(G_{\tau }))\neq \emptyset .$ 
Then 
$\mbox{{\em int}}(T_{\infty, \tau }^{-1}(\{ 1\}))\subset F(G_{\tau }).$ 
\end{lem}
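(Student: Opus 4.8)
The plan is to show that if $z_0\in \mathrm{int}(T_{\infty,\tau}^{-1}(\{1\}))$, then $z_0\in F(G_\tau)$. Suppose to the contrary that $z_0\in J(G_\tau)$, and let $W$ be an open neighborhood of $z_0$ with $W\subset T_{\infty,\tau}^{-1}(\{1\})$. First I would use the hypothesis $\mathrm{int}(\hat K(G_\tau))\neq\emptyset$ together with the fact that $J(G_\tau)$ is the smallest nonempty compact backward-invariant set: since $\mathrm{int}(\hat K(G_\tau))$ is a nonempty open set disjoint from $\infty$, one can find an element $g\in G_\tau$ with $g(z_0)\in \mathrm{int}(\hat K(G_\tau))$. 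Indeed, $\hat K(G_\tau)\supset J(G_\tau)$ is not the case in general, but the point is that $z_0\in J(G_\tau)\setminus E(G_\tau)$ (as $J(G_\tau)$ is perfect, $E(G_\tau)$ has at most two points, and if $z_0\in E(G_\tau)$ a small perturbation argument or the density of $\bigcup_{g}g^{-1}(\{w\})$ in $J(G_\tau)$ lets us replace $z_0$ by a nearby point of $W\cap J(G_\tau)$), and therefore $\overline{\bigcup_{g\in G_\tau}g^{-1}(\{w\})}=J(G_\tau)$ for suitable $w$; combined with $J(G_\tau)\ni z_0$ one shows that $G_\tau$ acting forward cannot keep the $G_\tau$-orbit of $z_0$ away from $\mathrm{int}(\hat K(G_\tau))$.

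More carefully, the key step I expect to carry the argument is this: pick a point $a\in \mathrm{int}(\hat K(G_\tau))$. Since $a\in \hat K(G_\tau)$, the set $\{g(a)\mid g\in G_\tau\}$ is bounded in $\mathbb{C}$, so $a\notin J(G_\tau)$ unless... — actually the cleanest route is to note $\mathrm{int}(\hat K(G_\tau))\subset F(G_\tau)$: on $\mathrm{int}(\hat K(G_\tau))$ the family $G_\tau\cup\{Id\}$ is uniformly bounded, hence normal by Montel. Now I would argue that the neighborhood $W$ of $z_0\in J(G_\tau)$ must contain a point $z_1$ and an element $h=h_n\circ\cdots\circ h_1\in G_\tau$ with $h_j\in\mathrm{supp}\,\tau$ such that $h(z_1)\in\mathrm{int}(\hat K(G_\tau))$: this follows because $J(G_\tau)\subset \overline{G_\tau^{-1}(a')}$ for any $a'\in J(G_\tau)$ avoiding $E(G_\tau)$, but more directly, since $W\cap J(G_\tau)\neq\emptyset$ and $J(G_\tau)$ is not attracted to $\infty$ under the semigroup, some finite word sends a point of $W$ into the bounded Fatou region $\mathrm{int}(\hat K(G_\tau))$. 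Then for that specific word $(h_1,\dots,h_n)$, the cylinder set in $\mathcal{P}^{\mathbb N}$ of sequences beginning with $(h_1,\dots,h_n)$ has positive $\tilde\tau$-measure (as each $h_j\in\mathrm{supp}\,\tau$), and along every such sequence $\gamma$, $\gamma_m\cdots\gamma_1(z_1)$ eventually enters and — since $h(z_1)\in\mathrm{int}(\hat K(G))$ and forward images stay in $\hat K(G)$ — stays in a bounded region, so $\gamma_m\cdots\gamma_1(z_1)\not\to\infty$. Hence $T_{\infty,\tau}(z_1)<1$, contradicting $z_1\in W\subset T_{\infty,\tau}^{-1}(\{1\})$.

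The main obstacle I anticipate is justifying rigorously that some finite word in $\mathrm{supp}\,\tau$ maps a point of $W$ into $\mathrm{int}(\hat K(G_\tau))$, i.e., that the backward orbit of a point of $\mathrm{int}(\hat K(G_\tau))$ under $G_\tau$ meets every open set hitting $J(G_\tau)$. For this I would invoke the standard facts recalled in the excerpt: $J(G_\tau)=\overline{\bigcup_{g\in G_\tau}g^{-1}(\{w\})}$ for every $w\in J(G_\tau)\setminus E(G_\tau)$, and $\partial(\hat K(G_\tau))$ relates to $J(G_\tau)$; choosing $w\in \partial\,\mathrm{int}(\hat K(G_\tau))$ appropriately, its $G_\tau$-preimages are dense in $J(G_\tau)$, and a preimage point near $z_0$ has, by continuity/openness of the relevant branch, a preimage-neighborhood structure letting us push a whole open piece of $W$ forward onto a set meeting $\mathrm{int}(\hat K(G_\tau))$. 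One must also handle the exceptional set $E(G_\tau)$: if $z_0\in E(G_\tau)$, replace $z_0$ by a nearby point of $W\cap(J(G_\tau)\setminus E(G_\tau))$, which is possible since $J(G_\tau)$ is perfect and $\sharp E(G_\tau)\le 2$. Once that geometric claim is in hand, the measure-theoretic conclusion $T_{\infty,\tau}(z_1)<1$ is immediate from the definition of $T_{\infty,\tau}$ and the positivity of cylinder measures under $\tilde\tau$.
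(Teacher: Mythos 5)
Your core measure-theoretic insight --- that if some finite word $g = h_m\circ\cdots\circ h_1$ with each $h_j\in\mathrm{supp}\,\tau$ maps a point $z_1$ into $\mathrm{int}(\hat K(G_\tau))$, then a positive-$\tilde{\tau}$-measure cylinder of sequences keeps the orbit of $z_1$ bounded (forward images stay in $\hat K(G_\tau)$), forcing $T_{\infty,\tau}(z_1)<1$ --- is exactly the engine of the paper's proof, and your closing measure-theoretic sentence is correct. But the overall route you take to the contradiction is not the paper's, it is harder than necessary, and the spot you yourself flag as ``the main obstacle'' is a genuine gap in the write-up as it stands.

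You set up a proof by contradiction ($z_0\in W\cap J(G_\tau)$ with $W\subset T_{\infty,\tau}^{-1}(\{1\})$) and then try to manufacture a point of $W$ whose forward $G_\tau$-orbit hits $\mathrm{int}(\hat K(G_\tau))$, via density of backward orbits. To make that rigorous you would need several facts you only gesture at: that $\partial(\mathrm{int}(\hat K(G_\tau)))$ is a nonempty (in fact infinite) subset of $J(G_\tau)$, that such a boundary point can be chosen outside $E(G_\tau)$, and that openness of the maps lets you bump a preimage of a boundary point into $\mathrm{int}(\hat K(G_\tau))$. All of this can be pushed through, but none of it is needed.

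The paper runs the logic the other way and avoids the density argument entirely. It first proves, directly from the definition of $T_{\infty,\tau}$ and your cylinder observation, the clean claim: for every $z_0\in T_{\infty,\tau}^{-1}(\{1\})$ and every $g\in G_\tau$, one has $g(z_0)\notin\mathrm{int}(\hat K(G_\tau))$. Hence $G_\tau\bigl(\mathrm{int}(T_{\infty,\tau}^{-1}(\{1\}))\bigr)\subset\CCI\setminus\mathrm{int}(\hat K(G_\tau))$, and since the right-hand side omits the nonempty open set $\mathrm{int}(\hat K(G_\tau))$, Montel's theorem gives normality of $G_\tau$ on $\mathrm{int}(T_{\infty,\tau}^{-1}(\{1\}))$, i.e.\ this set is contained in $F(G_\tau)$. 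Notice that you already observed, correctly, that $\mathrm{int}(\hat K(G_\tau))\subset F(G_\tau)$ by Montel --- but you applied Montel to the wrong set. The useful application is to $\mathrm{int}(T_{\infty,\tau}^{-1}(\{1\}))$ itself; that single step replaces your preimage-density argument and removes any need to invoke $E(G_\tau)$, perfectness of $J(G_\tau)$, or the structure of $\partial\hat K(G_\tau)$.
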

\begin{proof}
We first prove the following claim.\\ 
Claim. For each $z_{0}\in T_{\infty ,\tau }^{-1}(\{ 1\} )$, 
there exists no $g\in G_{\tau }$ with $g(z_{0})\in \mbox{int}(\hat{K}(G_{\tau })).$

To prove this claim, let $z_{0}\in T_{\infty ,\tau }^{-1}(\{ 1\} )$ and 
suppose there exists an element $g\in G_{\tau }$ with $g(z_{0})\in \mbox{int}(\hat{K}(G_{\tau })).$ 
Let $h_{1},\ldots ,h_{m}\in \G_{\tau }$ be some elements with 
$g=h_{m}\circ \cdots \circ h_{1}.$ 
Then there exists a neighborhood $W$ of $(h_{1},\ldots ,h_{m})$ in $\G_{\tau }^{m}$ 
such that for each $\omega =(\omega _{1},\ldots ,\omega _{m})\in W$, 
$\omega _{m}\cdots \omega _{1}(z_{0})\in \mbox{int}(\hat{K}(G_{\tau })).$ 
Therefore for each $\g \in X_{\tau }$ with $(\g _{1},\ldots ,\g _{m})\in W$, 
$\{ \g _{n,1}(z_{0})\} _{n\in \NN }$ is bounded. 
Thus 
$T_{\infty ,\tau }(z_{0})\leq 1-\tilde{\tau }(\{ \g \in X_{\tau }\mid (\g _{1},\ldots ,\g _{m})\in W\} )<1.$ 
This is a contradiction. Hence we have proved the claim.

 From this claim, $G_{\tau }(\mbox{int}(T_{\infty ,\tau }^{-1}(\{ 1\} )))\subset \CCI \setminus \mbox{int}(\hat{K}(G_{\tau })).$ 
Therefore $ \mbox{int}(T_{\infty ,\tau }^{-1}(\{ 1\} ))\subset F(G_{\tau }).$ 
Thus we have proved our lemma. 
\end{proof}
\begin{lem}
\label{l:ovintk}
Let $(h_{1},h_{2})\in \overline{{\cal D}}\cap {\cal B}\cap {\cal H}.$ 
Suppose that $K(h_{1})\subsetneqq K(h_{2})$.  
Then $\overline{(\mbox{{\em int}}(K(h_{2})))\setminus K(h_{1})}=K(h_{2})\setminus \mbox{{\em int}}(K(h_{1})).$ 
\end{lem}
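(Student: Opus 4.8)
The plan is to write $U := (\mbox{int}(K(h_2)))\setminus K(h_1)$ and $G=\langle h_1,h_2\rangle$, and to establish the two inclusions separately. Since $K(h_1)\subsetneqq K(h_2)$, Proposition~\ref{p:bdyosc} applies in its case (1), which gives $K(h_1)\subset h_1^{-1}(K(h_2))\subset h_2^{-1}(K(h_1))\subset K(h_2)$, the disjointness $h_1^{-1}(U)\amalg h_2^{-1}(U)\subset U$, and that $J(h_2)$ is a quasicircle; in particular $K(h_2)=\overline{\mbox{int}(K(h_2))}$, and $\partial K(h_j)=J(h_j)$ for $j=1,2$ (the standard fact for polynomials of degree $\geq 2$). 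Combining Proposition~\ref{p:bdyosc}-(b) with \cite[Corollary 3.2]{HM}, exactly as in the proofs of Lemmas~\ref{l:h1h2bdhb} and \ref{l:pfmainth3}, we obtain $J(G)\subset\overline{U}$; and since $J(G)=\overline{\bigcup_{g\in G}J(g)}$, this yields $J(h_1)\cup J(h_2)\subset\overline{U}$.

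The inclusion $\overline{U}\subset K(h_2)\setminus\mbox{int}(K(h_1))$ is immediate: $U\subset\mbox{int}(K(h_2))\setminus\mbox{int}(K(h_1))\subset K(h_2)\setminus\mbox{int}(K(h_1))$, and the set $K(h_2)\setminus\mbox{int}(K(h_1))=K(h_2)\cap(\CCI\setminus\mbox{int}(K(h_1)))$ is closed, hence contains $\overline{U}$.

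For the reverse inclusion, I would take $z\in K(h_2)\setminus\mbox{int}(K(h_1))$ and split into three exhaustive cases. If $z\in K(h_1)$, then $z\in K(h_1)\setminus\mbox{int}(K(h_1))=\partial K(h_1)=J(h_1)\subset\overline{U}$. If $z\notin K(h_1)$ and $z\in\mbox{int}(K(h_2))$, then $z\in\mbox{int}(K(h_2))\setminus K(h_1)=U\subset\overline{U}$. If $z\notin K(h_1)$ and $z\notin\mbox{int}(K(h_2))$, then since $z\in K(h_2)$ we get $z\in K(h_2)\setminus\mbox{int}(K(h_2))=\partial K(h_2)=J(h_2)\subset\overline{U}$. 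Thus $K(h_2)\setminus\mbox{int}(K(h_1))\subset\overline{U}$, and equality follows.

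I do not expect any serious obstacle here; the argument is essentially bookkeeping once Proposition~\ref{p:bdyosc} and the inclusion $J(G)\subset\overline{U}$ are in hand. The only points deserving care are verifying that the three cases genuinely cover $K(h_2)\setminus\mbox{int}(K(h_1))$ (this uses only $K(h_1)\subset K(h_2)$), and noting that $U\neq\emptyset$, so that $J(G)\subset\overline{U}$ is a meaningful statement: indeed $\mbox{int}(K(h_2))\subset K(h_1)$ would force $K(h_2)=\overline{\mbox{int}(K(h_2))}\subset K(h_1)$, contradicting $K(h_1)\subsetneqq K(h_2)$, which is where the quasicircle conclusion of Proposition~\ref{p:bdyosc}-(c) is used.
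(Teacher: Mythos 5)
Your proof is correct, but it takes a genuinely different route from the paper's.

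The paper decomposes $K(h_2)\setminus\mbox{int}(K(h_1))$ as $A_1\cup A_2\cup A_3\cup A_4$, where $A_1=J(h_2)\cap(\CC\setminus K(h_1))$, $A_2=\mbox{int}(K(h_2))\cap(\CC\setminus K(h_1))$, $A_3=J(h_2)\cap J(h_1)$, and $A_4=\mbox{int}(K(h_2))\cap J(h_1)$, and then shows $A_i\subset\overline{U}$ for each $i$ by self-contained topological arguments: $A_1$ and $A_4$ by approximating boundary points of a filled Julia set from the interior or the complement, and $A_3$ by pushing the single-polynomial density of $\bigcup_n h_2^{-n}(z_1)$ near $J(h_2)$ together with the forward invariance $h_2^{-1}(U)\subset U$. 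The semigroup Julia set $J(G)$ never enters. You instead cover $K(h_2)\setminus\mbox{int}(K(h_1))$ by $J(h_1)\cup U\cup J(h_2)$ and fold the two nontrivial pieces into the single containment $J(h_1)\cup J(h_2)\subset J(G)\subset\overline{U}$, the latter from backward invariance of $\overline{U}$ under the generators (Proposition~\ref{p:bdyosc}-(b) plus openness of polynomials) and the minimality property of $J(G)$ cited from \cite[Corollary 3.2]{HM}, exactly as the paper already does in the proofs of Lemmas~\ref{l:h1h2bdhb} and \ref{l:pfmainth3}. Your route is shorter and reuses machinery that is already on the table by this point, whereas the paper's route keeps the lemma independent of the observation $J(G)\subset\overline{U}$ and in particular avoids invoking \cite[Corollary 3.2]{HM} again. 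You were also right to flag the nonvacuity of $U$: one does need $J(h_2)$ a quasicircle (Proposition~\ref{p:bdyosc}-(c)) to ensure $K(h_2)=\overline{\mbox{int}(K(h_2))}$, from which $U\neq\emptyset$ follows under $K(h_1)\subsetneqq K(h_2)$; the paper's proof of the $A_3$ case implicitly relies on the related fact $J(h_2)\setminus K(h_1)\neq\emptyset$, which you sidestep.
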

\begin{proof}
It is clear that $\overline{(\mbox{int}(K(h_{2})))\setminus K(h_{1})}\subset K(h_{2})\setminus \mbox{int}(K(h_{1})).$ 
Let $A_{1}:=J(h_{2})\cap (\CC \setminus K(h_{1}))$, 
$A_{2}:=(\mbox{int}(K(h_{2})))\cap (\CC \setminus K(h_{1}))$, 
$A_{3}:= J(h_{2})\cap J(h_{1})$ and 
$A_{4}:= (\mbox{int}(K(h_{2})))\cap J(h_{1}).$ 
Then we have 
\begin{eqnarray*}
K(h_{2})\setminus \mbox{int}(K(h_{1})) & = &
(J(h_{2})\cup \mbox{int}(K(h_{2})))\cap (\CC \setminus \mbox{int}(K(h_{1})))\\ 
& = & (J(h_{2})\cup \mbox{int}(K(h_{2})))\cap ((\CC \setminus K(h_{1}))\cup J(h_{1}))\\ 
& = & A_{1}\cup A_{2}\cup A_{3}\cup A_{4}.
\end{eqnarray*} 
We want to see that $A_{i}\subset \overline{(\mbox{int}(K(h_{2})))\setminus K(h_{1})}$ for each $i=1,\ldots ,4.$ 

It is clear that $A_{2}\subset \overline{(\mbox{int}(K(h_{2})))\setminus K(h_{1})}.$ 

Suppose $z_{0}\in A_{1}.$ Let $\epsilon _{0}>0$ be a small number such that 
$B(z_{0},\epsilon _{0})\subset \CC \setminus K(h_{1}).$ 
Then for each $\epsilon \in (0,\epsilon _{0})$ there exists a 
point $z_{1}\in B(z_{0},\epsilon )\cap \mbox{int}(K(h_{2})).$ 
Thus $z_{1}\in (B(z_{0},\epsilon )\cap \mbox{int}(K(h_{2})))\setminus K(h_{1}).$ 
Hence $z_{0}\in \overline{(\mbox{int}(K(h_{2})))\setminus K(h_{1})}.$ 
Therefore $A_{1}\subset \overline{(\mbox{int}(K(h_{2})))\setminus K(h_{1})}.$  

We now let $z_{0}\in A_{4}.$ Then there exists a number $\epsilon _{0}>0$ 
such that $B(z_{0},\epsilon _{0})\subset \mbox{int}(K(h_{2})).$ 
For each $\epsilon \in (0,\epsilon _{0})$, 
there exists an element $z_{1}\in B(z_{0},\epsilon )\cap (\CC \setminus K(h_{1})).$ 
Hence $z_{1}\in B(z_{0},\epsilon )\cap (\CC \setminus K(h_{1}))\cap \mbox{int}(K(h_{2})).$ 
Therefore $z_{0}\in \overline{(\mbox{int}(K(h_{2})))\setminus K(h_{1})}.$ 
Thus $A_{4}\subset \overline{(\mbox{int}(K(h_{2})))\setminus K(h_{1})}.$ 

 We now let $z_{0}\in A_{3}.$ Since we are assuming $K(h_{1})\subsetneqq K(h_{2})$, 
 there exists a point $z_{1}\in J(h_{2})\setminus K(h_{1})$. 
 Then for each $\epsilon >0$ there exists a point $a_{n}\in h_{2}^{-n}(z_{1})$ for some $n\in \NN $ 
 such that $a_{n}\in B(z_{0},\epsilon ).$ There exists a point $w_{1}\in (\mbox{int}(K(h_{2})))\setminus K(h_{1})$ 
 arbitrarily close to $z_{1}.$ Hence there exists a point $b_{n}\in h_{2}^{-n}(w_{1})$ 
 arbitrarily close to $a_{n}.$ 
Since $h_{2}^{-1}((\mbox{int}(K(h_{2})))\setminus K(h_{1})) 
\subset (\mbox{int}(K(h_{2})))\setminus K(h_{1})$ (see Proposition~\ref{p:bdyosc}), 
it follows that $b_{n}\in (\mbox{int}(K(h_{2})))\setminus K(h_{1}).$ 
Therefore we obtain that $z_{0}\in \overline{(\mbox{int}(K(h_{2})))\setminus K(h_{1})}.$ 
Thus $A_{3}\subset \overline{(\mbox{int}(K(h_{2})))\setminus K(h_{1})}.$ 
From these arguments, it follows that 
$\overline{(\mbox{int}(K(h_{2})))\setminus K(h_{1})}=K(h_{2})\setminus \mbox{int}(K(h_{1})).$ 


Thus we have proved Lemma~\ref{l:ovintk}. 
\end{proof} 

The following notion is the key to investigating random complex dynamical systems 
which are associated with  rational semigroups.  
\begin{df}
Let $G$ be a rational semigroup. 
We set $J_{\ker }(G)=\bigcap _{g\in G}g^{-1}(J(G)).$ 
This $J_{\ker }(G)$ is called the {\em kernel Julia set} of $G$. 
Moreover, for a finite subset $\{ h_{1},\ldots ,h_{m}\} $ of Rat, 
we set $J_{\ker }(h_{1},\ldots ,h_{m}):=J_{\ker }(\langle h_{1},\ldots ,h_{m}\rangle ) .$ 
\end{df}
Note that $J_{\ker }(G)$ is the largest forward 
invariant subset of $J(G)$ under the action of $G.$ 
We remark that 
if $G$ is a group or if $G$ is a commutative semigroup, 
then $J_{\ker }(G)=J(G).$ 
However, for a general rational semigroup $G$ generated by a family of 
rational maps $h$ with $\deg (h)\geq 2$, it may happen that 
$\emptyset =J_{\ker }(G)\neq J(G) $.

\begin{prop}
\label{p:jh1jh2jk}
Let $(h_{1},h_{2})\in (\partial {\cal C})\cap {\cal B}\cap {\cal H}.$ 
Let $G=\langle h_{1},h_{2}\rangle .$ 
Then we have all of the following. 
\begin{itemize}
\item[{\em (1)}] 
$J_{\ker }(G)=J(h_{1})\cap J(h_{2}).$ 
\item[{\em (2)}] Either $K(h_{1})\subset K(h_{2})$ or $K(h_{2})\subset K(h_{1}).$ 
\item[{\em (3)}] If $K(h_{1})\subset K(h_{2})$ then 
$F_{\infty }(G)=\CCI \setminus K(h_{2})$ and $\hat{K}(G)=K(h_{1}).$ 
\end{itemize}
\end{prop}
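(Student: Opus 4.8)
The plan is to establish the three assertions in the order (2), (3), (1). Assertion (2) is immediate: since $(\partial{\cal C})\cap{\cal B}\cap{\cal H}=(\partial{\cal D})\cap{\cal B}\cap{\cal H}\subset\overline{{\cal D}}\cap{\cal B}\cap{\cal H}$, it is precisely the dichotomy of Proposition~\ref{p:bdyosc}. So assume from now on that $K(h_1)\subset K(h_2)$. Then Proposition~\ref{p:bdyosc}(a) gives $K(h_1)\subset h_1^{-1}(K(h_2))\subset h_2^{-1}(K(h_1))\subset K(h_2)$, and the only consequences I will use are (i) $h_2(K(h_1))\subset K(h_1)$ and (ii) $h_1(\CCI\setminus K(h_2))\subset\CCI\setminus K(h_2)$; I also use that $J(h_2)$ is connected (as $(h_1,h_2)\in{\cal B}$), so that $\CCI\setminus K(h_2)=F_{\infty}(h_2)$ is connected, and that every $g\in G$ is hyperbolic with connected Julia set (because $G$ is hyperbolic and postcritically bounded, so $P(g)\subset P(G)\subset F(G)\subset F(g)$ and $P^{\ast}(g)\subset P^{\ast}(G)$ is bounded).

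For (3): clearly $\hat{K}(G)\subset K(h_1)$, since a point of $\hat{K}(G)$ has bounded $\langle h_1\rangle$-orbit; conversely, combining (i) with $h_1(K(h_1))=K(h_1)$ gives $G(K(h_1))\subset K(h_1)$, hence $K(h_1)\subset\hat{K}(G)$. For $F_{\infty}(G)$, note that by (ii) together with the complete invariance $h_2^{-1}(K(h_2))=K(h_2)$, the open connected set $\CCI\setminus K(h_2)$ containing $\infty$ is forward invariant under $G$ and avoids the infinite set $K(h_2)$; by Montel's theorem it lies in $F(G)$, hence in $F_{\infty}(G)$. Conversely, were $F_{\infty}(G)$ to meet $K(h_2)$, then, being connected and containing $\infty\notin K(h_2)$, it would meet $\partial K(h_2)=J(h_2)\subset J(G)$, which is impossible since $F_{\infty}(G)\subset F(G)$. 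Hence $F_{\infty}(G)=\CCI\setminus K(h_2)$ and $\hat{K}(G)=K(h_1)$.

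For (1) I prove both inclusions between $J_{\ker}(G)$ and $J(h_1)\cap J(h_2)$. For ``$\supset$'': since $K(h_1)\subset K(h_2)$ one checks $J(h_1)\cap J(h_2)=K(h_1)\cap J(h_2)$ (a point of $K(h_1)\cap\partial K(h_2)$ cannot lie in $\mbox{int}(K(h_1))\subset\mbox{int}(K(h_2))$), and that this set is forward invariant under $G$: invariance under $h_2$ follows from (i) and $h_2(J(h_2))=J(h_2)$; for invariance under $h_1$, given $z\in K(h_1)\cap\partial K(h_2)$ one approximates $z$ by points of $\CCI\setminus K(h_2)$, whose $h_1$-images remain in $\CCI\setminus K(h_2)$ by (ii), so $h_1(z)\in\overline{\CCI\setminus K(h_2)}\cap K(h_1)=\partial K(h_2)\cap K(h_1)$. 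Since $J(h_1)\cap J(h_2)\subset J(h_1)\subset J(G)$ and $J_{\ker}(G)$ is the largest forward invariant subset of $J(G)$, we conclude $J(h_1)\cap J(h_2)\subset J_{\ker}(G)$. For ``$\subset$'': I show $J_{\ker}(G)\subset J(g)$ for every $g\in G$, whence in particular $J_{\ker}(G)\subset J(h_1)\cap J(h_2)$. Let $z\in J_{\ker}(G)$ and $g\in G$, so $g^n(z)\in J_{\ker}(G)\subset J(G)$ for all $n$. As $g$ is hyperbolic with connected Julia set, $F(g)=(\CCI\setminus K(g))\amalg\mbox{int}(K(g))$. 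If $z\in\CCI\setminus K(g)$, then $g^n(z)\to\infty\in F(G)$ (as $\{h_1,h_2\}$ is compact), contradicting $g^n(z)\in J(G)$. If $z\in\mbox{int}(K(g))$, then $g^n(z)$ accumulates on an attracting cycle $\Lambda$ of $g$; since a critical point of $g$ lies in the immediate basin of $\Lambda$ and $P(G)$ is closed, satisfies $h(P(G))\subset P(G)$ for all $h\in G$, and contains $\mbox{CV}(g)$, we get $\Lambda\subset P(G)\subset F(G)$, whereas $\Lambda\subset\overline{\{g^n(z):n\in\NN\}}\subset J(G)$, a contradiction. Therefore $z\in J(g)$, which finishes (1).

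The step I expect to be the crux is the inclusion $J_{\ker}(G)\subset\bigcap_{g\in G}J(g)$ in (1): a point of $J_{\ker}(G)$ might a priori sit in $\mbox{int}(K(g))$ for some $g\in G$ even though its whole $G$-orbit lies in $J(G)$, and the way to exclude this is to observe that in a hyperbolic semigroup the attracting cycles of each $g$ are trapped inside $P(G)\subset F(G)$, so that an orbit confined to $J(G)$ can be attracted neither to them nor to $\infty$. The ``$\supset$'' direction is routine once (i) and (ii) are recorded, the only mildly delicate point being the boundary-approximation argument that $h_1$ maps $K(h_1)\cap J(h_2)$ into itself; and the whole argument is uniform in whether or not $(h_1,h_2)\in{\cal Q}$.
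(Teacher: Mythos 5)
Your proof is correct, and it takes a genuinely different route from the paper's in several places, most notably in the inclusion $J_{\ker}(G)\subset J(h_1)\cap J(h_2)$. The paper's proof of this inclusion relies on the special structure coming from Proposition~\ref{p:bdyosc}: it uses that $J(h_2)$ is a quasicircle and hence $h_2$ has a unique attracting fixed point $c\in\mbox{int}(K(h_2))$ with $c\in P(G)\subset F(G)$, so that a point of $J_{\ker}(G)$ inside $\mbox{int}(K(h_2))$ would be sucked into $F(G)$ by the iterates of $h_2$. You instead prove the cleaner and more general fact that $J_{\ker}(G)\subset\bigcap_{g\in G}J(g)$: for each individual $g\in G$, hyperbolicity of $\langle h_1,h_2\rangle$ forces every attracting cycle of $g$ to lie in $P(G)\subset F(G)$ (a critical value trajectory of $g$ converges to the cycle and $P(G)$ is a closed, forward-invariant set containing $\mbox{CV}(g)$), so an orbit of $g$ confined to $J(G)$ can lie neither in the basin of $\infty$ nor in any bounded attracting basin. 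This is a cleaner argument and does not use the quasicircle fact at all; it also immediately yields a statement that is interesting on its own for hyperbolic postcritically bounded semigroups. Your proof of (3) likewise bypasses the paper's use of the open set condition and Lemma~\ref{l:ovintk}: rather than deducing $J(G)\subset\overline{U}=K(h_2)\setminus\mbox{int}(K(h_1))$, you observe directly that $\CCI\setminus K(h_2)$ is a connected, $G$-forward-invariant open set omitting the infinite set $K(h_2)$, hence lies in $F_\infty(G)$ by Montel, while the reverse inclusion follows from $\partial K(h_2)=J(h_2)\subset J(G)$. Finally, your derivation of the forward invariance of $J(h_1)\cap J(h_2)$ is a direct boundary-approximation argument, in contrast to the paper's route through the forward-invariant set $\hat K(G)\cap\overline{F_\infty(G)}$. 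Both are fine; yours is more elementary and has the advantage of treating the case $K(h_1)=K(h_2)$ uniformly, whereas the paper disposes of that case separately at the outset.
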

\begin{proof}
If $K(h_{1})=K(h_{2})$, then 
$J(h_{1})=J(h_{2})$ and $J(G)=J(h_{1})=J(h_{2})$. Thus (1)--(3) hold. 

We now suppose $K(h_{1})\neq K(h_{2}).$ 
By Proposition~\ref{p:bdyosc}, either $K(h_{1})\subset K(h_{2})$ or 
$K(h_{2})\subset K(h_{1})$. 
We suppose $K(h_{1})\subset K(h_{2}).$ 
Since we are assuming $K(h_{1})\neq K(h_{2})$, we obtain $K(h_{1})\subsetneqq K(h_{2}).$ 
Therefore $U:=(\mbox{int}{K(h_{2})})\setminus K(h_{1})$ is a non-empty open set. 
Moreover, by Proposition~\ref{p:bdyosc}, we have that 
$(h_{1},h_{2})$ satisfies the open set condition with $U.$ 
Therefore $J(G)\subset \overline{U}.$ Moreover, by Lemma~\ref{l:ovintk}, 
we have $\overline{U}=K(h_{2})\setminus \mbox{int}(K(h_{1})).$ Thus 
we obtain $J(G)\subset K(h_{2})\setminus \mbox{int}(K(h_{1})).$  
Therefore 
\begin{equation}
\label{eq:finftygcci}
F_{\infty }(G)=\CCI \setminus K(h_{2}).
\end{equation}

We now prove the following claim. \\ 
Claim. $\hat{K}(G)=K(h_{1}).$ 

To prove this claim, it is easy to see that $\hat{K}(G)\subset K(h_{1}).$ 
By Proposition~\ref{p:bdyosc}, we have that
\begin{equation}
\label{eq:hj-1intkh2}
h_{j}^{-1}((\mbox{int}(K(h_{2})))\setminus K(h_{1}))\subset 
(\mbox{int}(K(h_{2})))\setminus K(h_{1}) \mbox{ for each } j=1,2.
\end{equation}
Suppose $h_{2}(K(h_{1}))\cap (\CC \setminus K(h_{1}))\neq \emptyset .$ 
Then $h_{2}(\mbox{int}(K(h_{1})))\cap (\CC \setminus K(h_{1}))\neq \emptyset .$ 
Since $h_{2}(\mbox{int}(K(h_{1})))\subset h_{2}(\mbox{int}(K(h_{2})))
\subset \mbox{int}(K(h_{2}))$, 
it follows that $h_{2}^{-1}((\mbox{int}(K(h_{2})))\setminus K(h_{1}))\cap \mbox{int}(K(h_{1}))\neq \emptyset .$ 
However, this contradicts (\ref{eq:hj-1intkh2}). Thus we must have that 
$h_{2}(K(h_{1}))\subset K(h_{1}).$ 
Hence $K(h_{1})\subset \hat{K}(G).$ 
Therefore $K(h_{1})=\hat{K}(G).$ Thus we have proved the above claim. 

By Claim and (\ref{eq:finftygcci}), we obtain 
$J(h_{1})\cap J(h_{2})\subset \hat{K}(G)\cap \overline{F_{\infty }(G)}.$ 
Therefore for each $j=1,2,$ we have 
$$h_{j}(J(h_{1})\cap J(h_{2}))\subset \hat{K}(G)\cap \overline{F_{\infty }(G)} 
=K(h_{1})\cap (\CCI \setminus \mbox{int}(K(h_{2}))).$$ 
Since $K(h_{1})\subset K(h_{2})$, we have 
$\mbox{int}(K(h_{1}))\subset \mbox{int}(K(h_{2})).$ 
Hence $$K(h_{1})\cap (\CCI \setminus \mbox{int}(K(h_{2})))\subset 
J(h_{1})\cap (\CC \setminus \mbox{int}(K(h_{2})))=
J(h_{1})\cap ((\CC \setminus K(h_{2}))\cup J(h_{2}))
=J(h_{1})\cap J(h_{2}).$$
It follows that $h_{j}(J(h_{1})\cap J(h_{2}))\subset J(h_{1})\cap J(h_{2})$ for each 
$j=1,2.$ Therefore $J(h_{1})\cap J(h_{2})\subset J_{\ker }(G).$ 

 We now let $z_{0}\in J_{\ker }(G).$ 
Then  we have $z_{0}\in K(h_{1})\cap K(h_{2})=K(h_{1}).$  
By Proposition~\ref{p:bdyosc}, we have $h_{2}(\mbox{int}(K(h_{1})))\subset \mbox{int}(K(h_{1})).$ 
Therefore $g(\mbox{int}(K(h_{1})))\subset \mbox{int}(K(h_{1}))$ for each $g\in G.$ 
It implies that $\mbox{int}(K(h_{1}))\subset F(G).$ 
Since $z_{0}\in J(G)\cap K(h_{1})$, it follows that $z_{0}\in J(h_{1}).$ 
We now want to show that $z_{0}\in J(h_{2}).$ 
Suppose that $z_{0}\in \mbox{int}(K(h_{2})).$ By Proposition~\ref{p:bdyosc}, 
$J(h_{2})$ is a quasicircle and $h_{2}$ has a unique attracting fixed point $c\in \mbox{int}(K(h_{2})).$ 
 Since $c\in P(G)\subset F(G)$, it follows that there exists a number $n\in \NN $ such that 
 $h_{2}^{n}(z_{0})\in F(G).$ However, this contradicts $z_{0}\in J_{\ker }(G).$ 
 Therefore $z_{0}\in J(h_{2}).$ Thus $z_{0}\in J(h_{1})\cap J(h_{2}).$ 
 Hence $J_{\ker }(G)\subset J(h_{1})\cap J(h_{2}).$ 
 Therefore $J_{\ker }(G)=J(h_{1})\cap J(h_{2}).$ Thus we have proved Proposition~\ref{p:jh1jh2jk}. 
\end{proof}

\begin{lem}
\label{l:pfmainth8}
Statement~\ref{spacemainth8} in Theorem~\ref{t:spaceth1h2} holds. 
\end{lem}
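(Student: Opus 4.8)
The statement to be proved is Statement \ref{spacemainth8} in Theorem~\ref{t:spaceth1h2}: for $(h_{1},h_{2})\in ({\cal D}\cap {\cal B})\cup ((\partial {\cal C})\cap {\cal B}\cap {\cal H})$ and $0<p<1$, the Julia set $J(h_{1},h_{2})$ coincides with the set of points $z_{0}$ such that $z\mapsto T(h_{1},h_{2},p,z)$ is non-constant on every neighborhood of $z_{0}$. I would observe first that $\varphi(z):=T(h_{1},h_{2},p,z)$ satisfies $M_{h_{1},h_{2},p}(\varphi)=\varphi$ (the function of probability of tending to $\infty$ is a fixed point of the transition operator, by conditioning on the first map applied), that $\varphi$ is locally constant on $F(G)$ where $G=\langle h_1,h_2\rangle$ (this is \cite[Lemma 3.24]{Splms10}, already cited in the excerpt), and that $\varphi|_{\hat K(G)}\equiv 0$ while $\varphi|_{F_\infty(G)}\equiv 1$. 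So the set $A$ in the statement is automatically contained in $J(G)$; the whole content is the reverse inclusion $J(G)\subset A$, i.e. that $\varphi$ genuinely varies near every point of the Julia set.

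**Key steps, in order.** First I would dispose of the easy sub-case $K(h_1)=K(h_2)$ (so $J(h_1)=J(h_2)=J(G)$): here $\varphi$ separates $\hat K(G)=K(h_1)$ from $F_\infty(G)=\CCI\setminus K(h_1)$, which are open on either side of $J(G)$, and since $J(G)$ is perfect with empty interior, $\varphi$ cannot be locally constant at any point of $J(G)$. Second, in the main case I would invoke Proposition~\ref{p:bdyosc} (resp. Proposition~\ref{p:dosc} in the ${\cal D}\cap{\cal B}$ case) to assume $K(h_1)\subsetneqq K(h_2)$, so that $(h_1,h_2)$ satisfies the open set condition with $U=(\mathrm{int}(K(h_2)))\setminus K(h_1)$, and $J(G)\subset\overline U$. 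Third, I would apply Lemma~\ref{l:oscncp} with $a=1$, $m=2$, $\tau=\tau_{h_1,h_2,p}$: its hypotheses hold ($M_\tau(\varphi)=\varphi$, $\varphi$ constant on each Fatou component, OSC with $U$), so the conclusion is that either $A=J(G)$ — which is exactly what we want — or $A\subset\partial U$. Fourth, I must rule out $A\subset\partial U$ (equivalently, show $A$ is not confined to the boundary of $U$). For this I would use that $\varphi=1$ on $F_\infty(G)=\CCI\setminus K(h_2)$ (Proposition~\ref{p:jh1jh2jk}(3) in the boundary case, or directly in the ${\cal D}$ case) and $\varphi=0$ on $\mathrm{int}(K(h_1))\subset\hat K(G)$, so $\varphi$ takes both values $0$ and $1$; since $J(G)=h_1^{-1}(J(G))\cup h_2^{-1}(J(G))$ and $J(G)\subset\overline U = K(h_2)\setminus\mathrm{int}(K(h_1))$ (Lemma~\ref{l:ovintk}), and $J(G)$ is perfect, there are points of $J(G)$ arbitrarily close to both $K(h_1)$-side and the $F_\infty$-side; near such a point $\varphi$ cannot be constant, so $A\ne\emptyset$ and $A\not\subset\partial U$ unless $A=J(G)$. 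Then Lemma~\ref{l:oscncp} forces $A=J(G)$.

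**The main obstacle.** The delicate point is the fourth step: excluding the alternative $A\subset\partial U$ in Lemma~\ref{l:oscncp}. A priori $\varphi$ could be locally constant on $U$ itself (not just on Fatou components of $G$), in which case $A$ would miss parts of $J(G)$ lying in $U$. The key realization is that $A\subset\partial U$ together with $\varphi$ locally constant off $A$ would make $\varphi$ locally constant on the connected open set $U$ — but $U$ is connected (it is the region between $K(h_1)$ and $K(h_2)$, and $\CCI\setminus K(h_2)$, $\mathrm{int}(K(h_1))$ are connected by $(h_1,h_2)\in{\cal B}$), and $\overline U\supset J(G)$ meets both $\{\varphi=0\}$ and $\{\varphi=1\}$, so a single constant value on $U$ is impossible. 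Hence the alternative $A\subset\partial U$ is vacuous and $A=J(G)$. I would write this last reductio carefully, as it is where the argument genuinely uses connectivity of $U$ and the backward self-similarity $J(G)=\bigcup_j h_j^{-1}(J(G))$ together with the disjointness coming from the open set condition. For the ${\cal D}\cap{\cal B}$ case without hyperbolicity, the same scheme applies using Lemma~\ref{l:bdjconti} and Proposition~\ref{p:dosc} in place of Proposition~\ref{p:bdyosc}, noting $\hat K(G)$ has nonempty interior so $\varphi\equiv 0$ on a nonempty open set.
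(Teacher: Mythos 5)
Your overall scheme is the right one and largely parallels the paper's: reduce via Proposition~\ref{p:bdyosc} to the case $K(h_1)\subsetneqq K(h_2)$ with OSC on $U=\mbox{int}(K(h_2))\setminus K(h_1)$, apply Lemma~\ref{l:oscncp} to get the dichotomy ``$A=J(G)$ or $A\subset\partial U$,'' and then rule out the second alternative. But your argument for ruling out $A\subset\partial U$ has a genuine gap. You write: ``$\overline{U}\supset J(G)$ meets both $\{\varphi=0\}$ and $\{\varphi=1\}$, so a single constant value on $U$ is impossible.'' First, the claim that $\overline{U}$ meets $\{\varphi=1\}$ is not justified: $\varphi\equiv 1$ on $F_\infty(G)=\CCI\setminus K(h_2)$, which is disjoint from $\overline{U}=K(h_2)\setminus\mbox{int}(K(h_1))$, and on $J(h_2)\subset\partial U$ the value of $\varphi$ is not determined a priori. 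Second, and more fundamentally, even if $\overline{U}$ did contain points where $\varphi=0$ and $\varphi=1$, this would not constrain the constant value of $\varphi$ on the open set $U$, because $\varphi$ is in general \emph{discontinuous} in exactly this case: Theorem~\ref{t:spaceth1h2}-\ref{spacemainth9} shows $T(h_1,h_2,p,\cdot)$ fails to be continuous on $\CCI$ whenever $J(h_1)\cap J(h_2)\neq\emptyset$, which is precisely the hard subcase. The sets $\{\varphi=0\}$ and (if nonempty) $\{\varphi=1\}$ that you invoke lie inside $\partial U$, and a discontinuity of $\varphi$ along $\partial U$ does not perturb a constant on $U$.

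There are two ways to close the gap, and they differ from what you wrote. The paper's route is constructive: using Lemma~\ref{l:dh1-1log} together with Proposition~\ref{p:jh1jh2jk}-(1) (nowhere-density of $J(h_1)\cap J(h_2)$ in $J(h_2)$), it exhibits a point $z_0\in h_1^{-1}(J(h_2))\setminus(h_2^{-1}(J(h_1))\cup J(h_1))$, verifies $z_0\in U$, then shows (via $h_2(z_0)\in\mbox{int}(K(h_1))\subset F(G)$, Lemma~\ref{l:intT1}, \cite[Lemma 5.24]{Splms10} and the identity $\varphi=p\varphi\circ h_1+(1-p)\varphi\circ h_2$) that $\varphi$ is non-constant near $z_0$, so $z_0\in A\cap U$. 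Alternatively, your reductio \emph{can} be salvaged, but by the self-consistency equation and OSC rather than by continuity: if $\varphi\equiv c$ on $U$, take any $z\in h_1^{-1}(U)\neq\emptyset$; OSC forces $h_2(z)\notin U$, while $h_2(z)\in\mbox{int}(K(h_2))$, so $h_2(z)\in K(h_1)=\hat K(G)$ and $\varphi(h_2(z))=0$; the equation $c=pc+(1-p)\cdot 0$ gives $c=0$; yet for $z\in U\subset\CC\setminus K(h_1)$ there exists $N$ with $h_1^N(z)\in F_\infty(G)$, so $\varphi(z)\geq p^N>0$, a contradiction. Either of these fills the hole, but neither is the argument you gave. (Also note that the paper's treatment of the ${\cal D}\cap{\cal B}$ case simply cites \cite[Lemmas 3.75, 3.73, Theorem 3.22]{Splms10} rather than re-running Lemma~\ref{l:oscncp}, and that the connectedness of $U$, which your reductio uses, deserves a sentence of justification when $K(h_1)$ touches $J(h_2)$.)
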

\begin{proof}
Let $(h_{1},h_{2})\in ({\cal D}\cap {\cal B})\cup ((\partial {\cal C})\cap {\cal B}\cap {\cal H})$ 
and let $0<p<1.$ Let 
$\varphi (z) =T(h_{1},h_{2},p,z).$ 
Let $$A=\{ z_{0}\in \CCI \mid \mbox{ for each neighborhood } V \mbox{ of } z_{0}, 
\varphi |_{V} \mbox{ is not constant}\} .$$  
Let $G=\langle h_{1},h_{2}\rangle .$ 
Since $\emptyset \neq P^{\ast }(G)\subset \hat{K}(G)$, 
$\varphi \not \equiv 1.$ Hence $\varphi :\CCI \rightarrow [0,1]$ is not constant.  
If $(h_{1},h_{2})\in {\cal D}\cap {\cal B}$, 
then by Lemma~\ref{l:bdjconti} and  \cite[Lemmas 3.75, 3.73, Theorem 3.22]{Splms10}, it follows 
that $J(G)=A.$ 

 We now suppose $(h_{1},h_{2})\in (\partial {\cal C})\cap {\cal B}\cap {\cal H}.$ 
 We consider the following two cases. Case 1. $(h_{1},h_{2})\in {\cal Q}. $ 
 Case 2. $(h_{1},h_{2})\not\in {\cal Q}.$ 
 If we have case 1, then $\hat{K}(G)=K(h_{1})=K(h_{2})$ and $J(G)=J(h_{1})=J(h_{2})$. 
 Therefore $A=J(G)=J(h_{1})=J(h_{2}).$ Thus, the remaining case is Case 2. 
 Let $(h_{1},h_{2})\in ((\partial {\cal C})\cap {\cal B}\cap {\cal H})\setminus {\cal Q}.$ 
 By Proposition~\ref{p:bdyosc}, we may assume that 
 $K(h_{1})\subset K(h_{2})$. Then by Proposition~\ref{p:bdyosc} again, we have 
\begin{equation} 
\label{eq:kh1sh1-1}
K(h_{1})\subset h_{1}^{-1}(K(h_{2}))\subset h_{2}^{-1}(K(h_{1}))\subset K(h_{2})
\end{equation}
 and  
 $(h_{1},h_{2})$ satisfies the open set condition with 
 $U:=(\mbox{int}(K(h_{2})))\setminus K(h_{1}).$ 
 Moreover, by Lemma~\ref{l:dh1-1log}, we have 
 $h_{1}^{-1}(K(h_{2}))\neq h_{2}^{-1}(K(h_{1})).$ 
 Therefore $h_{1}^{-1}(J(h_{2}))\setminus h_{2}^{-1}(J(h_{1}))\neq \emptyset .$ 
Moreover, by Proposition~\ref{p:jh1jh2jk} (1), that $K(h_{1})\subset K(h_{2})$ 
and that $(h_{1},h_{2})\not\in {\cal Q}$,  we obtain that 
$J(h_{2})\cap J(h_{1})$ is a nowhere dense subset of $J(h_{2}).$ 
It follows that $(h_{1}^{-1}(J(h_{2})))\setminus (h_{2}^{-1}(J(h_{1}))\cup J(h_{1}))\neq \emptyset .$   
 Let $z_{0}\in h_{1}^{-1}(J(h_{2}))\setminus (h_{2}^{-1}(J(h_{1}))\cup J(h_{1}))$ be a point. 
 Since $h_{1}^{-1}(K(h_{2}))\subset h_{2}^{-1}(K(h_{1}))$ and 
 $z_{0}\not\in h_{2}^{-1}(J(h_{1}))$, 
we obtain $z_{0}\in h_{2}^{-1}(\mbox{int}(K(h_{1}))).$ 
Hence 
\begin{equation}
\label{eq:h2z0in}
h_{2}(z_{0})\in \mbox{int}(K(h_{1})).
\end{equation} 
Since 
$K(h_{1})\subset h_{1}^{-1}(K(h_{2}))\subset h_{2}^{-1}(K(h_{1}))\subset K(h_{2})$, 
we obtain $h_{i}(K(h_{1}))\subset K(h_{1})$ for each $i=1,2.$ 
Therefore $\mbox{int}(K(h_{1}))\subset F(G).$ Combining this with (\ref{eq:h2z0in}), 
we obtain 
\begin{equation}
\label{eq:h2z0fg}
h_{2}(z_{0})\in F(G). 
\end{equation}
We now show the following claim. 

\noindent Claim 1. $z_{0}\in U.$ 

To prove this claim, since $z_{0}\in h_{2}^{-1}(J(h_{1}))$, 
the fact $K(h_{1})\subset h_{1}^{-1}(K(h_{2}))\subset h_{2}^{-1}(K(h_{1}))\subset K(h_{2})$ 
implies that $z_{0}\in K(h_{2}).$ 
If $z_{0}\in J(h_{2})$, then (\ref{eq:h2z0in}) implies 
that $h_{2}(z_{0})\in J(h_{2})\cap \mbox{int}(K(h_{1})).$ However, this contradicts 
$K(h_{1})\subset K(h_{2}).$ Therefore, 
we must have that 
\begin{equation}
\label{eq:z0inint}
z_{0}\in \mbox{int}(K(h_{2})).
\end{equation}
If $z_{0}\in K(h_{1})$, then 
the fact $z_{0}\in h_{1}^{-1}(J(h_{2}))\setminus (h_{2}^{-1}(J(h_{1}))\cup J(h_{1}))$ 
implies $z_{0}\not\in J(h_{1}).$ Hence $z_{0}\in \mbox{int}(K(h_{1})).$ 
Therefore $z_{0}\in (\mbox{int}(K(h_{1})))\cap h_{1}^{-1}(J(h_{2})).$ 
However, this contradicts $K(h_{1})\subset h_{1}^{-1}(K(h_{2})).$ 
Thus, we must have that 
\begin{equation}
\label{eq:z0notink}
z_{0}\not\in K(h_{1}). 
\end{equation} 
By (\ref{eq:z0inint})(\ref{eq:z0notink}), we obtain that $z_{0}\in U.$ Thus claim 1 holds. 

Since $z_{0}\in h_{1}^{-1}(J(h_{2}))$, we have 
$h_{1}(z_{0})\in J(h_{2}).$ Moreover, by (\ref{eq:kh1sh1-1}), 
we have $h_{1}(F_{\infty }(h_{2}))\subset F_{\infty }(h_{2})$. 
Therefore for each $g\in G$, we have $g(F_{\infty }(h_{2}))\subset F_{\infty }(h_{2}).$ 
It follows that $F_{\infty }(h_{2})\subset F(G)$. Therefore 
$F_{\infty}(h_{2})=F_{\infty }(G).$ Thus $h_{1}(z_{0})\in \overline{F_{\infty }(G)}\cap J(G).$ 
We now prove the following claim. 

\noindent Claim 2. For each neighborhood $W$ of $h_{1}(z_{0})$ in $\CCI $, the function 
$\varphi |_{W}:W\rightarrow [0,1] $ is not constant. 

To prove claim 2, suppose that there exists a neighborhood $W$ of $h_{1}(z_{0})$ in $\CCI $
and a constant $c\in [0,1]$ such that $\varphi |_{W}\equiv c.$ 
Since $h_{1}(z_{0})\in \overline{F_{\infty} (G)}$, \cite[Lemma 5.24]{Splms10} implies that 
$c=1.$ Thus $h_{1}(z_{0})\in \mbox{int}(\varphi ^{-1}(1)).$ Combining this with Lemma~\ref{l:intT1}, 
we obtain that $h_{1}(z_{0})\in F(G).$ 
However, this contradicts $h_{1}(z_{0})\in J(G).$ Therefore, claim 2 holds. 

By the equation $\varphi (z)=p\varphi (h_{1}(z))+(1-p)\varphi (h_{2}(z))$, 
the fact $\varphi :F(G)\rightarrow [0,1]$ is locally constant (see \cite[Lemma 5.27]{Splms10}), 
(\ref{eq:h2z0fg}), and claim 2, we obtain that $z_{0}\in A.$ From this,  claim 1, Lemma~\ref{l:oscncp} 
and the fact that $(h_{1},h_{2})$ satisfies the open set condition with $U$,  
it follows that $A=J(G).$ 

Thus statement~\ref{spacemainth8} in Theorem~\ref{t:spaceth1h2} holds. 
\end{proof}
We now prove statement~\ref{spacemainth9} in Theorem~\ref{t:spaceth1h2}. 
\begin{lem}
\label{l:pfmainth9}
Statement~\ref{spacemainth9} in Theorem~\ref{t:spaceth1h2} holds. 
\end{lem}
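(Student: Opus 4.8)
The plan is to prove the equivalence by treating the two implications separately, the forward one (non-emptiness of $J(h_{1})\cap J(h_{2})$ forces discontinuity) being elementary and the backward one carrying the substance.

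First I would show that if $J(h_{1})\cap J(h_{2})\neq\emptyset$ then $z\mapsto T(h_{1},h_{2},p,z)$ is discontinuous. By Proposition~\ref{p:bdyosc} I may assume $K(h_{1})\subset K(h_{2})$, and then Proposition~\ref{p:jh1jh2jk} gives $\hat{K}(G)=K(h_{1})$, $F_{\infty}(G)=\CCI\setminus K(h_{2})$, and $J_{\ker}(G)=J(h_{1})\cap J(h_{2})\neq\emptyset$ (this also covers the case $(h_{1},h_{2})\in{\cal Q}$, where $K(h_{1})=K(h_{2})$). Fix a point $z_{0}\in J(h_{1})\cap J(h_{2})$. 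Since $z_{0}\in J(h_{1})\subset K(h_{1})=\hat{K}(G)$ and $\hat{K}(G)$ is forward invariant under $G$ and bounded, every random orbit of $z_{0}$ stays bounded in $\CC$, so $T(h_{1},h_{2},p,z_{0})=0$. On the other hand $z_{0}\in J(h_{2})=\partial K(h_{2})\subset\overline{\CCI\setminus K(h_{2})}=\overline{F_{\infty}(G)}$, while $z_{0}\in J(G)$ forces $z_{0}\notin F_{\infty}(G)$; so there are $z_{n}\to z_{0}$ with $z_{n}\in F_{\infty}(G)$, and since $T(h_{1},h_{2},p,\cdot)\equiv1$ on $F_{\infty}(G)$ (standard; cf. the use of \cite[Lemma~5.24]{Splms10} in the proof of Lemma~\ref{l:pfmainth8}), we get $T(h_{1},h_{2},p,z_{n})=1\not\to 0=T(h_{1},h_{2},p,z_{0})$, i.e. $T(h_{1},h_{2},p,\cdot)$ is not continuous at $z_{0}$.

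For the converse, assume $J(h_{1})\cap J(h_{2})=\emptyset$; by Proposition~\ref{p:jh1jh2jk}(1) this says exactly that $J_{\ker}(G)=\emptyset$. The plan is to deduce that $\{h_{1},h_{2}\}$ is mean stable in the sense of Definition~\ref{d:as}. Indeed, $G=\langle h_{1},h_{2}\rangle$ is generated by the compact set $\{h_{1},h_{2}\}$ (so $\infty\in F(G)$), is postcritically bounded, and is hyperbolic; hence the two minimal sets $\{\infty\}$ and $L\subset\mbox{int}(K(h_{1}))$ of $(G,\CCI)$ are attracting and lie in $F(G)$, and because $J_{\ker}(G)=\emptyset$ each point of $\CCI$ is carried by some element of $G$ into a fixed neighbourhood of $\{\infty\}\cup L$; so $\tau_{h_{1},h_{2},p}$ is mean stable. (This mean stability is also contained in Theorem~\ref{t:timportant}-\ref{spacemainth10}(i) applied at $(h_{1},h_{2},p)$.) Then, by the cooperation-principle results for mean stable systems in \cite{Splms10, Scp}, $z\mapsto T_{\infty,\tau_{h_{1},h_{2},p}}(z)=T(h_{1},h_{2},p,z)$ is $\alpha$-H\"{o}lder continuous on $\CCI$ for some $\alpha\in(0,1)$, in particular continuous.

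The hard part will be the verification of mean stability of $\{h_{1},h_{2}\}$ in the converse direction: producing a \emph{single} pair of nested open sets $\overline{V}\subset U$ with $\overline{U}\subset F(G)$ and a uniform $n\in\NN$ such that $\gamma_{n}\circ\cdots\circ\gamma_{1}(\overline{U})\subset V$ for every $\gamma\in\{h_{1},h_{2}\}^{\NN}$, while also knowing that $\cup_{g\in G}\{g(z)\}$ meets $U$ for every $z\in\CCI$ — all from $J_{\ker}(G)=\emptyset$, hyperbolicity, and postcritical boundedness. The ingredients are the contracting behaviour of the system near $\infty$ and near the interior minimal set, plus the use of $J_{\ker}(G)=\emptyset$ to drive an arbitrary point into the union of those two attracting neighbourhoods. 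Once mean stability is in hand, the continuity (indeed H\"{o}lder continuity) of $T(h_{1},h_{2},p,\cdot)$ is a direct appeal to \cite{Splms10, Scp}, and the forward direction uses nothing beyond Propositions~\ref{p:bdyosc} and~\ref{p:jh1jh2jk}.
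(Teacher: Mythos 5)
Your forward direction matches the paper's essentially point for point: Proposition~\ref{p:jh1jh2jk} gives $\hat{K}(G)=K(h_{1})$, $F_{\infty}(G)=\CCI\setminus K(h_{2})$, hence $J(h_{1})\cap J(h_{2})\subset\overline{F_{\infty}(G)}\cap\hat{K}(G)$, and discontinuity follows from $T\equiv 1$ on $F_{\infty}(G)$ and $T\equiv 0$ on $\hat{K}(G)$.

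For the converse you take a genuinely different and longer route than the paper. Both of you begin from Proposition~\ref{p:jh1jh2jk}(1), which translates $J(h_{1})\cap J(h_{2})=\emptyset$ into $J_{\ker}(G)=\emptyset$. At that point the paper simply cites \cite[Theorem~3.22]{Splms10}, which gives continuity of $T(h_{1},h_{2},p,\cdot)$ directly from the vanishing of the kernel Julia set (together with the standing compactness hypotheses), and stops. You instead pass through mean stability of $\{h_{1},h_{2}\}$ --- the implication ``$J_{\ker}(G)=\emptyset$ and $G$ hyperbolic $\Rightarrow$ mean stable'', which is \cite[Proposition~3.63]{Splms10} and is what Lemma~\ref{l:pfmainth10} uses --- and then invoke the cooperation-principle machinery to get H\"older continuity. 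This is valid and proves more (H\"older rather than just $C^{0}$ continuity), but it leans on the hyperbolicity of $G$, whereas the paper's citation of \cite[Theorem~3.22]{Splms10} needs only $J_{\ker}(G)=\emptyset$. The step you flag as ``the hard part'' (constructing $U$, $V$, $n$) is in fact carried by that cited proposition of \cite{Splms10}, so you do not need to establish it from scratch; but the paper avoids that detour entirely by using the more elementary continuity criterion.
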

\begin{proof}
Let $(h_{1},h_{2})\in (\partial {\cal C})\cap {\cal B}\cap {\cal H})$ and let $0<p<1.$ 
Let $G=\langle h_{1},h_{2}\rangle .$ 
Let $\varphi (z)=T(h_{1},h_{2},p,z).$ 
Suppose $J(h_{1})\cap J(h_{2})=\emptyset .$ 
Then by Proposition~\ref{p:jh1jh2jk}, we obtain that $J_{\ker }(G)=\emptyset .$ 
By \cite[Theorem 3.22]{Splms10}, it follows that $\varphi :\CCI \rightarrow \RR $ is continuous on $\CCI .$ 

We now suppose that $J(h_{1})\cap J(h_{2})\neq \emptyset .$ 
By Proposition~\ref{p:jh1jh2jk} (3), we have $J(h_{1})\cap J(h_{2})\subset \overline{F_{\infty }(G)}\cap 
\hat{K}(G).$  Since $\varphi |_{F_{\infty }(G)}\equiv 1$ (see \cite[Lemma 5.24]{Splms10}) and 
$\varphi |_{\hat{K}(G)}\equiv 0$, it follows that $\varphi :\CCI \rightarrow \RR $ 
is not continuous at each point in $J(h_{1})\cap J(h_{2}).$ Thus 
statement~\ref{spacemainth9} in Theorem~\ref{t:spaceth1h2} holds.
\end{proof} 
\vspace{-5mm}
\subsection{Proof of Theorem~\ref{t:timportant}}
\vspace{-2mm} 
In this subsection, we prove Theorem~\ref{t:timportant}. 
We also show a result on the Fatou components (Theorem~\ref{t:infcomp}) and 
a result in which we do not assume hyperbolicity (Theorem~\ref{t:oscanal}).  

\begin{df}
For an element $\tau \in {\frak M}_{1}(\Rat)$, 
we denote by $U_{\tau }$ the space of all linear combinations of unitary eigenvectors of 
$M_{\tau }:C(\CCI )\rightarrow C(\CCI )$, where a non-zero element $\varphi \in C(\CCI)$ is said to be 
a unitary eigenvector of $M_{\tau }$ if there exists an element $a\in \CC $ with $|a|=1$ 
such that $M_{\tau }(\varphi )=a\varphi .$ 
\end{df}

\begin{lem}
\label{l:pfmainth10}
Statement~\ref{spacemainth10} in Theorem~\ref{t:timportant} holds. 
\end{lem}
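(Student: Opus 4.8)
The plan is to establish Statement~\ref{spacemainth10} in Theorem~\ref{t:timportant} by feeding the hyperbolicity and mean-stability machinery into the general results on random complex dynamics from \cite{Splms10, Scp}. First I would note that by Lemma~\ref{l:pfmainthosc} (via Proposition~\ref{p:bdyosc}) any $(h_1,h_2)\in ({\cal D}\cap {\cal B}\cap {\cal H})\cup(((\partial {\cal C})\cap {\cal B}\cap {\cal H})\setminus {\cal I})$ satisfies the open set condition, and by Proposition~\ref{p:bdyosc} we may assume $K(h_1)\subset K(h_2)$ with $J(h_2)$ a quasicircle and $h_2^{-1}(K(h_1))\subset \mathrm{int}(K(h_2))$; in the $(\partial{\cal C})\setminus{\cal I}$ case we additionally have $J(h_1)\cap J(h_2)=\emptyset$, hence $J_{\ker}(G)=\emptyset$ by Proposition~\ref{p:jh1jh2jk}. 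The key geometric input is that, using $K(h_1)\subset\mathrm{int}(h_2^{-1}(K(h_1)))$ and $\mathrm{int}(K(h_1))\subset F(G)$ together with $F_\infty(G)=\widehat{\CC}\setminus K(h_2)$ and $\widehat{K}(G)=K(h_1)$ (Proposition~\ref{p:jh1jh2jk}(3)), one checks directly that $\{h_1,h_2\}$ is mean stable: take $U$ a neighborhood of $K(h_1)$ inside $\mathrm{int}(h_2^{-1}(K(h_1)))$ and note every forward orbit either lands in $U$ or escapes to $\infty$, the latter being excluded once we observe $\CC\setminus K(h_2)$ is the attracting basin of $\{\infty\}$; for (i) this should be an open condition, proved by perturbing and using continuity of $J(h_j)$ and $K(h_j)$ in the Hausdorff metric, so a whole neighborhood $V$ of $(h_1,h_2)$ in ${\cal B}\cap{\cal H}$ works and also $\tau_{g_1,g_2,q}$ is mean stable for all $q\in(0,1)$.

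Next I would obtain (iii) and (iv) from the mean-stability structure: the two minimal sets for $(\langle g_1,g_2\rangle,\widehat{\CC})$ are forced to be $\{\infty\}$ and a unique $L_{g_1,g_2}\subset\mathrm{int}(\widehat{K}(g_1,g_2))$, which follows because every minimal set is contained in $F(G)$ for a mean stable $G$ (\cite[Lemma 3.x]{Splms10}-type statements) and the attracting regions near $\infty$ and near $K(g_1)$ account for all of $\widehat{\CC}$; (iv) is then the standard almost-sure convergence to the union of minimal sets for mean stable systems. Statements (ii), (v), (vi) come from \cite[Theorems 1.x, 3.x]{Scp}: for a mean stable $\tau$ with $J_{\ker}(G_\tau)=\emptyset$, the transition operator $M_{\tau}$ has the property that $M_\tau^n\varphi$ converges uniformly to a combination of the ``probability of tending to $\infty$'' function and an integral against the invariant measure $\nu$ on $\widehat{K}(G)$, and moreover $T(\cdot)$ is $\alpha$-Hölder for a uniform $\alpha$ on a neighborhood, with continuous dependence on parameters; I would simply cite these after verifying the hypotheses ($J_{\ker}=\emptyset$, which in the ${\cal D}\cap{\cal B}$ case follows from Lemma~\ref{l:bdjconti} since $h_1^{-1}(J(G))\cap h_2^{-1}(J(G))=\emptyset$, and in the $(\partial{\cal C})\setminus{\cal I}$ case from Proposition~\ref{p:jh1jh2jk}(1)). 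For (vii) and (viii) I would invoke the perturbation-theory-for-linear-operators results of \cite{Scp}: real-analyticity of $q\mapsto T(g_1,g_2,q,\cdot)$ in the Hölder norm, the explicit recursion $\psi_{n+1}=\sum_{j=0}^\infty M^j((n+1)(\psi_n\circ g_1-\psi_n\circ g_2))$ characterizing the derivatives, and the boundary conditions $\varphi|_{\widehat{K}(G)\cup F_\infty(G)}\equiv 0$, all of which are formal consequences of differentiating the fixed-point equation $M_{g_1,g_2,q}\varphi=\varphi$ in $q$ and using spectral gap of $M_{g_1,g_2,q}|_{C^\zeta}$ restricted to the complement of the unitary eigenspace.

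I would organize the write-up as: (1) reduce to the normalized situation via Proposition~\ref{p:bdyosc}; (2) prove mean stability and its openness, giving (i); (3) identify the minimal sets and prove (iii), (iv); (4) apply the cited results from \cite{Splms10, Scp} to get (ii), (v), (vi); (5) apply the perturbation theory from \cite{Scp} to get (vii), (viii), noting the recursion follows by induction on $n$ by differentiating the functional equation and that H\"older continuity and local constancy on $F(G)$ are built into those theorems. The main obstacle I expect is step (2)–(3): verifying rigorously that mean stability is an \emph{open} condition at the boundary points $(\partial{\cal C})\cap{\cal B}\cap{\cal H}$ (where $J(h_1)$ and $J(h_2)$ may be only barely disjoint, so one must control the Hausdorff-continuity of $K(g_j)$ and the attracting basins simultaneously and uniformly), and pinning down that there are exactly two minimal sets with $L_{g_1,g_2}\subset\mathrm{int}(\widehat{K})$ rather than possibly more complicated invariant compacta — this requires carefully using $\mathrm{int}(K(h_1))\subset F(G)$ plus the quasicircle structure of $J(h_2)$ from Proposition~\ref{p:bdyosc}(c), and the fact that $h_2$ has a unique attracting fixed point in $\mathrm{int}(K(h_2))$, exactly as exploited in the proof of Proposition~\ref{p:jh1jh2jk}. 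Everything downstream is then a citation-and-bookkeeping exercise against \cite{Splms10} and \cite{Scp}.
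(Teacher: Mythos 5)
Your overall architecture matches the paper's — compute $J_{\ker}(G)=\emptyset$ (via Lemma~\ref{l:bdjconti} in the disconnected case and Proposition~\ref{p:jh1jh2jk} in the boundary case), identify the minimal sets using the attracting fixed point of $h_2$ in $\mathrm{int}(K(h_2))$, then harvest items (ii), (iv)--(vii) from \cite{Splms10, Scp}. But there is a real gap in item (i), and you correctly flag it yourself: you propose to establish mean stability of $\{h_1,h_2\}$ by a direct geometric argument and then to show openness ``by perturbing and using continuity of $J(h_j)$ and $K(h_j)$.'' That plan, as written, does not work cleanly: the definition of mean stability requires open sets $V\subset\overline V\subset U\subset\overline U\subset F(G)$ with $\gamma_n\cdots\gamma_1(\overline U)\subset V$ for \emph{every} word of length $n$, \emph{together} with the requirement that every point of $\CCI$ is mapped into $U$ by some element of $G$. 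Your candidate $U\subset\mathrm{int}(h_2^{-1}(K(h_1)))$ addresses neither the escape behavior near $\infty$ (so condition (3) of Definition~\ref{d:as} would fail without also adjoining a neighborhood of $\infty$) nor the uniformity needed for a perturbation argument near boundary parameters where $J(h_1)$ and $J(h_2)$ are only barely disjoint. The paper avoids all of this by citing \cite[Proposition 3.63]{Splms10}, which gives directly that $J_{\ker}(G)=\emptyset$ together with hyperbolicity implies the existence of a neighborhood $V$ on which $\{g_1,g_2\}$ and $\tau_{g_1,g_2,q}$ are mean stable for all $q\in(0,1)$. This single citation is what makes item (i) — and its openness — go through; you should use it rather than attempt a from-scratch proof.

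A second, smaller underestimate concerns item (viii). You describe the recursion for $\psi_{n+1}$ as ``a formal consequence of differentiating the fixed-point equation,'' but the substantive point is not the derivation of the equation — it is the \emph{uniqueness} of its solution in $C(\CCI)$ and the validity of the series $\sum_{j\ge 0}M^j_{g_1,g_2,q}(\phi_n)$. The paper establishes this by invoking the direct decomposition $C(\CCI)=U_{\tau}\oplus\{\psi: M^n\psi\to 0\}$ from \cite[Theorem 3.15]{Splms10}, the spectral gap estimate from \cite[Theorem 3.30]{Scp}, and the vanishing of the projection $\pi_{\tau}(\phi_n)=\pi_\tau(\varphi)=0$ (using that both vanish on $\{\infty\}\cup\hat K(G)$, hence on the minimal sets). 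Once $\pi_\tau(\varphi)=0$ you get $M^k(\varphi)\to 0$, and passing to the limit in $(I-M^k)(\varphi)=\sum_{j<k}M^j(\phi_n)$ gives the series representation and uniqueness. Your sketch cites spectral gap but does not isolate the role of the projection, which is what makes $M^k(\varphi)\to 0$ and hence uniqueness fall out. These two points aside, the remaining items are indeed citation-and-bookkeeping against \cite[Theorem 3.15, Lemma 3.24, Proposition 3.26]{Splms10} and \cite[Remark 5.11, Theorems 3.24, 3.32]{Scp}, exactly as you outline.
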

\begin{proof}
Let 
$(h_{1},h_{2})\in ({\cal D}\cap {\cal B}\cap {\cal H})\cup 
(((\partial {\cal C})\cap {\cal B}\cap {\cal H})
\setminus {\cal I})$. Let $p\in (0,1).$ 
By Lemma~\ref{l:pfmainth1} and Proposition~\ref{p:jh1jh2jk}, 
we obtain $J_{\ker }(h_{1},h_{2})=\emptyset .$ From this,  that $\langle h_{1},h_{2}\rangle $ is hyperbolic, and 
\cite[Proposition 3.63]{Splms10}, it follows that there exists an open neighborhood $V$ of $(h_{1},h_{2})$ in 
${\cal B}\cap {\cal H}$ 
such that 
for each $(g_{1},g_{2},q)\in V\times (0,1)$, we have that $\{ g_{1},g_{2}\} $ is mean stable and 
$\tau _{g_{1},g_{2},q}$ is mean stable.  
By \cite[Remark 5.11]{Scp}, taking a small open neighborhood $W$ of $p$, 
and shrinking $V$ if necessary, 
we obtain that there exists a number $\alpha \in (0,1)$ such that 
for each $(g_{1},g_{2},q)\in V\times W$, $T(g_{1},g_{2},q,\cdot )\in C^{\alpha }(\CCI ).$  

We now let $(g_{1},g_{2},q)\in V\times W.$ 
Since $\emptyset \neq \hat{K}(g_{1},g_{2})$ and 
$\langle g_{1},g_{2}\rangle (\hat{K}(g_{1},g_{2}))\subset \hat{K}(g_{1},g_{2})$,  
Zorn's lemma implies that there exists a minimal set $L_{0}$ for $(\langle g_{1},g_{2}\rangle ,\CCI )$ 
with $L_{0}\subset \hat{K}(g_{1},g_{2}).$ 
Let $L$ be a minimal set for $(\langle g_{1},g_{2}\rangle, \CCI )$ with $L\neq \{ \infty \} .$ 
Then $L\subset \hat{K}(g_{1},g_{2}).$ By Proposition~\ref{p:bdyosc}, we may assume that 
$K(h_{1})\subset K(h_{2})$. Then, by Proposition~\ref{p:bdyosc} again, we obtain that 
$J(h_{2})$ is a quasicircle and $h_{2}$ has an attracting fixed point $c$ in $K(h_{2}).$ 
Shrinking $V$ if necessary, we obtain that $J(g_{2})$ is a quasicircle and $g_{2}$ has an attracting fixed point 
$c'$ in $K(g_{2}).$ 
Moreover, since $\{ g_{1},g_{2}\} $ is mean stable, $L\subset F(g_{1},g_{2}).$ 
Therefore $L\subset \mbox{int}(K(g_{2})).$ It follows that for each $z\in L$, 
$g_{2}^{n}(z)\rightarrow c'$ as $n\rightarrow \infty .$ In particular, 
$c'\in L.$ Thus, there exists a unique minimal set $L_{g_{1},g_{2}}$ for 
$(\langle g_{1},g_{2}\rangle , \CCI )$ with $L_{g_{1},g_{2}}\subset \CC .$ 
Hence the set of all minimal sets for $(\langle g_{1},g_{2}\rangle ,\CCI )$ is 
$\{ \{ \infty \} , L_{g_{1},g_{2}}\} .$  
Moreover, from the above argument we have $L_{g_{1},g_{2}}\subset \mbox{int}(\hat{K}(g_{1},g_{2}))\subset F(g_{1},g_{2}).$   
By \cite[Theorem 3.15-7]{Splms10}, item (iv) of statement~\ref{spacemainth10} in Theorem~\ref{t:timportant} follows. 
Since $L_{g_{1},g_{2}}$ contains a fixed point $g_{2}$, \cite[Theorem 3.15-12]{Splms10} implies that 
 the number $r_{L}$ in \cite[Theorem 3.15-8]{Splms10} for $L=L_{g_{1},g_{2}}$ is equal to $1.$ 
Therefore, by \cite[Theorem 3.15-1, 13]{Splms10}, there exist   
two functions $\varphi _{1},\varphi _{2}\in C(\CCI ) $ with $M_{g_{1},g_{2},q}(\varphi _{i})=\varphi _{i}$ 
and a Borel probability measure $\nu =\nu _{g_{1},g_{2},q}$ on $L_{g_{1},g_{2}}$ with $M_{g_{1},g_{2},q}^{\ast }(\nu )=\nu $ 
such that for each $\varphi \in C(\CCI )$, 
$$M_{g_{1},g_{2},q}^{n}(\varphi )\rightarrow \varphi (\infty )\cdot \varphi _{1} +(\int \varphi d\nu )\cdot \varphi _{2}  
\mbox{ in } C(\CCI ) \mbox{ as } n\rightarrow \infty ,$$ 
supp$\,\nu =L_{g_{1},g_{2}}$, 
$\varphi _{1}(\infty )=1, \varphi _{1}|_{L_{g_{1},g_{2}}}\equiv 0$, 
$\varphi _{2}(\infty )=0$, and $\varphi _{2}|_{L_{g_{1},g_{2}}}\equiv 1$.  
Combining these with item (iv), we see that 
$\varphi _{1}(z)=T(g_{1},g_{2},q,z)$ and $\varphi _{2}(z)=1-T(g_{1},g_{2},q,z).$ 
From these arguments item (v) of statement~\ref{spacemainth10} in Theorem~\ref{t:timportant} follows.     
By \cite[Theorem 3.24]{Scp}, shrinking $V$ and $W$ if necessary, item (vi) of  statement~\ref{spacemainth10} in Theorem~\ref{t:timportant} 
holds.  By \cite[Theorem 3.32]{Scp}, item (vii) of  statement~\ref{spacemainth10} in Theorem~\ref{t:timportant} holds. 

 We now prove item (viii). Let $(g_{1},g_{2})\in V$ and let $G=\langle g_{1},g_{2}\rangle .$ 
 Since the statement in item (vii) holds for arbitrary $p\in (0,1)$, 
 we obtain that the function $q\mapsto T(h_{1},h_{2},q,z)$ is real-analytic on $(0,1)$ for any $z\in \CCI $,  
 that the function  
$(q,z)\mapsto (\partial ^{n}T/\partial q^{n})
(g_{1},g_{2},q,z)$ is continuous on $(0,1)\times \CCI $ for any $n\in \NN \cup \{ 0\} ,$ 
and that the function $z\mapsto (\partial ^{n}T/\partial q^{n})(g_{1},g_{2},q,z)$ is H\"{o}lder continuous on $\CCI $ 
for any $n\in \NN \cup \{ 0\} $ and any $q\in (0,1).$ 
Moreover, for any $n\in \NN \cup \{ 0\} $ and any $q\in (0,1)$, 
since $z\mapsto T(g_{1},g_{2},q,z)$ is locally constant on $F(G)$ (see \cite[Lemma 3.24]{Splms10}), 
it follows that  the function $z\mapsto (\partial ^{n}T/\partial q^{n})(g_{1},g_{2},q,z)$ is locally constant on $F(G).$ 
By \cite[Proposition 3.26]{Splms10},  for each $q\in (0,1)$, 
the function $z\mapsto T(g_{1},g_{2},q,z)$ is characterized by the unique element $\varphi \in C(\CCI )$
such that 
$M_{g_{1},g_{2},q}(\varphi )=\varphi ,\varphi |_{\hat{K}(G)}\equiv 0,  $ 
$ \varphi | _{ F_{\infty }(G)}\equiv 1$. 
For each $q\in (0,1)$ and for each $n\in \NN \cup \{ 0\} $, 
we set $\varphi _{n,q}(z)=(\partial ^{n}T/\partial q^{n})(g_{1},g_{2},q,z).$ 
Since $\varphi _{0,q}|_{F_{\infty }(G)}\equiv 1$ and  $\varphi _{0,q}|_{\hat{K}(G)}\equiv 0$, 
we have $\varphi _{n,q}|_{F_{\infty }(G)\cup \hat{K}(G)}\equiv 0$ for each $n\geq 1.$ 
By \cite[Theorem 3.32]{Scp}, the function $\varphi _{1,q}$ is characterized by 
the unique element $\varphi \in C(\CCI )$ such that 
$\varphi _{1,q}(z)=M_{g_{1},g_{2},q}(\varphi _{1,q})(z)+
(\varphi _{0,q}(g_{1}(z))-\varphi _{0,q}(g_{2}(z)))$, 
$\varphi _{1,q}|_{F_{\infty }(G)\cup \hat{K}(G)}\equiv 0.$  
Let $k\geq 0$ and suppose that  $\varphi _{k+1,q}(z)=M_{g_{1},g_{2},q}(\varphi _{k+1,q})(z)+
(k+1)(\varphi _{k,q}(g_{1}(z))-\varphi _{k,q}(g_{2}(z)))$. 
By taking the partial derivatives of both hand sides of this equation with respect to the parameter $q$, 
we obtain that $\varphi _{k+2,q}(z)=M_{g_{1},g_{2},q}(\varphi _{k+2,q})(z)+
(k+2)(\varphi _{k+1,q}(g_{1}(z))-\varphi _{k+1,q}(g_{2}(z)))$. 
Therefore for each $n\in \NN \cup \{ 0\} $, 
we have  $\varphi _{n+1,q}(z)=M_{g_{1},g_{2},q}(\varphi _{n+1,q})(z)+
(n+1)(\varphi _{n,q}(g_{1}(z))-\varphi _{n,q}(g_{2}(z)))$. 
Let $n\in \NN ,q\in (0,1)$ and 
let $\varphi \in C(\CCI ) $ be an element such that 
\begin{equation}
\label{eq:varphizm}
\varphi (z)=M_{g_{1},g_{2},q}(\varphi )(z)+
(n+1)(\varphi _{n,q}(g_{1}(z))-\varphi _{n,q}(g_{2}(z))) \mbox{ and } 
\varphi |_{F_{\infty }(G)\cup \hat{K}(G)}\equiv 0.
\end{equation}
We want to show that $\varphi =\varphi _{n+1,q}.$  
Let $\phi _{n}(z)=(n+1)(\varphi _{n,q}(g_{1}(z))-\varphi _{n,q}(g_{2}(z))).$ 
Then $\phi _{n}\in C^{\gamma }(\CCI )$ for some $\gamma \in (0,1).$ 
Since $\tau _{g_{1},g_{2},q}$ is mean stable, 
there exists a direct decomposition 
$C(\CCI )= U_{\tau _{g_{1},g_{2},q}}\oplus 
\{ \psi \in \CCI \mid M_{g_{1},g_{2},q}^{n}(\psi )\rightarrow 0 \mbox{ as }n\rightarrow \infty \} $ 
(see \cite[Theorem 3.15]{Splms10}).  
Let $\pi _{\tau _{g_{1},g_{2},q}}:C(\CCI )\rightarrow U_{\tau _{g_{1},g_{2},q}}$ be the 
projection map regarding the direct decomposition. 
Moreover, by \cite[Theorem 3.30]{Scp} and its proof, 
there exist constants $\zeta \in (0,\gamma ], \lambda \in (0,1), C>0$ such that 
 for each $\psi \in C^{\zeta }(\CCI )$, 
 $\| M_{g_{1},g_{2},q}^{k}(\psi -\pi _{\tau _{g_{1},g_{2},q}}(\psi ))\| _{\zeta }
 \leq C\lambda ^{k}\| \psi -\pi _{\tau _{g_{1},g_{2},q}}(\psi )\| _{\zeta }$ for each $k\in \NN .$ 
 By the definition of $\phi _{n}$, we have $\phi _{n}|_{\{ \infty \} \cup \hat{K}(G)}\equiv 0.$ 
 Therefore by \cite[Theorem 3.15-2]{Splms10}, we have $\pi _{\tau _{g_{1},g_{2},q}}(\phi _{n})=0.$ 
 It follows that 
 $\| M_{g_{1},g_{2},q}^{k}(\phi _{n})\| _{\zeta }\leq C\lambda ^{k}\| \phi _{n}\| _{\zeta }.$ 
By (\ref{eq:varphizm}), we have 
\begin{equation}
\label{eq:i-mg1}
(I-M_{g_{1},g_{2},q}^{k})(\varphi )=\sum _{j=0}^{k-1}M_{g_{1},g_{2},q}^{k}(\phi _{n}) 
\mbox{ for each }k\in \NN .
\end{equation} 
Moreover, by $\varphi |_{F_{\infty }(G)\cup \hat{K}(G)}\equiv 0$ and \cite[Theorem 3.15-2]{Splms10} we have 
$\pi _{\tau _{g_{1},g_{2},q}}(\varphi )=0 .$ 
Hence $M_{g_{1},g_{2},q}^{k}(\varphi )\rightarrow 0$ in $C(\CCI )$ as $k\rightarrow \infty .$ 
Therefore, letting $k\rightarrow \infty $ in (\ref{eq:i-mg1}), we obtain 
$\varphi =\sum _{j=0}^{\infty }M_{g_{1},g_{2},q}^{j}(\phi _{n})$ in $C(\CCI ).$ 
(In fact, this equation holds even in $C^{\zeta }(\CCI ).$)   
Thus, there exists a unique element $\varphi \in C(\CCI )$ which satisfies 
(\ref{eq:varphizm}). 
Hence we have proved item (viii).  
Thus we have proved statement~\ref{spacemainth10} in Theorem~\ref{t:timportant}. 
\end{proof}
\begin{thm}
\label{t:oscanal}
Let $(h_{1},h_{2})\in {\cal B}.$ Let $p\in (0,1).$ 
Suppose that $K(h_{1})\subset (\mbox{int}(K(h_{2}))).$  
Let $U:=(\mbox{int}(K(h_{2})))\setminus K(h_{1}).$ 
Suppose that $(h_{1},h_{2})$ satisfies the open set condition with $U.$ 
Then,  we have all of the following.
\begin{itemize}
\item[{\em (i)}] 
$J_{\ker }(h_{1},h_{2})=\emptyset .$ Moreover, 
$T(h_{1},h_{2},p,\cdot )$ is H\"{o}lder continuous on $\CCI $ and 
locally constant on $F(G).$  Moreover, 
$J(G)=\{ z_{0}\in \CCI \mid 
\mbox{for each neighborhood }V \mbox{of }z_{0}, T(h_{1},h_{2},p,\cdot )|_{V} 
\mbox{ is not constant} \} .$ 
\item[{\em (ii)}] 
There exists a unique minimal set $L$ for 
$(\langle h_{1},h_{2}\rangle , \hat{K}(h_{1},h_{2}))$ and the 
set of minimal sets for $(\langle h_{1},h_{2}\rangle , \CCI )$ is 
$\{ \{ \infty \}, L\} .$ 
\item[{\em (iii)}] For each $z\in \CCI $ there exists a Borel subset 
${\cal B}_{z}$ of $\{ h_{1},h_{2}\} ^{\NN }$ with 
$\tilde{\tau } _{h_{1},h_{2},p}({\cal B}_{z})=1$ such that for each 
$\gamma =(\gamma _{1},\gamma _{2},\ldots )\in {\cal B}_{z}$, we have 
$d(\gamma _{n}\cdots \gamma _{1}(z), \{ \infty \} \cup L) \rightarrow 0$ as $n\rightarrow \infty .$ 
\item[{\em (iv)}] 
There exists a unique $M_{\tau _{h_{1},h_{2},p}}^{\ast }$-invariant Borel probability measure 
$\nu $ on $\hat{K}(h_{1},h_{2})$ such that for each $\varphi \in C(\CCI )$, 
$$M_{h_{1},h_{2},p}^{n}(\varphi )(z)\rightarrow T(h_{1},h_{2},p,z)\cdot \varphi (\infty )
+(1-T(h_{1},h_{2},p,z))\cdot \int \varphi d\nu \mbox{ as }n\rightarrow \infty $$
uniformly on $\CCI .$

\item[{\em (v)}]   
For each $z\in \CCI $, the function  
$p\mapsto T(h_{1},h_{2},p,z) $ is real-analytic on   
$(0,1)$. Moreover,   
for each $n\in \NN \cup \{ 0\} $, the function  
$(p,z)\mapsto (\partial ^{n}T/\partial p^{n})
(h_{1},h_{2},p,z)$ is continuous on $(0,1)\times \CCI $.   
The function $z\mapsto T(h_{1},h_{2},p,z)$ is characterized by the unique element $\varphi \in C(\CCI )$
such that 
$M_{h_{1},h_{2},p}(\varphi )=\varphi ,\varphi |_{\hat{K}(G)}\equiv 0,  
\varphi |_{F_{\infty }(G)}\equiv 1$. 
Furthermore, inductively, 
for any $n\in \NN \cup \{ 0\} $, 
the function $z\mapsto (\partial ^{n+1}T/\partial p^{n+1})(h_{1},h_{2},p,z)$
is characterized by the unique element $\varphi \in C(\CCI )$ such that  \\ 
$\varphi (z)\equiv M_{h_{1},h_{2},p}(\varphi )(z)+
(n+1)\left((\partial ^{n}T/\partial p^{n})(h_{1},h_{2},p,h_{1}(z))-
(\partial ^{n}T/\partial p^{n})(h_{1},h_{2},p,h_{2}(z))\right),\\   
\varphi |_{\hat{K}(G)\cup F_{\infty }(G)}\equiv 0$.

Moreover, the function 
$z\mapsto (\partial ^{n+1}T/\partial p^{n+1})(h_{1},h_{2},p,z)$ 
is locally constant on $F(G).$ Moreover, setting $\psi _{n,h_{1},h_{2},p}(z)=(\partial ^{n}T/\partial p^{n})(h_{1},h_{2},p,z)$ for each 
$p\in (0,1), z\in \CCI, n\in \NN \cup \{ 0\} $, we have that 
$\psi _{n+1,h_{1},h_{2},p}=\sum _{j=0}^{\infty }M_{h_{1},h_{2},p}^{j}
((n+1)(\psi _{n,h_{1},h_{2},p}\circ h_{1}-\psi _{n,h_{1},h_{2},p}\circ h_{2}))$ 
in $(C(\CCI ), \| \cdot \| _{\infty })$ for each $n\in \NN \cup \{ 0\} .$   

\end{itemize} 
\end{thm}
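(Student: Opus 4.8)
The plan is to reduce everything to the general machinery on mean stable random dynamical systems developed in \cite{Splms10,Scp} together with the structural results on postcritically bounded semigroups (Proposition~\ref{p:bdyosc}, Lemma~\ref{l:dh1-1log}, Lemma~\ref{l:pfmainth10}'s argument), replacing the hyperbolicity hypothesis used in Theorem~\ref{t:timportant}-\ref{spacemainth10} by the open set condition hypothesis here. First I would establish $J_{\ker}(G)=\emptyset$, which is the cornerstone: since $(h_1,h_2)$ satisfies the open set condition with $U=(\mathrm{int}(K(h_2)))\setminus K(h_1)$ and $(h_1,h_2)\in{\cal B}$, we get $K(h_1)\subset h_1^{-1}(K(h_2))\subset h_2^{-1}(K(h_1))\subset K(h_2)$, and by Lemma~\ref{l:dh1-1log}'s method (or rather its hypothesis-free version in Lemma~\ref{l:oscinte}) $h_1^{-1}(K(h_2))\subsetneqq h_2^{-1}(K(h_1))$, so $h_1^{-1}(\overline U)\cup h_2^{-1}(\overline U)\subsetneqq\overline U$; combined with $J(G)\subset\overline U$ (\cite[Corollary 3.2]{HM}) and $J(G)=h_1^{-1}(J(G))\cup h_2^{-1}(J(G))$ (\cite[Lemma 1.1.4]{S1}) this forces $J(G)\neq\overline U$, hence $\mathrm{int}(J(G))=\emptyset$ by \cite[Proposition 4.3]{S4}. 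Then, arguing as in the proof of Proposition~\ref{p:jh1jh2jk}, one identifies $\hat K(G)=K(h_1)$, $F_\infty(G)=\CCI\setminus K(h_2)$, and shows $J_{\ker}(G)=J(h_1)\cap J(h_2)$ is contained in $\overline{F_\infty(G)}\cap\hat K(G)$; but now I must use the open set condition more carefully, since without hyperbolicity $h_2$ need not have an attracting fixed point inside $K(h_2)$. The fix: since the open set condition gives that the sets $(h_{w_1}\cdots h_{w_n})^{-1}(U)$ are mutually disjoint, any point of $J_{\ker}(G)$ has infinitely many backward images, and by \cite[Lemma 2.3(e),(f)]{S3} together with the forward invariance of $J_{\ker}(G)$ one deduces $J_{\ker}(G)=\emptyset$ directly (a point in $J_{\ker}(G)$ would have all its forward orbit in $J(G)$, but $K(h_1)\subset\mathrm{int}(K(h_2))\subset F(G)$ eventually absorbs it under any word using $h_2$). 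This handles (i) up to the Hölder continuity and the characterization of $J(G)$ via non-constancy.

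Next, with $J_{\ker}(G)=\emptyset$ in hand, I would invoke \cite[Proposition 3.63]{Splms10} to conclude that $\tau_{h_1,h_2,p}$ is mean stable for the given $p$ (indeed for all $q\in(0,1)$), and then \cite[Remark 5.11]{Scp} or \cite[Theorem 1.10]{Scp} to get $\alpha$-Hölder continuity of $T(h_1,h_2,p,\cdot)$ for some $\alpha\in(0,1)$, and \cite[Lemma 3.24]{Splms10} for local constancy on $F(G)$. The characterization $J(G)=\{z_0\mid T(h_1,h_2,p,\cdot)\text{ not constant near }z_0\}$ follows exactly as in the proof of Lemma~\ref{l:pfmainth8}: one locates a point $z_0\in U\cap h_1^{-1}(J(h_2))$ with $h_2(z_0)\in\mathrm{int}(K(h_1))\subset F(G)$ (possible because $h_1^{-1}(K(h_2))\subsetneqq h_2^{-1}(K(h_1))$ and $J(h_1)\cap J(h_2)$ is nowhere dense in $J(h_2)$ — here one needs $J(h_1)\neq J(h_2)$, which is automatic since $K(h_1)\subset\mathrm{int}(K(h_2))$ implies $K(h_1)\subsetneqq K(h_2)$), shows $T$ is non-constant near $h_1(z_0)\in\overline{F_\infty(G)}\cap J(G)$ using Lemma~\ref{l:intT1} and \cite[Lemmas 5.24, 5.27]{Splms10}, and then applies Lemma~\ref{l:oscncp} with the open set $U$. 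Items (ii), (iii), (iv) follow from \cite[Theorem 3.15]{Splms10} applied to the mean stable $\tau_{h_1,h_2,p}$: Zorn's lemma gives a minimal set inside $\hat K(G)=K(h_1)$; any minimal set $L\neq\{\infty\}$ lies in $\hat K(G)\subset\mathrm{int}(K(h_2))$ and is attracted toward the interior, and one shows uniqueness by the argument in Lemma~\ref{l:pfmainth10} (the subtlety again being the absence of an attracting fixed point — I would instead argue that two distinct minimal sets in $\hat K(G)$ would contradict mean stability via \cite[Theorem 3.15-7]{Splms10}, or note that $h_2$ restricted to $K(h_1)$ has its Julia set disjoint, forcing convergence). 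The measure $\nu$, the convergence $M^n_{h_1,h_2,p}(\varphi)\to T\cdot\varphi(\infty)+(1-T)\int\varphi\,d\nu$, and the fact that the two unitary eigenfunctions are $T$ and $1-T$, all come from \cite[Theorem 3.15-1,13]{Splms10}.

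Finally, for item (v), I would run the argument of Lemma~\ref{l:pfmainth10}'s proof of item (viii) verbatim: \cite[Proposition 3.26]{Splms10} gives the characterization of $T(h_1,h_2,p,\cdot)$ as the unique $\varphi\in C(\CCI)$ with $M_{h_1,h_2,p}(\varphi)=\varphi$, $\varphi|_{\hat K(G)}\equiv 0$, $\varphi|_{F_\infty(G)}\equiv 1$; \cite[Theorem 3.32]{Scp} and induction (differentiating the fixed-point equation $\varphi_n = M_{h_1,h_2,p}(\varphi_n) + n(\varphi_{n-1}\circ h_1 - \varphi_{n-1}\circ h_2)$ in $p$) give real-analyticity in $p$, the recursion for the higher derivatives, and their continuity in $(p,z)$; local constancy on $F(G)$ of each $\psi_{n,h_1,h_2,p}$ follows from that of $T$ itself; and the series representation $\psi_{n+1}=\sum_{j=0}^\infty M^j_{h_1,h_2,p}((n+1)(\psi_n\circ h_1-\psi_n\circ h_2))$ in $C(\CCI)$ follows by iterating $(I-M^k)(\varphi)=\sum_{j=0}^{k-1}M^j(\phi_n)$ and letting $k\to\infty$, using that $\pi_{\tau_{h_1,h_2,p}}(\phi_n)=0$ (since $\phi_n|_{\{\infty\}\cup\hat K(G)}\equiv 0$, by \cite[Theorem 3.15-2]{Splms10}) so that $M^k(\phi_n)\to 0$, and the spectral gap from \cite[Theorem 3.30]{Scp}. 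The main obstacle I anticipate is establishing uniqueness of the minimal set $L\subset\hat K(G)$ and $J_{\ker}(G)=\emptyset$ \emph{without} hyperbolicity: the proof of Lemma~\ref{l:pfmainth10} genuinely used that $h_2$ has an attracting fixed point (a consequence of hyperbolicity via Proposition~\ref{p:bdyosc}-(c)), and here I must replace that step by a purely topological/open-set-condition argument — essentially the observation that $K(h_1)\subset\mathrm{int}(K(h_2))$ together with $h_2^{-1}(K(h_1))\subset\mathrm{int}(K(h_2))$ and $K(h_1)\subset h_2^{-1}(K(h_1))$ forces $\bigcap_n h_2^{-n}(K(h_2))$ to coincide with, or be absorbed into, a region where the dynamics of the whole semigroup is tame, so that every minimal set inside $\hat K(G)$ is ``pulled'' to a common location.
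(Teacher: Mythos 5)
There is a genuine gap in your proposal, and it stems from a single wrong step early on: you invoke \cite[Proposition 3.63]{Splms10} to conclude that $\tau_{h_1,h_2,p}$ is mean stable, and then items (i), (v) lean heavily on mean-stability machinery (\cite[Remark 5.11]{Scp}, \cite[Theorem 3.30]{Scp}, \cite[Theorem 3.32]{Scp}, the spectral gap). But Theorem~\ref{t:oscanal} does \emph{not} assume hyperbolicity, and mean stability can genuinely fail in this setting. Indeed Remark~\ref{r:toscanal} explicitly exhibits, for $h_1$ with a parabolic cycle or Siegel disk cycle, an $h_2$ satisfying all hypotheses of Theorem~\ref{t:oscanal} for which $\{h_1,h_2\}$ is \emph{not} mean stable. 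So every consequence you draw from mean stability --- in particular the entire item (v), whose analyticity in $p$ you derive from the $C^\alpha$-spectral gap of $M_{g_1,g_2,q}$ --- is unjustified here. The paper takes a completely different and more elementary route for (v): it extends $p\mapsto M_p$ holomorphically to the bidisc $\{|p|<1,\,|1-p|<1\}$ and bounds $|M_p^{n+1}(\varphi)(z)-M_p^n(\varphi)(z)|$ directly, using only the open set condition to show that the words $\omega$ with $h_\omega(z)\in U$ contribute at most a geometric factor $c_K^n$; this gives uniform convergence of the series $\varphi+\sum(M_p^{n+1}-M_p^n)(\varphi)$ and hence holomorphy in $p$ by Cauchy's integral formula, with no spectral theory at all. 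The characterization of $\psi_{n+1,p}$ and the $C(\CCI)$-series $\sum_j M_p^j(\phi_{n,p})$ are likewise obtained from the crude bound $\|M_p^k(\psi)\|_\infty\le(\max\{p,1-p\})^k\|\psi\|_\infty$ valid for any $\psi$ vanishing on $\overline{F_\infty(G)}\cup\hat K(G)$ (again purely by the OSC), not from the contraction on $C^\zeta$.

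The other gap is in your ``main obstacle'' paragraph, where you correctly sense a difficulty but propose a vague workaround. The difficulty --- that $h_2$ may not have an attracting fixed point absent hyperbolicity --- is in fact not a difficulty, and the paper's proof dispatches it cleanly: once the OSC forces $\mathrm{int}(K(h_2))$ to be connected (because $h_2^{-1}(K(h_1))$ is a connected subset of $\mathrm{int}(K(h_2))$ containing $K(h_1)$, hence in the same component $A$, and $h_2^{-1}(A)\subset A$), the map $h_2\colon\mathrm{int}(K(h_2))\to\mathrm{int}(K(h_2))$ is a proper branched cover of degree $\deg(h_2)\ge 2$, so it cannot be a rotation domain; since $h_2(K(h_1))\subset K(h_1)$ keeps orbits compactly inside, parabolic convergence to the boundary is also ruled out, and an attracting fixed point $z_0\in K(h_1)$ exists. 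The paper then argues $z_0\in\mathrm{int}(K(h_1))$ by contradiction via the disjointness $h_1^{-1}(U)\cap h_2^{-1}(U)=\emptyset$, and uses $z_0$ exactly as the hyperbolic case did: every $z\in K(h_1)$ satisfies $h_2^n(z)\to z_0\in\mathrm{int}(K(h_1))\subset F(G)$ and every $z\notin K(h_1)$ is eventually pushed by $h_1$ into $F_\infty(G)$, whence $J_{\ker}(G)=\emptyset$ and (via the same argument as Lemma~\ref{l:pfmainth10}) the minimal set $L$ containing $z_0$ is unique. So the attracting fixed point is recovered topologically, and you should not try to replace it with an assertion that $\bigcap_n h_2^{-n}(K(h_2))$ is ``absorbed'': that argument, as written, does not conclude. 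In short, your overall skeleton is right (reduce to $J_{\ker}(G)=\emptyset$, then to the framework of \cite{Splms10, Scp}), but you misidentify which tools from \cite{Scp} are still available without hyperbolicity, and you do not see that the existence of the attracting fixed point of $h_2$ survives.
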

\begin{proof}
Let $A$ be the connected component of $\mbox{int}(K(h_{2}))$ with $K(h_{1})\subset A.$ 
Since $K(h_{1})\subset \mbox{int}(K(h_{2}))$, we have 
$h_{2}^{-1}(K(h_{1}))\subset \mbox{int}(K(h_{2})).$ 
Since $(h_{1},h_{2})$ satisfies the open set condition with $U$, we have 
$h_{2}^{-1}(K(h_{1}))\supset K(h_{1}).$  Moreover, since $(h_{1},h_{2})\in {\cal B}$, 
we have that $h_{2}^{-1}(K(h_{1}))$ is connected. 
It follows that $h_{2}^{-1}(K(h_{1}))\subset A.$ Thus 
$h_{2}^{-1}(A)\subset A.$ It implies that $\mbox{int}(K(h_{2}))$ is 
connected. Since $h_{2}(K(h_{1}))\subset K(h_{1})\subset \mbox{int}(K(h_{2}))$, 
it follows that 
there exists an attracting fixed point $z_{0}$ of $h_{2}$ 
in $K(h_{1}).$  Since $(h_{1},h_{2})$ satisfies the open set condition with $U$, 
we have $h_{1}^{-1}(U)\cap h_{2}^{-1}(U)=\emptyset .$ 
Since $U=(\mbox{int}(K(h_{2})))\setminus K(h_{1})$ and 
$\mbox{int}(K(h_{2}))=U\cup K(h_{1})$, 
we obtain that 
$h_{2}^{-1}(U)\cap (h_{1}^{-1}(\mbox{int}(K(h_{2}))))=
h_{2}^{-1}(U)\cap (h_{1}^{-1}(U)\cup K(h_{1}))
\subset h_{2}^{-1}(U)\cap K(h_{1})\subset U\cap K(h_{1})=\emptyset .$ 
Therefore $\overline{h_{2}^{-1}(U)}\subset \CCI \setminus h_{1}^{-1}(\mbox{int}(K(h_{2}))).$ 
We now want to show that $z_{0}\in \mbox{int}(K(h_{1})).$ 
Suppose to the contrary that $z_{0}\in J(h_{1}).$ 
Then $z_{0}\in h_{2}^{-1}(z_{0})\subset h_{2}^{-1}(\overline{U})=\overline{h_{2}^{-1}(U)}.$ 
It implies that 
$z_{0}\in \CCI \setminus h_{1}^{-1}(\mbox{int}(K(h_{2})))
\subset \CCI \setminus h_{1}^{-1}(K(h_{1}))=\CCI \setminus K(h_{1})$, 
which contradicts $z_{0}\in J(h_{1}).$ Thus we must have that $z_{0}\in \mbox{int}(K(h_{1})).$ 

Since $h_{2}(K(h_{1}))\subset K(h_{1})$, setting $G=\langle h_{1},h_{2}\rangle $, we have 
$g(K(h_{1}))\subset K(h_{1}).$ Hence 
\begin{equation}
\label{eq:hatkgkh1}
\hat{K}(G)=K(h_{1}).
\end{equation}
Therefore $G(\mbox{int}(K(h_{1})))\subset \mbox{int}(K(h_{1}))$. 
Thus $\mbox{int}(K(h_{1}))\subset F(G).$ Since $z_{0}$ is an attracting fixed point of $h_{2}$ 
and it belongs to $\mbox{int}(K(h_{1}))\subset F(G)$, it follows that for each $z\in K(h_{1})$, 
there exists a number $n\in \NN $ such that $h_{2}^{n}(z)\in F(G).$ 
Moreover, if $z\in \CCI \setminus K(h_{1})$, there exists a number $m$ such that 
$h_{1}^{m}(z)\in F_{\infty }(G)\subset F(G).$ Hence, we obtain that 
$J_{\ker }(G)=\emptyset .$ Combining this with \cite[Theorem 3.29]{Scp} and \cite[Theorem 3.22]{Splms10},  
we obtain that the function $\psi _{p}:=T(h_{1},h_{2},p,\cdot )$ is H\"{o}lder continuous on $\CCI $ 
and $M_{h_{1},h_{2},p}(\psi _{p})=\psi _{p}.$ 
By \cite[Lemma 3.24]{Splms10}, $\psi _{p}$ is locally constant on $F(G).$ 
By Lemma~\ref{l:oscinte} and the arguments in the proof of Lemma~\ref{l:intT1}, 
it follows that 
$J(G)=\{ z_{0}\in \CCI \mid 
\mbox{for each neighborhood }V \mbox{ of }z_{0}, \psi _{p}|_{V} \mbox{ is not constant}\} .$ 
By using the arguments in the proof of Lemma~\ref{l:pfmainth10}, 
we can show that items (ii)--(iv) of our theorem hold. Moreover, 
by \cite[Proposition 3.26]{Splms10}, 
$\psi _{p}$ is characterized by the unique element $\psi \in C(\CCI )$ such that 
$\psi =M_{h_{1},h_{2},p}(\psi )=\psi , \psi _{F_{\infty }(G)}\equiv 1, \psi _{\hat{K}(G)}\equiv 0.$

Since $(h_{1},h_{2})$ satisfies the open set condition with $U$, 
we have $h_{1}^{-1}(\mbox{int}(K(h_{2})))\subset h_{1}^{-1}(U\cup K(h_{1}))\subset U\cup K(h_{1})=\mbox{int}(K(h_{2})).$ 
Hence $h_{1}^{-1}(K(h_{2}))\subset K(h_{2}).$ Therefore $h_{1}(F_{\infty }(h_{2}))\subset F_{\infty }(h_{2}).$ 
Thus $G(F_{\infty }(h_{2}))\subset F_{\infty }(h_{2}).$ It follows that 
\begin{equation}
\label{eq:finftygh2}
F_{\infty }(G)=F_{\infty }(h_{2}).
\end{equation}
Since $\psi _{p}$ is continuous on $\CCI $ and $M_{h_{1},h_{2},p}(\psi _{p})=\psi _{p}$, 
we obtain that 
\begin{equation}
\label{eq:psipzp}
\psi _{p}(z)=p\psi _{p}(h_{1}(z))+(1-p)\psi _{p}(h_{2}(z))\mbox{ for each } z\in \CCI, \ 
\psi _{p}|_{\hat{K}(G)}\equiv 0,\ \psi _{p}|_{\overline{F_{\infty }(G)}}\equiv 1.
\end{equation}
Let $\varphi \in C(\CCI )$ be an element such that $\varphi _{\hat{K}(G)}\equiv 0$, 
$\varphi |_{\overline{F_{\infty }(G)}}\equiv 1.$ 
By items (ii) and (iii) of our theorem, which have been already proved, 
we obtain that $T(h_{1},h_{2},p,z)=\lim _{n\rightarrow \infty }M_{h_{1},h_{2},p}^{n}(\varphi )(z)$ 
for each $z\in \CCI .$
Let $A:= \{ p\in \CC \mid |p|<1, |1-p|<1\} .$ 
For each $p\in A$ and for each $\psi \in C(\CCI )$, 
we set $M_{p}(\psi )(z)=p\psi (h_{1}(z))+(1-p)\psi (h_{2}(z)).$ 
For each $p\in A$, we set $p_{1}=p$ and $p_{2}=1-p.$ 
For each $n\in \NN $ and for each $w=(w_{1},\ldots ,w_{n})\in \{ 1,2\} ^{n}$, 
we set $h_{w}=h_{w_{n}}\cdots h_{w_{1}} $ and $p_{w}=p_{w_{n}}\cdots p_{w_{1}}.$  
Moreover, we set $B_{n,z}=\{ (w_{1},\ldots ,w_{n})\in \{ 1,2\} ^{n} \mid 
h_{w_{n}}\cdots h_{w_{1}}(z)\in \overline{F_{\infty }(G)}\} $ and 
$C_{n,z}=\{ (w_{1},\ldots ,w_{n})\in \{ 1,2\} ^{n}\mid 
h_{w_{n}}\cdots h_{w_{1}}(z)\in U\} .$ 
Furthermore, for each $w=(w_{1},\ldots ,w_{n})\in \{ 1,2\} ^{n}$ 
and for each $m\leq n$, we set $w|_{m}:= (w_{1},\ldots ,w_{m})\in \{ 1,2\} ^{m}.$ 
Then for each $p\in A$, for each $n\in \NN $ and for each $z\in \CCI $, we have 
\begin{eqnarray*}
& \ & |M_{p}^{n+1}(\varphi )(z)-M_{p}^{n}(\varphi )(z)|\\ 
& = & \left| \sum _{\omega \in B_{n+1,z}}p_{\omega }
 + \sum _{\omega \in C_{n+1,z}}p_{\omega }\varphi (h_{\omega }(z))
 -\sum _{\gamma \in B_{n,z}}p_{\gamma }
      -\sum _{\gamma \in C_{n,z}}p_{\gamma }\varphi (h_{\gamma }(z))\right|\\ 
& = & | \sum  _{\omega \in B_{n+1,z}, \omega |_{n}\in B_{n,z}}p_{\omega } 
      +\sum _{\omega \in B_{n+1,z}, \omega |_{n}\in C_{n,z}}p_{\omega }
      + \sum _{\omega \in C_{n+1,z}}p_{\omega }\varphi (h_{\omega }(z))\\ 
& \ & \ \ \ \ \ \ \ \ \ \ \ \ \ \ \ \ \ \ \ \ \ \ \ \ \ \ \ \ \ \ \ \ \ 
  - \sum _{\gamma \in B_{n,z}}p_{\gamma }-\sum _{\gamma \in C_{n,z}}p_{\gamma }\varphi (h_{\gamma }(z))| .         
\end{eqnarray*}
 Since $\sum _{\omega \in B_{n+1,z}, \omega |_{n}\in B_{n,z}}p_{\omega }=
 \sum _{\gamma \in B_{n,z}}p_{\gamma }$, 
 we obtain 
\begin{equation}
\label{eq:mpn1-mpn}
|M_{p}^{n+1}(\varphi )(z)-M_{p}^{n}(\varphi )(z)| 
=\left| \sum _{\omega \in B_{n+1,z}, \omega |_{n}\in C_{n,z}}p_{\omega }
+\sum _{\omega \in C_{n+1,z}}p_{\omega }\varphi (h_{\omega }(z))
-\sum _{\gamma \in C_{n,z}}p_{\gamma }\varphi (h_{\gamma }(z))\right| . 
\end{equation} 
 Let $K$ be a non-empty compact subset of $A.$ Then there exists a constant $c_{K}\in (0,1)$ 
 such that $K\subset \{ |z|<c_{K}\} .$ 
 By (\ref{eq:mpn1-mpn}) and that $(h_{1},h_{2})$ satisfies the open set condition with $U$, 
it follows that 
 $|M_{p}^{n+1}(\varphi )(z)-M_{p}^{n}(z)|
 \leq c_{K}^{n}+(c_{K}^{n+1}+c_{K}^{n})\sup _{a\in \CCI }|\varphi (a)|.$ 
 Therefore $S _{\infty, p}(z):=\lim _{n\rightarrow \infty }M_{p}^{n}(\varphi )(z)$ 
 exists in $A\times \CCI $, 
$(p, z)\rightarrow S _{\infty ,p}(z)$ is continuous on $A\times \CCI $, 
and 
\begin{equation} 
\label{eq:varphiphol}
\mbox{for each } z\in \CCI , \mbox{ the function }p\mapsto S _{\infty ,p}(z) \mbox{ is holomorphic in } A. 
\end{equation}  
Combining these with Cauchy's integral formula, we obtain that 
for each $n\in \NN $, 
$\frac{\partial ^{n}S _{\infty ,p}(z)}{\partial p^{n}}$ is continuous on $A\times \CCI .$ 
From these arguments, it follows that for each $z\in \CCI $, the function  
$p\mapsto T(h_{1},h_{2},p,z) $ is real-analytic on   
$(0,1)$ and   
for each $n\in \NN \cup \{ 0\} $, the function  
$(p,z)\mapsto (\partial ^{n}T/\partial p^{n})
(h_{1},h_{2},p,z)$ is continuous on $(0,1)\times \CCI $.   

Let $\psi _{n,p}(z)=\frac{\partial ^{n}T}{\partial p^{n}}(h_{1},h_{2},p,z)$ 
for each $n\in \NN \cup \{ 0\} $ and $z\in \CCI .$  
By taking the $n$-th derivatives of equations in (\ref{eq:psipzp}), 
we obtain that 
\begin{equation}
\label{eq:psin1p}
\psi _{n+1,p}(z)=M_{p}(\psi _{n+1,p})(z)+(n+1)(\psi _{n,p}(h_{1}(z))-\psi _{n,p}(h_{2}(z))), 
\psi _{n+1,p}|_{\overline{F_{\infty }(G)}\cup \hat{K}(G)}\equiv 0 
\end{equation}
for each $n\in \NN \cup \{ 0\} $, $p\in (0,1)$ and $z\in \CCI .$ 
Let $\phi _{n,p}(z):=(n+1)(\psi _{n,p}(h_{1}(z))-\psi _{n,p}(h_{2}(z))).$ 
Then $\phi _{n,p}|_{\overline{F_{\infty }(G)}\cup \hat{K}(G)}\equiv 0.$ 
Let $\psi \in C(\CCI )$ be an element such that 
$\psi _{\overline{F_{\infty }(G)}\cup \hat{K}(G)}\equiv 0.$ 
Then $M_{p}^{k}(\psi )(z)=\sum _{\omega \in \{ 1,2\} ^{k}}p_{\omega }\psi (h_{\omega }(z))
=\sum _{\omega \in C_{k,z}}p_{\omega }\psi (h_{\omega }(z))$. 
Since $(h_{1},h_{2})$ satisfies the open set condition with $U$, 
it follows that 
\begin{equation}
\label{eq:mpkpsi}
\| M_{p}^{k}(\psi )\| _{\infty }\leq (\max\{ p,1-p\} )^{k}\| \psi \| _{\infty }.
\end{equation}
If $\alpha \in C(\CCI )$ is an element such that 
\begin{equation}
\label{eq:alphamp}
\alpha (z)=M_{p}(\alpha )(z)+(n+1)(\psi _{n,p}(h_{1}(z))-\psi _{n,p}(h_{2}(z))), 
\alpha |_{\overline{F_{\infty }(G)}\cup \hat{K}(G)}\equiv 0 
\end{equation}
then we have  
\begin{equation}
\label{eq:i-mpkalpha}
(I-M_{p}^{k})(\alpha )(z)=\sum _{j=0}^{k}M_{p}^{j}(\phi _{n,p})(z) \mbox{ for each } z\in \CCI , k\in \NN . 
\end{equation}
By (\ref{eq:mpkpsi}), letting $k\rightarrow \infty $ in (\ref{eq:i-mpkalpha}) 
we obtain that $\alpha =\sum _{j=0}^{\infty }M_{p}^{j}(\phi _{n,p})$ in 
$(C(\CCI ), \| \cdot \| _{\infty }).$ 
Therefore, for each $n\in \NN \cup \{ 0\} $, the element 
$\psi  _{n+1,p}\in C(\CCI) $ is characterized by the unique element $\alpha \in C(\CCI )$ such that 
$\alpha (z)=M_{p}(\alpha )(z)+(n+1)(\psi _{n,p}(h_{1}(z))-\psi _{n,p}(h_{2}(z))), 
\alpha |_{\overline{F_{\infty }(G)}\cup \hat{K}(G)}\equiv 0 .$ 
Since $\psi _{p}=T(h_{1},h_{2},p,\cdot )$ is locally constant on $F(G)$, 
$\psi _{n+1,p}$ is locally constant on $F(G).$ 

Combining all of these arguments, we see that item (v) of our theorem holds. 


Thus we have proved Theorem~\ref{t:oscanal}. 
\end{proof} 
\begin{rem}
\label{r:toscanal}
Let $h_{1}\in {\cal P}$ and $d\in \NN $ with $d\geq 2$ and $p\in (0,1)$.  Suppose that $\langle h_{1}\rangle $ is postcritically bounded, 
int$(K(h_{1}))\neq\emptyset $ and $(\deg (h_{1}),d)\neq (2,2).$ Then, 
by Theorem~\ref{t:spgenex}, there exists an element $h_{2}\in {\cal P}$ with 
$\deg (h_{2})=d$ such that  $(h_{1},h_{2})\in 
((\partial {D})\cap 
{\cal C}\cap {\cal B})\setminus {\cal I}$
and such that $(h_{1},h_{2},p)$ satisfies the assumptions of Theorem~\ref{t:oscanal}. 
Note that if $h_{1}$ has a parabolic cycle or a  Siegel disk cycle, then the above $h_{2}$ can be 
taken so that $\{ h_{1},h_{2}\} $ is {\bf not} mean stable. In fact, in order to have such an $h_{2}$, 
we take a point $b\in \mbox{int}(K(h_{1}))$
so that $b$ belongs to the basin of parabolic cycle or the Siegel disk cycle of $h_{1}$ and 
then use the method in the proof of Lemma~\ref{l:pfexample}.  
In \cite{Scp}, the author showed several results on the random dynamical systems 
for which the associated kernel Julia sets are empty and all minimal sets are included in the 
Fatou sets. However, the author did not deal with the case for which some minimal sets meet 
the Julia sets. 
We remark that if $h_{1}\in {\cal P}$ has a parabolic cycle, 
$\langle h_{1}\rangle $ is postcritically bounded, and $d\geq 2$ satisfies $(\deg (h_{1}), d)\neq (2,2)$,   
then we can take 
an $h_{2}\in {\cal P}$ so that $\deg (h_{2})=d$, 
$(h_{1},h_{2})\in ((\partial {\cal D})\cap {\cal C}\cap {\cal B})\setminus 
{\cal I}$, 
$(h_{1},h_{2})$ satisfies the assumptions of Theorem~\ref{t:oscanal},  
$J_{\ker }(h_{1},h_{2} )=\emptyset $ and the bounded minimal set of $\langle h_{1},h_{2}\rangle $ 
meets $J(h_{1},h_{2})$ (we take a point $b$ in the basin of the parabolic cycle of $h_{1}$ and 
use the method of Lemma~\ref{l:pfexample}).  Thus,   
 Theorems~\ref{t:spgenex} and \ref{t:oscanal} deal with a new case.   
\end{rem} 
\begin{lem}
\label{l:pfmainth11}
Statement~\ref{spacemainth11} in Theorem~\ref{t:timportant} holds. 
\end{lem}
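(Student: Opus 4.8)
The plan is to deduce statement~\ref{spacemainth11} in Theorem~\ref{t:timportant} (the real-analyticity in $p$ of $T(h_1,h_2,p,\cdot)$, the continuity of its $p$-derivatives, and the characterizations of the derivatives as unique solutions of the relevant functional equations) from the already-established facts for the disconnected case, together with the machinery of Lemma~\ref{l:bdjconti}, Lemma~\ref{l:oscncp} and the estimates used in the proof of Theorem~\ref{t:oscanal}. Indeed, by Lemma~\ref{l:bdjconti}, for $(h_1,h_2)\in{\cal D}\cap{\cal B}$ we have the overlap-free property $h_1^{-1}(J(G))\cap h_2^{-1}(J(G))=\emptyset$, and by Proposition~\ref{p:dosc} (after possibly swapping $h_1,h_2$) we may assume $K(h_1)\subset\mbox{int}(K(h_2))$ and that $(h_1,h_2)$ satisfies the open set condition with $U:=(\mbox{int}(K(h_2)))\setminus K(h_1)$, so that $\hat K(G)=K(h_1)$, $F_\infty(G)=F_\infty(h_2)=\CCI\setminus K(h_2)$ (because $h_1^{-1}(K(h_2))\subset K(h_2)$ and $h_2(K(h_1))\subset K(h_1)$), and $J_{\ker}(G)=\emptyset$. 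This puts us exactly in the situation of Theorem~\ref{t:oscanal}: the hypotheses ``$K(h_1)\subset\mbox{int}(K(h_2))$ and $(h_1,h_2)$ satisfies the open set condition with $U$'' hold here without any appeal to hyperbolicity.

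First I would observe that Theorem~\ref{t:oscanal} is stated under precisely these hypotheses and yields essentially everything claimed in statement~\ref{spacemainth11}: H\"older continuity of $T(h_1,h_2,p,\cdot)$ on $\CCI$, local constancy on $F(G)$, real-analyticity of $p\mapsto T(h_1,h_2,p,z)$ on $(0,1)$ for each fixed $z$, continuity of $(p,z)\mapsto(\partial^n T/\partial p^n)(h_1,h_2,p,z)$ on $(0,1)\times\CCI$, the characterization of $z\mapsto T(h_1,h_2,p,z)$ as the unique $\varphi\in C(\CCI)$ with $M_{h_1,h_2,p}(\varphi)=\varphi$, $\varphi|_{\hat K(G)}\equiv 0$, $\varphi|_{F_\infty(G)}\equiv 1$, the inductive characterization of $z\mapsto(\partial^{n+1}T/\partial p^{n+1})(h_1,h_2,p,z)$ as the unique $\varphi\in C(\CCI)$ solving the stated inhomogeneous fixed-point equation with $\varphi|_{\hat K(G)\cup F_\infty(G)}\equiv 0$, and the series representation $\psi_{n+1,h_1,h_2,p}=\sum_{j=0}^\infty M_{h_1,h_2,p}^j((n+1)(\psi_{n,h_1,h_2,p}\circ h_1-\psi_{n,h_1,h_2,p}\circ h_2))$ in $(C(\CCI),\|\cdot\|_\infty)$. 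Thus the proof is essentially: reduce to the normalization $K(h_1)\subset\mbox{int}(K(h_2))$ via Proposition~\ref{p:dosc}, verify the open set condition hypothesis of Theorem~\ref{t:oscanal}, and invoke Theorem~\ref{t:oscanal}. The symmetric case $K(h_2)\subset\mbox{int}(K(h_1))$ is handled by interchanging the roles of $h_1$ and $h_2$ and of $p$ and $1-p$.

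The remaining point is the H\"older continuity of $T(h_1,h_2,p,\cdot)$ itself (item (i) of Theorem~\ref{t:oscanal} already gives this, via \cite[Theorem 3.29]{Scp} and \cite[Theorem 3.22]{Splms10}, because $J_{\ker}(G)=\emptyset$), and the statement that $J(G)$ equals the set of non-constancy points of $T(h_1,h_2,p,\cdot)$, which is statement~\ref{spacemainth8} of Theorem~\ref{t:spaceth1h2} already proved in Lemma~\ref{l:pfmainth8} for the case $(h_1,h_2)\in{\cal D}\cap{\cal B}$. So the only genuine work is bookkeeping: checking that each assertion of statement~\ref{spacemainth11} is literally one of the conclusions of Theorem~\ref{t:oscanal} (or Lemma~\ref{l:pfmainth8}) once the normalization and the open set condition are in place. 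The main obstacle — though it is a mild one — is making sure that Theorem~\ref{t:oscanal}'s hypothesis that $(h_1,h_2)$ satisfies the open set condition \emph{with the specific set $U=(\mbox{int}(K(h_2)))\setminus K(h_1)$} is met here; this is guaranteed by Proposition~\ref{p:dosc}, which asserts exactly that for $(h_1,h_2)\in{\cal B}\cap{\cal D}$ with $K(h_1)\subset\mbox{int}(K(h_2))$ the open set condition holds with this $U$ and $K(h_1)\subset h_1^{-1}(K(h_2))\subset h_2^{-1}(K(h_1))\subset K(h_2)$.

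\begin{proof}
Let $(h_1,h_2)\in{\cal D}\cap{\cal B}$ and $G=\langle h_1,h_2\rangle$. By Proposition~\ref{p:dosc}, either $K(h_1)\subset\mbox{int}(K(h_2))$ or $K(h_2)\subset\mbox{int}(K(h_1))$. Interchanging the roles of $h_1$ and $h_2$ and of $p$ and $1-p$ if necessary, we may assume $K(h_1)\subset\mbox{int}(K(h_2))$. Then by Proposition~\ref{p:dosc} again, setting $U:=(\mbox{int}(K(h_2)))\setminus K(h_1)$, the pair $(h_1,h_2)$ satisfies the open set condition with $U$, and $K(h_1)\subset h_1^{-1}(K(h_2))\subset h_2^{-1}(K(h_1))\subset K(h_2)$. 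Thus the hypotheses of Theorem~\ref{t:oscanal} are satisfied for every $p\in(0,1)$. By Theorem~\ref{t:oscanal}-(i), $T(h_1,h_2,p,\cdot)$ is H\"older continuous on $\CCI$ and locally constant on $F(G)$. By Theorem~\ref{t:oscanal}-(v), for each $z\in\CCI$ the function $p\mapsto T(h_1,h_2,p,z)$ is real-analytic on $(0,1)$, for each $n\in\NN\cup\{0\}$ the function $(p,z)\mapsto(\partial^n T/\partial p^n)(h_1,h_2,p,z)$ is continuous on $(0,1)\times\CCI$, the function $z\mapsto T(h_1,h_2,p,z)$ is characterized by the unique $\varphi\in C(\CCI)$ with $M_{h_1,h_2,p}(\varphi)=\varphi$, $\varphi|_{\hat K(G)}\equiv 0$, $\varphi|_{F_\infty(G)}\equiv 1$, and, inductively, for each $n\in\NN\cup\{0\}$ the function $z\mapsto(\partial^{n+1}T/\partial p^{n+1})(h_1,h_2,p,z)$ is characterized by the unique $\varphi\in C(\CCI)$ with $\varphi(z)\equiv M_{h_1,h_2,p}(\varphi)(z)+(n+1)((\partial^n T/\partial p^n)(h_1,h_2,p,h_1(z))-(\partial^n T/\partial p^n)(h_1,h_2,p,h_2(z)))$ and $\varphi|_{\hat K(G)\cup F_\infty(G)}\equiv 0$. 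Finally, setting $\psi_{n,h_1,h_2,p}(z)=(\partial^n T/\partial p^n)(h_1,h_2,p,z)$, Theorem~\ref{t:oscanal}-(v) gives $\psi_{n+1,h_1,h_2,p}=\sum_{j=0}^\infty M_{h_1,h_2,p}^j((n+1)(\psi_{n,h_1,h_2,p}\circ h_1-\psi_{n,h_1,h_2,p}\circ h_2))$ in $(C(\CCI),\|\cdot\|_\infty)$ for each $n\in\NN\cup\{0\}$. These are exactly the assertions of statement~\ref{spacemainth11} in Theorem~\ref{t:timportant}.
\end{proof}
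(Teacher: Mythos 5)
Your proof is correct and follows essentially the same route as the paper: normalize via Proposition~\ref{p:dosc} so that $K(h_1)\subset\mathrm{int}(K(h_2))$ and the open set condition holds with $U=(\mathrm{int}(K(h_2)))\setminus K(h_1)$, then read off every assertion of statement~\ref{spacemainth11} directly from Theorem~\ref{t:oscanal}. Your version is more verbose in its bookkeeping and more explicit about the symmetry $(h_1,h_2,p)\leftrightarrow(h_2,h_1,1-p)$ justifying the reduction, but the argument is the same.
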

\begin{proof}
Let $(h_{1},h_{2})\in ({\cal D}\cap {\cal B}).$ 
Then by Proposition~\ref{p:dosc}, we may assume that $K(h_{1})\subset \mbox{int}(K(h_{2})).$ 
By Proposition~\ref{p:dosc} again, $(h_{1},h_{2})$ satisfies the open set condition with 
$U:=(\mbox{int}(K(h_{2}))^\setminus K(h_{1}).$ Thus, Theorem~\ref{t:oscanal} implies that 
statement~\ref{spacemainth11} in Theorem~\ref{t:timportant} holds. 
\end{proof}
We now give a result on the set of connected components of the Fatou set, which is shown 
by applying Theorem~\ref{t:timportant}-\ref{spacemainth10}-(i) and Theorem~\ref{spacemainth}-\ref{spacemainth4-2}.
\vspace{-2mm} 
\begin{thm}
\label{t:infcomp}
Let $(h_{1},h_{2})\in ((\partial {\cal C})\cap {\cal B}\cap {\cal H})\setminus {\cal I}.$ 
Then there exists a neighborhood $V$ of $(h_{1},h_{2})$ in ${\cal B}\cap {\cal H}$ with 
$V\cap \mbox{int}({\cal C})\cap {\cal B}\cap {\cal H}\neq \emptyset $ 
 such that 
for each $(g_{1},g_{2})\in V$, 
the set of connected components $U$ of $F(g_{1},g_{2})$ with
 $U\cap ( F_{\infty }(g_{1},g_{2})\cup \hat{K}(g_{1},g_{2}))=\emptyset $ is infinite. In particular, 
for each $(g_{1},g_{2})\in V$, there are infinitely many connected components of $F(g_{1},g_{2}).$ 
\end{thm}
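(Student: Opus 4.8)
\textbf{Proof proposal for Theorem~\ref{t:infcomp}.}
The plan is to first observe that the hypothesis $(h_1,h_2)\in((\partial{\cal C})\cap{\cal B}\cap{\cal H})\setminus{\cal I}$ puts us in the situation of Theorem~\ref{t:timportant}-\ref{spacemainth10}: there is a neighborhood $V_0$ of $(h_1,h_2)$ in ${\cal B}\cap{\cal H}$ such that for every $(g_1,g_2)\in V_0$, $\{g_1,g_2\}$ is mean stable, there is a unique bounded minimal set $L_{g_1,g_2}$ for $(\langle g_1,g_2\rangle,\CCI)$, the minimal sets for $(\langle g_1,g_2\rangle,\CCI)$ are exactly $\{\infty\}$ and $L_{g_1,g_2}$, and both $\{\infty\}\cup L_{g_1,g_2}\subset F(g_1,g_2)$ with $L_{g_1,g_2}\subset\mbox{int}(\hat K(g_1,g_2))$. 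Also, by Theorem~\ref{spacemainth}-\ref{spacemainth4-2}, shrinking $V_0$ if necessary, $\dim_H(J(g_1,g_2))<2$ for each $(g_1,g_2)\in V_0$; in particular $\mbox{int}(J(g_1,g_2))=\emptyset$, so the Julia set carries no Fatou components and every point of $F(G)$ genuinely lies in an open Fatou component. Finally, by Lemma~\ref{l:pfmainth2} (Theorem~\ref{t:spacetopology}-\ref{spacemainth2}) together with Lemma~\ref{l:pfmainth7}, $(\partial{\cal C})\cap{\cal B}\cap{\cal H}$ is approximated by $(\mbox{int}({\cal C}))\cap{\cal B}\cap{\cal H}$, so any neighborhood $V$ we end up with will satisfy $V\cap\mbox{int}({\cal C})\cap{\cal B}\cap{\cal H}\ne\emptyset$.

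The core step is to produce, for each $(g_1,g_2)\in V:=V_0$, infinitely many connected components of $F(G)$ ($G=\langle g_1,g_2\rangle$) that meet neither $F_\infty(G)$ nor $\hat K(G)$. Fix such $(g_1,g_2)$ and write $G=\langle g_1,g_2\rangle$. By mean stability there are nonempty open sets $\overline V\subset U$, $\overline U\subset F(G)$ and an $n\in\NN$ with $\gamma_n\circ\cdots\circ\gamma_1(\overline U)\subset V$ for all $\gamma$, and for each $z\in\CCI$ some $g\in G$ with $g(z)\in U$; moreover $U$ can be taken inside $\mbox{int}(\hat K(G))$ (it contains the unique bounded minimal set $L_{g_1,g_2}$, and by the mean-stability description the attracting behavior drives $\overline U$ to a neighborhood of $L_{g_1,g_2}$). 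Now take a point $z^\ast\in J(g_2)\setminus J(g_1)$: since $(g_1,g_2)\notin{\cal I}$ and by the quasicircle/surrounding picture from Proposition~\ref{p:bdyosc} (with, say, $K(g_1)\subsetneqq K(g_2)$), we have $J(g_1)\cap J(g_2)=\emptyset$ and $J(g_2)\subset\CCI\setminus\mbox{int}(K(g_1))$, so $z^\ast$ is not in $\hat K(G)$; choose a small disk $D$ around $z^\ast$ with $D\subset F(G)$ (possible because $J(g_2)\setminus J(g_1)\subset F(G)$ when $J(g_1)\cap J(g_2)=\emptyset$, using that $\mbox{int}(K(g_1))\subset F(G)$ and $F_\infty(g_2)=F_\infty(G)$). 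The Fatou component $W_0$ containing $D$ is a bounded Fatou component that is neither $F_\infty(G)$ nor contained in $\hat K(G)$. Then I would pull $W_0$ back: for each word $\omega=(\omega_1,\dots,\omega_k)\in\{1,2\}^k$ the set $(g_{\omega_k}\circ\cdots\circ g_{\omega_1})^{-1}(W_0)$ is a union of open subsets of $F(G)$, and by the open set condition satisfied by $(g_1,g_2)$ with $U_{g_1,g_2}=(\mbox{int}(K(g_2)))\setminus K(g_1)$ (Proposition~\ref{p:bdyosc}(b)), the preimages under distinct words are disjoint and accumulate on $J(G)$; each gives a Fatou component, and one must check that infinitely many of these components are \emph{distinct} and still avoid both $F_\infty(G)$ and $\hat K(G)$ — the avoidance of $\hat K(G)$ is automatic since preimages of points outside $\hat K(G)$ stay outside $\hat K(G)$ (as $G(\hat K(G))\subset\hat K(G)$), and the avoidance of $F_\infty(G)$ follows because $g_j^{-1}(F_\infty(G))\subset F_\infty(G)$ would force $W_0\subset F_\infty(G)$, contrary to construction. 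Distinctness of infinitely many of them follows from the disjointness of $h_\omega^{-1}(U_{g_1,g_2})$ over words $\omega$ and the fact that $W_0$ meets $U_{g_1,g_2}$ after finitely many forward iterations.

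Alternatively, and perhaps more cleanly, I would argue via Theorem~\ref{t:timportant}-\ref{spacemainth10}(iv)/(v): for $z^\ast\in J(g_2)\setminus J(g_1)$ the function $z\mapsto T(g_1,g_2,p,z)$ is locally constant on $F(G)$, equals $1$ on $F_\infty(G)$ and $0$ on $\hat K(G)$, yet at $z^\ast$ it must take a value strictly between $0$ and $1$ (because with positive probability the orbit of $z^\ast$ escapes — $g_1^m(z^\ast)\to\infty$ — while with positive probability it is captured into $\mbox{int}(K(g_1))\subset\hat K(G)$, using $g_2(K(g_1))\subset K(g_1)$ and the attracting fixed point there). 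So the level set $\{T=c\}$ for $c=T(g_1,g_2,p,z^\ast)\in(0,1)$ is a nonempty open subset of $F(G)$ disjoint from $F_\infty(G)\cup\hat K(G)$. Pulling this open set back by the infinitely many mutually disjoint inverse branches $h_\omega^{-1}$ attached to the open set condition, and using that each of the resulting open sets lies in a Fatou component avoiding $F_\infty(G)\cup\hat K(G)$ (again by forward-invariance of $F_\infty(G)$ and $\hat K(G)$), yields infinitely many such components; the disjointness of the branches guarantees that infinitely many distinct components arise. The last assertion, that $F(g_1,g_2)$ has infinitely many components, is then immediate, and $V\cap\mbox{int}({\cal C})\cap{\cal B}\cap{\cal H}\ne\emptyset$ comes from Lemma~\ref{l:pfmainth2} as noted. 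The main obstacle I anticipate is the bookkeeping that ensures the pulled-back open sets lie in \emph{pairwise distinct} Fatou components rather than collapsing into a few large ones — this is where the open set condition (disjointness of $h_\omega^{-1}(U_{g_1,g_2})$) and the fact that $J(g_1,g_2)$ separates them (it has empty interior and the components accumulate only on $J(G)$) do the essential work.
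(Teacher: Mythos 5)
Your framework — choosing $V$ via Theorem~\ref{t:timportant}-\ref{spacemainth10} (mean stability) and Theorem~\ref{spacemainth}-\ref{spacemainth4-2} ($\dim_{H}(J)<2$, hence $\mbox{int}(J)=\emptyset$), and getting $V\cap\mbox{int}({\cal C})\neq\emptyset$ from Theorem~\ref{t:spacetopology}-\ref{spacemainth2} — matches the paper exactly. The divergence is in the core claim, the existence of infinitely many Fatou components avoiding $F_{\infty}(G)\cup\hat{K}(G)$. The paper does not prove this from scratch: having established $\hat{K}(G)\neq\emptyset$, $J_{\ker}(G)=\emptyset$ (a consequence of mean stability) and $\mbox{int}(J(G))=\emptyset$, it simply invokes Theorem~3.34 of \cite{Splms10}, which packages precisely this implication. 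You instead attempt a direct construction, and it contains a genuine error.

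The error is the claim that $J(g_{2})\setminus J(g_{1})\subset F(G)$ and hence that one may pick $z^{\ast}\in J(g_{2})\setminus J(g_{1})$ with a disk $D\ni z^{\ast}$ inside $F(G)$. Since $g_{2}\in G=\langle g_{1},g_{2}\rangle$, we always have $F(G)\subset F(g_{2})$ and therefore $J(g_{2})\subset J(G)$; so $z^{\ast}$ lies in the Julia set and no such disk exists. This invalidates both versions of your construction, since the alternative argument also starts from $z^{\ast}\in J(g_{2})\setminus J(g_{1})$ and reasons about $T(g_{1},g_{2},p,\cdot)$ being locally constant near $z^{\ast}$, which it is not (by Theorem~\ref{t:spaceth1h2}-\ref{spacemainth8}, $J(G)$ is exactly the set where $T$ fails to be locally constant). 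The correct seed point must instead be taken in $U\cap F(G)$, which is nonempty because $J(G)\subsetneq\overline{U}$; in fact the proof of Lemma~\ref{l:pfmainth3} exhibits the open set $\mbox{int}(g_{2}^{-1}(K(g_{1})))\setminus g_{1}^{-1}(K(g_{2}))\subset U\cap F(G)$.

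There is a second, subtler gap you only half-notice: for rational semigroups one has $g^{-1}(J(G))\subset J(G)$ but \emph{not} $g^{-1}(F(G))\subset F(G)$, so pulling a Fatou component back under $g_{\omega}$ does not automatically yield a subset of the Fatou set; indeed $J_{\ker}(G)=\emptyset$ means that for some $z\in J(G)$ and some $g\in G$ one has $g(z)\in F(G)$. In this specific situation the nesting $K(g_{1})\subset g_{1}^{-1}(K(g_{2}))\subset g_{2}^{-1}(K(g_{1}))\subset K(g_{2})$ from Proposition~\ref{p:bdyosc} can be used to rule this out for the pulled-back sets inside $U$, but that is an argument you would need to supply, and even then one still has to verify that the pullbacks land in pairwise distinct Fatou components. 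These are exactly the details that the cited Theorem~3.34 of \cite{Splms10} settles in general under mean stability, $J_{\ker}=\emptyset$ and $\mbox{int}(J)=\emptyset$, which is why the paper routes the proof through it rather than reproving it.
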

\begin{proof}
By Theorem~\ref{t:timportant}-\ref{spacemainth10}-(i) and Theorem~\ref{spacemainth}-\ref{spacemainth4-2}, 
there exists a neighborhood $V$ of $(h_{1},h_{2})$ in ${\cal B}\cap {\cal H}$ such that 
for each $(g_{1},g_{2})\in V$, the set $\{ g_{1},g_{2}\} $ is mean stable and 
$\dim _{H}(J(g_{1},g_{2}))<2.$ 
Thus for each $(g_{1},g_{2})\in V$, we have  
$\hat{K}(g_{1},g_{2})\neq \emptyset, J_{\ker }(g_{1},g_{2})=\emptyset $ and 
$\mbox{int}(J(g_{1},g_{2}))=\emptyset .$ Combining this with \cite[Theorem 3.34]{Splms10}, 
we see that for each $(g_{1},g_{2})\in V$, 
 the set of connected components $U$ of $F(g_{1},g_{2})$ with
 $U\cap ( F_{\infty }(g_{1},g_{2})\cup \hat{K}(g_{1},g_{2}))=\emptyset $ is infinite. In particular, 
for each $(g_{1},g_{2})\in V$, there are infinitely many connected components of $F(g_{1},g_{2}).$   
Since $(g_{1},g_{2})\in (\partial {\cal C})\cap {\cal B}\cap {\cal H}$, 
Theorem~\ref{t:spacetopology}-\ref{spacemainth2} implies that 
$V\cap \mbox{int}({\cal C})\cap {\cal B}\cap {\cal H}\neq \emptyset $. 
Thus we have proved our theorem. 
\end{proof} 
\vspace{-6mm} 
\subsection{Proof of Theorem~\ref{t:spacend}}
\vspace{-2mm} 
In this subsection, we prove Theorems~\ref{t:spacend}. 
Also, we show some related results in which we do not assume hyperbolicity
 (Theorems~\ref{t:oscholgen}, \ref{t:oschg2}). 
We need the following. 
\vspace{-2.8mm} 
\begin{df}
Let $h=(h_{1},\ldots ,h_{m})\in (\Rat)^{m}.$ 
Let $p=(p_{1},p_{2},\ldots, p_{m})\in (0,1)^{m}$ with $\sum _{j=1}^{m}p_{j}=1.$ 
Let $\tilde{\mu }$ be an $\tilde{h}$-invariant Borel probability measure on $J(\tilde{h})$
 (i.e., $\tilde{\mu }(A)=\tilde{\mu }(\tilde{h}^{-1}(A))$ for each Borel subset $A$ of $J(\tilde{h})$).  
We set 
\vspace{-5mm} 
$$v(h,p,\tilde{\mu }):=\frac{-\int _{\Sigma _{m}\times \CCI }\log p_{w_{1}}d\tilde{\mu }(w,x)}
{\int _{\Sigma _{m}\times \CCI }\log \| D(h_{w_{1}})_{x}\| _{s}d\tilde{\mu}(w,x)}\in (0,\infty)$$
\vspace{-2mm} 
(when the denominator is positive).  
\end{df}
\begin{df}
\label{d:attminset}
Let $(h_{1},\ldots ,h_{m})\in (\Rat)^{m}$ and let $G=\langle h_{1},\ldots ,h_{m}\rangle .$ 
Let $L$ be a minimal set for $(G,\CCI ).$ 
We say that $L$ is attracting (for $(G,\CCI )$) if there exist non-empty open subsets $U,V $ of 
$F(G)$ and a number $n\in \NN $ such that both of the following hold. 
\begin{itemize}
\vspace{-2mm} 
\item $L\subset V\subset \overline{V}\subset U\subset \overline{U}\subset F(G), \sharp (\CCI \setminus V)\geq 3.$
\vspace{-2mm} 
\item For each $(w_{1},\ldots ,w_{n})\in \{ 1,\ldots ,m\} ^{n}$, 
$h_{w_{1}}\cdots h_{w_{n}}(\overline{U})\subset V.$
\end{itemize}
\end{df}
\vspace{-3mm} 
\begin{df}
Let $\tau \in {\frak M}_{1}(\Rat )$ and let 
$L$ be a minimal set for $(G_{\tau },\CCI )$. 
For each $z\in \CCI $, we set 
$T_{L,\tau }(z)=\tilde{\tau }(\{ \gamma =(\gamma _{1},\gamma _{2},\ldots )\in (\Rat)^{\NN }\mid 
d(\gamma _{n}\cdots \gamma_{1}(z),L)\rightarrow 0 \mbox{ as }n\rightarrow \infty \} ).$

\end{df}
\vspace{-4mm} 
\begin{df}
Let $\tau \in {\frak M}_{1}(\Rat ).$ 
We denote by $M_{\tau }^{\ast }:{\frak M}_{1}(\CCI )\rightarrow {\frak M}_{1}(\CCI )$ 
the dual map of $M_{\tau }:C(\CCI )\rightarrow C(\CCI )$, i.e., 
$\int _{\CCI }\varphi (z) d(M_{\tau }^{\ast }(\mu ))(z)=\int _{\CCI} M_{\tau }(\varphi )(z) d\mu (z)$ 
for each $\varphi \in C(\CCI)$ and $\mu \in {\frak M}_{1}(\CCI ).$  
By using the topological embedding $z\in \CCI \mapsto \delta _{z}\in {\frak M}_{1}(\CCI )$, 
we regard $\CCI $ as a compact subset of 
the compact metric space ${\frak M}_{1}(\CCI ).$ 
We denote by $F_{pt}^{0}(\tau )$ the set of points $z_{0}\in \CCI $ 
for which the sequence $\{ (M_{\tau }^{\ast })^{n}|_{\CCI }:\CCI \rightarrow {\frak M}_{1}(\CCI )\} _{n\in \NN} $ 
is equicontinuous at the one point $z_{0}.$  
\end{df}
\vspace{-4mm} 
\begin{lem}
\label{l:attminset}
Let $(h_{1},\ldots ,h_{m})\in (\emRat)^{m}$ and let $G=\langle h_{1},\ldots ,h_{m}\rangle .$ 
Let $L$ be an attracting minimal set for $(G,\CCI ).$ 
Let $p=(p_{1},\ldots ,p_{m})\in (0,1)^{m}$ with $\sum _{j=1}^{m}p_{j}=1$ and 
let $\tau =\sum _{j=1}p_{j}\delta _{h_{j}}.$  
Then $T_{L,\tau }$ is locally constant on $F(G)$ and $M_{\tau }(T_{L,\tau })=T_{L,\tau }.$  
Moreover, $T_{L,\tau }$ is continuous at every point of $F_{pt}^{0}(\tau ).$ 
\end{lem}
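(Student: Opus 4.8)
The plan is to prove the three assertions of Lemma~\ref{l:attminset} about $T_{L,\tau }$ separately, exploiting the defining property of an attracting minimal set $L$ together with elementary properties of the transition operator $M_{\tau }$ and the dual map $M_{\tau }^{\ast }$. First I would record the basic functional equation. Since $\tau =\sum _{j=1}^{m}p_{j}\delta _{h_{j}}$, conditioning on the first map chosen in the random dynamics and using the Markov property of the i.i.d.\ system gives, for every $z\in \CCI $,
$$T_{L,\tau }(z)=\sum _{j=1}^{m}p_{j}\,T_{L,\tau }(h_{j}(z))=M_{\tau }(T_{L,\tau })(z),$$
because the event $\{ d(\gamma _{n}\cdots \gamma _{1}(z),L)\to 0\} $ restricted to $\gamma _{1}=h_{j}$ is, up to a time shift, the corresponding event for the starting point $h_{j}(z)$ and the tail sequence $(\gamma _{2},\gamma _{3},\dots )$, which has the same law $\tilde{\tau }$. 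This establishes $M_{\tau }(T_{L,\tau })=T_{L,\tau }$.

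Next I would prove that $T_{L,\tau }$ is locally constant on $F(G)$. Let $z_{0}\in F(G)$ and let $\Omega $ be the connected component of $F(G)$ containing $z_{0}$; since $\{ g(\Omega )\} _{g\in G}$ is a normal family, by passing to a subsequence one may compare the orbit of $z_{0}$ with the orbit of any nearby point $z$ in a small neighborhood $U_{0}\subset \Omega $ of $z_{0}$: for each word $w=(w_{1},\dots ,w_{k})$ the diameter of $h_{w_{k}}\cdots h_{w_{1}}(U_{0})$ stays controlled, so $h_{w_{k}}\cdots h_{w_{1}}(z)$ and $h_{w_{k}}\cdots h_{w_{1}}(z_{0})$ remain spherically close along every branch. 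Using the attracting property of $L$ (the open sets $U,V$ with $h_{w_1}\cdots h_{w_n}(\overline U)\subset V$ for all length-$n$ words, and $\overline V\subset U\subset \overline U\subset F(G)$), once a finite orbit segment enters $U$ it is trapped and converges to $L$; and once it leaves a neighborhood of $\overline U$ it can only return to $L$ by re-entering $U$. Since the closed set $\overline U$ and its complement are separated, for $U_0$ small enough the two orbits $(\gamma _n\cdots \gamma _1(z))_n$ and $(\gamma _n\cdots \gamma _1(z_0))_n$ either both converge to $L$ or both do not, for each fixed $\gamma $. Hence $T_{L,\tau }(z)=T_{L,\tau }(z_{0})$ for all $z\in U_{0}$, i.e.\ $T_{L,\tau }$ is locally constant on $F(G)$.

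Finally, for continuity of $T_{L,\tau }$ at a point $z_{0}\in F_{pt}^{0}(\tau )$, I would use the identity $\int \varphi \, d((M_{\tau }^{\ast })^{n}(\delta _{z}))=M_{\tau }^{n}(\varphi )(z)$ and the equicontinuity of $\{ (M_{\tau }^{\ast })^{n}|_{\CCI }\} _{n}$ at $z_{0}$. The attracting property of $L$, together with $\overline{U}\subset F(G)$, implies that for a suitable open $U$ with $\overline U\subset F(G)$ the function $\chi := T_{L,\tau }$ agrees with the limit $\lim _{n\to \infty }M_{\tau }^{n}(\varphi )$ for a continuous $\varphi $ with $\varphi =1$ on a neighborhood of $L$; more precisely one approximates $T_{L,\tau }$ by $M_{\tau }^{n}(\varphi _{L})$ where $\varphi _L\in C(\CCI )$ satisfies $\varphi _L=1$ near $L$ and $\varphi _L=0$ outside $U$, and by the trapping property $M_{\tau }^{n}(\varphi _L)(z)\to T_{L,\tau }(z)$ for all $z$. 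Given $\epsilon >0$, equicontinuity at $z_{0}$ yields a neighborhood $W$ of $z_{0}$ and, for each $z\in W$, control $|M_{\tau }^{n}(\varphi _L)(z)-M_{\tau }^{n}(\varphi _L)(z_{0})|<\epsilon $ uniformly in $n$; letting $n\to \infty $ gives $|T_{L,\tau }(z)-T_{L,\tau }(z_{0})|\le \epsilon $, so $T_{L,\tau }$ is continuous at $z_{0}$. The main obstacle I anticipate is the bookkeeping in the local-constancy argument: one must carefully combine the normality of $\{ g|_{\Omega }\} _{g\in G}$ (to keep the two orbits close along every branch simultaneously) with the separation between $\overline U$ and its complement provided by the attracting property, so that the dichotomy ``converges to $L$ or not'' transfers from $z_{0}$ to nearby $z$ uniformly over all $\gamma \in (\emRat)^{\NN }$ — it is here that the precise definition of an attracting minimal set (Definition~\ref{d:attminset}), as opposed to an arbitrary minimal set, is used in an essential way.
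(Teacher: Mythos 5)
Your argument is correct and follows essentially the same route as the paper's: trap orbits in $U$, use equicontinuity of $G$ on $F(G)$ for local constancy, and represent $T_{L,\tau}$ as $\lim_n M_\tau^n(\varphi_L)$ to get continuity on $F^{0}_{pt}(\tau)$. The one genuinely different step is that you establish $M_\tau(T_{L,\tau})=T_{L,\tau}$ directly by conditioning on the first map $\gamma_1$ (a clean Markov-property computation), whereas the paper deduces it from the representation $T_{L,\tau}=\lim_n M_\tau^n(\varphi_L)$; your derivation is shorter, although both proofs need the limit representation anyway for the $F^{0}_{pt}$ step. One thing to make precise: the assertion that ``once a finite orbit segment enters $U$ it is trapped and converges to $L$'' (together with $\| D(h_{\gamma_n}\cdots h_{\gamma_1})_z\|_s\to 0$, which your local-constancy argument is implicitly leaning on) is \emph{not} an immediate consequence of Definition~\ref{d:attminset} — the set $V$ can be much larger than a small neighborhood of $L$, and the definition only guarantees the orbit re-enters $V$ every $n$ steps. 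This convergence is \cite[Remark~3.5]{Scp}, which the paper cites and which rests on a hyperbolic-metric/Schwarz contraction argument plus minimality of $L$; you should cite it rather than attribute the trapping-and-convergence to the bare definition. Likewise your $F^{0}_{pt}$ equicontinuity argument is a correct re-derivation of \cite[Lemma~4.2-2]{Splms10}; the uniformity in $n$ you need is fine because $\varphi_L$ is a fixed continuous function, so weak-$\ast$ equicontinuity of $\{(M_\tau^\ast)^n|_{\CCI}\}$ at $z_0$ does control $|M_\tau^n(\varphi_L)(z)-M_\tau^n(\varphi_L)(z_0)|$ uniformly in $n$.
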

\begin{proof}
Let $U$ be the open set coming from Definition~\ref{d:attminset} for $L$. 
By \cite[Remark 3.5]{Scp}, 
for each $z\in U$ and for each $\gamma \in \Sigma _{m}$, 
we have $d(h_{\gamma _{n}}\circ \cdots \circ h_{\gamma _{1}}(z),L)\rightarrow 0$ as $n\rightarrow \infty $ and 
$\| D(h_{\gamma _{n}}\circ \cdots \circ h_{\gamma _{1}})_{z}\| _{s}\rightarrow 0$ as $n\rightarrow \infty .$ 
Hence, for each $\omega \in \Sigma _{m}$, for each connected component $W$ of $F(G)$, 
if $x$ is a point of $W$ and 
$d(h_{\omega _{n}}\circ \cdots \circ h_{\omega _{1}}(x), L)\rightarrow 0$ as $n\rightarrow \infty $, 
then for any $y\in W$, we have 
$d(h_{\omega _{n}}\circ \cdots \circ h_{\omega _{1}}(y), L)\rightarrow 0$ as $n\rightarrow \infty $. 
Therefore $T_{L,\tau }$ is constant on $W.$ Thus $T_{L,\tau }$ is locally constant on $F(G).$ 

Let $\epsilon >0$ be a small number such that $B(L,\epsilon )\subset U.$ 
Let $\varphi \in C(\CCI )$ be an element such that 
$\varphi |_{L}\equiv 1$ and $\varphi |_{\CCI \setminus B(L,\epsilon )}\equiv 0.$ 
Since for each $z\in U$ and for each $\gamma \in \Sigma _{m}$, 
we have $d(h_{\gamma _{n}}\circ \cdots \circ 
h_{\gamma _{1}}(z),L)\rightarrow 0$ as $n\rightarrow \infty $, it follows that 
$M_{\tau }^{n}(\varphi )(z)\rightarrow T_{L,\tau }(z)$ as $n\rightarrow \infty $ for each $z\in \CCI .$ 
Therefore by \cite[Lemma 4.2-2]{Splms10}, we obtain that 
$T_{L,\tau }$ is continuous at every point of $F_{pt}^{0}(\tau ).$   
Moreover, for each $z\in\CCI $, 
$M_{\tau}(T_{L,\tau })(z)=M_{\tau }(\lim _{n\rightarrow \infty }M_{\tau }^{n}(\varphi ))(z)=
T_{L,\tau }(z).$ 
Thus we have proved our lemma. 
\end{proof}

\begin{df}
Let $h=(h_{1},\ldots ,h_{m})\in (\Rat) ^{m}.$ 
Let $w=(w_{1},\ldots ,w_{n})\in \{ 1,\ldots ,m\} ^{n}.$ 
We set $h_{w}=h_{w_{n}}\circ \cdots \circ h_{w_{1}}.$ 
Moreover, for each $\gamma =(\gamma _{1}, \gamma _{2},\ldots )\in \Sigma _{m}$ and for each $n\in \NN $, 
we set $\gamma |_{n}=(\gamma _{1},\ldots ,\gamma _{n})\in \{ 1,\ldots ,m\} ^{n}.$ 
\end{df}
\begin{thm}
\label{t:oscholgen} Let $m\in \NN $ with $m\geq 2.$ 
Let $h=(h_{1},\ldots ,h_{m})\in (\emRat) ^{m}.$ Let $G=\langle h_{1},\ldots ,h_{m}\rangle .$ 
Suppose that $\sharp J(G)\geq 3.$ 
Let $p=(p_{1},\ldots ,p_{m})\in (0,1)^{m}$ with $\sum _{j=1}^{m}p_{j}=1$ and 
let $\tau =\sum _{j=1}^{m}p_{j}\delta _{h_{j}}.$  
Suppose that $h$ satisfies the open set condition with an open set $U.$ 
Let $\tilde{\mu }$ be an $\tilde{h}$-invariant ergodic Borel probability measure on $J(\tilde{h})$ 
such that $\int \log \| D(\gamma _{1})_{x}\| _{s}d\tilde{\mu }(\gamma ,x)>0.$  
Let $\mu := (\pi _{\CCI })_{\ast }(\tilde{\mu }).$ 
Suppose that $\mu (U\setminus P(G))>0.$ 
Let $L$ be an attracting minimal set for $(G,\CCI )$. 
Suppose that there exists a point $\xi \in U$ such that $T_{L, \tau }$ is not constant on any neighborhood of $\xi .$ 
Then for $\mu $-a.e. $z_{0}\in J(G)$, we have that 
$z_{0}\in F_{pt}^{0}(\tau )$, $T_{L,\tau }$ is continuous at $z_{0}$ 
and $\emHol (T_{L,\tau },z_{0})\leq v(h,p,\tilde{\mu }).$   
\end{thm}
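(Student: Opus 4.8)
\textbf{Proof proposal for Theorem~\ref{t:oscholgen}.}

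The plan is to estimate the pointwise H\"older exponent of $T_{L,\tau }$ along a typical orbit of the skew product by tracking how fast the ``deviation from certainty'' accumulates under random pull-backs, and then to apply the Birkhoff ergodic theorem to $\tilde\mu$ to identify the rate with the ratio $v(h,p,\tilde\mu )$. First I would fix a point $\xi \in U$ at which $T_{L,\tau }$ is non-constant on every neighborhood; by Lemma~\ref{l:attminset} we know $T_{L,\tau }$ is locally constant on $F(G)$ and $M_\tau (T_{L,\tau })=T_{L,\tau }$, and since $h$ satisfies the open set condition with $U$, the inverse branches $h_w^{-1}$ for $w\in\{1,\dots,m\}^n$ restricted to $U$ have mutually disjoint images. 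This means that for $z$ near such a point, $T_{L,\tau }(z)$ differs from $T_{L,\tau }$ at a nearby point of the same cylinder by a quantity controlled by the single weight $p_w = p_{w_1}\cdots p_{w_n}$ of the relevant word, because all the other words contribute identically (this is exactly the mechanism used in Lemma~\ref{l:oscncp} and in the proof of Lemma~\ref{l:pfmainth8}). Concretely, if $z_0\in J(G)$ and $\gamma \in \Sigma _m$ is such that $h_{\gamma|_n}(z_0)$ returns near $\xi$ infinitely often in $U$, then the oscillation of $T_{L,\tau }$ on a ball of radius $\asymp \| D(h_{\gamma |_n})_{z_0}\| _s^{-1}\cdot\mathrm{const}$ around $z_0$ is bounded below by $c\cdot p_{\gamma|_n}$, hence
$$
\mathrm{H\ddot ol}(T_{L,\tau },z_0)\le \liminf_{n\to\infty}\frac{-\log p_{\gamma|_n}}{\log \| D(h_{\gamma|_n})_{z_0}\| _s}.
$$

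The second step is to make ``returns to $U$ near $\xi$ happen $\tilde\mu$-a.e. with the correct frequency.'' Since $\tilde\mu$ is $\tilde h$-invariant and ergodic with $\int \log\|D(\gamma_1)_x\|_s\,d\tilde\mu>0$ (so the fiberwise Lyapunov exponent is positive), and since $\mu(U\setminus P(G))>0$, the set $\{(\gamma,x):x\in U\setminus P(G)\}$ has positive $\tilde\mu$-measure; by Poincar\'e recurrence and ergodicity, $\tilde\mu$-a.e. $(\gamma,x)$ has forward $\tilde h$-orbit hitting this set with positive frequency, and in fact, using that $T_{L,\tau}$ is non-constant near $\xi$, one can push the returns to accumulate at a point where non-constancy holds. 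Here I would use the projection $\mu=(\pi_{\CCI})_*\tilde\mu$ and the fact that $P(G)$ has $\mu$-measure zero (from $\mu(U\setminus P(G))>0$ together with ergodicity and invariance, since $P(G)$ is forward invariant) to ensure that the inverse branches encountered along the orbit are all univalent on fixed-size disks, so the spherical derivative distortion is bounded. Then the Birkhoff ergodic theorem applied to the functions $(\gamma,x)\mapsto -\log p_{\gamma_1}$ and $(\gamma,x)\mapsto \log\|D(h_{\gamma_1})_x\|_s$ gives, for $\tilde\mu$-a.e. $(\gamma,x)$,
$$
\frac{1}{n}\sum_{k=0}^{n-1}\bigl(-\log p_{(\tilde h^k(\gamma,x))_1}\bigr)\longrightarrow -\int\log p_{\gamma_1}\,d\tilde\mu,\qquad
\frac{1}{n}\log\| D(h_{\gamma|_n})_x\| _s\longrightarrow \int\log\|D(\gamma_1)_x\|_s\,d\tilde\mu,
$$
using the chain rule $\|D(h_{\gamma|_n})_x\|_s=\prod_{k=0}^{n-1}\|D(h_{\gamma_{k+1}})_{h_{\gamma|_k}(x)}\|_s$. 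Dividing, the $\liminf$ above equals $v(h,p,\tilde\mu)$ for $\tilde\mu$-a.e. $(\gamma,x)$, hence for $\mu$-a.e. $z_0$ (by Fubini over the $\gamma$-direction, for $\mu$-a.e. $z_0$ there is a $\tilde\mu$-positive-measure set of $\gamma$'s above it doing the job).

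For the continuity claim and the assertion $z_0\in F_{pt}^0(\tau)$: by Lemma~\ref{l:attminset}, $T_{L,\tau }$ is continuous at every point of $F_{pt}^0(\tau )$, so it suffices to show $\mu$-a.e. $z_0\in J(G)$ lies in $F_{pt}^0(\tau )$. I would establish this by the standard equicontinuity-of-the-dual-operator argument: along a typical orbit the fiberwise derivatives $\|D(h_{\gamma|_n})_{z_0}\|_s\to\infty$ (positive Lyapunov exponent) so small neighborhoods of $z_0$ are contracted by $h_{\gamma|_n}$, and since $L$ is attracting with attracting open set $U$, once an orbit enters $U$ it stays near $L$; combined with the open set condition this forces $\{(M_\tau^*)^n|_{\CCI}\}$ to be equicontinuous at $z_0$, as in the proofs of Theorem~\ref{t:spacend} and \cite[Lemma 4.2]{Splms10}. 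The main obstacle I anticipate is the lower bound on oscillation: one must verify that the contributions of all words \emph{other} than the distinguished returning word genuinely cancel (or are negligible) when comparing $T_{L,\tau }(z)$ to its value at the nearby cylinder point, which requires the open set condition to guarantee that only one inverse branch lands $z$ near $\xi$ at the relevant scale, plus a distortion estimate controlling the shape of $h_{\gamma|_n}^{-1}(B(\xi,\epsilon))$ — this is where the hypotheses $\mu(U\setminus P(G))>0$ and the non-constancy of $T_{L,\tau}$ near $\xi$ are essential, and handling it carefully is the technical heart of the argument.
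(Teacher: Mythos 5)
Your proposal identifies the right mechanism for the H\"older estimate (the open set condition collapses $M_{\tau}(T_{L,\tau})=T_{L,\tau}$ to a single-weight estimate along the distinguished word, Koebe bounds the ball radius, and Birkhoff applied to $-\log p_{\gamma_{1}}$ and $\log\|D(h_{\gamma_{1}})_{x}\|_{s}$ gives the rate $v(h,p,\tilde{\mu})$), and this is essentially the paper's argument. However, your continuity/$F_{pt}^{0}(\tau)$ paragraph contains a genuine error: you conflate the OSC set $U$ with the attracting open neighborhood of $L$ from Definition~\ref{d:attminset}, writing ``since $L$ is attracting with attracting open set $U$, once an orbit enters $U$ it stays near $L$.'' These are different sets; in the applications of this theorem in the paper one has $U=(\mbox{int}(K(h_{2})))\setminus K(h_{1})$ while $L\subset\mbox{int}(K(h_{1}))$, so they are disjoint, and nothing in the hypotheses makes $U$ absorbing or close to $L$. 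The paper's actual route to $\mu$-a.e.\ membership in $F_{pt}^{0}(\tau)$ is different: one shows $\tilde{\mu}\bigl(\bigcap_{n\geq 0}\tilde{h}^{-n}(\pi_{\CCI}^{-1}(U\setminus P(G)))\bigr)=1$ by ergodicity and invariance, then proves a unique-coding claim, namely that any $b\in J(G)$ whose code-orbit stays in $U$ forever admits a \emph{unique} $\alpha\in\Sigma_{m}$ with $b\in\bigcap_{n}h_{\alpha|_{n}}^{-1}(J(G))$ (this is precisely where OSC together with $J(G)\subset\overline{U}$ enters), and then invokes \cite[Lemma 4.3]{Splms10} (not Lemma 4.2). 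The unique-coding claim is the step your sketch is missing.

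The second gap you flag yourself: the oscillation lower bound is ``the main obstacle'' you describe but do not carry out. The paper's verification requires choosing a \emph{specific} recurrence point $a\in\mbox{supp}\,\tilde{\mu}$ whose code-orbit stays in $U\setminus P(G)$ forever (so that $T_{L,\tau}$ is continuous at $a$ by the unique-coding argument above), using Lemma~\ref{l:oscncp} to transport the non-constancy hypothesis from $\xi$ to $a$, and then, for each return time $n_{j}$, showing that for every word $\rho\neq\gamma|_{n_{j}}$ of the same length the image $h_{\rho}(\zeta_{j}(W))$ lies in a single Fatou component: this follows because $\zeta_{j}(W)\subset h_{\gamma|_{n_{j}}}^{-1}(U)$ is open and disjoint from $h_{\rho}^{-1}(U)$, hence disjoint from $\overline{h_{\rho}^{-1}(U)}\supset h_{\rho}^{-1}(J(G))$. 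Only then does $M_{\tau}^{n_{j}}$ reduce to the single term $p_{\gamma|_{n_{j}}}$ when comparing $T_{L,\tau}(z_{0})$ and $T_{L,\tau}(\zeta_{j}(b))$ for a point $b$ near $a$ with $T_{L,\tau}(b)\neq T_{L,\tau}(a)$. Without this recurrence-point bookkeeping and disjointness step, the lower bound $|T_{L,\tau}(z_{0})-T_{L,\tau}(\zeta_{j}(b))|\geq\tfrac{\eta}{2}\,p_{\gamma|_{n_{j}}}$ does not follow, so as written your argument for the H\"older exponent is a plausible blueprint rather than a proof.
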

\begin{proof}
We assume that there exists an attracting minimal set $L$ for $(G,\CCI ).$ 
Let $\tilde{U}:=\pi _{\CCI }^{-1}(U\setminus P(G)).$ 
Then $\tilde{h}^{-1}(\tilde{U})\subset \tilde{U}$ and 
$\tilde{\mu }(\tilde{U})>0.$ 
Hence 
$\tilde{\mu }(\cap _{n=0}^{\infty }\tilde{h}^{-n}(\tilde{U}))
=\lim _{n\rightarrow \infty }\tilde{\mu }(\tilde{h}^{-n}(\tilde{U}))=
\tilde{\mu }(\tilde{U})>0.$ 
Therefore by the ergodicity of $\tilde{\mu }$, we obtain 
$\tilde{\mu }(\cap _{n=0}^{\infty }\tilde{h}^{-n}(\tilde{U}))=1.$ 
Let $(\omega ,a)\in \mbox{supp}(\tilde{\mu })\cap 
\cap _{n=0}^{\infty }\tilde{h}^{-n}(\tilde{U})\cap J(\tilde{h})$ be a point. 
By Birkhoff's ergodic theorem, we have that 
for $\tilde{\mu }$-a.e. $(\gamma ,x)\in J(\tilde{h})$, there exists a strictly increasing 
sequence $\{ n_{j}\} $ in $\NN $ such that 
$\tilde{h}^{n_{j}}(\gamma ,x)\rightarrow (\omega ,a)$ as $j\rightarrow \infty .$ 
Let $\tilde{A}:=\{ (\gamma ,x)\in J(\tilde{h})\mid \exists \{ n_{j}\} \rightarrow \infty 
\mbox{ s.t. }  
\tilde{h}^{n_{j}}(\gamma ,x)\rightarrow (\omega ,a)\} .$ 
Then $\tilde{\mu }(\tilde{A})=1.$ 
We now prove the following claim. 

\noindent Claim. For each $b\in \pi _{\CCI }(\cap _{n=0}^{\infty }\tilde{h}^{-n}(\tilde{U}))\cap J(G)$, 
there exists a unique $\alpha \in \Sigma _{m}$ such that 
$b\in \cap _{j=1}^{\infty }h_{w_{1}}^{-1}\cdots h_{w_{j}}^{-1}(J(G)).$ 
 
To prove this claim, since $b\in \pi _{\CCI }(\cap _{n=0}^{\infty }\tilde{h}^{-n}(\tilde{U}))$, 
there exists an element $\alpha \in \Sigma _{m}$ such that 
$(\alpha ,b)\in \cap _{n=0}^{\infty }\tilde{h}^{-n}(\tilde{U}).$ 
Therefore $b\in h_{\alpha |_{n}}^{-1}(U)$ for each $n.$ 
Moreover, since $b\in J(G)=\pi _{\CCI }(J(\tilde{h}))$ (see \cite[Lemma 3.5]{SdpbpIII}),  
there exists an element $\alpha '\in \Sigma _{m}$ such that 
$(\alpha ', b)\in J(\tilde{h}).$ 
Then $h_{\alpha '|_{n}}(b)\in J(G)$ for each $n.$ Since $J(G)\subset \overline{U}$, 
we obtain $b\in \overline{h_{\alpha '|_{n}}^{-1}(U)}.$ 
Hence $h_{\alpha |_{n}}^{-1}(U)\cap \overline{h_{\alpha '|_{n}}^{-1}(U)}\neq \emptyset .$ 
Therefore $h_{\alpha |_{n}}^{-1}(U)\cap h_{\alpha '|_{n}}^{-1}(U)\neq \emptyset .$ 
Since $h$ satisfies the open set condition with $U$, it follows that 
$\alpha |_{n}=\alpha '|_{n}$ for each $n\in \NN .$ Hence $\alpha =\alpha '.$ 
Therefore $b\in \cap _{n=0}^{\infty }h_{\alpha |_{n }}^{-1}(J(G)).$ 
Let $\beta \in \Sigma _{m}$ be an element such that 
$b\in \cap _{n=0}^{\infty }h_{\beta |_{n}}^{-1}(J(G)).$ 
Then $b\in h_{\alpha |_{n}}^{-1}(U)\cap h_{\beta |_{n}}^{-1}(J(G))
=h_{\alpha |_{n}}^{-1}(U)\cap \overline{h_{\beta |_{n}}^{-1}(U)} $ for each $n\in \NN .$ 
 Since $h$ satisfies the open set condition with $U$, 
 we obtain that $\alpha |_{n}=\beta |_{n}$ for each $n\in \NN .$ 
 Hence $\alpha =\beta .$ Therefore our claim holds. 
 
By the above claim and \cite[Lemma 4.3]{Splms10}, 
we obtain that 
$\pi _{\CCI }(\cap _{n=0}^{\infty }\tilde{h}^{-n}(\tilde{U}))\cap J(G)\subset F_{pt}^{0}(\tau ).$ 
By Lemma~\ref{l:attminset}, 
for each point 
$z_{0}\in \pi _{\CCI }(\cap _{n=0}^{\infty }\tilde{h}^{-n}(\tilde{U}))\cap J(G)$, 
the function $T_{L,\tau }$ is continuous at $z_{0}.$ 
Since $a\in \pi _{\CCI }(\cap _{n=0}^{\infty }\tilde{h}^{-n}(\tilde{U}))\cap J(G)$, 
it follows that $T_{L,\tau }$ is continuous at $a.$ 
Moreover, let $\{ K_{n}\} $ be a sequence of compact subsets of $\cap _{n=0}^{\infty }\tilde{h}^{-n}(\tilde{U})$ 
such that $\tilde{\mu }((\cap _{n=0}^{\infty }\tilde{h}^{-n}(\tilde{U}))\setminus \cup _{n=1}^{\infty }K_{n})=0.$ 
Then $B:=\pi _{\CCI }(\cup _{n=1}^{\infty }K_{n})\cap J(G)$ is Borel measurable and 
$\mu (B)=1.$ 
Thus we obtain that for $\mu $-a.e.$z_{0}\in J(G)$, we have 
$z_{0}\in F_{pt}^{0}(\tau )$ and 
$T_{L,\tau }$ is continuous at $z_{0}.$ 

We now want to prove that for $\mu $-a.e. $z_{0}\in J(G)$, 
$\Hol(T_{L,\tau },z_{0})\leq v(h,p,\tilde{\mu }).$ 
We have 
$v(h,p,\tilde{\mu })<\infty $, i.e., $\int |\log \| D(\gamma _{1})_{x}\| _{s}|d\tilde{\mu }(\gamma ,x)<\infty .$  
For each $k\in \NN $, let $\theta _{k}=v(h,p,\tilde{\mu })+\frac{1}{k}.$ 
Then \\ 
$\int |\log (p_{\gamma _{1}}\| D(h_{\gamma _{1}})_{x}\| _{s}^{\theta _{k}})| d\tilde{\mu }(\gamma ,x) <\infty $. 
Let $$\tilde{A}_{k}:= \{ (\gamma ,x)\in J(\tilde{h})\mid 
\frac{1}{n}\log (p_{\gamma _{n}}\cdots p_{\gamma _{1}}
\| D(h_{\gamma |_{n}})_{x}\| _{s}^{\theta _{k}})\rightarrow 
\int \log (p_{\gamma _{1}}\| D(h_{\gamma _{1}})_{x}\| _{s}^{\theta _{k}}) d\tilde{\mu }(\gamma ,x) \mbox{ as }n\rightarrow \infty \} $$ for each $k\in \NN .$ 
By Birkhoff's ergodic theorem, we have $\tilde{\mu }(\tilde{A}_{k})=1.$ 
Moreover, since $$\int \log (p_{\gamma _{1}}\| D(h_{\gamma _{1}})_{x}\| _{s}^{\theta _{k}}) d\tilde{\mu }(\gamma ,x) >0,$$  
we obtain that for each $(\gamma ,x)\in \tilde{A}_{k}$, 
$p_{\gamma _{n}}\cdots p_{\gamma _{1}}
\| D(h_{\gamma |_{n}})_{x}\| _{s}^{\theta _{k}}\rightarrow \infty $ as 
$n\rightarrow \infty .$  
 Let $(\gamma ,x)\in \tilde{A}\cap \tilde{A}_{k}.$ 
Then there exists 
a strictly increasing sequence $\{ n_{j}\} $ in $\NN $ 
such that $h_{\gamma |_{n_{j}}}(x)\rightarrow a$ as $j\rightarrow \infty .$ 
Let $W$ be a small open disk neighborhood of $a$ with $\overline{W}\subset U\setminus P(G)$. 
We may assume that $h_{\gamma |_{n_{j}}}(x)\in W$ for each $j.$  
Let $\zeta _{j}:W \rightarrow \CCI $ be the inverse branch of 
$h_{\gamma |_{n_{j}}}$ with $\zeta _{j}(h_{\gamma |_{n_{j}}}(x))=x.$ 
Then $\zeta _{j}(W)\subset h_{\gamma |_{n_{j}}}^{-1}(W)
\subset h_{\gamma |_{n_{j}}}^{-1}(U).$ 
Since $h$ satisfies the open set condition with $U$ and $J(G)\subset \overline{U}$, 
we obtain that for each $(\rho _{1},\ldots ,\rho _{n_{j}})\in \{ 1,\ldots ,m\} ^{n_{j}}$ 
with $(\gamma _{1},\ldots ,\gamma _{n_{j}})\neq (\rho _{1},\ldots ,\rho _{n_{j}})$, 
we have
$\zeta _{j}(W)\cap (h_{\rho _{n_{j}}}\cdots h_{\rho _{1}})^{-1}(J(G))=\emptyset .$ 
In particular, $h_{\rho _{n_{j}}}\cdots h_{\rho _{1}}(\zeta _{j}(W))$ 
is included in $F(G).$ Since  $h_{\rho _{n_{j}}}\cdots h_{\rho _{1}}(\zeta _{j}(W))$ 
is connected, it is included in a connected component of $F(G).$ 
Since $T_{L,\tau }$ is locally constant on $F(G)$ and 
$M_{\tau }(T_{L,\tau })=T_{L,\tau }$ (see Lemma~\ref{l:attminset}), it follows that 
for each $y\in \zeta _{j}(W)$, 
we have 
$$|T_{L,\tau }(x)-T_{L,\tau }(y)|=p_{\gamma _{n_{j}}}\cdots p_{1}
|T_{L,\tau }(h_{\gamma |_{n_{j}}}(x))-T_{L,\tau }(h_{\gamma |_{n_{j}}}(y))|.$$ 
Since we are assuming that there exists a point $\xi \in U$ such that 
$T_{L,\tau }$ is not constant in any neighborhood of $\xi $, 
Lemma~\ref{l:oscncp} implies that 
$T_{L,\tau }$ is not constant in any neighborhood of $a.$ 
Therefore there exists a point $b\in W$ such that 
$T_{L,\tau }(a)\neq T_{L,\tau }(b).$ 
Let $\eta =|T_{L,\tau }(a)-T_{L,\tau }(b)|>0.$ 
Let $c_{j}:=\zeta _{j}(b)\in\zeta _{j}(W).$ 
Since $T_{L,\tau }$ is continuous at $a$, 
for each large $j$, we have 
$$|T_{L,\tau }(h_{\gamma |_{n_{j}}}(x))-T_{L,\tau }(h_{\gamma |_{n_{j}}}(c_{j}))|
=|T_{L,\tau }(h_{\gamma |_{n_{j}}}(x))-T_{L,\tau }(b)|\geq \frac{\eta }{2}.$$ 
Therefore for each large $j$, we have 
$|T_{L,\tau }(x)-T_{L,\tau }(c_{j})|\geq p_{\gamma _{n_{j}}}\cdots p_{1}\frac{\eta }{2}.$ 
By the Koebe distortion theorem, there exists a constant $C>0$ such that 
$c_{j}\in B(x,C\| D(h_{\gamma |_{n_{j}}})_{x}\| _{s}^{-1})$ for each $j.$ 
Then 
$$\sup _{y\in B(x,C\|D(h_{\gamma |_{n_{j}}})_{x}\|_{s}^{-1})}
\frac{|T_{L,\tau }(x)-T_{L,\tau }(y)|}{C^{\theta _{k}}|\|D(h_{\gamma |_{n_{j}}})_{x}\|_{s}^{-\theta _{k}}}
\geq \frac{p_{\gamma _{n_{j}}}\cdots p_{1}}
{C^{\theta _{k}}\|D(h_{\gamma |_{n_{j}}})_{x}\| _{s}^{-\theta_{k}}}\frac{\eta }{2}
\rightarrow \infty  \mbox{ as } j\rightarrow \infty .$$   
Therefore 
$\limsup _{y\rightarrow x,y\neq x}\frac{|T_{L,\tau }(x)-T_{L,\tau }(y)|}{d(y,x)^{\theta _{k}}}=\infty .$  
Thus $\Hol(T_{L,\tau },x)\leq \theta _{k}.$ 
Hence for each $(\gamma ,x)\in \tilde{A}\cap \cap _{k=1}^{\infty }\tilde{A}_{k}$, 
$\Hol(T_{L,\tau },x)\leq v(h,p,\tilde{\mu }).$    
Let $\{ E_{n}\} $ be a sequence of compact subsets of $\tilde{A}\cap \cap _{k=1}^{\infty }\tilde{A}_{k}$ 
such that $\tilde{\mu }((\tilde{A}\cap \cap _{k=1}^{\infty }\tilde{A}_{k})\setminus \cup _{n=1}^{\infty }E_{n})=0.$ 
Then $D:=\pi _{\CCI }(\cup _{n=1}^{\infty }E_{n})$ is Borel measurable and 
$\mu (D)=1.$ Moreover, for each $x\in D$, $\Hol (T_{L,\tau },x)\leq v(h,p,\tilde{\mu }).$ 
Thus we have proved Theorem~\ref{t:oscholgen}. 
\end{proof}
\begin{lem}
\label{l:dbhqhol}
Let $h=(h_{1},h_{2})\in (\overline{{\cal D}}\cap {\cal B}\cap {\cal H})\setminus {\cal Q}.$ 
Let $G=\langle h_{1},h_{2}\rangle .$ 
Then $(h_{1},h_{2})$ satisfies the open set condition with an open set $U$ for which 
the following (i) and (ii) holds. 
(i) $(U\cap J(G))\setminus P(G)\neq \emptyset .$ (ii) There exists a point $\xi \in U$ such that 
$T(h_{1},h_{2},p,\cdot )$ is not constant in any neighborhood of $\xi .$ 

Moreover, if $\tilde{\mu }$ is an $\tilde{h}$-invariant ergodic Borel probability measure on $J(\tilde{h})$ with 
supp$\tilde{\mu }=J(\tilde{h})$, then for each $p\in (0,1)$, setting 
$\mu =(\pi _{\CCI })_{\ast }(\tilde{\mu })$ and 
$\tau =p\delta _{h_{1}}+(1-p)\delta _{h_{2}}$, 
 for $\mu $-a.e.$z_{0}\in J(G)$, we have $z_{0}\in F_{pt}^{0}(\tau )$, 
 $T(h_{1},h_{2},p,\cdot )$ is continuous at $z_{0}$, and $\emHol (T(h_{1},h_{2},p,\cdot ),z_{0})\leq u(h_{1},h_{2},p,\tilde{\mu }).$  
\end{lem}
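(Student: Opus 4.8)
The plan is to derive Lemma~\ref{l:dbhqhol} as an almost immediate consequence of Theorem~\ref{t:oscholgen} applied to the system $h=(h_1,h_2)$, once we have verified all of its hypotheses. So the work splits into two parts: (a) constructing the open set $U$ with properties (i) and (ii), and (b) checking that the maximal entropy-type measure $\tilde\mu$ with full support gives an admissible choice in Theorem~\ref{t:oscholgen}, so that the conclusion transfers.

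First I would fix $(h_1,h_2)\in(\overline{\cal D}\cap{\cal B}\cap{\cal H})\setminus{\cal Q}$ and, using Proposition~\ref{p:bdyosc}, assume $K(h_1)\subset K(h_2)$. By Lemma~\ref{l:pfmainthosc} (together with Proposition~\ref{p:bdyosc}), setting $U:=(\mathrm{int}(K(h_2)))\setminus K(h_1)$ we get that $(h_1,h_2)$ satisfies the open set condition with $U$, that $U\neq\emptyset$ (since $(h_1,h_2)\notin{\cal Q}$ forces $K(h_1)\subsetneqq K(h_2)$), and that $J(h_2)$ is a quasicircle. For property (i): $J(G)\subset\overline U$ by \cite[Corollary 3.2]{HM}, and by the argument in the proof of Lemma~\ref{l:pfmainth3}/Lemma~\ref{l:dh1-1log} we have $J(G)\subsetneqq\overline U$ and in fact $J(G)\cap U\neq\emptyset$ (points of $J(G)$ lying in the open annular region, e.g. points in $\mathrm{int}(h_2^{-1}(K(h_1)))\setminus h_1^{-1}(K(h_2))$ which is contained in $U$ and meets $J(G)$); moreover $P(G)=P^*(G)\subset\hat K(G)=K(h_1)$ is disjoint from $U$, so $(U\cap J(G))\setminus P(G)=U\cap J(G)\neq\emptyset$. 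For property (ii): by Lemma~\ref{l:pfmainth8} (statement~\ref{spacemainth8} in Theorem~\ref{t:spaceth1h2}), the set of points where $z\mapsto T(h_1,h_2,p,z)$ is non-constant on every neighborhood equals $J(G)$, so any $\xi\in U\cap J(G)$ works.

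Next I would set $\mu=(\pi_{\CCI})_*(\tilde\mu)$ and $\tau=p\delta_{h_1}+(1-p)\delta_{h_2}$, and identify the attracting minimal set: by Proposition~\ref{p:bdyosc}, $h_2$ is hyperbolic with attracting fixed point $c\in\mathrm{int}(K(h_1))$, and (as in the proof of Lemma~\ref{l:pfmainth10}, using $h_j(K(h_1))\subset K(h_1)$ and $\mathrm{int}(K(h_1))\subset F(G)$) the set $L:=\overline{G(\{c\})}$ — more simply, the unique minimal set contained in $\hat K(G)=K(h_1)$ — is an attracting minimal set for $(G,\CCI)$ in the sense of Definition~\ref{d:attminset}, and one checks $T_{L,\tau}=T(h_1,h_2,p,\cdot)$ because $\{\infty\}$ and $L$ are the only minimal sets and $T_{L,\tau}+T_{\{\infty\},\tau}\equiv1$ with $T_{\{\infty\},\tau}=T(h_1,h_2,p,\cdot)$. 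Now apply Theorem~\ref{t:oscholgen} with this $U$, this $L$, and $\tilde\mu$: the hypotheses $\sharp J(G)\geq3$ (since $J(G)$ is a perfect set), open set condition with $U$, $\int\log\|D(\gamma_1)_x\|_s\,d\tilde\mu>0$ (this holds for the natural entropy-maximizing measures on hyperbolic $J(\tilde h)$, e.g. $\tilde\lambda_{h_1,h_2,p}$ or $\eta$ from Theorem~\ref{t:spacend}, since the integrand is $\geq$ a positive constant by expansivity — alternatively one restricts attention to such $\tilde\mu$), $\mu(U\setminus P(G))>0$ (because $\mathrm{supp}\,\tilde\mu=J(\tilde h)$ forces $\mathrm{supp}\,\mu=J(G)$, and $U\setminus P(G)\supset U\cap J(G)$ is a nonempty relatively open subset of $J(G)$), and the existence of $\xi\in U$ where $T_{L,\tau}$ is non-constant on every neighborhood (property (ii)). The conclusion of Theorem~\ref{t:oscholgen} is exactly: for $\mu$-a.e.\ $z_0\in J(G)$, $z_0\in F_{pt}^0(\tau)$, $T_{L,\tau}=T(h_1,h_2,p,\cdot)$ is continuous at $z_0$, and $\mathrm{H\ddot ol}(T(h_1,h_2,p,\cdot),z_0)\leq v(h,p,\tilde\mu)=u(h_1,h_2,p,\tilde\mu)$ (the two quantities agree by the matching definitions of $v$ and $u$). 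This finishes the proof.

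The main obstacle I anticipate is the positivity $\int_{\Sigma_2\times\CCI}\log\|D(h_{w_1})_x\|_s\,d\tilde\mu>0$ and the verification that $L\subset K(h_1)$ really is \emph{attracting} in the precise sense of Definition~\ref{d:attminset} (needing open sets $V\subset\overline V\subset U\subset\overline U\subset F(G)$ with uniform contraction under all compositions of a fixed length, and $\sharp(\CCI\setminus V)\geq3$), rather than merely a minimal set contained in the Fatou set; this requires using hyperbolicity of $G$ on $P(G)$ together with the fact that $h_2$ contracts $\mathrm{int}(K(h_2))$ toward $c$, much as in the proof of Lemma~\ref{l:pfmainth10}. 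Everything else is bookkeeping: matching $T_{L,\tau}$ with $T(h_1,h_2,p,\cdot)$ via the classification of minimal sets, and matching the two notations $v(h,p,\tilde\mu)$ and $u(h_1,h_2,p,\tilde\mu)$.
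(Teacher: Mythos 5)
Your plan — set up the open set condition via Proposition~\ref{p:bdyosc}, verify the two properties (i), (ii), and then invoke Theorem~\ref{t:oscholgen} — is exactly the route the paper takes, so the overall strategy is right. However, there are three concrete loose ends. First, the cleanest choice for the attracting minimal set is $L=\{\infty\}$, which immediately gives $T_{L,\tau}=T(h_1,h_2,p,\cdot)$ (indeed $\{\infty\}$ is attracting since every polynomial of degree $\geq 2$ has a superattracting fixed point there, and one checks Definition~\ref{d:attminset} with small disks around $\infty$). Your choice of $L$ as the bounded minimal set instead gives $T_{L,\tau}=1-T(h_1,h_2,p,\cdot)$, not $T_{L,\tau}=T(h_1,h_2,p,\cdot)$ as you wrote; the conclusion still transfers because H\"older exponent and continuity are invariant under $\varphi\mapsto 1-\varphi$, but as written your identification is incorrect. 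Second, your justification that $U\cap J(G)\neq\emptyset$ ("points in $\mathrm{int}(h_2^{-1}(K(h_1)))\setminus h_1^{-1}(K(h_2))$ ... meets $J(G)$") is stated without proof, whereas the paper gives a concrete backward-orbit argument: pick $a\in J(h_2)\setminus J(h_1)$ and $b\in J(h_1)\setminus J(h_2)$ (possible since $(h_1,h_2)\notin{\cal Q}$), use density of $\cup_n h_1^{-n}(a)$ in $J(h_1)$ to produce $a_j\in J(G)$ with $a_j\to b$, and observe that eventually $a_j\in U$. Third, your worry about the positivity hypothesis $\int\log\|D(\gamma_1)_x\|_s\,d\tilde\mu>0$ is misplaced: since $(h_1,h_2)\in{\cal H}$ the semigroup is hyperbolic, so expansion on $J(\tilde h)$ gives this automatically for any $\tilde h$-invariant probability measure on $J(\tilde h)$ (iterate and use the chain rule plus $\tilde h$-invariance), not just for entropy-maximizing ones; the lemma does not and need not restrict the class of $\tilde\mu$ further.
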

\begin{proof}
By Proposition~\ref{p:bdyosc}, we may assume that $K(h_{1})\subset K(h_{2})$. 
By Proposition~\ref{p:bdyosc} again, we obtain that $(h_{1},h_{2})$ satisfies the open set condition with 
$U:=(\mbox{int}(K(h_{2})))\setminus K(h_{1}).$ 
Let $a\in J(h_{2})\setminus J(h_{1})$ and let $b\in J(h_{1})\setminus J(h_{2}).$ 
Then there exists a sequence $\{ a_{j}\} $ of points with $a_{j}\in h_{1}^{-n_{j}}(a)$ for some $n_{j}\in \NN $, 
such that $a_{j}\rightarrow b$ as $j\rightarrow \infty .$ 
Then, for a large $j$, we have $a_{j}\in (\mbox{int}(K(h_{2})))\setminus K(h_{1})=U.$ 
Since $a_{j}\in J(G)$, it follows that $U\cap J(G)\neq \emptyset .$ 
Moreover, by Lemma~\ref{l:pfmainth8}, for each $z_{0}\in J(G)$, 
the function 
$T(h_{1},h_{2},p,\cdot )$ is not constant in any neighborhood of $z_{0}.$ 
Hence there exists a point $\xi \in U$ such that $T(h_{1},h_{2},p,\cdot )$ is not 
constant in any neighborhood of $\xi .$ 
Furthermore, since $G$ is hyperbolic and $U\cap J(G)\neq \emptyset $, we obtain 
$(U\cap J(G))\setminus P(G)\neq \emptyset .$ 

 Let $\tilde{\mu }$ be an $\tilde{h}$-invariant ergodic Borel probability measure on $J(\tilde{h})$ with 
supp$\,\tilde{\mu }=J(\tilde{h})$. Let $p\in (0,1)$. We set 
$\mu :=(\pi _{\CCI })_{\ast }(\tilde{\mu })$ and 
$\tau :=p\delta _{h_{1}}+(1-p)\delta _{h_{2}}$. 
Since supp$\,\mu =\pi _{\CCI }(J(\tilde{h}))=J(G)$ (see \cite[Lemma 3.5]{SdpbpIII}),  
we obtain that $\mu (U\setminus P(G))>0.$ 
By Theorem~\ref{t:oscholgen},  it follows that 
for $\mu $-a.e.$z_{0}\in J(G)$, we have $z_{0}\in F_{pt}^{0}(\tau )$, 
 $T(h_{1},h_{2},p,\cdot )$ is continuous at $z_{0}$, and $\Hol (T(h_{1},h_{2},p,\cdot ),z_{0})\leq u(h_{1},h_{2},p,\tilde{\mu }).$  
Thus we have proved our lemma. 
\end{proof}
\vspace{-2mm} 
We now prove Theorem~\ref{t:spacend}. 
\vspace{-2mm} 
\begin{lem}
\label{l:pfnondiff}
Statement~\ref{nondiff} in Theorem~\ref{t:spacend} holds. 
\end{lem}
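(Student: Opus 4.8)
\textbf{Proof proposal for Lemma~\ref{l:pfnondiff} (Statement~\ref{nondiff} in Theorem~\ref{t:spacend}).}

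The plan is to first handle the measure-theoretic preliminaries and then the pointwise H\"older estimate via the self-similar structure coming from the open set condition. First I would recall that, by Lemma~\ref{l:pfmainthosc}, an element $(h_1,h_2)\in(\overline{{\cal D}}\cap{\cal B}\cap{\cal H})\setminus{\cal Q}$ satisfies the open set condition, and that $\tilde h|_{J(\tilde h)}$ is then (semi-)hyperbolic; by the thermodynamic formalism of \cite{S3,S6,SU1} the maximal relative entropy measure $\tilde\lambda_{h_1,h_2,p}$ exists, is ergodic, has $\pi_*(\tilde\lambda_{h_1,h_2,p})=\eta_p$, and projects to a Borel probability measure $\lambda_{h_1,h_2,p}$ on $J(G)$. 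Using backward self-similarity $J(G)=h_1^{-1}(J(G))\cup h_2^{-1}(J(G))$ together with the open set condition, one shows $\mathrm{supp}\,\tilde\lambda_{h_1,h_2,p}=J(\tilde h)$, hence $\mathrm{supp}\,\lambda_{h_1,h_2,p}=\pi_{\CCI}(J(\tilde h))=J(G)$ (by \cite[Lemma 3.5]{SdpbpIII}); non-atomicity follows since $J(G)$ is perfect and the measure has full support with no point masses (the fiberwise measures are the measures of maximal entropy for the holomorphic maps $h_{w_n}\circ\cdots\circ h_{w_1}$, which are non-atomic). The entropy identity $h_{\tilde\lambda}(\tilde h|\sigma)=\sum_j p_j\log\deg(h_j)$ and the definition of $u(h_1,h_2,p,\cdot)$ give the explicit closed-form middle equality in (\ref{eq:holth1}), namely $u(h_1,h_2,p,\tilde\lambda_{h_1,h_2,p})=-\frac{p\log p+(1-p)\log(1-p)}{p\log\deg h_1+(1-p)\log\deg h_2}$, and this quantity is $<1$ because the numerator is strictly smaller than the denominator (as $\deg h_j\ge 2$ forces $\log\deg h_j\ge\log 2>-(p\log p+(1-p)\log(1-p))/1$ when one compares the binary entropy with the weighted log-degree; more precisely $-p\log p-(1-p)\log(1-p)\le\log 2\le p\log\deg h_1+(1-p)\log\deg h_2$, with equality only when both degrees are $2$, which is excluded by Lemma~\ref{l:d1d2not22}).

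Next I would prove the upper bound $\Hol(T(h_1,h_2,p,\cdot),z_0)\le u(h_1,h_2,p,\tilde\lambda_{h_1,h_2,p})$ for $\lambda_{h_1,h_2,p}$-a.e.\ $z_0$. This is exactly the mechanism of Theorem~\ref{t:oscholgen}/Lemma~\ref{l:dbhqhol}: the open set condition with $U=(\mathrm{int}(K(h_2)))\setminus K(h_1)$ satisfies hypotheses (i),(ii) of Lemma~\ref{l:dbhqhol} (there is a point of $(U\cap J(G))\setminus P(G)$ by hyperbolicity, and $T(h_1,h_2,p,\cdot)$ is non-constant on every neighborhood of every point of $J(G)$ by Lemma~\ref{l:pfmainth8}); feeding $\tilde\mu=\tilde\lambda_{h_1,h_2,p}$ (which has full support on $J(\tilde h)$) into Lemma~\ref{l:dbhqhol} yields, for $\lambda_{h_1,h_2,p}$-a.e.\ $z_0$, that $z_0\in F_{pt}^0(\tau)$, that $T(h_1,h_2,p,\cdot)$ is continuous at $z_0$, and that $\Hol(T,z_0)\le u(h_1,h_2,p,\tilde\lambda_{h_1,h_2,p})$. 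Since $u<1$ this gives non-differentiability at such $z_0$ (by the \textbf{Note} after the definition of pointwise H\"older exponent). For the \emph{uncountable dense} set $A$: the set of $z_0$ satisfying the conclusion has full $\lambda_{h_1,h_2,p}$-measure and $\lambda_{h_1,h_2,p}$ is non-atomic with support $J(G)$, so it is uncountable; intersecting with every ball centered on $J(G)$ keeps positive measure, giving density in $J(G)$.

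The last two refinements are the real content. For the statement that when $(h_1,h_2)\in{\cal D}\cap{\cal B}\cap{\cal H}$ one in fact has \emph{equality} $\Hol(T,z_0)=u(h_1,h_2,p,\tilde\lambda_{h_1,h_2,p})$ a.e., I would use Lemma~\ref{l:bdjconti}: here $J(G)=\amalg_{\gamma}J_\gamma$ with the overlap empty, so the fiber Julia sets are separated; along $\tilde h^{n_j}(\gamma,x)$ returning near a generic point, the oscillation of $T$ transported through the inverse branch $\zeta_j$ is \emph{exactly} $p_{\gamma|_{n_j}}$ times the oscillation near the return point (no interference from the other inverse image, since its image lands in a different fiber and hence in $F(G)$ where $T$ is locally constant), so the argument of Theorem~\ref{t:oscholgen} becomes two-sided via a Borel--Cantelli / Birkhoff lower bound matching the upper one; this is where I expect the main obstacle, as one needs the disjointness $h_1^{-1}(J(G))\cap h_2^{-1}(J(G))=\emptyset$ (Lemma~\ref{l:bdjconti}) in an essential, quantitative way, plus uniform Koebe distortion on the inverse branches of the hyperbolic skew product. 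Finally, for $(h_1,h_2)\in(\overline{{\cal D}}\cap{\cal B}\cap{\cal H})\setminus{\cal Q}$ the continuity of $T$ at $\lambda_{h_1,h_2,p}$-a.e.\ $z_0$ is already delivered by the $F_{pt}^0(\tau)$-membership from Lemma~\ref{l:dbhqhol} together with Lemma~\ref{l:attminset}-type continuity of limit state functions at points of $F_{pt}^0(\tau)$; I would assemble these pieces and conclude.
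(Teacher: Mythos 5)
Your outline correctly reproduces the paper's mechanism for the generic case $(h_1,h_2)\notin{\cal Q}$: the entropy computation $\int(-\log p_{\gamma_1})\,d\tilde\lambda=-\sum p_i\log p_i$ and $\int\log\|D(h_{\gamma_1})_x\|_s\,d\tilde\lambda=\sum p_i\log\deg h_i$ (the latter is \cite[Lemma 5.52]{Splms10}), the bound $u<1$ from $-\sum p_i\log p_i\le\log 2$ together with $(\deg h_1,\deg h_2)\neq(2,2)$, and feeding $\tilde\lambda_{h_1,h_2,p}$ (with full support on $J(\tilde h)$ by \cite[Theorem 4.3]{S3}) into Lemma~\ref{l:dbhqhol} to get the a.e.\ upper Hölder bound and continuity. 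That is the paper's own route.

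However, there is a genuine gap: the statement you must prove asserts the estimate $\Hol(T,z_0)\le u$ and non-differentiability for \emph{all} $(h_1,h_2)\in\overline{{\cal D}}\cap{\cal B}\cap{\cal H}$, including the ${\cal Q}$-locus, yet your argument is set entirely inside $(\overline{{\cal D}}\cap{\cal B}\cap{\cal H})\setminus{\cal Q}$ — you invoke Lemma~\ref{l:pfmainthosc} and Lemma~\ref{l:dbhqhol}, both of which require $\notin{\cal Q}$. You never handle $(h_1,h_2)\in{\cal Q}$. In that case $J(G)=J(h_1)=J(h_2)$ is the common quasicircle, $\hat K(G)=K(h_1)=K(h_2)$, and $F_\infty(G)=F_\infty(h_1)$, so $T\equiv 0$ on $\hat K(G)$ and $T\equiv 1$ on $\overline{F_\infty(G)}$, forcing $T$ to be discontinuous at \emph{every} $z_0\in J(G)$; hence $\Hol(T,z_0)=0\le u$ and non-differentiability is immediate — but this short case still has to be said. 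Two smaller points: for the equality $\Hol=u$ when $(h_1,h_2)\in{\cal D}\cap{\cal B}\cap{\cal H}$, the paper simply cites \cite[Theorem 3.82]{Splms10} (using the disjointness $h_1^{-1}(J(G))\cap h_2^{-1}(J(G))=\emptyset$ from Lemma~\ref{l:bdjconti}), whereas you sketch a two-sided Birkhoff/Koebe argument from scratch; that is a valid plan but would require real work where the paper has a ready-made reference. And your non-atomicity claim deserves a citation (cf.\ \cite[Lemma 5.1]{S3}, as used in Theorem~\ref{t:oschg2}'s proof) rather than the heuristic that fiberwise maximal-entropy measures are non-atomic.
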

\begin{proof}
Let $h=(h_{1},h_{2})\in \overline{{\cal D}}\cap {\cal B}\cap {\cal H}$ and let $0<p<1.$ 
By \cite[Theorem 4.3]{S3}, supp$\,\lambda _{h_{1},h_{2},p}=J(G).$ 
Let $p_{1}=p, p_{2}=1-p.$ 
Since $\pi _{\ast }(\tilde{\lambda }_{h_{1},h_{2},p})$ is equal to the Bernoulli measure 
$\otimes _{n=1}^{\infty }(\sum _{i=1}^{2}p_{i}\delta _{i})$ on $\Sigma _{2}$, 
we have 
$\int _{\Sigma _{2}\times \CCI }-\log p_{\gamma _{1}}d\tilde{\lambda }_{h_{1},h_{2},p}(\gamma ,x)=
-\sum _{i=1}^{2}p_{i}\log p_{i}.$ 
Moreover, by \cite[Lemma 5.52]{Splms10}, 
$\int _{\Sigma _{2}\times \CCI }\log \| D(h_{\gamma _{1}})_{x}\| _{s}d\tilde{\lambda }_{h_{1},h_{2},p}(\gamma ,x)=
\sum _{i=1}^{2}p_{i}\log \deg (h_{i}).$ 
Therefore we get that $u(h_{1},h_{2},p,\tilde{\lambda }_{h_{1},h_{2},p})=$ $
\frac{-\sum _{i=1}^{2}p_{i}\log p_{i}}{\sum _{i=1}^{2}p_{i}\log (\deg (h_{i}))}.$ 
Since $\sum _{i=1}^{2}-p_{i}\log p_{i}\leq \log 2$ and $(\deg (h_{1}),\deg (h_{2}))\neq (2,2)$ (see Lemma~\ref{l:d1d2not22}), 
we obtain $\frac{-\sum _{i=1}^{2}p_{i}\log p_{i}}{\sum _{i=1}^{2}p_{i}\log (\deg (h_{i}))}<1.$ 

 If $(h_{1},h_{2})\in {\cal Q}$, then 
 $J(G)=J(h_{1})=J(h_{2})$, $F_{\infty }(G)=F_{\infty }(h_{1})=F_{\infty }(h_{2})$, 
 and $\hat{K}(G)=K(h_{1})=K(h_{2}).$ Since $T(h_{1},h_{2},p,\cdot )|_{F_{\infty }(G)}\equiv 1$ (see \cite[Lema 5.24]{Splms10}) 
and $T(h_{1},h_{2},p,\cdot )|_{\hat{K}(G)}\equiv 0$, we obtain that 
 for each $z_{0}\in J(G)$, the function $T(h_{1},h_{2},p,\cdot )$ is not continuous at $z_{0}.$ 
In particular, for each $z_{0}\in J(G)$, $0=\Hol(T(h_{1},h_{2},p,\cdot ),z_{0})
\leq u(h_{1},h_{2},p,\tilde{\lambda }_{h_{1},h_{2},p}).$

We now suppose that $(h_{1},h_{2})\in (\overline{{\cal D}}\cap {\cal B}\cap {\cal H})\setminus {\cal Q}.$ 
By \cite[Theorem 4.3]{S3}, supp$\tilde{\lambda }_{h_{1},h_{2},p}=J(\tilde{h}).$ 
Hence, Lemma~\ref{l:dbhqhol} implies that 
 for $\lambda _{h_{1},h_{2},p}$-a.e. $z_{0}\in J(G)$, 
we have that the function $T(h_{1},h_{2},p,\cdot ):\CCI \rightarrow \RR $ is continuous at $z_{0}$ 
and $\Hol (T(h_{1},h_{2},p,\cdot ), z_{0})\leq u(h_{1},h_{2},p,\tilde{\lambda }_{h_{1},h_{2},p}).$ 

We now suppose that $(h_{1},h_{2})\in {\cal D}\cap {\cal B}\cap {\cal H}.$ 
Then by Lemma~\ref{l:bdjconti}, $h_{1}^{-1}(J(G))\cap h_{2}^{-1}(J(G))=\emptyset .$ 
By \cite[Theorem 3.82]{Splms10}, 
it follows that for $\lambda _{h_{1},h_{2},p}$-a.e. $z_{0}\in J(G)$, 
$\Hol(T(h_{1},h_{2},p,\cdot ),z_{0})=u(h_{1},h_{2},p,\tilde{\lambda }_{h_{1},h_{2},p}).$ 
Thus statement~\ref{nondiff} in Theorem~\ref{t:spacend} holds.  
\end{proof}
\begin{lem}
\label{l:pfnonorable}
Statement~\ref{nonorable} in Theorem~\ref{t:spacend} holds. 
\end{lem}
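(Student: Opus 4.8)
\textbf{Plan for the proof of Lemma~\ref{l:pfnonorable} (i.e.\ of statement~\ref{nonorable} in Theorem~\ref{t:spacend}).}
The plan is to produce the Gibbs/conformal measure $\nu$ on $J(\tilde h)$ and its invariant version $\eta$ by applying the thermodynamic formalism for hyperbolic finitely generated polynomial semigroups from \cite{S6} (and the refinements in \cite{SU1}) to the potential $-v\log\|D(\cdot)_{(\cdot)}\|_{s}$, where $v=\dim_H(J(G))$. First I would invoke Lemma~\ref{l:pfmainthosc} (statement~\ref{spacemainthosc}) to know that $(h_1,h_2)$ satisfies the open set condition, and Lemma~\ref{l:pfmainth3} (statement~\ref{spacemainth3}) together with Lemma~\ref{l:pfmainth4} (statement~\ref{spacemainth4}) to know that $v=\delta_{(h_1,h_2)}<2$ is the zero of the pressure function and equals both Hausdorff and upper box dimension. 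Since the open set condition holds and the semigroup is hyperbolic, by \cite[Theorems~1.1, 1.2]{S6} the $v$-dimensional Hausdorff measure satisfies $0<H^v(J(G))<\infty$ and there is a conformal measure; the skew-product Perron--Frobenius (Ruelle) operator $\mathcal{L}\varphi(\gamma,x)=\sum_{(\alpha,y)\in\tilde h^{-1}(\gamma,x)}\varphi(\alpha,y)\|D(\alpha_1)_y\|_s^{-v}$ has, by the general theory (see \cite{S6, SUb}), a positive eigenfunction $\psi\in C(J(\tilde h))$ with eigenvalue $1$ (because $P(v)=0$) and a dual eigenmeasure $\nu$ with $\mathcal{L}^*\nu=\nu$; then $\eta:=\psi\cdot\nu$ is $\tilde h$-invariant and ergodic. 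The identity $(\pi_{\CCI})_*(\nu)=H^v|_{J(G)}/H^v(J(G))$ follows from uniqueness of the conformal measure and the standard argument that $H^v|_{J(G)}$, suitably normalized, is $v$-conformal for the semigroup (using the open set condition to control overlaps). This gives item~(i).

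For item~(ii) the plan is to apply Theorem~\ref{t:oscholgen} with $\tilde\mu=\eta$. I would first check its hypotheses: $\sharp J(G)\geq3$ holds since $J(G)$ is perfect; $h=(h_1,h_2)$ satisfies the open set condition with $U$ from Proposition~\ref{p:bdyosc}; $\eta$ is $\tilde h$-invariant and ergodic on $J(\tilde h)$ with $\int\log\|D(\gamma_1)_x\|_s\,d\eta>0$ because $G$ is hyperbolic (expanding on $J(G)$); $\operatorname{supp}\eta=\operatorname{supp}\nu=J(\tilde h)$ by positivity of $\psi$ and the conformality of $\nu$, hence $\mu:=(\pi_{\CCI})_*(\eta)$ has support $J(G)$ and $\mu(U\setminus P(G))>0$ (here I use Lemma~\ref{l:dbhqhol}, which already records that $(U\cap J(G))\setminus P(G)\neq\emptyset$ and that a point $\xi\in U$ exists where $T(h_1,h_2,p,\cdot)$ is non-constant on every neighborhood). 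The attracting minimal set $L$ for $(G,\CCI)$ with $L\subset\CC$ and $T_{L,\tau}=T(h_1,h_2,p,\cdot)$ is supplied by Theorem~\ref{t:oscanal}/Lemma~\ref{l:pfmainth10} in the relevant cases; note $T(h_1,h_2,p,\cdot)=1-T_{L,\tau}$ or equals $T_{L,\tau}$ up to the obvious normalization, and in any case differs from $T_{L,\tau}$ by a constant, so pointwise Hölder exponents coincide. Then Theorem~\ref{t:oscholgen} yields: for $\mu$-a.e.\ $z_0\in J(G)$, $z_0\in F^0_{pt}(\tau)$, $T(h_1,h_2,p,\cdot)$ is continuous at $z_0$, and $\emHol(T(h_1,h_2,p,\cdot),z_0)\leq v(h,p,\eta)=u(h_1,h_2,p,\eta)$. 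Since $(\pi_{\CCI})_*(\eta)$ and $(\pi_{\CCI})_*(\nu)=H^v|_{J(G)}/H^v(J(G))$ are mutually absolutely continuous (as $\eta=\psi\nu$ with $\psi$ bounded above and below away from $0$), the $\mu$-a.e.\ statement transfers verbatim to $H^v$-a.e.

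For the lower bound in the case $(h_1,h_2)\in{\cal D}\cap{\cal B}\cap{\cal H}$, the plan is to invoke Lemma~\ref{l:bdjconti} to get $h_1^{-1}(J(G))\cap h_2^{-1}(J(G))=\emptyset$, so the backward dynamics is a genuine (disjoint) contracting IFS on $J(G)$, and then apply \cite[Theorem~3.82]{Splms10} (the same tool used in Lemma~\ref{l:pfnondiff}) with the invariant measure $\eta$ in place of $\tilde\lambda_{h_1,h_2,p}$: this gives $\emHol(T(h_1,h_2,p,\cdot),z_0)=u(h_1,h_2,p,\eta)$ for $\mu$-a.e., hence $H^v$-a.e., $z_0\in J(G)$. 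I expect the main obstacle to be the measure-theoretic bookkeeping in item~(i): verifying rigorously that the Hausdorff measure $H^v|_{J(G)}$ is $v$-conformal (so that it coincides with $(\pi_{\CCI})_*(\nu)$) requires the open set condition together with the porosity/Ahlfors-regularity of $J(G)$ from Lemma~\ref{l:pfmainth3} and \cite[Theorem~1.25]{S7}, and one must be careful that the pushforward under $\pi_{\CCI}$ of the skew-product eigenmeasure is well-defined and genuinely equals this normalized Hausdorff measure rather than some other $v$-conformal measure — this is where I would spend most of the care, citing \cite{S6, SUb} for the existence and uniqueness statements and checking that the ergodicity of $\eta$ (hence of $\nu$ up to equivalence) forces the identification.
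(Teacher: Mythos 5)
Your proposal follows essentially the same route as the paper's own proof: the open set condition from Lemma~\ref{l:pfmainthosc} plus \cite{S6} supplies item~(i) (the conformal measure $\nu$, the eigenfunction $\psi$, the invariant ergodic measure $\eta=\psi\nu$, and the identification $(\pi_{\CCI})_{\ast}(\nu)=H^{v}|_{J(G)}/H^{v}(J(G))$); then applying Lemma~\ref{l:dbhqhol} (equivalently, Theorem~\ref{t:oscholgen} after verifying its hypotheses, which is exactly what Lemma~\ref{l:dbhqhol} packages) with $\tilde\mu=\eta$ and transferring from $(\pi_{\CCI})_{\ast}(\eta)$-a.e.\ to $H^{v}$-a.e.\ via the uniform bounds on $\psi$ gives the upper bound in item~(ii); and for $(h_{1},h_{2})\in{\cal D}\cap{\cal B}\cap{\cal H}$, Lemma~\ref{l:bdjconti} plus the relevant result from \cite{Splms10} gives the equality. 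One small correction: for the Hausdorff-measure equality in the disconnected case the paper cites \cite[Theorem~3.84]{Splms10}, not \cite[Theorem~3.82]{Splms10} (the latter is the version adapted to the maximal relative entropy measure used in Lemma~\ref{l:pfnondiff}); the conceptual role is the same, but the precise statement being invoked is different.
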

\begin{proof}
Let $h=(h_{1},h_{2})\in (\overline{{\cal D}}\cap {\cal B}\cap {\cal H})\setminus {\cal Q}.$ Then 
by Lemma~\ref{l:pfmainthosc}, $(h_{1},h_{2})$ satisfies the open set condition. 
By \cite{S6}, it follows that item (i) of statement~\ref{nonorable} holds. 
Moreover, we have supp$\,\nu =J(\tilde{h}).$ Hence supp$\,\eta =J(\tilde{h}).$ 
By Lemma~\ref{l:dbhqhol} and item (i) of statement~\ref{nonorable}, 
it follows that for almost every  $z_{0}\in J(G)$ with respect to 
$H^{v}$,  the function $T(h_{1},h_{2},p,\cdot ):\CCI \rightarrow [0,1] $ is continuous at $z_{0}$ and 
$\mbox{H\"{o}l}(T(h_{1},h_{2},p,\cdot ),z_{0})\leq u(h_{1},h_{2},p,\eta )$.

 We now suppose that $(h_{1},h_{2})\in {\cal D}\cap {\cal B}\cap {\cal H}$. Then 
 by Lemma~\ref{l:bdjconti}, we have $h_{1}^{-1}(J(G))\cap h_{2}^{-1}(J(G))=\emptyset .$ 
 By \cite[Theorem 3.84]{Splms10}, it follows that 
 for almest every $z_{0}\in J(G)$ with respect to $H^{v}$, 
 $\mbox{H\"{o}l}(T(h_{1},h_{2},p,\cdot ),z_{0})= u(h_{1},h_{2},p,\eta )$.
 
 Thus statement~\ref{nonorable} in Theorem~\ref{t:spacend} holds.
\end{proof}
We now show the following result which is proved by using Theorem~\ref{t:oscholgen}.
\vspace{-2mm} 
\begin{thm}
\label{t:oschg2}
Let $(h_{1},h_{2})\in {\cal B}$ with $(\deg (h_{1}),\deg (h_{2}))\neq (2,2).$ 
Let $G=\langle h_{1},h_{2}\rangle. $ 
Let $p\in (0,1).$ 
Suppose that $K(h_{1})\subset (\mbox{{\em int}}(K(h_{2}))).$  
Let $U:=(\mbox{{\em int}}(K(h_{2})))\setminus K(h_{1}).$ 
Suppose that $(h_{1},h_{2})$ satisfies the open set condition with $U.$ 
Then,  
 supp $\lambda  _{h_{1},h_{2},p}=J(G)$, $\lambda _{h_{1},h_{2},p}$ is non-atomic, 
and for almost every point $z_{0}\in J(G)$ with respect to  
 $\lambda _{h_{1},h_{2},p}$, 
\begin{equation} 
\label{eq:holgth1}
\mbox{{\em H\"{o}l}}(T(h_{1},h_{2},p,\cdot ),z_{0})\leq 
u(h_{1},h_{2},p,\tilde{\lambda }_{h_{1},h_{2},p})
=-\frac{p\log p+(1-p)\log (1-p)}
{p\log (\deg (h_{1}))+(1-p)\log (\deg (h_{2}))}
<1
\end{equation}  
and $T(h_{1},h_{2},p,\cdot )$ is not differentiable at $z_{0}$.   
In particular, there exists an uncountable dense subset $A$ of $J(G)$ such that 
at every point of $A$, the function 
$T(h_{1},h_{2},p,\cdot )$ is not differentiable. 
Moreover, for each $\alpha \in (u(h_{1},h_{2},p, \tilde{\lambda}_{h_{1},h_{2},p}), 1)$ 
and for each $\varphi \in C^{\alpha}(\CCI )$ such that $\varphi (\infty )=1$ and $\varphi |_{\hat{K}(G)}\equiv 0$,  
we have $\| M_{h_{1},h_{2},p}^{n}(\varphi )\| _{\alpha }\rightarrow \infty $ as $n\rightarrow \infty .$ 

\end{thm}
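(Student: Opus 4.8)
\textbf{Proof proposal for Theorem~\ref{t:oschg2}.}
The plan is to reduce this statement to Theorem~\ref{t:oscholgen}, Lemma~\ref{l:pfnondiff} (really its proof), and Theorem~\ref{t:oscanal}, exactly along the lines of the hyperbolic case treated in Lemmas~\ref{l:dbhqhol} and \ref{l:pfnondiff}, but without assuming hyperbolicity. First I would record the structural facts available under the hypotheses of the theorem: by Lemma~\ref{l:oscinte} we have $h_{1}^{-1}(K(h_{2}))\subsetneqq h_{2}^{-1}(K(h_{1}))$ and $\mbox{int}(J(G))=\emptyset$, and by Theorem~\ref{t:oscanal}-(i) the kernel Julia set $J_{\ker}(G)$ is empty, $\varphi_{0}:=T(h_{1},h_{2},p,\cdot)$ is H\"older continuous on $\CCI$, locally constant on $F(G)$, and $J(G)$ is precisely the set of points at which $\varphi_{0}$ is not locally constant. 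In particular $T(h_{1},h_{2},p,\cdot)$ is not constant on any neighborhood of any point of $J(G)$, which supplies hypothesis (ii) of Theorem~\ref{t:oscholgen} and also shows there is $\xi\in U$ with $T(h_{1},h_{2},p,\cdot)$ not constant near $\xi$ (take any $\xi\in U\cap J(G)$, which is non-empty since $J(G)\subset\overline{U}$ and $J(G)\neq\overline{U}$).

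Next I would construct the relevant invariant measure. As in Definition~\ref{d:maxrelent}, let $\tilde\lambda:=\tilde\lambda_{h_{1},h_{2},p}$ be the maximal relative entropy measure for $\tilde h$ with respect to $(\sigma,\eta_{p})$ and $\lambda:=\lambda_{h_{1},h_{2},p}=(\pi_{\CCI})_{\ast}(\tilde\lambda)$; it is $\tilde h$-invariant and ergodic, $\pi_{\ast}(\tilde\lambda)=\eta_{p}$, and by \cite[Theorem 4.3]{S3} (applied as in Lemma~\ref{l:pfnondiff}) $\mathrm{supp}\,\tilde\lambda=J(\tilde h)$, hence $\mathrm{supp}\,\lambda=J(G)$; moreover $\lambda$ is non-atomic since $J(\tilde h)$ is perfect and the fibers are non-trivial (alternatively, since $\mathrm{int}(J(G))=\emptyset$ and the open set condition gives $m^{n}$ disjoint inverse images, atoms are impossible). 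Then, just as in Lemma~\ref{l:pfnondiff}, $\int -\log p_{\gamma_{1}}\,d\tilde\lambda=-\sum_{i}p_{i}\log p_{i}=-(p\log p+(1-p)\log(1-p))$, and by \cite[Lemma 5.52]{Splms10} $\int\log\|D(h_{\gamma_{1}})_{x}\|_{s}\,d\tilde\lambda=\sum_{i}p_{i}\log\deg(h_{i})=p\log\deg(h_{1})+(1-p)\log\deg(h_{2})$, so $u(h_{1},h_{2},p,\tilde\lambda)$ equals the displayed ratio; since $-\sum p_{i}\log p_{i}\le\log 2$ and $(\deg h_{1},\deg h_{2})\neq(2,2)$, this ratio is $<1$.

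Now I would invoke Theorem~\ref{t:oscholgen} with $m=2$, $h=(h_{1},h_{2})$, $\tau=p\delta_{h_{1}}+(1-p)\delta_{h_{2}}$, the open set $U$, the ergodic measure $\tilde\mu=\tilde\lambda$, and the attracting minimal set $L$ furnished by Theorem~\ref{t:oscanal}-(ii) (one must check $L$ is \emph{attracting} in the sense of Definition~\ref{d:attminset}: since $L\subset\mathrm{int}(\hat K(G))\subset F(G)$ with $\sharp J(G)\ge 3$, and $\{h_{1},h_{2}\}$ is mean stable here, the contraction toward $L$ on a neighborhood gives the required $U',V'$; the argument in the proof of Lemma~\ref{l:pfmainth10} / Theorem~\ref{t:oscanal} essentially does this). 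The measure-theoretic hypotheses of Theorem~\ref{t:oscholgen} hold: $\int\log\|D(\gamma_{1})_{x}\|_{s}\,d\tilde\lambda=p\log\deg(h_{1})+(1-p)\log\deg(h_{2})>0$; $\mu:=(\pi_{\CCI})_{\ast}(\tilde\lambda)=\lambda$ has $\mathrm{supp}\,\lambda=J(G)$, so $\lambda(U\setminus P(G))>0$ because $U\cap J(G)$ is a non-empty open-in-$J(G)$ set and $P(G)$, being $\neq J(G)$ (note $J_{\ker}(G)=\emptyset$ forces $P(G)$ not to contain $J(G)$, or simply $\mathrm{int}(J(G))=\emptyset$ while $P(G)\cap F(G)$ is large), is relatively nowhere dense; and hypothesis on $T_{L,\tau}=T(h_{1},h_{2},p,\cdot)$ not being locally constant near $\xi\in U$ was verified above (here I use that $T_{L,\tau}$ coincides with $\varphi_{0}$, since the only bounded minimal set is $L$ and $T(h_{1},h_{2},p,\cdot)$ is the probability of \emph{not} tending to $\infty$). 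Theorem~\ref{t:oscholgen} then yields: for $\lambda$-a.e. $z_{0}\in J(G)$, $z_{0}\in F_{pt}^{0}(\tau)$, $T(h_{1},h_{2},p,\cdot)$ is continuous at $z_{0}$, and $\mathrm{H\ddot{o}l}(T(h_{1},h_{2},p,\cdot),z_{0})\le u(h_{1},h_{2},p,\tilde\lambda)<1$; by the note in the definition of $\mathrm{H\ddot{o}l}$, H\"older exponent $<1$ implies non-differentiability at $z_{0}$. Since $\lambda$ is non-atomic with support $J(G)$, the set of such $z_{0}$ is uncountable and dense in $J(G)$, giving $A$. Finally, for the operator-norm blow-up: given $\alpha\in(u(h_{1},h_{2},p,\tilde\lambda),1)$ and $\varphi\in C^{\alpha}(\CCI)$ with $\varphi(\infty)=1$, $\varphi|_{\hat K(G)}\equiv 0$, Theorem~\ref{t:oscanal}-(ii)(iii)(iv) gives $M_{h_{1},h_{2},p}^{n}(\varphi)(z)\to T(h_{1},h_{2},p,z)$ pointwise (indeed uniformly) on $\CCI$; if $\sup_{n}\|M_{h_{1},h_{2},p}^{n}(\varphi)\|_{\alpha}<\infty$ then by Arzel\`a--Ascoli the limit $T(h_{1},h_{2},p,\cdot)$ would lie in $C^{\alpha}(\CCI)$, hence would be $\alpha$-H\"older at $\lambda$-a.e.\ $z_{0}$, contradicting $\mathrm{H\ddot{o}l}(T(h_{1},h_{2},p,\cdot),z_{0})\le u(h_{1},h_{2},p,\tilde\lambda)<\alpha$ on a set of positive $\lambda$-measure. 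Hence $\|M_{h_{1},h_{2},p}^{n}(\varphi)\|_{\alpha}\to\infty$.

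The main obstacle I anticipate is verifying the hypotheses of Theorem~\ref{t:oscholgen} in the \emph{non-hyperbolic} setting --- specifically that $\mu(U\setminus P(G))>0$ (one must rule out $\tilde\lambda$ being concentrated over the postcritical set, which uses $\mathrm{supp}\,\tilde\lambda=J(\tilde h)$ together with the fact that, for this configuration, $J(G)\not\subset P(G)$) and that the minimal set $L$ of Theorem~\ref{t:oscanal} is genuinely \emph{attracting}, not merely minimal inside $F(G)$. The latter is where I would spend the most care: I would extract from the proof of Theorem~\ref{t:oscanal}-(ii) the attracting fixed point $z_{0}$ of $h_{2}$ in $\mathrm{int}(K(h_{1}))$, show every point of $\hat K(G)$ is eventually mapped into a small neighborhood $V'$ of $L$ on which all length-$n$ compositions (for suitable $n$) land inside $V'$, and thereby produce the $U',V',n$ of Definition~\ref{d:attminset}; once that is in hand the rest is a mechanical transcription of Lemmas~\ref{l:dbhqhol} and \ref{l:pfnondiff}.
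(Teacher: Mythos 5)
Your overall plan -- invoke Theorem~\ref{t:oscholgen} with $\tilde\mu=\tilde\lambda_{h_{1},h_{2},p}$, compute $u(h_{1},h_{2},p,\tilde\lambda_{h_{1},h_{2},p})$ via \cite[Lemma 5.52]{Splms10}, deduce $<1$ from $(\deg h_1,\deg h_2)\neq(2,2)$, and derive the $\|\cdot\|_\alpha$-blowup by contradiction via the uniform convergence in Theorem~\ref{t:oscanal} -- is exactly the paper's strategy. But there is a genuine gap in how you set up the invocation of Theorem~\ref{t:oscholgen}, and it stems from a sign error: you write that $T(h_{1},h_{2},p,\cdot)$ is the probability of \emph{not} tending to $\infty$, whereas by definition it \emph{is} the probability of tending to $\infty$. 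Consequently you plan to feed the \emph{bounded} minimal set $L$ from Theorem~\ref{t:oscanal}-(ii) into Theorem~\ref{t:oscholgen}, and you identify ``showing $L$ is attracting'' as the place you would spend the most effort. This is a dead end: in the non-hyperbolic setting that Theorem~\ref{t:oschg2} is meant to cover, the bounded minimal set need not be attracting at all -- Remark~\ref{r:toscanal} explicitly constructs $(h_1,h_2)$ satisfying the hypotheses here for which $\{h_1,h_2\}$ is not mean stable and the bounded minimal set even meets $J(h_1,h_2)$, which rules out $L\subset V\subset F(G)$ as required by Definition~\ref{d:attminset}. The correct choice is $L=\{\infty\}$, which is trivially an attracting minimal set for any finitely generated polynomial semigroup of degree-$\geq 2$ maps, and then $T_{L,\tau}=T(h_1,h_2,p,\cdot)$ \emph{on the nose}, so no auxiliary argument about bounded minimal sets is needed or used in the paper's proof.

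The second gap is in your justification of $\lambda_{h_{1},h_{2},p}(U\setminus P(G))>0$. You argue that $P(G)\neq J(G)$ forces $P(G)$ to be ``relatively nowhere dense'' in $J(G)$; this inference is false -- $P(G)\cap J(G)$ can perfectly well be dense in $J(G)$ without equaling it. The correct and much simpler observation (which the paper uses) is that $U$ is \emph{entirely} disjoint from $P^{\ast}(G)$: since $(h_1,h_2)\in{\cal B}$ and $\hat K(G)=K(h_1)$ (this is (\ref{eq:hatkgkh1}) from the proof of Theorem~\ref{t:oscanal}), one has $P^{\ast}(G)\subset\hat K(G)=K(h_1)$, while $U=(\mathrm{int}(K(h_2)))\setminus K(h_1)$ avoids $K(h_1)$ by construction. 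Combined with $U\cap J(G)\neq\emptyset$ (obtained by pulling a point of $J(h_2)$ back under $h_1^{-n}$ toward $J(h_1)$, using $J(h_1)\cap J(h_2)=\emptyset$) and $\mathrm{supp}\,\lambda_{h_1,h_2,p}=J(G)$, this gives $\lambda_{h_1,h_2,p}(U\setminus P(G))>0$. Finally, for non-atomicity the paper just cites \cite[Lemma 5.1]{S3}; your heuristic via the open set condition is a plausible sketch but you should quote that lemma rather than improvise.
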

\begin{proof}
From our assumption, we have $J(h_{1})\cap J(h_{2})=\emptyset .$ 
Also, by Lemma~\ref{l:oscinte}, we have $h_{1}^{-1}(K(h_{2}))\subsetneqq h_{2}^{-1}(K(h_{1})).$ 
By Theorem~\ref{t:oscanal} (i), we have that 
for each open set $W$ in $\CCI $ with $J(G)\cap W\neq \emptyset $, 
$T(h_{1},h_{2},p,\cdot )$ is not constant on $W.$ 
Let $a\in J(h_{2})$ and let $b\in J(h_{1}).$ 
Then there exists a sequence $\{ a_{j}\} $ of points with $a_{j}\in h_{1}^{-n_{j}}(a)$ for some $n_{j}\in \NN $, 
such that $a_{j}\rightarrow b$ as $j\rightarrow \infty .$ 
Then, for a large $j$, we have $a_{j}\in (\mbox{int}(K(h_{2})))\setminus K(h_{1})=U.$ 
Since $a_{j}\in J(G)$ and $U\subset (\CC \setminus \hat{K(h_{1})})
\subset \CC \setminus P^{\ast }(G)$, it follows that $U\cap J(G)\cap (\CCI \setminus P(G))
\neq \emptyset .$ Also, by \cite[Theorem 4.3]{S3}, supp$\tilde{\lambda }_{h_{1},h_{2},p}=J(G).$ 
Hence $\lambda _{h_{1},h_{2},p}(U\setminus P(G))>0.$ 
Moreover, by \cite[Lemma 5.52]{Splms10}, we have 
$\int \log \| D(\gamma _{1})_{x}\| _{s}d\tilde{\lambda }_{h_{1},h_{2},p}(\gamma ,x)
=p\log \deg (h_{1})+(1-p)\log \deg (h_{2})>0.$ 
Combining these with Theorem~\ref{t:oscholgen}, 
we see that for $\lambda _{h_{1},h_{2},p}$-a.e.$z_{0}\in J(G)$, 
\vspace{-3mm} 
$$\Hol(T(h_{1},h_{2},p,\cdot ), z_{0})\leq u(h_{1},h_{2},p, \tilde{\lambda }_{h_{1},h_{2},p})
=-\frac{p\log p+(1-p)\log (1-p)}{p\log (\deg (h_{1}))+(1-p)\log (\deg (h_{2}))}.$$ 
Since $-p\log p-(1-p)\log (1-p)\leq \log 2$ and 
$(\deg (h_{1}),\deg (h_{2}))\neq (2,2)$, we have that \\ 
$-\frac{p\log p+(1-p)\log (1-p)}{p\log (\deg (h_{1}))+(1-p)\log (\deg (h_{2}))}<1.$ 
In particular, for  $\lambda _{h_{1},h_{2},p}$-a.e.$z_{0}\in J(G)$, 
$T(h_{1},h_{2},p,\cdot )$ is not differentiable at $z_{0}.$ 
Moreover, by \cite[Lemma 5.1]{S3}, $\lambda _{h_{1},h_{2},p}$ is non-atomic. 
Hence  there exists an uncountable dense subset $A$ of $J(G)$ such that 
at every point of $A$, the function 
$T(h_{1},h_{2},p,\cdot )$ is not differentiable. 

 We now let  $\alpha \in (u(h_{1},h_{2},p, \tilde{\lambda}_{h_{1},h_{2},p}), 1)$ 
and let $\varphi \in C^{\alpha}(\CCI )$ such that $\varphi (\infty )=1$ and $\varphi |_{\hat{K}(G)}\equiv 0$.   By Theorem~\ref{t:oscanal}, we have 
$M_{h_{1},h_{2},p}^{n}(\varphi )(z)\rightarrow T(h_{1},h_{2},p,z)$ as $n\rightarrow \infty $ 
uniformly on $\CCI .$ 
If there exists a constant $C>0$ and a strictly increasing sequence $\{ n_{j}\} $ in $\NN $ 
such that $\| M_{h_{1},h_{2},p}^{n_{j}}(\varphi )\| _{\alpha }\leq C $ for each $j$, 
then we obtain $T(h_{1},h_{2},p,\cdot )\in C^{\alpha }(\CCI ).$ However, this is a contradiction. 
Hence  $\| M_{h_{1},h_{2},p}^{n}(\varphi )\| _{\alpha }\rightarrow \infty $ as $n\rightarrow \infty .$ 
Thus we have proved our theorem.
\end{proof}
\begin{rem}
\label{r:oschg3}
Let $h_{1}\in {\cal P}$ and $d\in \NN $ with $d\geq 2$ and $p\in (0,1)$.  Suppose that $\langle h_{1}\rangle $ is postcritically bounded, 
int$(K(h_{1}))\neq\emptyset $ and $(\deg (h_{1}),d)\neq (2,2).$
 Then, as in Remark~\ref{r:toscanal},  
by Theorem~\ref{t:spgenex}, there exists an element $h_{2}\in {\cal P}$ with 
$\deg (h_{2})=d$ such that  
$(h_{1},h_{2})\in ((\partial {D})\cap {\cal C}\cap {\cal B})\setminus 
{\cal I}$ 
and such that $(h_{1},h_{2},p)$ satisfies the assumptions of 
Theorems~\ref{t:oscanal} and \ref{t:oschg2}.  These two theorems imply that 
the associated random dynamical system does not have chaos in the $C^{0}$ sense 
(note that this is a randomness-induced phenomenon which cannot hold in the deterministic iteraton 
dynamics of an $f\in {\cal P}$),  
but still has a kind of chaos in the $C^{\alpha }$ sense for some $0<\alpha <1$.  
More precisely,  
there exists a number $\alpha _{0}\in (0,1)$ such that 
for each $\alpha \in (\alpha _{0}, 1)$, 
the system behaves chaotically  
on the Banach space $C^{\alpha }(\CCI )$ (and on the Banach space $C^{1}(\CCI )$ as well).  
Namely, as in Remark~\ref{r:gradation}, the above results indicate that 
regarding the random dynamical systems, we have a kind of gradation between chaos and 
order. Note that in Theorems~\ref{t:oscanal} and \ref{t:oschg2} we do not assume 
hyperbolicity, and as in Remark~\ref{r:toscanal}, if $h_{1}$ has a parabolic cycle or 
Siegel disk cycle, then the above $h_{2}$ can be taken so that $\langle h_{1},h_{2}\rangle $ 
is not mean stable. Moreover, as in Remark~\ref{r:toscanal} again, 
if $h_{1}$ has a parabolic cycle, then the above $h_{2}$ can be taken so that 
$J_{\ker }(h_{1},h_{2})=\emptyset $ and 
a minimal set of $\langle h_{1}, h_{2}\rangle $ meets the Julia set of $\langle h_{1},h_{2}\rangle .$ 
Thus, regarding the gradation between the chaos and order, 
Theorems ~\ref{t:oscanal} and \ref{t:oschg2} deal with  a new case.  

\end{rem}
\vspace{-4mm} 
\subsection{Proof of Theorem~\ref{t:j1orj2}}
\vspace{-2mm} 
In this subsection, we prove Theorem~\ref{t:j1orj2}. 
We need several lemmas. 
\begin{lem}
\label{l:suppmdisc}
Let $m\in \NN $ with $m\geq 2.$ 
Let $h=(h_{1},\ldots, h_{m})\in (\emRat )^{m}.$ Let $G=\langle h_{1},\ldots, h_{m}\rangle .$ 
Let $p=(p_{1},\ldots ,p_{m})\in (0,1)^{m}$ with $\sum _{j=1}^{m}p_{j}=1$ and 
let $\tau =\sum _{j=1}^{m}p_{j}\delta _{h_{j}}.$ 
Let $L$ be an attracting minimal set for $(G,\CCI )$. 
Let $\tilde{\mu }$ be an $\tilde{h}$-invariant ergodic Borel probability measure on $J(\tilde{h})$. 
Let $\mu =(\pi _{\CCI })_{\ast }(\tilde{\mu }).$ 
Suppose that all of the following holds. 
\begin{itemize}
\item[{\em (a)}] 
{\em supp}$\,\mu \subset J(G)\setminus \bigcup _{(i,j):i\neq j}(h_{i}^{-1}(J(G))\cap h_{j}^{-1}(J(G))).$ 
\item[{\em (b)}] 
%
There exists a point $a \in \mbox{{\em supp}}\,\mu \setminus P(G)$ such that 
$T_{L,\tau }$ is not constant in any neighborhood of $a.$  
\item[{\em (c)}] 
$\int \log \| D(\gamma _{1})_{x}\| _{s}d\tilde{\mu }(\gamma ,x)>0.$ 
\end{itemize}  
Then, $T_{L,\tau }$ is continuous at each point of supp$\,\mu $ and 
for $\mu$-a.e. $z_{0}\in J(G)$, $\emHol(T_{L,\tau },z_{0})\leq 
v(h,p,\tilde{\mu }).$ 

\end{lem}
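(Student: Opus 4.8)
\textbf{Proof proposal for Lemma~\ref{l:suppmdisc}.}

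The plan is to mirror the structure of the proof of Theorem~\ref{t:oscholgen}, but to replace the role of the open set condition (which guaranteed, for each point of $J(G)\cap U$, a \emph{unique} coding $\alpha\in\Sigma_m$ with $b\in\cap_j h_{\alpha|_j}^{-1}(J(G))$) by hypothesis (a), which supplies the same uniqueness directly: if $b\in\mathrm{supp}\,\mu$ and $h_{\alpha|_n}(b)\in J(G)$, $h_{\beta|_n}(b)\in J(G)$ for $\alpha,\beta\in\Sigma_m$, then at the first index where $\alpha$ and $\beta$ differ we would have $h_{\alpha_1\cdots\alpha_{k}}(b)\in h_{i}^{-1}(J(G))\cap h_{j}^{-1}(J(G))$ for some $i\neq j$, but $h_{\alpha_1\cdots\alpha_{k-1}}(b)\in\mathrm{supp}\,\mu$ since $\mathrm{supp}\,\mu$ is $\tilde h$-invariant (hence $\pi_{\CCI}(\mathrm{supp}\,\tilde\mu)$ is forward invariant under the $h_j$'s intersected with $J(G)$), contradicting (a). So the coding is unique along $\mathrm{supp}\,\mu$. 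First I would record this uniqueness as a claim, and deduce via \cite[Lemma 4.3]{Splms10} that $\mathrm{supp}\,\mu\cap J(G)\subset F_{pt}^{0}(\tau)$; then Lemma~\ref{l:attminset} gives continuity of $T_{L,\tau}$ at every point of $F_{pt}^{0}(\tau)$, hence at every point of $\mathrm{supp}\,\mu$.

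Next I would carry out the pointwise Hölder estimate exactly as in Theorem~\ref{t:oscholgen}. Using (c) and finiteness of $\int|\log\|D(\gamma_1)_x\|_s|\,d\tilde\mu$, for each $k$ set $\theta_k=v(h,p,\tilde\mu)+\frac1k$; Birkhoff's ergodic theorem applied to $\log(p_{\gamma_1}\|D(h_{\gamma_1})_x\|_s^{\theta_k})$ (which has positive integral, by definition of $v$) gives a full-measure set $\tilde A_k$ on which $p_{\gamma_n}\cdots p_{\gamma_1}\|D(h_{\gamma|_n})_x\|_s^{\theta_k}\to\infty$. Fix a point $(\omega,a)\in\mathrm{supp}\,\tilde\mu$ with $a$ the point from hypothesis (b); by Birkhoff and ergodicity, for $\tilde\mu$-a.e. $(\gamma,x)$ there is $\{n_j\}$ with $\tilde h^{n_j}(\gamma,x)\to(\omega,a)$, i.e. $h_{\gamma|_{n_j}}(x)\to a$. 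Choosing a small disk $W\ni a$ with $\overline W\subset\CCI\setminus P(G)$ and univalent inverse branches $\zeta_j$ of $h_{\gamma|_{n_j}}$ on $W$ sending $h_{\gamma|_{n_j}}(x)\mapsto x$, hypothesis (a) guarantees (again by the uniqueness-of-coding argument, now applied along the orbit of $a$ rather than at $a$ itself — here is where I must be slightly careful: I need that for $(\rho_1,\dots,\rho_{n_j})\neq(\gamma_1,\dots,\gamma_{n_j})$ the set $h_{\rho_{n_j}}\cdots h_{\rho_1}(\zeta_j(W))$ misses $J(G)$, which follows because any point of $\zeta_j(W)\cap J(G)$ would, by (a) and forward-invariance of $\mathrm{supp}\,\mu$, have a unique coding, forcing $\rho|_{n_j}=\gamma|_{n_j}$) that these images lie in $F(G)$, where $T_{L,\tau}$ is locally constant with $M_\tau(T_{L,\tau})=T_{L,\tau}$. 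Hence for $y\in\zeta_j(W)$, $|T_{L,\tau}(x)-T_{L,\tau}(y)|=p_{\gamma_{n_j}}\cdots p_{\gamma_1}|T_{L,\tau}(h_{\gamma|_{n_j}}(x))-T_{L,\tau}(h_{\gamma|_{n_j}}(y))|$. By (b) and Lemma~\ref{l:oscncp} (applied with the attracting minimal set, via Lemma~\ref{l:attminset}), $T_{L,\tau}$ is non-constant near $a$; pick $b\in W$ with $\eta:=|T_{L,\tau}(a)-T_{L,\tau}(b)|>0$, set $c_j=\zeta_j(b)$, use continuity of $T_{L,\tau}$ at $a$ to get $|T_{L,\tau}(h_{\gamma|_{n_j}}(x))-T_{L,\tau}(c_j\text{'s image})|\geq\eta/2$ for large $j$, and apply the Koebe distortion theorem to get $c_j\in B(x,C\|D(h_{\gamma|_{n_j}})_x\|_s^{-1})$. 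Then the difference quotient of $T_{L,\tau}$ at $x$ at scale $\|D(h_{\gamma|_{n_j}})_x\|_s^{-1}$ with exponent $\theta_k$ blows up, so $\mathrm{H\ddot ol}(T_{L,\tau},x)\leq\theta_k$ for $(\gamma,x)\in\tilde A\cap\tilde A_k$; intersecting over $k$ and pushing forward via $\pi_{\CCI}$ through an exhaustion by compacta gives a Borel set of full $\mu$-measure on which $\mathrm{H\ddot ol}(T_{L,\tau},\cdot)\leq v(h,p,\tilde\mu)$.

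The main obstacle I anticipate is the separation argument replacing the open set condition: in Theorem~\ref{t:oscholgen} the disjointness $h_i^{-1}(U)\cap h_j^{-1}(U)=\emptyset$ together with $J(G)\subset\overline U$ was used twice — once to get the unique coding of points of $J(G)$, once to ensure that the ``wrong'' preimages $h_{\rho|_{n_j}}^{-1}$ of $W$ avoid $J(G)$. Here I only have the weaker hypothesis (a), disjointness of the \emph{first}-level preimages of $J(G)$ along $\mathrm{supp}\,\mu$, and I must bootstrap it to all levels using the $\tilde h$-invariance of $\mathrm{supp}\,\tilde\mu$ (so that the orbit of a point of $\mathrm{supp}\,\mu$ stays in $\mathrm{supp}\,\mu$, where (a) can be reapplied). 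Making this induction precise — in particular confirming that $\pi_{\CCI}(\mathrm{supp}\,\tilde\mu)\cap h_i^{-1}(J(G))\cap h_j^{-1}(J(G))=\emptyset$ implies the analogous statement for all words, and that this suffices for the local-constancy-on-images step — is the delicate point; everything else is a transcription of the proof of Theorem~\ref{t:oscholgen}.
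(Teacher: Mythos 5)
Your first half (continuity on $\mathrm{supp}\,\mu$) coincides exactly with the paper's argument: the $\tilde{h}$-invariance of $\mathrm{supp}\,\tilde{\mu}$ propagates hypothesis~(a) along forward orbits, giving a unique symbolic coding of each point of $\mathrm{supp}\,\mu$ (this is the content of the paper's displayed inclusion, eq.~(2.39)), after which \cite[Lemma 4.3]{Splms10} gives $\mathrm{supp}\,\mu\subset F_{pt}^{0}(\tau)$ and Lemma~\ref{l:attminset} gives continuity. For the H\"older estimate you again follow the paper, which at this point simply defers to ``the method in the proof of Theorem~\ref{t:oscholgen}.'' So your overall route is the paper's route.

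The gap is in the specific justification you offer for the step you yourself flag as delicate. You assert that for $\rho\in\{1,\ldots,m\}^{n_j}$ with $\rho\neq\gamma|_{n_j}$ one has $h_{\rho}(\zeta_j(W))\cap J(G)=\emptyset$ ``because any point of $\zeta_j(W)\cap J(G)$ would, by (a) and forward-invariance of $\mathrm{supp}\,\mu$, have a unique coding, forcing $\rho|_{n_j}=\gamma|_{n_j}$.'' But hypothesis~(a) only forbids double codings at points of $\mathrm{supp}\,\mu$; it says nothing about points of $J(G)\setminus\mathrm{supp}\,\mu$, and $\zeta_j(W)$ is an open neighbourhood of $x$ that will in general contain such points. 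So a point $z\in\zeta_j(W)$ with $h_\rho(z)\in J(G)$ need not lie in $\mathrm{supp}\,\mu$, and your argument does not apply to it. (This is exactly where the open set condition in Theorem~\ref{t:oscholgen} did real work: it separated the backward images $h_w^{-1}(U)$ of \emph{all} words at every scale, not merely along $\mathrm{supp}\,\mu$.) What you do have from~(a) is that $h_\rho(x)\in F(G)$ for every $\rho\neq\gamma|_{n_j}$, i.e.\ that the \emph{center} $x$ avoids the wrong $h_\rho^{-1}(J(G))$; to make the terms with $\rho\neq\gamma|_{n_j}$ in $M_\tau^{n_j}(T_{L,\tau})(x)-M_\tau^{n_j}(T_{L,\tau})(c_j)$ actually vanish you also need the diameters $\mathrm{diam}\bigl(h_{\gamma|_{k-1}}(\zeta_j(W))\bigr)$, $k=1,\dots,n_j$, to be smaller than the uniform separation $\delta_0>0$ between the disjoint compact sets $C_i=\mathrm{supp}\,\mu\cap h_i^{-1}(J(G))$ and $h_l^{-1}(J(G))$ for $l\neq i$ (this uniform $\delta_0$ exists by compactness and~(a)). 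That is a quantitative statement about the intermediate contraction of the inverse branches which does not come for free from the Lyapunov hypothesis~(c); the paper treats it as routine, and you correctly sense that it is the crux, but the reasoning you propose will not establish it as stated.
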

\begin{proof}
Let $x\in \mbox{supp}\,\mu .$ Then there exists an element $\gamma \in \Sigma _{m}$ 
such that $(\gamma ,x)\in \mbox{supp}\,\tilde{\mu }.$ 
Since $\tilde{h}^{n}(\mbox{supp}\,\tilde{\mu })\subset \mbox{supp}\,\tilde{\mu }$ for each 
$n\in \NN $, we obtain that 
\vspace{-2mm} 
\begin{equation}
\label{eq:hgnxins}
h_{\gamma |_{n}}(x)\in \mbox{supp}\,\mu \subset J(G)\setminus 
\bigcup _{(i,j):i\neq j}(h_{i}^{-1}(J(G))\cap h_{j}^{-1}(J(G))) 
\mbox{ for each }n\in \NN .
\end{equation} 
Let $\alpha \in \Sigma _{m}$ be an element such that 
$x\in \cap _{n=0}^{\infty }h_{\alpha |_{n}}^{-1}(J(G)).$ 
Then by (\ref{eq:hgnxins}), it follows that $\alpha =\gamma .$ 
Therefore 
$\{ \omega \in \Sigma _{m}\mid x\in \cap _{n=0}^{\infty }h_{\omega |_{n}}^{-1}(J(G))\} =\{ \gamma \} .$ 
By \cite[Lemma 4.3]{Splms10}, we obtain that 
supp$\,\mu \subset F_{pt}^{0}(\tau ).$ By Lemma~\ref{l:attminset}, 
it follows that $T_{L,\tau }$ is continuous at every point of supp$\,\mu .$ 
 
Since $a\in \mbox{supp}\,\mu $, there exists a point $\omega \in \Sigma _{m}$ 
such that $(\omega ,a)\in \mbox{supp}\,\tilde{\mu }.$ 
Let 
\vspace{-2mm} 
$$\tilde{A}:=\{ (\gamma ,x)\in J(\tilde{h})\mid \exists \{ n_{j}\} \rightarrow \infty \mbox{ s.t. } 
\tilde{h}^{n_{j}}(\gamma, x)\rightarrow (\omega ,a) \mbox{ as } j\rightarrow \infty \} .$$ 
Then by Birkhoff's ergodic theorem, we have $\tilde{\mu }(\tilde{A})=1.$ 
By using the assumptions of our lemma, (\ref{eq:hgnxins}) and the method in the proof of 
Theorem~\ref{t:oscholgen}, it is easy to see that 
for $\mu $-a.e. $z_{0}\in J(G)$, $\Hol(T_{L,\tau },z_{0})\leq v(h,p,\tilde{\mu }).$ 
Thus we have proved our lemma. 
\end{proof}
\vspace{-4mm} 
\begin{lem}
\label{l:jhjscs}
Let $(h_{1},h_{2})\in (\overline{{\cal D}}\cap {\cal B}\cap {\cal H})\setminus {\cal I}.$ 
Then $\cup _{j=1}^{2}J(h_{j})\subset \CCI \setminus (h_{1}^{-1}(J(G))\cap h_{2}^{-1}(J(G))).$ 
Moreover, if, in addition to the assumption, $K(h_{1})\subset K(h_{2})$, then 
$h_{2}(K(h_{1}))\subset \mbox{int}(K(h_{1}))$ and 
$h_{1}(\overline{F_{\infty }(h_{2})})\subset F_{\infty }(h_{2}).$ 
\end{lem}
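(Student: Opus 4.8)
\textbf{Proof proposal for Lemma~\ref{l:jhjscs}.}
The plan is to exploit Proposition~\ref{p:bdyosc} together with the hypothesis $(h_1,h_2)\notin{\cal I}$, i.e.\ $J(h_1)\cap J(h_2)=\emptyset$. First I would fix the case $K(h_1)\subset K(h_2)$ coming from Proposition~\ref{p:bdyosc} (the other case is symmetric, so I would also note at the end that the conclusion about $J(h_1)\cup J(h_2)$ is stated symmetrically while the second half is stated only under $K(h_1)\subset K(h_2)$). By Proposition~\ref{p:bdyosc}(a) we have the chain $K(h_1)\subset h_1^{-1}(K(h_2))\subset h_2^{-1}(K(h_1))\subset K(h_2)$, and by Proposition~\ref{p:bdyosc}(b), setting $U=(\mathrm{int}(K(h_2)))\setminus K(h_1)$, the sets $h_1^{-1}(U)$ and $h_2^{-1}(U)$ are disjoint and contained in $U$; combined with \cite[Corollary 3.2]{HM} this gives $J(G)\subset\overline U$, and with Lemma~\ref{l:ovintk} (applicable once $K(h_1)\subsetneqq K(h_2)$, which holds since $(h_1,h_2)\notin{\cal I}$ forces $J(h_1)\ne J(h_2)$) one gets $\overline U=K(h_2)\setminus\mathrm{int}(K(h_1))$.

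For the first assertion, I would argue that a point $z$ in $h_1^{-1}(J(G))\cap h_2^{-1}(J(G))$ must lie in $\overline U\cap h_1^{-1}(\overline U)\cap h_2^{-1}(\overline U)$; using the inclusions computed in the proof of Proposition~\ref{p:bdyosc} ((\ref{eq:h1-uint}), (\ref{eq:h2-uint})) this means $z\in h_1^{-1}(K(h_2))$ and $z\in K(h_2)\setminus\mathrm{int}(h_2^{-1}(K(h_1)))$, and by part (a) of Proposition~\ref{p:bdyosc} these two conditions force $z\in\partial(h_1^{-1}(K(h_2)))\cap\partial(h_2^{-1}(K(h_1)))$, i.e.\ $h_1(z)\in J(h_2)$ and $h_2(z)\in J(h_1)$. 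Thus $h_1^{-1}(J(G))\cap h_2^{-1}(J(G))\subset h_1^{-1}(J(h_2))\cap h_2^{-1}(J(h_1))$. Now if $w\in J(h_1)$ lay in this intersection, then $h_1(w)\in J(h_1)\cap J(h_2)=\emptyset$, a contradiction; similarly for $w\in J(h_2)$. Hence $J(h_1)\cup J(h_2)$ is disjoint from $h_1^{-1}(J(G))\cap h_2^{-1}(J(G))$, which is the first claim.

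For the second assertion, assume $K(h_1)\subset K(h_2)$. Since $J(h_1)\cap J(h_2)=\emptyset$ and, from the chain above, $h_2(K(h_1))\subset K(h_1)$, the only way $h_2(K(h_1))$ could meet $J(h_1)$ is through a boundary point; but $h_2(J(h_1))$ cannot meet $J(h_1)$: indeed $J(h_1)\subset K(h_1)\subset h_2^{-1}(K(h_1))$ with $h_2^{-1}(K(h_1))\supset K(h_1)$ strictly separated from $J(h_1)$ once one checks $J(h_1)\cap h_2^{-1}(J(h_1))=\emptyset$ — this last is where $(h_1,h_2)\notin{\cal I}$ enters again, via the argument that $h_2(J(h_1))\subset K(h_1)$ and if some $x\in J(h_1)$ had $h_2(x)\in J(h_1)$ then, combining with $h_1(x)$, one produces a point of $J(G)$ in $h_1^{-1}(J(G))\cap h_2^{-1}(J(G))$ contradicting the first part (this is exactly the mechanism used in the proof of Lemma~\ref{l:pfexample}, Claim~3). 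Therefore $h_2(K(h_1))\subset\mathrm{int}(K(h_1))$. For the last inclusion, from $h_1^{-1}(K(h_2))\subset h_2^{-1}(K(h_1))\subset K(h_2)$ we get $h_1^{-1}(K(h_2))\subset K(h_2)$, hence $h_1(\CCI\setminus K(h_2))\subset\CCI\setminus K(h_2)$, i.e.\ $h_1(F_\infty(h_2))\subset F_\infty(h_2)$; since $h_1$ is continuous and $F_\infty(h_2)$ is open this gives $h_1(\overline{F_\infty(h_2)})\subset\overline{F_\infty(h_2)}$, and because $J(h_1)\cap J(h_2)=\emptyset$ the boundary $J(h_2)=\partial F_\infty(h_2)$ maps into $\CCI\setminus K(h_2)=F_\infty(h_2)$, upgrading this to $h_1(\overline{F_\infty(h_2)})\subset F_\infty(h_2)$.

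The main obstacle I anticipate is the careful bookkeeping in the second assertion: showing $J(h_1)\cap h_2^{-1}(J(h_1))=\emptyset$ cleanly, since a priori $h_2$ could fold a piece of $J(h_1)$ back onto $J(h_1)$ even though $J(h_1)\cap J(h_2)=\emptyset$. The right way around this is to package it through the first assertion (as above) rather than trying to argue it directly; everything else is a routine chase through the inclusions already established in Proposition~\ref{p:bdyosc} and Lemma~\ref{l:ovintk}.
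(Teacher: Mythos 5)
Your argument for the first assertion is correct but differently organized from the paper's. The paper goes straight for the two stronger disjointness facts $J(h_1)\cap h_2^{-1}(J(G))=\emptyset$ and $J(h_2)\cap h_1^{-1}(J(G))=\emptyset$ (via a short chase through the chain (a): if $z\in J(h_1)\cap h_2^{-1}(J(G))$, then since $J(G)\subset K(h_2)\setminus\mathrm{int}(K(h_1))$ and $J(h_1)\subset h_2^{-1}(K(h_1))$ one forces $h_2(z)\in J(h_1)$, then using $J(h_1)\subset h_1^{-1}(K(h_2))\subset h_2^{-1}(K(h_1))$ one pushes $z$ to $\partial(h_1^{-1}(K(h_2)))$, giving $h_1(z)\in J(h_2)$ and hence $J(h_1)\cap J(h_2)\neq\emptyset$). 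Both parts of the lemma then fall out of those two relations in one line. Your route instead establishes only $h_1^{-1}(J(G))\cap h_2^{-1}(J(G))\subset h_1^{-1}(J(h_2))\cap h_2^{-1}(J(h_1))$ and finishes the first assertion by forward invariance of each $J(h_j)$; that works, but it means the useful facts $J(h_j)\cap h_{3-j}^{-1}(J(G))=\emptyset$ are not yet in hand when you turn to the second assertion, so you have to re-derive them there.

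This is where the only real gap lies. For $h_2(K(h_1))\subset\mathrm{int}(K(h_1))$ you do re-derive $J(h_1)\cap h_2^{-1}(J(h_1))=\emptyset$ correctly by appealing back to the first assertion. But for $h_1(\overline{F_\infty(h_2)})\subset F_\infty(h_2)$ you write that the upgrade ``because $J(h_1)\cap J(h_2)=\emptyset$ the boundary $J(h_2)$ maps into $F_\infty(h_2)$'' — that jump is not justified as stated: $J(h_1)\cap J(h_2)=\emptyset$ alone says nothing immediate about where $h_1$ sends $J(h_2)$. What you actually need is $J(h_2)\cap h_1^{-1}(J(h_2))=\emptyset$, and you should get it by the exact same mechanism you used one paragraph earlier: if $z\in J(h_2)$ with $h_1(z)\in J(h_2)\subset J(G)$ then also $h_2(z)\in J(h_2)\subset J(G)$, so $z\in J(h_2)\cap h_1^{-1}(J(G))\cap h_2^{-1}(J(G))$, contradicting the first assertion. (Alternatively, a direct route: $J(h_1)\cap J(h_2)=\emptyset$ together with $K(h_1)\subset K(h_2)$ forces $K(h_1)\subset\mathrm{int}(K(h_2))$, hence $h_2^{-1}(K(h_1))\subset h_2^{-1}(\mathrm{int}(K(h_2)))=\mathrm{int}(K(h_2))$, hence $h_1^{-1}(K(h_2))\subset h_2^{-1}(K(h_1))\subset\mathrm{int}(K(h_2))$, giving $J(h_2)\cap h_1^{-1}(K(h_2))=\emptyset$.) With either of these, the proof closes; as written, that last step is a leap.
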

\begin{proof}
By Proposition~\ref{p:bdyosc}, either  
$K(h_{1})\subset K(h_{2})$ or $K(h_{2})\subset K(h_{1}).$ 
We assume $K(h_{1})\subset K(h_{2}).$ 
 By Proposition~\ref{p:bdyosc} again, 
we obtain $K(h_{1})\subset h_{1}^{-1}(K(h_{2}))\subset h_{2}^{-1}(K(h_{1}))\subset K(h_{2})$ and 
$(h_{1},h_{2})$ satisfies the open set condition with 
$U=(\mbox{int}(K(h_{2})))\setminus K(h_{1}).$ 
Then $J(G)\subset \overline{U}\subset K(h_{2})\setminus \mbox{int}(K(h_{1})).$ 
Suppose $J(h_{1})\cap h_{2}^{-1}(J(G))\neq \emptyset .$ 
Then  $J(h_{1})\cap h_{2}^{-1}(J(G))\subset 
J(h_{1})\cap h_{2}^{-1}(K(h_{2})\setminus \mbox{int}(K(h_{1})))
\subset J(h_{1})\cap h_{2}^{-1}(J(h_{1})).$ 
Hence $J(h_{1})\cap h_{2}^{-1}(J(h_{1}))\neq \emptyset .$ 
Since $J(h_{1})\subset h_{1}^{-1}(K(h_{2}))\subset h_{2}^{-1}(K(h_{1}))$, we 
obtain that $\emptyset \neq J(h_{1})\cap h_{2}^{-1}(J(h_{1}))\subset J(h_{1})\cap h_{1}^{-1}(J(h_{2})).$ 
Thus $J(h_{1})\cap J(h_{2})\neq \emptyset .$ However, this contradicts $(h_{1},h_{2})\not\in {\cal I}.$ 
Hence we must have that $J(h_{1})\cap h_{2}^{-1}(J(G))=\emptyset .$ 
Similarly, we can show that $J(h_{2})\cap h_{1}^{-1}(J(G))=\emptyset .$ 
Since $K(h_{1})\subset h_{1}^{-1}(K(h_{2}))\subset h_{2}^{-1}(K(h_{1}))\subset K(h_{2})$, 
$J(h_{1})\cap h_{2}^{-1}(J(G))=\emptyset $, and $J(h_{2})\cap h_{1}^{-1}(J(G))=\emptyset $, 
we obtain that  $h_{2}(K(h_{1}))\subset \mbox{int}(K(h_{1}))$ and 
$h_{1}(\overline{F_{\infty }(h_{2})})\subset F_{\infty }(h_{2}).$
Thus we have proved our lemma. 
\end{proof}
\vspace{-3mm} 
\begin{lem}
\label{l:intt-10}
Let $\tau \in {\frak M}_{1}({\cal P}).$ Suppose $\infty \in F(G_{\tau }).$ 
Then $\mbox{{\em int}}(T_{\infty ,\tau }^{-1}(\{ 0\} ))\subset F(G_{\tau }).$ 
\end{lem}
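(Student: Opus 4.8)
\textbf{Proof plan for Lemma~\ref{l:intt-10}.} The statement is the companion of Lemma~\ref{l:intT1}, with the minimal set $\{\infty\}$ playing the role that $\mbox{int}(\hat K(G_\tau))$ played there, and the proof will follow the same template. The plan is to show that if $z_0\in T_{\infty,\tau}^{-1}(\{0\})$, then no element of $G_\tau$ can map $z_0$ into a fixed open disc neighbourhood of $\infty$ contained in $F_\infty(G_\tau)$; then to upgrade this to the assertion that $G_\tau(\mbox{int}(T_{\infty,\tau}^{-1}(\{0\})))$ avoids that neighbourhood, which forces $\mbox{int}(T_{\infty,\tau}^{-1}(\{0\}))\subset F(G_\tau)$.

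First I would fix, using $\infty\in F(G_\tau)$, a small open disc neighbourhood $W$ of $\infty$ with $\overline W\subset F_\infty(G_\tau)$ and such that $g(\overline W)\subset W$ for every generator $g\in\Gamma_\tau=\mbox{supp}\,\tau$ (this is possible by taking $W$ to be a sufficiently small round disc about $\infty$ in a local coordinate, since each polynomial $g$ has a superattracting fixed point at $\infty$; alternatively one uses that $\overline W\subset F_\infty(G_\tau)$ and shrinks so that the whole compact family $\Gamma_\tau$ pushes $\overline W$ into $W$ — here compactness of $\mbox{supp}\,\tau$ is used). In particular, once an orbit enters $W$ it stays in $W$ and tends to $\infty$. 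Then the key claim is: for each $z_0\in T_{\infty,\tau}^{-1}(\{0\})$ there is no $g\in G_\tau$ with $g(z_0)\in W$. Indeed, if $g=h_m\circ\cdots\circ h_1$ with $h_j\in\Gamma_\tau$ and $g(z_0)\in W$, then there is a neighbourhood $N$ of $(h_1,\dots,h_m)$ in $\Gamma_\tau^m$ such that $\omega_m\cdots\omega_1(z_0)\in W$ for all $\omega\in N$; for every $\gamma\in X_\tau$ with $(\gamma_1,\dots,\gamma_m)\in N$ we then have $\gamma_{n,1}(z_0)\in W$ for all $n\ge m$, hence $\gamma_{n,1}(z_0)\to\infty$. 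This gives $T_{\infty,\tau}(z_0)\ge \tilde\tau(\{\gamma\in X_\tau\mid(\gamma_1,\dots,\gamma_m)\in N\})>0$, contradicting $z_0\in T_{\infty,\tau}^{-1}(\{0\})$.

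From the claim, $G_\tau(T_{\infty,\tau}^{-1}(\{0\}))\subset\CCI\setminus W$, and in particular $G_\tau(\mbox{int}(T_{\infty,\tau}^{-1}(\{0\})))\subset\CCI\setminus W$, which is a set omitting more than three points. Therefore the family $\{g|_{\mbox{int}(T_{\infty,\tau}^{-1}(\{0\}))}\mid g\in G_\tau\}$ avoids $W$ and hence is normal (by Montel's theorem), so $\mbox{int}(T_{\infty,\tau}^{-1}(\{0\}))\subset F(G_\tau)$. This mirrors exactly the last line of the proof of Lemma~\ref{l:intT1} and completes the argument.

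\textbf{Anticipated main obstacle.} The only genuinely delicate point is the choice of the neighbourhood $W$ of $\infty$ that is forward-invariant under \emph{all} maps in $G_\tau$ simultaneously, not merely under a single map. This requires combining $\overline W\subset F_\infty(G_\tau)$ with the compactness of $\mbox{supp}\,\tau$: one first takes any $W_0$ with $\overline{W_0}\subset F_\infty(G_\tau)$, then uses that $F_\infty(G_\tau)$ is forward invariant and that for a round disc about $\infty$ of radius large enough each generator maps it strictly inside itself, shrinking uniformly over the compact generating set. Everything else is the same bookkeeping with $\tilde\tau$ as in Lemma~\ref{l:intT1}, so I expect no further difficulty.
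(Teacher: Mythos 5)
Your proof is correct and follows essentially the same template as the paper's, which is the companion argument to Lemma~\ref{l:intT1}: show that for $z_0\in T_{\infty,\tau}^{-1}(\{0\})$ no $g\in G_\tau$ maps $z_0$ into a fixed forward-invariant neighbourhood of $\infty$, deduce $G_\tau(\mbox{int}(T_{\infty,\tau}^{-1}(\{0\})))$ avoids that neighbourhood, and conclude normality. The only stylistic difference is that the paper works directly with $F_\infty(G_\tau)$ and cites \cite[Lemma 5.24]{Splms10} (that every sequence $\gamma\in X_\tau$ satisfies $\gamma_{n,1}\to\infty$ locally uniformly on $F_\infty(G_\tau)$), whereas you make the argument self-contained by explicitly constructing a round disc $W$ about $\infty$ with $g(\overline W)\subset W$ for all $g$ in the compact support of $\tau$ — a correct and slightly more elementary version of the same step.
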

\begin{proof}
Since $\infty \in F(G_{\tau })$, \cite[Lemma 5.24]{Splms10} implies that 
for each $\gamma \in X_{\tau }$, $\gamma _{n,1}\rightarrow \infty $ locally uniformly on $F_{\infty }(G_{\tau }).$  
We now prove the following claim.\\ 
Claim. For each $z_{0}\in T_{\infty ,\tau }^{-1}(\{ 0\} )$, 
there exists no $g\in G_{\tau }$ with $g(z_{0})\in F_{\infty }(G_{\tau }).$

To prove this claim, let $z_{0}\in T_{\infty ,\tau }^{-1}(\{ 0\} )$ and 
suppose there exists an element $g\in G_{\tau }$ with $g(z_{0})\in F_{\infty }(G_{\tau }).$ 
Let $h_{1},\ldots ,h_{m}\in \G_{\tau }$ be some elements with 
$g=h_{m}\circ \cdots \circ h_{1}.$ 
Then there exists a neighborhood $W$ of $(h_{1},\ldots ,h_{m})$ in $\G_{\tau }^{m}$ 
such that for each $\omega =(\omega _{1},\ldots ,\omega _{m})\in W$, 
$\omega _{m}\cdots \omega _{1}(z_{0})\in F_{\infty }(G_{\tau }).$ 
Therefore for each $\g \in X_{\tau }$ with $(\g _{1},\ldots ,\g _{m})\in W$, 
$\g _{n,1}(z_{0})\rightarrow \infty $ as $n\rightarrow \infty .$   
Thus 
$T_{\infty ,\tau }(z_{0})\geq \tilde{\tau }(\{ \g \in X_{\tau }\mid (\g _{1},\ldots ,\g _{m})\in W\} )>0.$ 
This is a contradiction. Hence the claim holds. 

 From this claim, $G_{\tau }(\mbox{int}(T_{\infty ,\tau }^{-1}(\{ 0\} )))\subset \CCI \setminus F_{\infty }(G_{\tau }).$  
Therefore $ \mbox{int}(T_{\infty ,\tau }^{-1}(\{ 0\} ))\subset F(G_{\tau }).$ 
\end{proof}
We now prove Theorem~\ref{t:j1orj2}. \\ 
\noindent {\bf Proof of Theorem~\ref{t:j1orj2}.} 
By Lemma~\ref{l:d1d2not22}, 
we have $\deg (h_{j})\geq 2$ for each $j$ and $(\deg (h_{1}),\deg (h_{2}))\neq (2,2)$. Therefore 
there exists an $i\in \{ 1,2\} $ such that $-\frac{\log p_{i}}{\log \deg (h_{i})}<1.$ 

By Proposition~\ref{p:bdyosc}, 
we may assume that $K(h_{1})\subset K(h_{2}).$ 
Since $(g_{1},g_{2})\mapsto J(g_{1},g_{2})$ is continuous in a neighborhood of $(h_{1},h_{2})$ 
with respect to the Hausdorff metric (see \cite[Theorem 2.4.1]{S1}), 
Lemma~\ref{l:jhjscs} implies that  
there exists an open neighborhood $V$ of $(h_{1},h_{2})$ such that 
for each $(g_{1},g_{2})\in V,$  we have 
$\cup _{j=1}^{2}J(g_{j})\subset \CCI \setminus (g_{1}^{-1}(J(g_{1},g_{2}))\cap g_{2}^{-1}(J(g_{1},g_{2})))$, 
$g_{2}(K(g_{1}))\subset \mbox{int}(K(g_{1}))$ and $g_{1}(\overline{F_{\infty }(g_{2})})\subset F_{\infty }(g_{2}).$ 
Then for each $(g_{1},g_{2})\in V$, $\hat{K}(g_{1},g_{2})=K(g_{1})$ and 
$F_{\infty }(g_{1},g_{2})=F_{\infty }(g_{2}).$   
If $V$ is small enough, then  Lemma~\ref{l:pfmainth10} implies that 
 for each $g=(g_{1},g_{2})\in V$, $T(g_{1},g_{2},p,\cdot )$ is continuous on $\CCI .$ 
 Hence for each $g=(g_{1},g_{2})\in V$, 
 $T(g_{1},g_{2},p,\cdot )|_{\overline{F_{\infty }(g_{1},g_{2})}}\equiv 1.$ 
Let $g=(g_{1},g_{2})\in V.$ 
 By Lemma~\ref{l:intT1}, it follows that for each point $a\in \partial F_{\infty }(g_{1},g_{2})=J(g_{2})$, 
 the function $T(g_{1},g_{2},p,\cdot )$ is not constant in any neighborhood of $a.$ 
Moreover, by lemma~\ref{l:intt-10}, for each point $a\in \partial \hat{K}(g_{1},g_{2})=J(g_{1})$, 
the function $T(g_{1},g_{2},p,\cdot )$ is not constant in any neighborhood of $a.$ 
Furthermore, for each $j=1,2,$ there exists a $\tilde{g}$-invariant Borel probability measure 
$\tilde{\mu}_{j}$ on $J(\tilde{g})$ such that $(\pi _{\CCI })_{\ast }(\tilde{\mu }_{j})=\mu _{j}$. 
Since $\mu _{j}$ is ergodic with respect to $g_{j}$, we obtain that 
$\tilde{\mu }_{j}$ is ergodic. 
Moreover, for each $j=1,2,$ we have 
$\int \log \| D(\gamma _{1})_{x}\| _{s}d\tilde{\mu }_{j}(\gamma ,x)=
\int \log \| D(g_{j})_{x}\| _{s}d\mu _{j}(x)=\log \deg (g_{j})>0.$ 
Hence, 
Lemma~\ref{l:suppmdisc} implies that for $\mu _{j}$-a.e. $z_{0}\in J(g_{j})$, 
we have $\Hol(T(g_{1},g_{2},p,\cdot ),z_{0})\leq -\frac{\log p_{j}}{\int \log \| D(g_{j})_{x}\| _{s}d\mu _{j}(x)}
=-\frac{\log p_{j}}{\log \deg g_{j}}.$ Therefore items (i) (ii) of our theorem hold. 
We now prove item (iii) of our theorem. Let  $\alpha \in (-\frac{\log p_{i}}{\log \deg (g_{i})}, 1)$  and 
let $\varphi \in C^{\alpha}(\CCI )$ be an element 
such that $\varphi (\infty )=1$ and $\varphi |_{\hat{K}(G)}\equiv 0$. 
By Lemma~\ref{l:pfmainth10}, if $V$ is small enough, 
item (v) in statement~\ref{spacemainth10} in Theorem~\ref{t:timportant} holds. 
Thus $M_{g_{1},g_{2},p}^{n}(\varphi )(z)\rightarrow T(g_{1},g_{2},p,z)$ as $n\rightarrow \infty $ uniformly on $\CCI .$ 
If there exists a constant $C>0$ and a strictly increasing sequence $\{ n_{j}\} $ in $\NN $ 
such that $\| M_{g_{1},g_{2},p}^{n_{j}}(\varphi )\| _{\alpha }\leq C $ for each $j$, 
then we obtain $T(g_{1},g_{2},p,\cdot )\in C^{\alpha }(\CCI ).$ However, this contradicts item (ii) of 
Theorem~\ref{t:j1orj2}, which we have already proved. Therefore, 
item (iii) of our theorem holds. Thus, we have proved Theorem~\ref{t:j1orj2}.  
\qed 
\vspace{-4.5mm}

\end{document}